\setlist{parsep=0pt} % decrease enumerate/itemize separation
\setlist[itemize,enumerate]{nolistsep,itemsep=1pt,topsep=1pt} 
\setlist{leftmargin=5mm}
\theoremstyle{plain}
\newcommand{\myrulewidth}{0pt}
\definecolor{ThmColor}{rgb}{0.85,0.85,0.99}
\definecolor{DefColor}{rgb}{0.81,0.92,0.97}
\definecolor{RemColor}{rgb}{0.92,0.85,0.92}
\definecolor{ExoColor}{rgb}{0.81,0.99,0.81}
\declaretheorem[name=Theorem, numberwithin=section,
shaded={rulecolor=Black,rulewidth=\myrulewidth,bgcolor=ThmColor}]{theorem}
\declaretheorem[name=Proposition, sibling=theorem,
shaded={rulecolor=Black,rulewidth=\myrulewidth,bgcolor=ThmColor}]{proposition}
\declaretheorem[name=Corollary, sibling=theorem,
shaded={rulecolor=Black,rulewidth=\myrulewidth,bgcolor=ThmColor}]{corollary}
\declaretheorem[name=Lemma, sibling=theorem,
shaded={rulecolor=Black,rulewidth=\myrulewidth,bgcolor=ThmColor}]{lemma}
\declaretheorem[name=Definition, sibling=theorem,
shaded={rulecolor=Black,rulewidth=\myrulewidth,bgcolor=DefColor}]{definition}
\declaretheoremstyle[
    headfont=\bfseries,
    notefont=\bfseries,
%     headformat={\NUMBER.\if\NOTE\ \else \NOTE\fi},
    headpunct={.},
%     notebraces={}{},
    numbered=yes
]{customrem}
\theoremstyle{customrem}
\declaretheorem[name=Example, sibling=theorem, qed={$\diamond$},
shaded={rulecolor=Black,rulewidth=\myrulewidth,bgcolor=RemColor}]{example}
\declaretheorem[name=Remark, sibling=theorem, qed={$\diamond$}, 
shaded={rulecolor=Black,rulewidth=\myrulewidth,bgcolor=RemColor}]{remark}
\declaretheorem[name=Exercise, sibling=theorem, qed={$\clubsuit$},
shaded={rulecolor=Black,rulewidth=\myrulewidth,bgcolor=ExoColor}]{exercise}
\crefname{exercise}{exercise}{exercises}
\begin{document}

\pagestyle{empty}
\newgeometry{margin=1in}

\hypersetup{pageanchor=false}

\thispagestyle{empty}

\vspace*{1cm}
\begin{center}

{\Huge\bfseries\scshape
An introduction to \\[1mm]
singular stochastic PDEs: \\[1mm]  
Allen--Cahn equations, metastability \\[1mm]
and regularity structures \\[1mm]
}%\\[8mm]

\vspace*{12mm}
{\large Nils Berglund}\\[2mm]
{\large Institut Denis Poisson -- UMR 7013}\\[2mm]
{\large Universit\'e d'Orl\'eans, Universit\'e de Tours, CNRS}

\vspace*{12mm}
{\Large Lecture notes\\[4mm]
Sarajevo Stochastic Analysis Winter School 2019}\\
\vspace*{12mm}

\begin{figure}[h]
\begin{center}
\includegraphics[width=9cm]{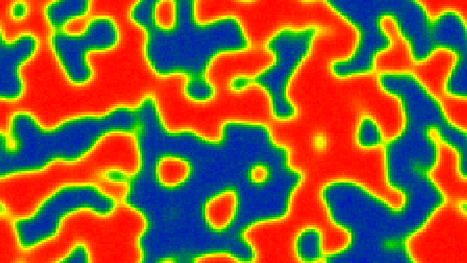}
\end{center}
\end{figure}

\vspace*{27mm}
--- Version of September 12, 2019 ---\\[2mm]
%--- Version of \today\ ---\\[2mm]

\end{center}
\hypersetup{pageanchor=true}

\cleardoublepage

\pagestyle{fancy}
\fancyhead[RO,LE]{\thepage}
\fancyhead[LO]{\nouppercase{\rightmark}}
\fancyhead[RE]{\nouppercase{\leftmark}}
\cfoot{}
\setcounter{page}{1}
\pagenumbering{roman}
\restoregeometry

\tableofcontents

\cleardoublepage

%%%%%%%%%%%%%%%%%%%%%%%%%%%%%%%%%%%%%%%%%%%%%%%%%%%%%%%%%%%%%%%%%%%%%%%%%%%%%%%%

\chapter*{Preface}

These notes have been prepared for a series of lectures given at the 
Sarajevo Stochastic Analysis Winter School, from January 28 to February 1, 2019. 
There already exist several excellent lecture notes and reviews on the subject, 
such as~\cite{Hairer_LN_2009} on (non-singular) stochastic PDEs, 
and~\cite{Hairer_LN_2015,Chandra_Weber_LN17} on singular stochastic PDEs and 
regularity structures. The present notes have two main specificities. The first 
one is that they focus on a particular example, the Allen--Cahn equation, which 
allows to introduce several of the difficulties of the theory in a gradual way, 
by increasing the space dimension step by step. The hope is that while this 
limits the generality of the theory presented, this limitation is more that made 
up by a gain in clarity. The second specific aspect of these notes is that they 
go beyond existence and uniqueness of solutions, by covering a few recent 
results on convergence to equilibrium and metastability in these system. 

The author wishes to thank the organisers of the Winter School, Frank Proske, 
Abdol-Reza Mansouri and Oussama Amine, for the invitation that provided the 
motivation to compile these notes; his co-workers on SPDEs, Ajay Chandra, 
Giacomo Di Ges\`u, Barbara Gentz, Christian Kuehn and Hendrik Weber, who 
greatly contributed to some results presented here; Yvain Bruned for remarks on 
an early version of the notes; and Martin Hairer and Lorenzo Zambotti for 
patiently explaining some subtleties of the theory of regularity structures. 

Thanks are also due to the Isaac Newton Institute in Cambridge, as well as  
David Brydges, Alessandro Giuliani, Massimiliano Gubinelli, Antti Kupiainen, 
Hendrik Weber and Lorenzo Zambotti for organising the semester \lq\lq Scaling 
limits, rough paths, quantum field theory\rq\rq. In particular the tutorial 
lectures given during the opening workshop of the semester proved of great help 
in preparing these notes. 

Finally, thanks are due to Adrian Martini, for pointing out some typos in the 
original version of these notes. 

\vfill

\cleardoublepage
%\newpage
\setcounter{page}{1}
\pagenumbering{arabic}

%%%%%%%%%%%%%%%%%%%%%%%%%%%%%%%%%%%%%%%%%%%%%%%%%%%%%%%%%%%%%%%%%%%%%%%%%%%%%%%%

\chapter{A system of interacting diffusions}
\label{ch:diff} 

In this chapter, we will analyse the dynamics of a system of $N\geqs2$ coupled 
stochastic differential equations (SDEs), which will converge, in a certain 
sense, to a stochastic Allen--Cahn PDE. This will serve two purposes: firstly, 
it will introduce a natural physical model that motivates the use of stochastic 
PDEs, and secondly, it will allow us to recapitulate some notions from the 
theory of SDEs that will be important in the infinite-dimensional case. 

The system of interacting diffusions is given by 
\begin{equation}
\label{eq:SDE} 
 \6y^i_t = \bigbrak{y^i_t - (y^i_t)^3} \6t 
 + \frac\gamma2 \bigbrak{y^{i+1}_t - 2y^i_t + y^{i-1}_t} \6t 
 + \sqrt{2\eps} \6W^i_t\;, 
 \qquad
 i = 1,\dots, N\;, 
\end{equation} 
in the following setting.
\begin{itemize}
\item 	The real variables $y^1,\dots,y^N\in\R$ can be thought of as the 
positions of $N$ particles. We use periodic boundary conditions, that is, 
$y^0=y^N$ and $y^{N+1}=y^1$. This can also be indicated by considering $i$ as an 
element of the cyclic group $\Lambda=\Z/N\Z$.

\item 	The term $\brak{y^i_t - (y^i_t)^3} \6t$ tends to push each particle 
towards $+1$ if $y^i_t>0$ and towards $-1$ if $y^i_t<0$, that is, we have a 
bistable local dynamics. 

\item 	The term $\smash{\frac\gamma2} \brak{y^{i+1}_t - 2y^i_t + y^{i-1}_t} 
\6t$ is a discretised Laplacian interaction, which describes a ferromagnetic 
nearest-neighbour coupling pushing the $y^i_t$ towards each other (if 
$\gamma>0$). 

\item 	The $W^i_t$ are independent standard Wiener processes on a probability 
space $(\Omega,\cF,\fP)$, modelling thermal noise. 
\end{itemize}

The system~\eqref{eq:SDE} can be thought of as a version of the Ising model 
with continuous spins. We are mainly interested in its long-time dynamics for 
large values of $N$ and $\gamma$. The parameter $\eps\geqs0$ will be either 
small or of order $1$. 

%%%%%%%%%%%%%%%%%%%%%%%%%%%%%%%%%%%%%%%%%%%%%%%%%%%%%%%%%%%%%%%%%%%%%%%%%%%%%%%%

\section{Deterministic dynamics}
\label{sec:diffdet} 

In the deterministic case $\eps=0$, the system~\eqref{eq:SDE} reduces to the 
system of ordinary differential equations (ODEs) 
\begin{equation}
\label{eq:ODE} 
 \dot y^i = y^i - (y^i)^3 
 + \frac\gamma2 \bigbrak{y^{i+1} - 2y^i + y^{i-1}}\;.
\end{equation} 
A first observation is that~\eqref{eq:ODE} has gradient form 
\begin{equation}
 \dot y = -\nabla V(y)\;,
\end{equation} 
where $V$ is the quartic potential given by 
\begin{equation}
\label{eq:V_finite} 
 V(y) := \sum_{i\in\Lambda} U(y^i) + \frac{\gamma}{4} \sum_{i\in\Lambda} 
(y^{i+1}-y^i)^2\;, 
 \qquad
 U(\xi) = \frac14 \bigpar{\xi^2 - 1}^2\;.
\end{equation} 
The stationary points of~\eqref{eq:ODE} are thus the critical points of $V$. 
Note that $V$ is invariant under the group $G$ generated by cyclic permutations 
of the $y^i$, the reflection $y^i\mapsto y^{N+1-i}$, and the point symmetry 
$y\mapsto -y$. 

The simplest situation arises for $\gamma=0$: then the set of all critical  
points of $V$ is $\set{-1,0,1}^\Lambda$ and has cardinality $3^N$. The  critical 
points in $\set{-1,1}^\Lambda$ are all local minima of $V$, and thus describe 
$2^N$ stable stationary points of~\eqref{eq:ODE}. In fact, one can show 
\cite[Proposition~2.1]{BFG06a} that this situation perturbs to small positive 
$\gamma$, in the sense that there are still $3^N$ stationary points, $2^N$ of 
which are stable, for all $\gamma$ up to a critical value $\gamma_0(N)$ which is 
larger than $\frac14$ for all $N$. 

We are more interested, however, in the case of large $\gamma$ of the order 
$N^2$. Indeed, this is the natural scaling in which the discrete Laplacian 
in~\eqref{eq:ODE} approaches the continuous Laplacian. 

\begin{proposition}%[{\cite[Proposition~2.2]{BFG06a}}]
\label{prop:det_potential} 
Let 
\begin{equation}
\label{eq:gamma1} 
 \gamma_1(N) := \frac{1}{2\sin^2(\pi/N)}
 = \frac{N^2}{2\pi^2} \Bigbrak{1+\BigOrder{\frac{1}{N^2}}}\;.
\end{equation} 
Then the potential $V$ admits exactly $3$ stationary points $0$ and 
$\pm(1,\dots,1)$ if and only if $\gamma\geqs\gamma_1(N)$. 
The points $\pm(1,\dots,1)$ are always local minima of $V$, while $0$ is a 
saddle with exactly one linearly unstable direction if and only if 
$\gamma>\gamma_1(N)$.
\end{proposition}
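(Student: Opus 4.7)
The plan is to exploit that the discrete Laplacian $\Delta$ on the cyclic group $\Lambda$ is diagonalised by the discrete Fourier transform, with eigenvalues $-4\sin^2(\pi k/N)$ for $k = 0, \dots, N-1$ and the zero mode corresponding to the constant direction $\one = (1,\dots,1)$. Direct inspection shows $\nabla V(0) = 0$ and $\nabla V(\pm\one) = 0$ for every $\gamma$. At $\pm\one$ one computes $\Hess V = 2I - (\gamma/2)\Delta$, whose eigenvalues $2 + 2\gamma\sin^2(\pi k/N)$ are all strictly positive, so $\pm\one$ are always strict local minima. At the origin $\Hess V(0) = -I - (\gamma/2)\Delta$ has eigenvalues $\mu_k = -1 + 2\gamma\sin^2(\pi k/N)$; the zero mode $\mu_0 = -1$ is always negative (along the direction $\one$), while the next-smallest eigenvalue comes from the conjugate pair $k = \pm 1$ and equals $-1 + \gamma/\gamma_1(N)$, which is strictly positive iff $\gamma > \gamma_1(N)$. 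This gives the saddle statement of the second part of the proposition.

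For uniqueness of critical points when $\gamma \geqs \gamma_1(N)$, I would decompose $y = \bar y \one + w$ with $\sum_i w^i = 0$ and study $V$ restricted to each hyperplane $A_c = \setsuch{y}{\bar y = c}$. Since $\nabla V(c\one) = (c^3 - c)\one$ is parallel to the normal $\one$ of $A_c$, the point $y = c\one$ is always a critical point of $V|_{A_c}$, and the task reduces to proving it is the only one. For $v \perp \one$ and arbitrary $y$,
\[
\bigpscal{v}{\Hess V(y)\, v} = \sum_i \bigpar{3(y^i)^2 - 1}(v^i)^2 + \frac{\gamma}{2}\sum_i (v^{i+1} - v^i)^2 \geqs \Bigpar{\frac{\gamma}{\gamma_1(N)} - 1}\|v\|^2,
\]
combining the pointwise bound $3(y^i)^2 - 1 \geqs -1$ with the sharp discrete Poincar\'e inequality $\pscal{v}{-\Delta v} \geqs 4\sin^2(\pi/N)\|v\|^2$. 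For $\gamma > \gamma_1(N)$ this is strict convexity of $V|_{A_c}$, forcing the uniqueness of the critical point. In the critical case $\gamma = \gamma_1(N)$ one needs the extra observation that simultaneous equality in both bounds would require $v$ to be a pure $k = \pm 1$ Fourier mode (hence nonzero at all but at most two indices, for $N \geqs 3$) and $y^i = 0$ at every index with $v^i \neq 0$; this is incompatible with saturating the bound along an entire line segment of hypothetical minimisers, so uniqueness persists. Thus every critical point of $V$ has the form $c\one$, and the one-dimensional equation $c^3 = c$ yields $c \in \set{0, \pm 1}$.

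The converse direction is to show that for $\gamma < \gamma_1(N)$ strictly more than three critical points exist. Here the natural tool is bifurcation theory: since $\mu_{\pm 1}$ changes sign as $\gamma$ crosses $\gamma_1(N)$, a Lyapunov--Schmidt reduction onto the two-dimensional unstable eigenspace at the origin, combined with the $O(2)$-symmetry generated by cyclic shifts and reflection, yields a whole circle of new critical points bifurcating from $0$. I expect this to be the main technical point of the proposition; extending the existence of additional critical points to the full range $\gamma < \gamma_1(N)$ (and not merely to a neighbourhood of $\gamma_1(N)$) may require either a global bifurcation argument or, complementing the local analysis, explicit families of critical points in invariant subspaces — for instance the period-two configurations $(a, -a, a, -a, \dots)$ with $a^2 = 1 - 2\gamma$, which are critical points of $V$ whenever $N$ is even and $\gamma < 1/2$.
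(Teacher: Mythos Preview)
Your Hessian computations at $0$ and $\pm\one$ match the paper's. For the uniqueness direction ($\gamma\geqs\gamma_1(N)$) you take a genuinely different route: you show that $V$ restricted to each affine hyperplane $A_c=\{\bar y=c\}$ is convex in the transverse directions via the Poincar\'e inequality, so its unique critical point is $c\one$. The paper instead uses a \emph{dynamical} Lyapunov argument: with $W(y)=\tfrac12\sum_i(y^{i+1}-y^i)^2$ one computes
\[
\dtot{}{t}W(y_t)\leqs 2\Bigl(1-\frac{\gamma}{\gamma_1(N)}\Bigr)W(y_t)-\frac1N W(y_t)^2\;,
\]
so for $\gamma\geqs\gamma_1(N)$ every orbit converges to the diagonal, forcing all stationary points to lie there. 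Both arguments are correct; yours is static and is essentially the content of the paper's \Cref{exo:offdiagonal}, while the Lyapunov argument also handles the borderline $\gamma=\gamma_1(N)$ without a separate equality-case discussion.

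For the converse ($\gamma<\gamma_1(N)$) your bifurcation approach is much heavier than needed and, as you note, only local. The paper's argument is a one-liner: for $\gamma<\gamma_1(N)$ the origin has Morse index at least $2$ (the $k=\pm1$ modes give off-diagonal negative Hessian eigenvalues), while $V$ is coercive and has the two strict minima $\pm\one$. A mountain-pass critical point between $\pm\one$ therefore exists and cannot be the origin (a nondegenerate saddle of index $\geqs2$ can be bypassed by paths staying strictly below $V(0)$), so there must be a fourth critical point, necessarily off the diagonal. No Lyapunov--Schmidt reduction or explicit families are required, and the conclusion holds for the full range $\gamma<\gamma_1(N)$ at once.
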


\begin{proof}
First note that the discrete Laplacian is represented by a Toeplitz matrix (its 
entries are constant on diagonals), whose eigenvalues can be computed 
explicitly by discrete Fourier transform. They are given by $-\lambda^N_k$ 
where 
\begin{equation}
 \lambda^N_k = \lambda^N_{N-k} := 2\sin^2\biggpar{\frac{k\pi}{N}}\;, 
 \qquad 
 k = 0,\dots, N-1\;.
\end{equation} 
In particular, $\lambda^N_0=0$ and 
$\gamma_1(N) = 1/(\lambda^N_1-\lambda^N_0)$ is the inverse of the spectral 
gap. Consider now the function 
\begin{equation}
 W(y) := \frac12 \sum_{i\in\Lambda} \bigpar{y^{i+1} - y^i}^2\;.
\end{equation} 
Note that $W$ vanishes on the diagonal $\set{y^1=\dots=y^N}$ and is otherwise 
positive. An easy computation (cf.~\cite[Proposition~3.1]{BFG06a}) shows that 
\begin{equation}
 \dtot{}{t} W(y_t) \leqs 2\Bigpar{1-\frac{\gamma}{\gamma_1(N)}} W(y_t)
 - \frac1N W(y_t)^2\;.
\end{equation} 
Thus when $\gamma\geqs\gamma_1(N)$, $t\mapsto W(y_t)$ decreases for $y_t$ 
outside the diagonal, i.e., $W$ is a Lyapunov function. Therefore, when  
$\gamma\geqs\gamma_1(N)$ all orbits of~\eqref{eq:ODE} will converge to the 
diagonal. The only stationary points on the diagonal are $0$ and 
$\pm(1,\dots,1)$. The eigenvalues of the linearisation of~\eqref{eq:ODE} at $0$ 
are given by $-\mu^N_k$ where 
\begin{equation}
\label{eq:muNk} 
 \mu^N_k := -1+\gamma\lambda^N_k\;, 
 \qquad 
 k = 0, \dots, N-1\;.
\end{equation} 
If $\gamma>\gamma_1(N)$, then only $-\mu^N_0$ is positive, while for 
$\gamma<\gamma_1(N)$, there are at least three positive eigenvalues (two if 
$N=2$). Since $V(y)$ is bounded below and goes to infinity as 
$\norm{y}\to\infty$, there must be critical points outside the diagonal in 
the latter case. Finally, the eigenvalues of the linearisation at 
$\pm(1,\dots,1)$ are given by $-\nu^N_k$ where 
\begin{equation}
\label{eq:nuNk} 
 \nu^N_k := 2+\gamma\lambda^N_k\;, 
 \qquad 
 k = 0, \dots, N-1\;.
\end{equation} 
These are always negative, showing that these points are always 
stable (they are local minima of $V$). 
\end{proof}

\begin{exercise}
\label{exo:offdiagonal}
Show that the potential satisfies the lower bound
\begin{equation}
 \label{eq:offdiagonal}
 V(y_0 + y_\perp) \geqs V(y_0) + 
\frac12\Bigpar{\frac{\gamma}{\gamma_1(N)}-1}\,\norm{y_\perp}^2
\end{equation} 
whenever $y_0$ belongs to the diagonal $\set{y^1=\dots=y^N}$ and 
$y_\perp$ is orthogonal to the diagonal.
\end{exercise}

This result shows that if $\gamma > \gamma_1(N)$, then $V$ is a double-well 
potential, with a saddle having one unstable direction located at the origin, 
and the two minima located at $\pm(1,\dots,1)$. The unstable manifolds of the 
saddle are subsets of the diagonal, and connect $0$ to the two local minima. 
This is the case we will mainly be interested in in what follows. 

\begin{figure}[htb]
\begin{center}
\includegraphics[width=0.8\textwidth]{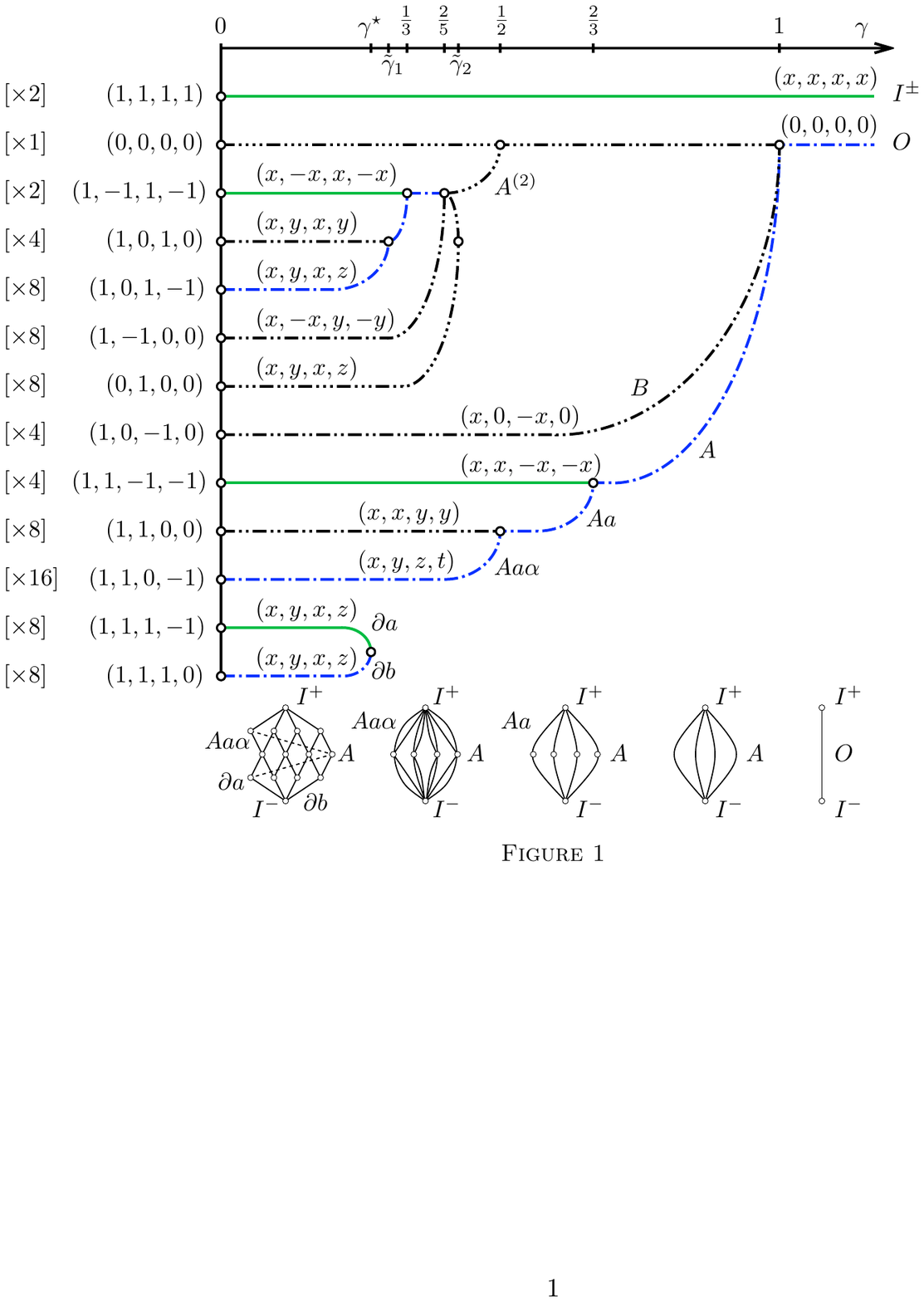}
\end{center}
\vspace{-1mm}
 \caption[]{Schematic bifurcation diagram for~\eqref{eq:ODE} in the case $N=4$. 
Each branch in this diagram represents a family of stationary points globally 
invariant under the symmetry group $G$, with only one representative of the 
family displayed on the left side of the diagram. Green full lines represent 
stable stationary points (local minima of the potential $V$), while broken lines 
with $k$ dots represent stationary points with $k$ unstable directions (saddles 
of Morse index $k$ of $V$).}
 \label{fig_bifurcation}
\end{figure}

As $\gamma$ decreases below $\gamma_1(N)$, the situation becomes much more 
involved, and is not understood in full detail. What is known is that at 
$\gamma_1(N)$, the system undergoes a $G$-symmetric analogue of a pitchfork 
bifurcation, at which the origin becomes unstable in more than one direction, 
while expelling a multiple of $N$ new stationary points with lower symmetry. As 
$\gamma$ decreases further, these points undergo further symmetry-breaking 
bifurcations, as illustrated by \Cref{fig_bifurcation} in the case $N=4$. 
However, all these further bifurcation occur for $\gamma=\order{N^2}$. In other 
words, for any $\tilde\gamma>0$, the number of stationary points is at most of 
order $N$ when $\gamma > \tilde\gamma N^2$ and $N$ is sufficiently large. 

\begin{exercise}
In the case $N=2$, find all stationary points of the potential $V$ and 
determine their stability as a function of $\gamma\geqs0$. 
\Hint Express the potential $V$ in variables $z_\pm = y_1\pm y_2$.
\end{exercise}

%%%%%%%%%%%%%%%%%%%%%%%%%%%%%%%%%%%%%%%%%%%%%%%%%%%%%%%%%%%%%%%%%%%%%%%%%%%%%%%%

\section{Existence and uniqueness of solutions}
\label{sec:diffex} 

We now return to the SDE~\eqref{eq:SDE} for $\eps>0$. We can write it in 
compact form as 
\begin{equation}
\label{eq:SDE_V} 
 \6y_t = -\nabla V(y_t) \6t + \sqrt{2\eps} \6W_t\;.
\end{equation} 
We first note that $y\mapsto\nabla V(y)$ is continuous (and therefore 
measurable) and locally (but not globally) Lipschitz. We can thus apply a 
standard result, such as~\cite[Theorem~5.2.1]{Oeksendal} to the process 
$y_{t\wedge\tau}$ where $\tau$ is the first-exit time from a large ball to 
obtain local existence and uniqueness of a strong solution to~\eqref{eq:SDE_V}.
In particular, local existence follows from the fact that the map 
$y^{(k)}\mapsto y^{(k+1)}$ defined by 
\begin{equation}
 y_t^{(k+1)} = y_0 - \int_0^{t\wedge\tau} \nabla V(y^{(k)}_s) \6t + 
\sqrt{2\eps} W_t
\end{equation} 
is a contraction, via Banach's fixed point theorem. 

To extend this result to global existence of a strong solution, we have to rule 
out finite-time explosion of solutions. This is, however, not hard to achieve, 
since the quartic growth of the potential prevents sample paths from straying 
very far from the origin (coercivity). Here are two possible ways of obtaining 
global 
existence:

\begin{enumerate}
\item 	One checks that the drift and diffusion coefficients 
in~\eqref{eq:SDE_V} satisfy the one-sided growth condition 
\begin{equation}
\label{eq:growth} 
 y \cdot \bigpar{-\nabla V(y)} + \frac12 \bigpar{\sqrt{2\eps}}^2 \leqs 
K(1+\norm{y}^2)
 \qquad \forall y\in\R^\Lambda
\end{equation} 
for a constant $K>0$. See for instance~\cite[Theorem~3.5]{Mao_book}. 

\item 	Let 
\begin{equation}
\label{eq:generator} 
 \cL := \eps\Delta - \nabla V \cdot \nabla
\end{equation} 
denote the infinitesimal generator of the diffusion~\eqref{eq:SDE_V}. Then 
we can take advantage of the fact that $V$ is itself a Lyapunov function 
for the system, in the sense that there exists a constant $c\geqs0$ such that  
\begin{equation}
\label{eq:Lyap-CD0} 
 (\cL V)(y) \leqs c V(y) 
 \qquad \forall y\in\R^\Lambda\;.
\end{equation} 
This implies that the process is non-explosive according to 
\cite[Theorem~2.1]{Meyn_Tweedie_1993b}. 
\end{enumerate}

\begin{exercise}
\label{ex:Lyap} 
Check that the SDE~\eqref{eq:SDE} satisfies 
the one-sided growth condition~\eqref{eq:growth} as well as the Lyapunov  
condition~\eqref{eq:Lyap-CD0}. 
Show that, in addition, there exists $R<\infty$ such that $\cL V(y)$ is 
negative outside the ball $\set{\norm{y}\leqs R}$. Can we do better than that?
\end{exercise}

The main result of this section is thus as follows.

\begin{proposition}
For any initial condition $y_0\in\R^\Lambda$, the SDE~\eqref{eq:SDE} admits an 
almost surely continuous, pathwise unique strong solution $(y_t)_{t\geqs0}$ 
which is global in time.
\end{proposition}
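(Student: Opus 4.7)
The plan is to combine a standard local existence/uniqueness result for SDEs with locally Lipschitz coefficients together with a non-explosion argument exploiting the coercivity of the potential $V$. Since the diffusion coefficient $\sqrt{2\eps}\,\Id$ is constant and the drift $-\nabla V$ is a polynomial in $y$, both coefficients are in particular locally Lipschitz on $\R^\Lambda$. The standard construction described in the excerpt (Picard iteration stopped at the exit time $\tau_R$ from a ball of radius $R$, giving a contraction in a suitable space of adapted continuous processes) produces, via Banach's fixed point theorem, a pathwise unique strong continuous solution $(y_t)_{t\leqs \tau_R}$ for every $R>0$. Letting $R\to\infty$ gives a unique maximal strong solution defined up to the explosion time $\tau_\infty:=\lim_{R\to\infty}\tau_R\in(0,\infty]$.

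The remaining task is to show that $\tau_\infty=\infty$ almost surely. I would carry this out via the Lyapunov approach sketched in point~2 of the excerpt, which is essentially what \Cref{ex:Lyap} asks to verify. Writing $V(y)=\sum_i U(y^i)+\frac{\gamma}{4}\sum_i(y^{i+1}-y^i)^2$ and applying the generator~\eqref{eq:generator} term by term, a direct computation gives
\begin{equation}
 (\cL V)(y) = \eps\Delta V(y) - \norm{\nabla V(y)}^2\;.
\end{equation}
Since $\Delta V(y)=\sum_i(3(y^i)^2-1)+\gamma\sum_i\lambda^N_{\rm avg}$ grows only quadratically in $y$ while $V(y)$ itself grows quartically, and the term $-\norm{\nabla V}^2$ is non-positive, one obtains easily an estimate of the form $(\cL V)(y)\leqs c V(y)+c'$ for suitable constants; in fact $\cL V$ is even negative outside a sufficiently large ball. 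This is precisely the Lyapunov condition required by \cite[Theorem~2.1]{Meyn_Tweedie_1993b}, which then guarantees non-explosion: $\fP\{\tau_\infty<\infty\}=0$.

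Concretely, one can also argue directly from Itô's formula applied to $V(y_{t\wedge\tau_R})$: taking expectations and using $\cL V\leqs cV+c'$ yields, by Gronwall's inequality, a bound $\E[V(y_{t\wedge\tau_R})]\leqs(V(y_0)+c't)\e^{ct}$ uniform in $R$. Since $V$ is coercive (it grows to infinity as $\norm{y}\to\infty$), Markov's inequality then gives $\fP\{\tau_R\leqs t\}\to 0$ as $R\to\infty$ for every fixed $t$, proving $\tau_\infty=\infty$ almost surely. Alternatively, one can verify the one-sided growth condition~\eqref{eq:growth}: the dominant term in $y\cdot(-\nabla V(y))$ is $-\sum_i(y^i)^4$, which is non-positive, while the Laplacian contribution $-\gamma\sum_i y^i(y^{i+1}-2y^i+y^{i-1})=\frac{\gamma}{2}\sum_i(y^{i+1}-y^i)^2+\frac{\gamma}{2}\text{(telescoping)}\geqs 0$ after rearrangement, and $-\sum_i(y^i)^2$ is trivially bounded by $\norm{y}^2$; altogether the left-hand side is dominated by $K(1+\norm{y}^2)$ for some $K>0$, so \cite[Theorem~3.5]{Mao_book} applies.

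The only mild subtlety is in choosing which of the two routes to write out; both are genuinely easy here because the quartic potential provides strong confinement, so I do not expect any real obstacle. The Lyapunov route is slightly cleaner since it reuses the function $V$ that will play a central role later (and in particular connects with the invariant measure $\e^{-V/\eps}$ of the dynamics), so I would present that one as the main argument and merely mention the growth-condition alternative.
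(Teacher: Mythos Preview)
Your proposal is correct and follows essentially the same route as the paper: the proposition there is stated as a summary of the preceding discussion, which first invokes local existence/uniqueness via the locally Lipschitz drift and Banach's fixed-point theorem, and then upgrades to global existence by either the one-sided growth condition~\eqref{eq:growth} or the Lyapunov condition~\eqref{eq:Lyap-CD0}, with the actual verification of these conditions delegated to \Cref{ex:Lyap}. Your write-up simply fills in those computations (modulo some minor sign slips in the one-sided growth check that do not affect the conclusion).
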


%%%%%%%%%%%%%%%%%%%%%%%%%%%%%%%%%%%%%%%%%%%%%%%%%%%%%%%%%%%%%%%%%%%%%%%%%%%%%%%%

\section{Invariant measure and reversibility}
\label{sec:diffinv} 

In order to determine an invariant probability measure for the 
diffusion~\eqref{eq:SDE_V}, it is useful to rewrite the infinitesimal 
generator~\eqref{eq:generator} in the form
\begin{equation}
 \label{eq:generator_rev}
 \cL = \eps \e^{V/\eps} \nabla \cdot \e^{-V/\eps} \nabla\;,
\end{equation} 
with the usual convention that any differential operator acts on everything on 
its right. This is indeed equivalent to~\eqref{eq:generator} by Leibniz' rule. 

\begin{proposition}
Assume $\eps>0$ and the potential $V$ is such that 
\begin{equation}
\label{eq:defZ} 
 \cZ := \int_{\R^\Lambda} \e^{-V(y)/\eps} \6y < \infty\;.
\end{equation}
Then the diffusion~\eqref{eq:SDE_V} admits a unique invariant probability 
measure, with density 
\begin{equation}
\label{eq:pi_finite_dim} 
 \pi(y) := \frac1\cZ \e^{-V(y)/\eps}
\end{equation} 
with respect to Lebesgue measure $\6y$, known as a \emph{Gibbs measure}. 
\end{proposition}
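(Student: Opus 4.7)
The plan is to verify that $\pi$ is invariant by a direct computation exploiting the symmetric form~\eqref{eq:generator_rev}, and then deduce uniqueness from the non-degeneracy of the noise combined with the Lyapunov bound established in \Cref{ex:Lyap}. The hypothesis $\cZ<\infty$ already makes $\pi$ as defined by~\eqref{eq:pi_finite_dim} a bona fide probability density with respect to Lebesgue measure.

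For the invariance, I would test against an arbitrary $f\in C_c^\infty(\R^\Lambda)$. Using~\eqref{eq:generator_rev},
\begin{equation*}
 \int_{\R^\Lambda}(\cL f)(y)\,\pi(y)\6y
 = \frac{\eps}{\cZ}\int_{\R^\Lambda}\nabla\cdot\bigbrak{\e^{-V(y)/\eps}\nabla f(y)}\6y = 0
\end{equation*}
by the divergence theorem, since the bracketed vector field has compact support. Applied to two test functions $f,g$, the same integration by parts yields
\begin{equation*}
 \int f\,\cL g\,\6\pi
 = -\frac{\eps}{\cZ}\int_{\R^\Lambda} \nabla f\cdot\nabla g\,\e^{-V/\eps}\6y
 = \int g\,\cL f\,\6\pi\;,
\end{equation*}
which is the stronger statement that $\cL$ is symmetric on $C_c^\infty\subset L^2(\pi)$, i.e.\ that the diffusion is \emph{reversible} with respect to $\pi$. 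Reversibility implies invariance as a special case (take $g\equiv 1$, or equivalently $f\equiv 1$ in the first display), giving $\pi P_t = \pi$ for the associated semigroup $(P_t)_{t\geqs0}$.

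For uniqueness I would appeal to standard ergodic theory for non-degenerate diffusions. The diffusion coefficient is the constant $\sqrt{2\eps}\,\Id$, hence uniformly elliptic on $\R^\Lambda$, with smooth drift of polynomial growth. Parabolic regularity then guarantees that for every $t>0$ and every $y$, the transition law $P_t(y,\cdot)$ admits a smooth strictly positive density with respect to Lebesgue measure; in particular the semigroup is strong Feller and topologically irreducible. Combined with the Lyapunov condition of \Cref{ex:Lyap} --- especially the strengthening that $(\cL V)(y)$ is negative outside a compact set --- the Meyn--Tweedie criterion yields positive Harris recurrence, and an irreducible Harris-recurrent process admits at most one invariant probability measure.

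The main obstacle I anticipate is technical rather than conceptual: passing from the identity $\int\cL f\,\6\pi=0$ on the test-function class $C_c^\infty$ to genuine semigroup invariance $\pi P_t = \pi$ requires justifying that $C_c^\infty(\R^\Lambda)$ is a core for $\cL$. In the reversible elliptic setting this is routine, since the quartic confinement in $V$ produces enough Gaussian-like decay of $\pi$ for the Dirichlet form on the right-hand side of the symmetry display above to be closable and for $\cL$ to be essentially self-adjoint on $C_c^\infty$; alternatively one can avoid functional analytic subtleties by invoking the Echeverr\'\i a--Weiss theorem identifying infinitesimally invariant measures with stationary measures of the corresponding martingale problem. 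Once this identification is made, the remaining uniqueness step is standard.
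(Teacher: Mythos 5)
Your proposal is correct and follows essentially the same route as the paper: integration by parts via the symmetric form~\eqref{eq:generator_rev} and the divergence theorem to obtain $\cL^\dagger\pi=0$, then uniqueness from irreducibility (uniform ellipticity) combined with Harris recurrence via the Lyapunov estimate of \Cref{ex:Lyap} and the Meyn--Tweedie criterion. Your explicit flagging of the gap between infinitesimal invariance and semigroup invariance is a point the paper glosses over by simply invoking the Fokker--Planck equation, so your treatment is if anything slightly more careful.
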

\begin{proof}
Let $f$ be a bounded measurable test function in the domain of $\cL$. Writing 
$\cL^\dagger$ for the adjoint of $\cL$, we have  
\begin{align}
\pscal{\cL^\dagger\pi}{f}_{L^2} 
&= \pscal{\pi}{\cL f}_{L^2} \\
&= \int_{\R^\Lambda} \pi(y)\eps \e^{V(y)/\eps} \nabla \cdot 
\bigpar{\e^{-V(y)/\eps}\nabla f(y)}\6y \\
&= \frac{\eps}{\cZ} \int_{\R^\Lambda} \nabla \cdot 
\bigpar{\e^{-V(y)/\eps}\nabla f(y)}\6y \\
&= 0 
\end{align}
by the divergence theorem. 
Thus $\cL^\dagger\pi = 0$, and Kolmogorov's backward equation (or 
Fokker--Planck equation) shows that $\pi$ is invariant.  

Uniqueness of $\pi$ follows from the fact that the diffusion is irreducible 
(with respect to Lebesgue measure) by uniform ellipticity of the noise. One way 
of seeing this is to apply Theorem~3.3 in~\cite{Meyn_Tweedie_1993b} to show 
that the process is Harris recurrent (it almost surely hits any set of positive 
measure), owing to the improved estimate on $\cL V$ obtained in \Cref{ex:Lyap}.
\end{proof}

\begin{exercise}
Use integration by parts to compute an explicit expression for $\cL^\dagger$, 
and use it to verify that $\cL^\dagger\pi=0$. 
\end{exercise}

Another consequence of the specific form~\eqref{eq:generator_rev} of the 
generator is that the diffusion is \emph{reversible}. 

\begin{proposition}
The infinitesimal generator $\cL$ is self-adjoint with respect to the weighted 
inner product 
\begin{equation}
\label{eq:pscal_pi} 
 \pscal{f}{g}_\pi 
 := \int_{\R^\Lambda} \e^{-V(y)/\eps} \overline{f(y)} g(y) \6y\;.
\end{equation} 
As a consequence, the transition probability density $p_t$ of the diffusion 
satisfies the detailed balance condition 
\begin{equation}
\label{eq:det_balance} 
 \pi(y) p_t(y,z) = \pi(z) p_t(z,y) 
\end{equation} 
for all $y,z\in\R^\Lambda$ and all $t>0$. 
\end{proposition}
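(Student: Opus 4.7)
The plan is to first establish self-adjointness of $\cL$ on the weighted space $L^2(\pi)$, and then deduce the detailed balance relation from the symmetry of the semigroup $P_t = \e^{t\cL}$ acting on test functions.

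For self-adjointness, I would exploit the factorised form~\eqref{eq:generator_rev} of the generator. Given smooth, compactly supported (or rapidly decaying) test functions $f,g$, the weight $\e^{-V/\eps}$ in the inner product~\eqref{eq:pscal_pi} cancels the prefactor $\e^{V/\eps}$ appearing in $\cL g$, so that
\begin{equation}
 \pscal{f}{\cL g}_\pi
 = \eps \int_{\R^\Lambda} \overline{f(y)}\, \nabla \cdot \bigpar{\e^{-V(y)/\eps}\nabla g(y)} \6y\;.
\end{equation}
Integration by parts, with boundary terms vanishing thanks to the coercivity of $V$ (which guarantees decay of $\e^{-V/\eps}$ at infinity faster than any polynomial), yields the manifestly symmetric Dirichlet form
\begin{equation}
 \pscal{f}{\cL g}_\pi
 = -\eps \int_{\R^\Lambda} \nabla \overline{f(y)} \cdot \nabla g(y)\, \e^{-V(y)/\eps} \6y
 = \pscal{\cL f}{g}_\pi\;.
\end{equation}
This shows that $\cL$ is symmetric on a dense domain; a density/closure argument (not grinded out here) upgrades this to self-adjointness.

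For detailed balance, I would note that self-adjointness of $\cL$ and the spectral theorem imply that $P_t = \e^{t\cL}$ is self-adjoint on $L^2(\pi)$ for every $t > 0$. Writing the action of $P_t$ through the transition density, $(P_t g)(y) = \int p_t(y,z) g(z) \6z$, the identity $\pscal{f}{P_t g}_\pi = \pscal{P_t f}{g}_\pi$ becomes
\begin{equation}
 \int\!\!\int_{\R^\Lambda \times \R^\Lambda} \pi(y) f(y) p_t(y,z) g(z) \6y\6z
 = \int\!\!\int_{\R^\Lambda \times \R^\Lambda} \pi(y) g(y) p_t(y,z) f(z) \6y\6z\;.
\end{equation}
Relabelling the dummy variables $y \leftrightarrow z$ on the right-hand side yields $\pi(z) p_t(z,y)$ inside the integral, and since this equality holds for all bounded measurable $f,g$, we conclude~\eqref{eq:det_balance} by a standard approximation argument.

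The main obstacle is the functional-analytic bookkeeping: one must specify a core of test functions on which the formal integration by parts is rigorous, and then extend from symmetry on the core to self-adjointness of the closure, as well as justify that the semigroup generated by this closure coincides with the transition semigroup of the diffusion~\eqref{eq:SDE_V}. For the finite-dimensional smooth diffusion at hand this is standard (smooth compactly supported functions form a core, and $V$ grows polynomially so all boundary terms vanish), but it is the step where one has to be careful rather than formal.
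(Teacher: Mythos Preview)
Your proof is correct and follows essentially the same approach as the paper: both use integration by parts (the divergence theorem) to obtain the symmetric Dirichlet form $-\eps\int \e^{-V/\eps}\,\overline{\nabla f}\cdot\nabla g$, deduce that $P_t=\e^{t\cL}$ is self-adjoint in $L^2(\pi)$, and read off detailed balance from $\pscal{f}{P_tg}_\pi=\pscal{P_tf}{g}_\pi$. The paper concludes by letting $f,g$ approach Dirac masses rather than relabelling integration variables, and it is somewhat less explicit than you are about the functional-analytic bookkeeping, but the argument is the same.
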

\begin{proof}
The divergence theorem yields 
\begin{equation}
 \pscal{f}{\cL g}_\pi 
= \eps \int_{\R^\Lambda} \overline{f(y)} \nabla \cdot \bigbrak{\e^{-V(y)/\eps} 
\nabla g(y)} \6y 
= - \eps \int \overline{\nabla f(y)} \cdot \nabla g(y) \e^{-V(y)/\eps} \6y\;.
\end{equation} 
Since $\overline f$ and $g$ play a symmetric role in this expression, it is 
equal to $\pscal{\cL f}{g}_\pi$. It follows that the Markov semigroup $P_t = 
\e^{t\cL}$ is also self-adjoint, that is, 
\begin{equation}
 \pscal{f}{P_t g}_\pi 
 = \pscal{P_t f}{g}_\pi\;.
\end{equation} 
Taking for $f$ and $g$ functions converging to Dirac distributions 
$\delta_y$ and $\delta_z$ yields~\eqref{eq:det_balance}. 
\end{proof}

\begin{remark}
The reason we have omitted the normalisation $\cZ^{-1}$ in the weighted inner 
product~\eqref{eq:pscal_pi} is purely for later notational convenience. Of 
course, it does not change anything in the presented results. 
\end{remark}

%%%%%%%%%%%%%%%%%%%%%%%%%%%%%%%%%%%%%%%%%%%%%%%%%%%%%%%%%%%%%%%%%%%%%%%%%%%%%%%%

\section{Large deviations}
\label{sec:diffldp} 

We now focus on the weak-noise regime $0<\eps\ll 1$. The theory of large 
deviations provides a useful tool to estimate the probability of rare events as 
$\eps$ decreases to $0$.

\begin{definition}[Large-deviation principle]
\label{def:ldp} 
Fix a time horizon $T \in (0,\infty)$. We say that a function $\cI=\cI_{[0,T]}$ 
from 
the space $\cC_0$ of continuous paths $\gamma:[0,T]\to\R^\Lambda$ to $\R_+$ is 
a 
\emph{good rate function} if it is lower semi-continuous (its sublevel sets are 
closed) and has compact level sets. The process $(y_t)_{t\in[0,T]}$ satisfies a 
sample-paths \emph{large-deviation principle} (LDP) with good rate function 
$\cI$ 
if 
\begin{align}
\liminf_{\eps\to0} 2\eps \log \bigprob{(y_t)_{t\in[0,T]} \in O}
&\geqs - \inf_{\gamma\in O} \cI_{[0,T]}(\gamma) \\
\limsup_{\eps\to0} 2\eps \log \bigprob{(y_t)_{t\in[0,T]} \in C}
&\leqs - \inf_{\gamma\in C} \cI_{[0,T]}(\gamma) 
\end{align}
holds for all open $O\subset\cC_0$ and all closed $C\subset\cC_0$.
\end{definition}

Roughly speaking, the large-deviation principle says that for a 
sufficiently nice set $\Gamma$ of paths $\gamma:[0,T]\to\R^\Lambda$, 
we have 
\begin{equation}
 \bigprob{(y_t)_{t\in[0,T]} \in \Gamma}
 \simeq \e^{-\inf_\Gamma \cI_{[0,T]}/(2\eps)}
\end{equation} 
in the sense of logarithmic equivalence. Here are two illustrative examples:

\begin{enumerate}
\item 	Let $\gamma_0:[0,T]\to\R^\Lambda$ be a given continuous path, and let 
$\Gamma$ be the set of paths such that $\norm{\gamma(t)-\gamma_0(t)}<\delta$ 
for 
all $t\in[0,T]$. As $\delta$ decreases to $0$, the infimum of $\cI_{[0,T]}$ 
over 
$\Gamma$ will converge to $\cI(\gamma_0)$. Therefore, the probability of sample 
paths tracking $\gamma_0$ up to a small error $\delta$ will be close to 
$\e^{-\cI_{[0,T]}(\gamma_0)/(2\eps)}$ for small $\delta$. 

\item 	Let $D\subset\R^\Lambda$ be a bounded, connected subset of 
$\R^\Lambda$, fix a point $y_0\in D$, and let $\Gamma$ be the set of continuous 
paths starting in $y_0$ and leaving $D$ at least once during the time interval 
$(0,T)$. Then we have 
\begin{equation}
 \bigprobin{y_0}{\tau_D < T} \simeq \e^{-\inf_\Gamma \cI_{[0,T]}/(2\eps)}\;,
\end{equation} 
where $\tau_D = \inf\setsuch{t>0}{y_t\notin D}$ is the first-exit time from $D$.
\end{enumerate}

In the case of scaled Brownian motion $\sqrt{2\eps} W_t$, 
Schilder~\cite{Schilder66} obtained a large-deviation principle with rate 
function 
\begin{equation}
 \cI_{[0,T]}(\gamma) := 
 \begin{cases}
 \displaystyle
 \frac12 \int_0^T \norm{\dot\gamma(t)}^2 \6t 
 & \text{if $\gamma\in H^1$\;, } \\
 + \infty 
 & \text{otherwise\;.}
 \end{cases}
\end{equation} 
Freidlin and Wenzell~\cite{FW} extended this result to general 
diffusions. Their proof uses the Cameron--Martin--Girsanov formula to reduce 
the general problem to the special case of scaled Brownian motion. In the case 
of the SDE~\eqref{eq:SDE_V}, the LDP takes the following form.

\begin{theorem}[LDP for gradient diffusions]
The diffusion~\eqref{eq:SDE_V} satisfies a large-deviation principle with good 
rate function
\begin{equation}
 \cI_{[0,T]}(\gamma) := 
 \begin{cases}
 \displaystyle
 \frac12 \int_0^T \norm{\dot\gamma(t) + \nabla V(\gamma(t))}^2 \6t 
 & \text{if $\gamma\in H^1$\;, } \\
 + \infty 
 & \text{otherwise\;.}
 \end{cases}
\end{equation}
\end{theorem}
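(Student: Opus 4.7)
The plan is to deduce the LDP from Schilder's theorem for scaled Brownian motion by transferring it to the diffusion through the solution map. Since the noise in~\eqref{eq:SDE_V} is additive, the sample path $y$ depends deterministically and, roughly, continuously on the driving path $\sqrt{2\eps}W$, so the contraction principle should apply.

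Concretely, I would introduce the Itô map $\Phi : \cC_0 \to \cC_0$ sending a continuous driving path $w$ with $w(0)=0$ to the unique continuous solution $\gamma$ of
\begin{equation*}
 \gamma(t) = y_0 + w(t) - \int_0^t \nabla V(\gamma(s))\,\6s\;,
 \qquad t\in[0,T]\;.
\end{equation*}
Global existence and uniqueness of $\gamma$ on $[0,T]$ follow from the Picard/Lyapunov argument of Section~\ref{sec:diffex}, and by construction $y_\cdot = \Phi(\sqrt{2\eps}W_\cdot)$. The key analytic step is to prove that $\Phi$ is continuous for the uniform topology. Given two driving paths $w,\tilde w$ with $\norm{w-\tilde w}_\infty\leqs\delta$ and corresponding solutions $\gamma,\tilde\gamma$, the coercivity of $V$ confines both to a common compact set $K\subset\R^\Lambda$, on which $\nabla V$ is Lipschitz with some constant $L_K$. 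Writing $\gamma(t)-\tilde\gamma(t) = (w(t)-\tilde w(t)) - \int_0^t[\nabla V(\gamma(s))-\nabla V(\tilde\gamma(s))]\,\6s$ and applying Gronwall's lemma yields $\norm{\gamma-\tilde\gamma}_\infty \leqs \delta\e^{L_K T}$.

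Applying the contraction principle to Schilder's LDP then gives an LDP for $(y_t)_{t\in[0,T]}$ with rate function $\cI_{[0,T]}(\gamma) = \inf\bigsetsuch{I_W(w)}{\Phi(w)=\gamma}$, where $I_W$ is Schilder's rate. For $\gamma\in H^1$ starting at $y_0$, inverting the defining equation shows that there is a unique preimage $w$, namely the $H^1$ path with $\dot w = \dot\gamma + \nabla V(\gamma)$; substituting into $I_W$ yields the stated expression. For $\gamma\notin H^1$ the infimum is over an empty set and equals $+\infty$. Lower semicontinuity and compactness of the sublevel sets transfer from $I_W$ via continuity of $\Phi$.

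The main obstacle is that $\nabla V$ is only locally, not globally, Lipschitz (being cubic), so the Gronwall estimate requires an a priori confinement to a compact set whose size depends on the driving path. The coercivity of $V$ (estimate~\eqref{eq:Lyap-CD0} and \Cref{ex:Lyap}) makes this possible, but if one is uncomfortable with the dependence of $L_K$ on $\norm{w}_\infty$, a cleaner route is the original Freidlin--Wenzell argument based on the Cameron--Martin--Girsanov transformation, combined with a truncation/exponential-tightness argument that reduces matters to a uniformly Lipschitz drift to which Schilder's theorem translates directly.
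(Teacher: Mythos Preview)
The paper does not actually supply a proof of this theorem: it attributes the result to Freidlin and Wenzell~\cite{FW} and remarks only that their proof ``uses the Cameron--Martin--Girsanov formula to reduce the general problem to the special case of scaled Brownian motion.'' Your proposal instead goes through the contraction principle applied to the It\^o map $\Phi$ from the driving path to the solution --- a route you yourself mention as an alternative in your final paragraph.

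Your argument is essentially correct. The continuity of $\Phi$ is the crux, and the Gronwall-after-confinement scheme works: for fixed $w$, any sequence $w_n\to w$ uniformly is eventually bounded by $\norm{w}_\infty+1$, and the quartic growth of $V$ then confines all corresponding solutions to a common compact set on which $\nabla V$ is Lipschitz. Pointwise continuity is all the contraction principle requires. One small quibble: the Lyapunov estimate~\eqref{eq:Lyap-CD0} you cite concerns the stochastic generator $\cL$, not the deterministic integral equation driven by a fixed continuous $w$; the confinement you actually need comes directly from the coercivity of $V$, for instance by differentiating $\abs{\gamma(t)-w(t)}^2$ and using $\nabla V(y)\cdot y \gtrsim \abs{y}^4$ at infinity.

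As for the comparison: your contraction-principle route is more elementary and transparent, exploiting the additive structure of the noise to get a genuinely continuous solution map. It is, however, fragile under multiplicative noise, where the It\^o map is typically not continuous in the uniform topology. The Girsanov approach referenced by the paper handles multiplicative noise and non-gradient drifts uniformly, at the price of working with measure changes and exponential martingales. For the gradient system~\eqref{eq:SDE_V} at hand, your approach is arguably the cleaner one.
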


For SDEs with more general drift coefficients $f(y)$, the term 
$\norm{\dot\gamma(t) + \nabla V(\gamma(t))}^2$ in the rate function has to be 
replaced by $\norm{\dot\gamma(t) - f(\gamma(t))}^2$, and there also exists a 
version for SDEs with general diffusion coefficients. However, the gradient form 
of the system~\eqref{eq:SDE_V} entails a substantial simplification, since we 
have the identity 
\begin{align}
\cI_{[0,T]}(\gamma) 
&= \frac12 \int_0^T \norm{\dot\gamma(t) - \nabla V(\gamma(t))}^2 \6t
+ 2 \int_0^T \dot\gamma(t) \cdot \nabla V(\gamma(t)) \6t \\
&= \frac12 \int_0^T \norm{\dot\gamma(t) - \nabla V(\gamma(t))}^2 \6t
+ 2 \bigbrak{V(\gamma(T)) - V(\gamma(0))}\;.
\label{eq:ratefct_gradient} 
\end{align}
If for example we want to analyse properties of the first-exit time from a 
potential well $D$, starting from its bottom $y^*$, the integral 
in~\eqref{eq:ratefct_gradient} can be made arbitrarily small by taking $T$ 
large and letting $\gamma(t)$ be the solution of the equation with reversed 
drift $\dot\gamma = + \nabla V(\gamma)$. The rate function will thus be 
dominated by the minimum of the potential difference $2 \brak{V(y) - V(y^*)}$ 
over all $y$ on the boundary of $D$. 

\begin{exercise}
Compute the rate function $\cI_{[0,T]}$ in the case of the Ornstein--Uhlenbeck 
process 
\begin{equation}
 \6y_t = - y_t \6t + \sqrt{2\eps} \6W_t\;,
\end{equation} 
and use it to analyse the cumulative distribution function of 
$\tau=\inf\setsuch{t>0}{\abs{y_t}>L}$ as $\eps\to 0$. 
\end{exercise}

%%%%%%%%%%%%%%%%%%%%%%%%%%%%%%%%%%%%%%%%%%%%%%%%%%%%%%%%%%%%%%%%%%%%%%%%%%%%%%%%

\section{Metastability}
\label{sec:diffmeta} 

The existence of a unique invariant probability measure having been settled, 
the next natural question to ask is whether the system will converge to this 
measure. In fact, as hinted at in \Cref{ex:Lyap}, in the case of the 
diffusion~\eqref{eq:SDE} we have the even stronger Lyapunov property 
\begin{equation}
 (\cL V)(y) \leqs -cV(y) + d
\end{equation} 
for two constants $c>0$ and $d\geqs0$. According 
to~\cite[Theorem~6.1]{Meyn_Tweedie_1993b}, we have an exponential ergodicity 
result of the form 
\begin{equation}
 \sup_{f\colon\abs{f}\leqs V+1} \bigabs{\expecin{y}{f(y_t)} - \pscal{\pi}{f}} 
 \leqs C \brak{V(y)+1} \e^{-\beta t}
 \qquad \forall y\in\R^\Lambda
\end{equation} 
for some constants $\beta, C>0$. However, this result does not yield a good 
control on the constants $\beta$ and $C$, and in particular $\beta$ can behave 
quite badly in terms of $\eps$. This is a manifestation of the phenomenon of 
\emph{metastability}, which is related to the fact that for small $\eps$, the 
system may take extremely long to move between potential wells. A classical 
approach to quantifying this phenomenon is thus to investigate the law of this 
interwell transition time.  

%%%%%%%%%%%%%%%%%%%%%%%%%%%%%%%%%%%%%%%%%%%%%%%%%%%%%%%%%%%%%%%%%%%%%%%%%%%%%%%%

\subsection{Arrhenius law}
\label{ssec:arrhenius} 

For simplicity, we will only consider cases where $V$ is a double-well 
potential, as is the case for the system~\eqref{eq:SDE} for $\gamma > 
\gamma_1(N)$ according to \Cref{prop:det_potential}. We can always assume that 
the saddle is located at the origin, and denote the two local minima by 
$y^*_\pm$. Assume that the diffusion starts in $y^*_-$, and given a small 
constant $\delta>0$, denote by 
\begin{equation}
 \tau_+ := \inf\bigsetsuch{t>0}{\norm{y_t-y^*_+}<\delta}
\end{equation} 
the first-hitting time of a small neighbourhood of $y^*_+$. 

A first type of results on the law of $\tau_+$ can be obtained by applying the 
theory of large deviations outlined in \Cref{sec:diffldp}. 

\begin{theorem}[Large-deviation results for interwell transitions]
\label{thm:arrhenius} 
Let $H = V(0) - V(y^*_-)$ be the potential difference between the starting 
minimum and the saddle. Then 
\begin{equation}
\label{eq:Arrhenius} 
 \lim_{\eps\to 0} \eps \log\expecin{y^*_-}{\tau_+} = H\;.
\end{equation} 
Furthermore, for any $\eta>0$, 
\begin{equation}
\label{eq:Arrhenius_concentration} 
 \lim_{\eps\to 0}
 \Bigprobin{y^*_-}{\e^{(H-\eta)/\eps} \leqs \tau_+ \leqs \e^{(H+\eta)/\eps}} = 
1\;.
\end{equation} 
Finally, let $D_-$ denote the basin of attraction of $y^*_-$ under the 
deterministic dynamics. Then the location $y_{\tau_{D_-}}$ of first exit from 
$D_-$ satisfies 
\begin{equation}
\label{eq:exit_concentration} 
 \lim_{\eps\to0} \Bigprobin{y^*_-}{y_{\tau_{D_-}}\in \cN} = 0
\end{equation} 
for any closed $\cN\subset\partial D_-$ that does not contain the saddle at $0$.
\end{theorem}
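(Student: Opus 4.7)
The strategy is to deduce all three statements from the sample-path LDP, exploiting the gradient-form identity \eqref{eq:ratefct_gradient}. That identity immediately identifies the quasipotential $V(y^*_-,z) := \inf_{T>0}\inf_{\gamma\colon y^*_-\to z\text{ on }[0,T]} \cI_{[0,T]}(\gamma)$: the integral term is non-negative and vanishes along trajectories of the time-reversed flow $\dot\gamma = +\nabla V(\gamma)$, so for any point $z$ reachable from $y^*_-$ by the reversed flow we get $V(y^*_-,z) = 2[V(z)-V(y^*_-)]$. In particular, the saddle at $0$ is reachable (it lies on the unstable manifold emanating from the diagonal), giving $V(y^*_-,0) = 2H$; and on the separatrix $\partial D_-$, the infimum of $V$ is attained precisely at $0$, so $V(y^*_-,z) > 2H$ for any closed set $\cN \subset \partial D_-$ avoiding the saddle.

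For the upper bound on $\E^{y^*_-}[\tau_+]$, I would fix $\eta>0$ and construct an explicit continuous path $\gamma$ on $[0,T]$ starting at $y^*_-$, passing through $0$, and ending at $y^*_+$, with $\cI_{[0,T]}(\gamma) < 2H+\eta/2$. Concretely, go uphill to the saddle using the reversed flow (contributing essentially $2H$), then follow the gradient flow down into $D_+$ (contributing nothing). The LDP lower bound then says that for small enough $\eps$, the diffusion stays within $\delta$ of $\gamma$, and hence hits $B_\delta(y^*_+)$ before time $T$, with probability at least $p_\eps := \exp(-(H+\eta)/\eps)$ uniformly in starting points within $\delta/2$ of $y^*_-$. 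A standard renewal argument---break time into blocks of length $T + T'$, where $T'$ is a deterministic time sufficient for the gradient flow to return inside $B_{\delta/2}(y^*_-)$ from any point in $D_-$---combined with the strong Markov property gives $\E^{y^*_-}[\tau_+] \leqs (T+T')/p_\eps$, yielding the upper bound in \eqref{eq:Arrhenius} and \eqref{eq:Arrhenius_concentration}.

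For the lower bound on $\tau_+$, I would apply the LDP upper bound to the closed set of continuous paths on $[0,1]$ that start in a neighborhood of $y^*_-$ and exit $D_-$. Any such path achieves $\cI \geqs 2H$, so by a compactness argument one has $\prob{\tau_+<1} \leqs \exp(-(2H-\eta/2)/(2\eps))$ for small $\eps$. Iterating over $\lfloor \e^{(H-\eta)/\eps}\rfloor$ independent blocks via the Markov property gives $\prob{\tau_+ < \e^{(H-\eta)/\eps}} \to 0$, which combined with Markov's inequality on the upper bound produces both \eqref{eq:Arrhenius} and the concentration statement \eqref{eq:Arrhenius_concentration}.

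For \eqref{eq:exit_concentration}, the key observation is that $H'' := \inf_{z\in\cN}[V(z)-V(y^*_-)] > H$ by compactness and the fact that the only minimum on $\partial D_-$ is the saddle. The LDP upper bound applied to paths exiting through an $\eta$-neighborhood of $\cN$ gives $\prob{y_{\tau_{D_-}}\in\cN,\ \tau_{D_-}<T} \leqs T\exp(-(H''-\eta)/\eps)$, while the lower bound already established shows the typical exit time is $\exp((H+\eta)/\eps)$; choosing $\eta$ small enough that $H''-\eta > H+\eta$ makes the former negligible. The main technical obstacle throughout is making the renewal/iteration arguments rigorous uniformly in the starting point of each block, for which one uses that $\cL V$ is bounded above and the Lyapunov estimate from \Cref{ex:Lyap} to guarantee that each failed attempt brings the process back into a small neighborhood of $y^*_-$ in bounded time with high probability.
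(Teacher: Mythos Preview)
Your sketch is correct and follows the Freidlin--Wentzell strategy that the paper cites rather than proves in detail: identify the quasipotential via the gradient identity~\eqref{eq:ratefct_gradient}, then bound $\tau_+$ above and below by a renewal argument over nearly-independent exit attempts, and compare the costs of competing exit locations for~\eqref{eq:exit_concentration}. The only organisational difference is that the paper invokes~\cite{FW} in two stages---first exit from a \emph{bounded} subdomain of $D_-$ (Chapter~4), then reduction to a Markov chain on neighbourhoods of critical points (Chapter~6)---which absorbs the uniformity-in-starting-point issue you correctly flag at the end, whereas you propose to handle it directly via the Lyapunov estimate.
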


Relation~\eqref{eq:Arrhenius} is called \emph{Arrhenius' law}. It states that 
the expected transition time behaves like $\e^{H/\eps}$ in the sense of 
logarithmic equivalence. The exponential dependence in $H/\eps$ goes back to 
works by van t'Hoff and Arrhenius in the late 19th century~\cite{Arrhenius}. 
Relation~\eqref{eq:Arrhenius_concentration} shows that the law of $\tau_+$ 
concentrates around its expectation, albeit in a rather weak sense. The last 
result~\eqref{eq:exit_concentration} states that the exit location from the 
starting potential well concentrates near the saddle in the vanishing noise 
limit. 

\Cref{thm:arrhenius} can be obtained in two main steps. Firstly, analogous 
results for the first exit from a bounded subset of $D_-$ follow from 
Theorems~2.1, 4.1 and 4.2 in~\cite[Chapter 4]{FW}. The main idea 
of the proof is that successive attempts to exit this subset are almost 
independent, and have an exponentially small probability of success as 
discussed at the end of \Cref{sec:diffldp}. Secondly, these estimates can be 
extended to the distribution of $\tau_+$ by showing that the process behaves 
like a Markov chain jumping between neighbourhoods of critical points of $V$, 
see Theorems~5.1 and 5.3 in~\cite[Chapter 6]{FW}. 

Particularising to the diffusion~\eqref{eq:SDE} for $\gamma > \gamma_1(N)$, we 
obtain that Arrhenius' law~\eqref{eq:Arrhenius} holds with 
\begin{equation}
\label{eq:H_synchro} 
 H = V(0) - V(-1,\dots,-1) = \frac{N}{4}\;.
\end{equation} 
We point out that at this stage, we do not claim any control on the speed of 
convergence in~\eqref{eq:Arrhenius}, \eqref{eq:Arrhenius_concentration} 
and~\eqref{eq:exit_concentration} as a function of $N$. This is a more subtle 
point that we will come back to later on.

\begin{remark}
The case where $\gamma$ lies below $\gamma_1(N)$ but is still of order $N^2$ 
can be analysed in a similar way. The main difference is that instead of a 
single saddle located at the origin, the number of relevant saddles is 
proportional to $N$. Thus $H$ is no longer given by~\eqref{eq:H_synchro}, and 
the exit location from the starting well is concentrated in the union of 
the relevant saddles. See~\cite[Theorem~2.10]{BFG06a} 
and~\cite[Theorem~2.4]{BFG06b}. 
\end{remark}

%%%%%%%%%%%%%%%%%%%%%%%%%%%%%%%%%%%%%%%%%%%%%%%%%%%%%%%%%%%%%%%%%%%%%%%%%%%%%%%%

\subsection{Potential theory}
\label{ssec:potential} 

One of the approaches allowing to obtain sharper asymptotics on metastable 
transition times is based on potential theory. The starting point is the 
observation that owing to Dynkin's formula, several probabilistic quantities of 
interest solve boundary value problems involving the infinitesimal generator 
$\cL$ of the process. 

Let $A \subset \R^\Lambda$ be a closed set with smooth boundary, and let 
$\tau_A=\inf\setsuch{t>0}{y_t\in A}$ be the first-hitting time of $A$. Then the 
map $y\mapsto w_A(y) = \expecin{y}{\tau_A}$ satisfies the Poisson problem 
\begin{equation}
\label{eq:Poisson} 
 \begin{cases}
  (\cL w_A)(y) = -1 &\qquad y\in A^c\;, \\
  w_A(y) = 0        &\qquad y\in A\;.
 \end{cases}
\end{equation} 
This is a particular case of~\cite[Corollary 9.1.2]{Oeksendal}. 

\begin{figure}[htb]
\begin{center}
%\vspace{-5mm}
\begin{tikzpicture}[>=stealth',main node/.style={draw,circle,fill=white,minimum
size=3pt,inner sep=0pt},scale=0.9%,x=1.5cm,y=0.3cm
]

% grid to help positioning
%\draw[help lines] (-5,-4) grid (5,1);

% axes

\draw[->,thick] (-5.5,0) -> (5.5,0);
\draw[->,thick] (0,-4.5) -> (0,2.0);

% potential

\draw[blue,thick] plot[smooth,tension=.6]
  coordinates{(-4.6,1.5) (-2.6,-4) (-0.05,0) (2.4,-3) (4.4,1.5)};
 
% vertical lines

\draw[blue!50,semithick,dashed] (-2.55,0) -- (-2.55,-4);
\draw[blue!50,semithick,dashed] (2.3,0) -- (2.3,-3);

% local minima and maxima

\node[main node,blue,fill=white,semithick] at (0,0) {}; 
\node[main node,semithick] at (2.3,0) {}; 
\node[main node,blue,fill=white,semithick] at (2.3,-3) {}; 
\node[main node,semithick] at (-2.55,0) {}; 
\node[main node,blue,fill=white,semithick] at (-2.55,-4) {}; 
\node[main node,semithick] at (-2.0,0) {}; 

\node[] at (-2.55,0.3) {$y^*_-$};
\node[] at (0.3,0.3) {$z^*$};
\node[] at (2.3,0.3) {$y^*_+$};
\node[] at (-2,0.3) {$a$};

\node[] at (5.0,-0.27) {$y$};
\node[] at (0.5,1.5) {$V(y)$};

\end{tikzpicture}
\vspace{-3mm}
\end{center}
\caption[]{A one-dimensional double-well potential.  
}
\label{fig:double_well}
\end{figure}
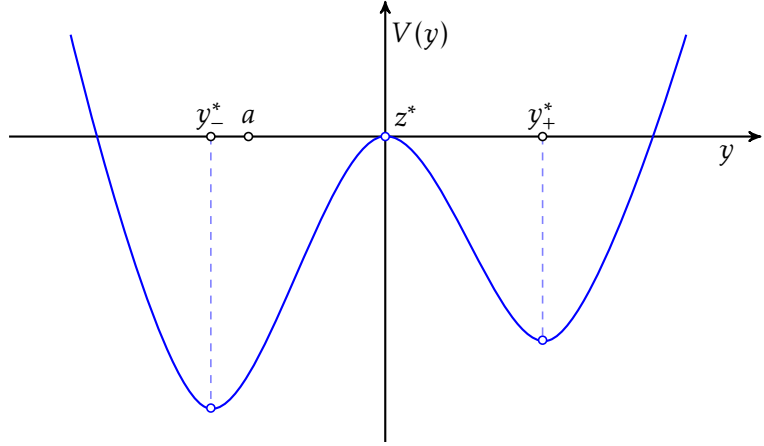

\begin{exercise}
\label{exo:1dKramers} 
Consider the SDE~\eqref{eq:SDE_V} in the one-dimensional case $y\in\R$, for a 
potential $V$ growing sufficiently fast at infinity. Show that for 
$A=(-\infty,a]$, the solution of~\eqref{eq:Poisson} is given by 
\begin{equation}
 w_A(y) = \frac{1}{\eps} \int_a^y \int_{y_2}^\infty \e^{[V(y_2)-V(y_1)]/\eps} 
\6y_1 \6y_2 
\qquad \forall y > a\;.
\end{equation} 
Assume now that $V$ is a double-well potential, with two local minima located 
at $y^*_- < a$ and $y^*_+ > a$, and a saddle (local maximum) at $z^* \in 
(a,y^*_+)$ (\Cref{fig:double_well}). Use Laplace asymptotics to show that
\begin{equation}
\label{eq:Kramers} 
 w_A(y) = \frac{2\pi}{\sqrt{\abs{V''(z^*)}V''(y^*_+)}}
 \e^{[V(z^*) - V(y^*_+)]/\eps}
 \bigbrak{1+\Order{\sqrt{\eps}}}\;.
\end{equation} 
Relation~\eqref{eq:Kramers} is known as \emph{Kramers' law}. 
\end{exercise}

In general, there is no explicit solution to the Poisson 
equation~\eqref{eq:Poisson}. However, the solution can be represented as 
\begin{equation}
\label{eq:wA_integral} 
 w_A(y) = -\int_{A^c} G_{A^c}(y,z)\6z\;,
\end{equation} 
where $G_{A^c}$ is the \emph{Green function} associated with $A^c$, which 
solves 
\begin{equation}
  \begin{cases}
  (\cL G_{A^c})(y,z) = \delta(y-z) &\qquad y\in A^c\;, \\
  G_{A^c}(y,z) = 0        &\qquad y\in A\;.
 \end{cases}
\end{equation} 
Reversibility of the SDE~\eqref{eq:SDE_V} implies that $G_{A^c}$ satisfies the 
detailed-balance relation
\begin{equation}
\label{eq:G_det_balance} 
 \e^{-V(y)/\eps} G_{A^c}(y,z)
 = \e^{-V(z)/\eps} G_{A^c}(z,y)\;.
\end{equation}
Let now $A$ and $B$ be two disjoint closed sets with smooth boundary. 
A second quantity of interest is the \emph{committor function} 
$h_{AB}(y) = \probin{y}{\tau_A < \tau_B}$, also called \emph{equilibrium 
potential}. It satisfies the Dirichlet problem 
\begin{equation}
\label{eq:Dirichlet} 
 \begin{cases}
  (\cL h_{AB})(y) = 0  &\qquad y\in (A\cup B)^c\;, \\
  h_{AB}(y) = 1        &\qquad y\in A\;, \\
  h_{AB}(y) = 0        &\qquad y\in B\;.
 \end{cases}
\end{equation} 

\begin{exercise}
\label{exo:1d-committor} 
Consider again the SDE~\eqref{eq:SDE_V} in the one-dimensional case $y\in\R$. 
Given $a<b$, let $A=(-\infty,a]$ and $B=[b,\infty)$. Show that 
\begin{equation}
\label{eq:hAB_1d} 
 h_{AB}(y) = 
 \frac{\displaystyle\int_y^b \e^{V(z)/\eps}\6z}
 {\displaystyle\int_a^b\e^{V(z)/\eps}\6z}
 \qquad \forall y\in [a,b]\;.
\end{equation} 
Use Laplace asymptotics to determine the behaviour of $h_{AB}(y)$ as 
$\eps\to0$.  
\end{exercise}

There exists again an integral representation of the solution 
to~\eqref{eq:Dirichlet} in terms of the Green function, namely 
\begin{equation}
\label{eq:hAB_integral} 
 h_{AB}(y) = -\int_{\partial A} G_{B^c}(y,z) e_{AB}(\6z)\;.
\end{equation} 
Here $e_{AB}$ is a measure concentrated on $\partial A$ called the 
\defwd{equilibrium measure}. It is defined by 
\begin{equation}
 e_{AB}(\6y) := (-\cL h_{AB})(dy)
\end{equation} 
interpreted in the weak sense (i.e., both sides have to be integrated against 
test functions). The \defwd{capacity} is then defined by 
\begin{equation}
 \capacity(A,B) := \int_{\partial A} \e^{-V(y)/\eps} e_{AB}(\6y)\;.
\end{equation} 
Note that the capacity is the normalisation required to make 
\begin{equation}
 \nu_{AB}(\6y) := \frac{1}{\capacity(A,B)} \e^{-V(y)/\eps} e_{AB}(\6y)
\end{equation} 
a probability measure on $\partial A$. 

\begin{remark}
The name \emph{capacity} is due to an analogy with electrostatics. Indeed, in 
the case without potential $V=0$, \eqref{eq:Dirichlet} is the equation for the 
electric potential of a capacitor, consisting of a conductor $A$ at potential 
$1$ and a grounded conductor $B$. The integral relation~\eqref{eq:hAB_integral} 
expresses the fact that $h_{AB}$ is the electric potential created by a charge 
density $e_{AB}$ on $\partial A$ with zero boundary conditions on $B$. Since 
the 
potential difference between the conductors is equal to $1$, the capacity is 
equal to the total charge on the conductor $A$. 
\end{remark}

The main result that makes the potential-theoretic approach so successful is 
the following relation between expected first-hitting time and capacity.

\begin{theorem}
\label{thm:magic_formula} 
For any disjoint sets $A$ and $B$ with smooth boundary, 
\begin{equation}
\label{eq:magic_formula} 
 \expecin{\nu_{AB}}{\tau_B} 
 := \int_{\partial A} w_B(y)\nu_{AB}(\6y) 
 = \frac{1}{\capacity(A,B)} \int_{B^c} \e^{-V(y)/\eps} h_{AB}(y) \6y\;.
\end{equation} 
\end{theorem}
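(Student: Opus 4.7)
The plan is to combine the two integral representations provided earlier, namely $w_B(y) = -\int_{B^c} G_{B^c}(y,z)\,\6z$ for the mean hitting time and $h_{AB}(y) = -\int_{\partial A} G_{B^c}(y,z) e_{AB}(\6z)$ for the committor, and then exploit the detailed-balance identity $\e^{-V(y)/\eps} G_{B^c}(y,z) = \e^{-V(z)/\eps} G_{B^c}(z,y)$ to swap the roles of source and target.

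First I would unfold the definition of $\nu_{AB}$ on the left-hand side to write
\begin{equation*}
\expecin{\nu_{AB}}{\tau_B}
= \frac{1}{\capacity(A,B)} \int_{\partial A} \e^{-V(y)/\eps}\, w_B(y)\, e_{AB}(\6y)\;.
\end{equation*}
Then I would substitute the Green-function representation of $w_B$, giving a double integral
\begin{equation*}
\expecin{\nu_{AB}}{\tau_B}
= -\frac{1}{\capacity(A,B)} \int_{\partial A} \int_{B^c} \e^{-V(y)/\eps}\, G_{B^c}(y,z)\,\6z\, e_{AB}(\6y)\;.
\end{equation*}
After exchanging the order of integration by Fubini, the detailed-balance identity turns $\e^{-V(y)/\eps} G_{B^c}(y,z)$ into $\e^{-V(z)/\eps} G_{B^c}(z,y)$, so that the inner integral becomes $\int_{\partial A} G_{B^c}(z,y) e_{AB}(\6y) = -h_{AB}(z)$ by the committor representation~\eqref{eq:hAB_integral}. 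Pulling out the factor $\e^{-V(z)/\eps}$ and absorbing the two minus signs yields exactly~\eqref{eq:magic_formula}.

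The main obstacle I anticipate is purely technical rather than conceptual: one must ensure that the Green function $G_{B^c}(y,z)$ is integrable enough in both variables (in particular near the singularity on the diagonal and near $\partial A \cup \partial B$) to justify Fubini, and that the equilibrium measure $e_{AB}$ is supported on $\partial A$ with enough regularity for the substitution step to make sense. Given the smoothness of $\partial A$, $\partial B$ and the uniform ellipticity of $\cL$ on $(A\cup B)^c$, these regularity properties follow from standard elliptic theory, and the detailed-balance relation is precisely the symmetry that makes the whole manipulation work.
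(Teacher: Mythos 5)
Your argument is exactly the paper's proof: multiply through by $\capacity(A,B)$, insert the Green-function representation~\eqref{eq:wA_integral} of $w_B$, swap the order of integration, apply the detailed-balance relation~\eqref{eq:G_det_balance}, and recognise the inner integral as $-h_{AB}(z)$ via~\eqref{eq:hAB_integral}. The approach and the key identities used are identical, so there is nothing to add beyond the (reasonable) technical caveats you already note about justifying Fubini.
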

\begin{proof}
It follows from the integral representation~\eqref{eq:wA_integral} of $w_B$, 
the detailed balance~\eqref{eq:G_det_balance} of the Green function, and the 
integral representation~\eqref{eq:hAB_integral} of $h_{AB}$, that 
\begin{align}
\capacity(A,B) \int_{\partial A} w_B(y) \nu_{AB}(\6y) 
&= - \int_{\partial A} \int_{B^c} G_{B^c}(y,z)\6z\, \e^{-V(y)/\eps} e_{AB}(\6y) 
\\
&= -\int_{B^c} \int_{\partial A} G_{B^c}(z,y) e_{AB}(\6y) \e^{-V(z)/\eps} \6z 
\\
&= \int_{B^c} h_{AB}(z) \e^{-V(z)/\eps} \6z\;.
\end{align}
Dividing by the capacity yields~\eqref{eq:magic_formula}. 
\end{proof}

The exact relation~\eqref{eq:magic_formula} is useful because there exist 
variational principles that often allow to obtain good upper and lower bounds 
on the capacity. The first of these principles states that the capacity is a 
minimiser of the \emph{Dirichlet form}.

\begin{definition}[Dirichlet form]
The \emph{Dirichlet form} associated with the diffusion~\eqref{eq:SDE_V} is the 
quadratic form acting on test functions $f:\R^\Lambda\to\R$ in the domain of 
$\cL$, defined by 
\begin{equation}
\label{eq:Dirichlet_form} 
 \cE(f) := \pscal{f}{-\cL f}_\pi = \eps \int_{\R^\Lambda} \e^{-V(y)/\eps} 
\norm{\nabla f(y)}^2 \6y\;.
\end{equation} 
\end{definition}

The integral expression for the Dirichlet form in~\eqref{eq:Dirichlet_form} is a 
consequence of the first Green identity, which is essentially the divergence 
theorem (using the fact that $\e^{-V/\eps}[\eps\nabla f\cdot\nabla g + g\cL f] = 
\eps\nabla\cdot(g\e^{-V/\eps}\nabla f)$) and says that for a set $D$ with smooth 
boundary, 
\begin{equation}
\label{eq:Green1} 
 \int_D \e^{-V(y)/\eps} \bigbrak{\eps \nabla f(y)\cdot\nabla g(y)
 + g(y)(\cL f)(y)} \6y 
 = \eps\int_{\partial D} \e^{-V(y)/\eps} g(y) \partial_{n(y)}f(y) \sigma(\6y)\;,
\end{equation} 
where $\smash{\partial_{n(y)}}f(y) = \nabla f(y)\cdot n(y)$ denotes the  
derivative in the direction of the unit outer normal $n(y)$ at a point 
$y\in\partial D$, and $\sigma(\6y)$ is the Lebesgue measure on $\partial D$. In 
the particular case $D=\R^\Lambda$, the right-hand side of~\eqref{eq:Green1} 
vanishes and one obtains the integral expression in~\eqref{eq:Dirichlet_form}. 

The quadratic form $\cE$ can be extended by polarisation to a bilinear form 
given by 
\begin{equation}
 \cE(f,g) = \pscal{f}{-\cL g}_\pi = \eps \int_{\R^\Lambda} \e^{-V(y)/\eps} 
\overline{\nabla f(y)}\cdot \nabla g(y) \6y\;, 
\end{equation} 
which satisfies the Cauchy--Schwarz inequality 
$\cE(f,g)^2 \leqs \cE(f)\cE(g)$.

\begin{theorem}[Dirichlet principle]
Let $\cH_{AB}$ be the set of functions $h:\R^\Lambda\to[0,1]$ that are in 
the domain of $\cE$ and such that $h\vert_A=1$ and $h\vert_B=0$. Then 
\begin{equation}
 \capacity(A,B) = \cE(h_{AB}) = \inf_{h\in \cH_{AB}} \cE(h)\;.
\end{equation}  
\end{theorem}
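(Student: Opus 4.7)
The plan is to establish the identity $\cE(h_{AB}) = \capacity(A,B)$ first, and then derive the variational characterisation by a standard orthogonality argument, exploiting the fact that $h_{AB}$ is $\cL$-harmonic on $D := (A\cup B)^c$.

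For the first equality, I would apply Green's identity \eqref{eq:Green1} on the domain $D$ with $f=g=h_{AB}$. Since $\cL h_{AB} = 0$ on $D$ by \eqref{eq:Dirichlet}, the second term on the left vanishes, and since $h_{AB}$ is constant on $A$ and $B$ (where $\nabla h_{AB}=0$), the integral over $D$ equals the full Dirichlet form $\cE(h_{AB})$. On $\partial D = \partial A\cup\partial B$, the boundary data $h_{AB}=1$ on $\partial A$ and $h_{AB}=0$ on $\partial B$ reduce the surface integral to one over $\partial A$ only, giving
\begin{equation}
 \cE(h_{AB}) = \eps\int_{\partial A}\e^{-V(y)/\eps}\partial_{n(y)}h_{AB}(y)\,\sigma(\6y)\;,
\end{equation}
where $n$ is the outer normal of $D$ on $\partial A$ (pointing into $A$).

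Next I identify this boundary integral with the capacity. The key is to recognise that the weak definition $e_{AB}(\6y) = -\cL h_{AB}(\6y)$, applied to $h_{AB}$ extended by its boundary values to all of $\R^\Lambda$, localises on $\partial A \cup \partial B$ and gives $e_{AB}(\6y) = \eps\,\partial_n h_{AB}(y)\,\sigma(\6y)$ on $\partial A$. Concretely, one tests against a smooth $\phi$ and applies Green's identity \eqref{eq:Green1} on $D$ to obtain $\pscal{\phi}{-\cL h_{AB}}_{L^2} = \eps\int_{\partial D}\phi(y)\partial_n h_{AB}(y)\,\sigma(\6y)$, identifying the measure. Substituting into the definition of $\capacity(A,B)$ yields exactly the right-hand side of the preceding display, hence $\cE(h_{AB}) = \capacity(A,B)$.

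For the minimisation, I would take an arbitrary $h\in\cH_{AB}$ and write $h = h_{AB} + \phi$ with $\phi\in\cH_0 := \setsuch{\phi}{\phi\vert_A = \phi\vert_B = 0}$. Then
\begin{equation}
 \cE(h) = \cE(h_{AB}) + 2\cE(h_{AB},\phi) + \cE(\phi)\;.
\end{equation}
The cross term vanishes: since $\nabla h_{AB} = 0$ on $A\cup B$, the integral defining $\cE(h_{AB},\phi)$ reduces to one over $D$, and one application of Green's identity converts it into a bulk integral $-\pscal{\phi}{\cL h_{AB}}_\pi$ (which is zero because $\cL h_{AB}=0$ on $D$) plus a boundary term $\eps\int_{\partial D}\e^{-V/\eps}\phi\,\partial_n h_{AB}\,\sigma(\6y)$ (which is zero because $\phi$ vanishes on $\partial D$). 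Therefore $\cE(h) = \cE(h_{AB}) + \cE(\phi) \geqs \cE(h_{AB})$, with equality iff $\cE(\phi)=0$, giving the Dirichlet principle.

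The step I expect to be the main obstacle is the identification of the equilibrium measure with the boundary surface measure $\eps\,\partial_n h_{AB}\,\sigma$, because the definition of $e_{AB}$ is distributional and requires justifying that $h_{AB}$ (extended by its boundary values) is smooth enough for Green's identity to apply on $D$, and that no pathologies arise at the boundary $\partial A\cup\partial B$; the smoothness assumption on $\partial A$ and $\partial B$ and the ellipticity of $\cL$ are what ultimately make this rigorous.
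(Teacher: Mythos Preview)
Your argument is correct and uses the same ingredients as the paper (Green's identity, harmonicity of $h_{AB}$ on $D$, the boundary data), but organises them differently. You first establish $\cE(h_{AB})=\capacity(A,B)$ via Green's identity with $f=g=h_{AB}$ and an explicit identification of $e_{AB}$ with the surface measure $\eps\,\partial_n h_{AB}\,\sigma$, and then prove minimality by the orthogonal decomposition $h=h_{AB}+\phi$, showing the cross term $\cE(h_{AB},\phi)$ vanishes. The paper instead computes $\cE(h,h_{AB})$ for an \emph{arbitrary} $h\in\cH_{AB}$ in one stroke: writing $\cE(h,h_{AB})=\pscal{h}{-\cL h_{AB}}_\pi$ and using that $-\cL h_{AB}=e_{AB}$ is supported on $\partial A$ where $h=1$, one gets $\cE(h,h_{AB})=\capacity(A,B)$ directly from the definition of the capacity, without needing to unpack $e_{AB}$ as a surface density. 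Specialising to $h=h_{AB}$ gives the first equality, and then Cauchy--Schwarz $\cE(h_{AB})^2=\cE(h,h_{AB})^2\leqs\cE(h)\cE(h_{AB})$ gives the minimisation in one line. The paper's route is shorter and sidesteps precisely the regularity issue you flag as the main obstacle, since it never needs the explicit boundary representation of $e_{AB}$; your route is the classical variational argument and makes the orthogonality structure more transparent.
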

\begin{proof}
Pick any $h\in\cH_{AB}$. Since $h_{AB}$ satisfies the Dirichlet 
problem~\eqref{eq:Dirichlet}, we have 
\begin{align}
\cE(h,h_{AB}) 
&= \eps \int_{\R^\Lambda} \e^{-V(y)/\eps} h(y) (-\cL h_{AB})(y) \6y \\
&= \eps \int_{\partial A} \e^{-V(y)/\eps} e_{AB}(\6y) \\
&= \capacity(A,B)\;.
\end{align}
Since $h_{AB}\in\cH_{AB}$, we obtain $\cE(h_{AB})=\capacity(A,B)$. 
The Cauchy--Schwarz inequality then yields 
$\cE(h_{AB})^2 = \cE(h,h_{AB})^2 \leqs \cE(h)\cE(h_{AB})$, i.e.\ 
$\cE(h)\geqs\cE(h_{AB})$ . A sketch is given
in~\Cref{fig:Dirichlet_form}. 
\end{proof}

\begin{figure}[htb]
\begin{center}
%\vspace{-5mm}
\begin{tikzpicture}[main node/.style={circle,minimum
size=0.1cm,inner sep=0pt,fill=white,draw},scale=0.9]

% grid to help positioning
%\draw[help lines] (-5,-1) grid (5,5);

\draw[semithick] (0,0) -- (3,2);
\draw[semithick] (0,0) -- (4,0.5);
\draw[] (2.7,1.8) -- (2.9,1.5) -- (3.2,1.7);
\draw[semithick,teal] (5,-1) -- (2,3.5);

\node[main node] at (0,0) {};
\node[main node] at (3,2) {};
\node[main node] at (4,0.5) {};

\node[] at (-0.3,-0.2) {$0$};
\node[] at (3.5, 2.2) {$h_{AB}$};
\node[] at (4.4, 0.6) {$h$};
\node[teal] at (5.4, -0.8) {$\cH_{AB}$};

\end{tikzpicture}
\vspace{-3mm}
\end{center}
\caption[]{Dirichlet principle. The capacity minimises the distance to the 
origin given by the Dirichlet form over all functions $h$ satisfying the 
boundary conditions.}
\label{fig:Dirichlet_form}
\end{figure}
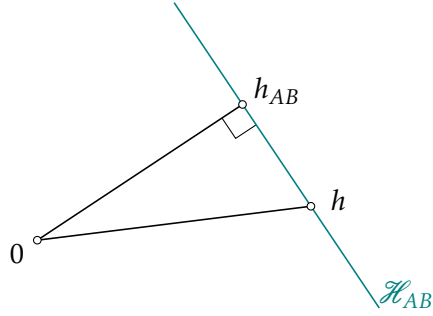

There also exists a variational principle allowing to obtain lower bounds on 
the capacity, the \emph{Thomson principle}, which involves divergence-free 
flows. 

\begin{definition}[Divergence-free unit flow]
A \defwd{divergence-free unit $AB$-flow} is a vector field $\ph$ of class 
$\cC^1$ on $(A\cup B)^c$ such that $\nabla\cdot \ph=0$ in $(A\cup B)^c$ and 
\begin{equation}
\label{eq:def_unit_flow} 
 \int_{\partial A} \ph(y) \cdot n_A(y) \sigma(\6y) = 1 
 = -\int_{\partial B} \ph(y) \cdot n_B(y) \sigma(\6y)\;,
\end{equation} 
where $n_A(y)$, $n_B(y)$ denote the unit outward normal vectors at 
$y\in\partial A$ and $y\in\partial B$ respectively. We denote by 
$\mathfrak{U}^1_{AB}$ the set of divergence-free unit 
$AB$-flows.
The \defwd{harmonic unit flow} is defined as 
\begin{equation}
 \ph_{AB}(y) := -\frac{\eps}{\capacity(A,B)} \e^{-V(y)/\eps} \nabla h_{AB}(y)
\end{equation}
and it satisfies $\ph_{AB} \in \mathfrak{U}^1_{AB}$.
\end{definition}

The fact that $\ph_{AB}$ is divergence-free follows from the 
form~\eqref{eq:generator_rev} of the generator and the fact that $h_{AB}$ is 
harmonic. Its satisfies~\eqref{eq:def_unit_flow} as a consequence of the 
divergence theorem. 

We define a bilinear form on $\mathfrak{U}^1_{AB}$ by 
\begin{equation}
\label{eq:def_D} 
 \cD(\ph,\psi) := \frac{1}{\eps} \int_{(A\cup B)^c} \e^{V(y)/\eps} \ph(y)\cdot 
\psi(y)\6y\;,
\end{equation} 
and set $\cD(\ph,\ph) := \cD(\ph)$. 

\begin{proposition}[Thomson principle]
\label{prop:thomson}
The capacity satisfies 
\begin{equation}
\label{eq:Thomson} 
 \capacity(A,B) 
 = \frac{1}{\cD(\ph_{AB})}
 = \sup_{\ph\in\mathfrak{U}^1_{AB}} \frac{1}{\cD(\ph)}\;.
\end{equation} 
\end{proposition}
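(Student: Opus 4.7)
The plan is to mirror the two-step structure of the proof of the Dirichlet principle, with $\cD$ and the harmonic flow $\ph_{AB}$ playing the roles of $\cE$ and the equilibrium potential $h_{AB}$. First I would verify directly that $1/\cD(\ph_{AB}) = \capacity(A,B)$, and then I would show that $\cD(\ph_{AB}) \leqs \cD(\ph)$ for every $\ph\in\mathfrak{U}^1_{AB}$ by a Cauchy--Schwarz argument based on a duality identity $\cD(\ph,\ph_{AB}) = \cD(\ph_{AB})$.

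For the equality $1/\cD(\ph_{AB}) = \capacity(A,B)$, I would just plug the definition of $\ph_{AB}$ into~\eqref{eq:def_D}:
\begin{equation}
 \cD(\ph_{AB})
 = \frac{\eps}{\capacity(A,B)^2} \int_{(A\cup B)^c} \e^{-V(y)/\eps} \norm{\nabla h_{AB}(y)}^2 \6y
 = \frac{\cE(h_{AB})}{\capacity(A,B)^2}\;,
\end{equation}
where the integration may be harmlessly extended to all of $\R^\Lambda$ because $h_{AB}$ is constant on the interiors of $A$ and $B$. The Dirichlet principle now gives $\cE(h_{AB}) = \capacity(A,B)$, so $\cD(\ph_{AB}) = 1/\capacity(A,B)$.

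For the variational step, the key identity to prove is $\cD(\ph,\ph_{AB}) = 1/\capacity(A,B)$ for every $\ph\in\mathfrak{U}^1_{AB}$. By the definitions of $\cD$ and $\ph_{AB}$,
\begin{equation}
 \cD(\ph,\ph_{AB})
 = -\frac{1}{\capacity(A,B)} \int_{(A\cup B)^c} \ph(y)\cdot\nabla h_{AB}(y)\,\6y\;,
\end{equation}
and applying the divergence theorem to $h_{AB}\ph$ gives a bulk term that vanishes because $\nabla\cdot\ph=0$ on $(A\cup B)^c$, together with a boundary contribution that reduces, thanks to $h_{AB}\vert_{\partial A}=1$, $h_{AB}\vert_{\partial B}=0$ and the unit-flow normalisation~\eqref{eq:def_unit_flow}, to $-\int_{\partial A}\ph\cdot n_A\,\sigma(\6y) = -1$. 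Hence $\cD(\ph,\ph_{AB}) = 1/\capacity(A,B) = \cD(\ph_{AB})$. The Cauchy--Schwarz inequality for the bilinear form $\cD$ then yields
\begin{equation}
 \cD(\ph_{AB})^2
 = \cD(\ph,\ph_{AB})^2
 \leqs \cD(\ph)\,\cD(\ph_{AB})\;,
\end{equation}
so $\cD(\ph)\geqs\cD(\ph_{AB})$ with equality at $\ph=\ph_{AB}$, and the supremum in~\eqref{eq:Thomson} is attained.

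The main obstacle will be the careful bookkeeping in the integration by parts on the unbounded domain $(A\cup B)^c$. The outward normal to $(A\cup B)^c$ is $-n_A$ on $\partial A$ and $-n_B$ on $\partial B$, and a minus sign from this orientation is exactly what is needed to combine with~\eqref{eq:def_unit_flow} to produce $-1$ rather than $+1$. One also needs the surface term at infinity to vanish, which requires some decay on $\ph$; this is implicit in restricting attention to flows with $\cD(\ph)<\infty$, since flows violating this bound automatically satisfy $1/\cD(\ph)=0$ and do not affect the supremum. Apart from these minor technicalities, the argument is entirely parallel to the proof of the Dirichlet principle.
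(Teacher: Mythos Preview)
Your proof is correct and follows essentially the same route as the paper: compute $\cD(\ph_{AB})$ directly, establish the duality identity via the divergence theorem and the divergence-free/unit-intensity conditions, and conclude by Cauchy--Schwarz. The only cosmetic difference is that the paper works with the un-normalised flow $\Psi_{h_{AB}} = \capacity(A,B)\,\ph_{AB}$ (so the key identity reads $\cD(\Psi_{h_{AB}},\ph)=1$ rather than $\cD(\ph_{AB},\ph)=1/\capacity(A,B)$), but this is the same computation up to a scalar.
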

\begin{proof}
Set $\Psi_{h_{AB}} = \capacity(A,B)\ph_{AB} = -\eps 
\e^{-V/\eps} \nabla h_{AB}$. Then 
\begin{equation}
 \cD(\Psi_{h_{AB}}) 
 = \frac{\eps^2}{\eps} \int_{(A\cup B)^c} \e^{V(y)/\eps} 
\abs{\e^{-V(y)/\eps}\nabla h_{AB}(y)}^2 \6y 
 = \capacity(A,B)\;.
\end{equation} 
This implies $\cD(\ph_{AB}) = (\capacity(A,B))^{-1}$ by bilinearity. 
For any unit flow $\ph\in\mathfrak{U}^1_{AB}$, we have 
\begin{align}
\cD(\Psi_{h_{AB}},\ph)  
&= \frac{1}{\eps} \int_{(A\cup B)^c} \e^{V(y)/\eps}  
\Psi_{h_{AB}}(y) \cdot \ph(y) \6y \\
&= -\int_{(A\cup B)^c} \nabla h_{AB}(y) \cdot \ph(y) \6y \\ 
&= -\int_{(A\cup B)^c} \nabla\cdot \bigpar{h_{AB}(y) \ph(y)} \6y
+ \int_{(A\cup B)^c} h_{AB}(y) \nabla\cdot \ph(y) \6y\;.
\end{align}
The second integral vanishes because $\ph$ is divergence-free. By the 
divergence 
theorem, the first term is equal to 
\begin{equation}
 -\int_{\partial A\cup\partial B} h_{AB}(y) \ph(y) \cdot n(y)\sigma(\6y)\;,
\end{equation} 
where $n(y)$ is the unit normal vector pointing \emph{inside} $A\cup B$. The 
integral on $\partial B$ vanishes, while the integral on $\partial A$ is equal 
to $1$ because $n(y)=-n_A(y)$ and $\ph$ 
satisfies~\eqref{eq:def_unit_flow}. 
This implies  
%\begin{equation}
$\cD(\Psi_{h_{AB}},\ph) = 1$, 
%\end{equation} 
and thus, by the Cauchy--Schwarz inequality, 
\begin{equation}
 1 = \cD(\Psi_{h_{AB}},\ph)^2 \leqs \cD(\Psi_{h_{AB}})\cD(\ph) 
 = \capacity(A,B) \cD(\ph)\;,
\end{equation} 
showing that indeed $\capacity(A,B) \geqs 1/\cD(\ph)$. 
\end{proof}

\begin{remark}
The potential-theoretic approach described in this section admits a simple 
analogue in the setting of discrete Markov chains. This will not play any role 
in what follows, but for completeness and since it may help intuition, we 
summarise this theory in \Cref{ch:app_markov}.
\end{remark}

%%%%%%%%%%%%%%%%%%%%%%%%%%%%%%%%%%%%%%%%%%%%%%%%%%%%%%%%%%%%%%%%%%%%%%%%%%%%%%%%

\subsection{Eyring--Kramers law}
\label{ssec:kramers} 

We now apply the potential-theoretic approach to the system of interacting 
diffusions, in order to obtain sharper asymptotics on the transition time 
between the two states $y^*_\pm = \pm(1,\dots,1)$. For simplicity, we only 
consider the case $\gamma > \gamma_1(N)$, when the potential is a double-well 
potential with a saddle located at the origin. The theory works, however, in 
much greater generality. 

One simplification due to the condition $\gamma > \gamma_1(N)$ is that we will 
be able to use the following symmetry argument.

\begin{lemma}
\label{lem:symmetry_hAB} 
Let $A$ and $B$ satisfy $B=-A$. Then 
\begin{equation}
\int_{(A\cup B)^c} \e^{-V(y)/\eps} h_{AB}(y) \6y
= \frac12 \cZ
\end{equation} 
where $\cZ$ is the \emph{partition function} of the system defined 
in~\eqref{eq:defZ}. 
\end{lemma}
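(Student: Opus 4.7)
The plan is to exploit the reflection symmetry $y \mapsto -y$, which belongs to the invariance group $G$ of the potential $V$ introduced in Section~1.1, and which by the hypothesis $B=-A$ also swaps the two sets $A$ and $B$. Thus the whole data of the Dirichlet problem~\eqref{eq:Dirichlet} is equivariant under this involution, and one would expect the committor to behave accordingly.

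First, I would establish the pointwise identity $h_{AB}(-y) = 1 - h_{AB}(y)$. Two ingredients enter: on one hand, since $V$ is even and the Wiener process satisfies $-W_t \eqinlaw W_t$, the process $(-y_t)_{t\geqs0}$ started at $-y$ has the same law as $(y_t)_{t\geqs0}$ started at $y$, so the law of the first-hitting time of $A$ from $-y$ coincides with the law of the first-hitting time of $-A=B$ from $y$; this gives $h_{AB}(-y) = h_{BA}(y)$. On the other hand, $A$ and $B$ are disjoint closed sets with smooth boundary, and almost every sample path is continuous, so $\set{\tau_A<\tau_B}$ and $\set{\tau_B<\tau_A}$ form a partition of a set of full measure, whence $h_{AB}(y)+h_{BA}(y) = 1$.

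Next, I would change variables $y \mapsto -y$ in the integral $I := \int_{(A\cup B)^c} \e^{-V(y)/\eps} h_{AB}(y)\6y$. The set $(A\cup B)^c$ is invariant under the reflection because $B = -A$, the Lebesgue measure is invariant, and $V(-y) = V(y)$; combining this with the pointwise identity from the previous step gives
\begin{equation*}
 I = \int_{(A\cup B)^c} \e^{-V(y)/\eps} h_{AB}(-y)\6y
   = \int_{(A\cup B)^c} \e^{-V(y)/\eps} \bigpar{1 - h_{AB}(y)} \6y\;.
\end{equation*}
Adding $I$ to both sides yields $2I = \int_{(A\cup B)^c} \e^{-V(y)/\eps}\6y$. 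To turn the right-hand side into $\cZ$, I would extend $h_{AB}$ to all of $\R^\Lambda$ using the Dirichlet boundary values $h_{AB}\equiv 1$ on $A$ and $h_{AB}\equiv 0$ on $B$; the identity $h_{AB}(y)+h_{AB}(-y)=1$ then holds everywhere, so the same symmetry argument performed on $\R^\Lambda$ gives $\int_{\R^\Lambda} \e^{-V/\eps} h_{AB}\6y = \cZ/2$, from which the stated identity follows once one accounts for the contributions of $\int_A \e^{-V/\eps}\6y$ and $\int_B \e^{-V/\eps}\6y$, which are equal by the same reflection symmetry.

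The argument is elementary and relies entirely on symmetry; there is no genuine obstacle. The only point requiring slight care is the justification of $h_{AB}(-y) = h_{BA}(y)$, which amounts to writing down the pathwise reflection of the SDE~\eqref{eq:SDE_V} and invoking the symmetry of Brownian motion.
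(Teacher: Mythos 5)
Your argument is exactly the route the paper intends: the lemma is left there as an exercise whose hint consists precisely of the three facts you use ($V(-y)=V(y)$, $h_{AB}(y)=h_{BA}(-y)$ and $h_{AB}+h_{BA}=1$), and your symmetry computation is correct, including the pathwise justification of $h_{AB}(-y)=h_{BA}(y)$. The one place where the write-up does not close is the very last step. What the reflection argument actually proves is
\begin{equation*}
\int_{\R^\Lambda} \e^{-V(y)/\eps}\, h_{AB}(y)\,\6y
= \int_{B^c} \e^{-V(y)/\eps}\, h_{AB}(y)\,\6y
= \frac{\cZ}{2}\;,
\end{equation*}
with $h_{AB}$ extended by its boundary values ($\equiv 1$ on $A$, $\equiv 0$ on $B$). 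Restricting the domain to $(A\cup B)^c$ removes the term $\int_A \e^{-V(y)/\eps}\6y$; this term equals $\int_B \e^{-V(y)/\eps}\6y$ by the same reflection, but the two do not cancel against anything, so \lq\lq accounting for\rq\rq\ them cannot restore the equality over $(A\cup B)^c$. Indeed, for sets as in~\eqref{eq:condAB} one has $\int_{(A\cup B)^c}\e^{-V(y)/\eps}\6y = \Order{\eps}\,\cZ$, so the integral over $(A\cup B)^c$ is nowhere near $\cZ/2$. The identity that is true — and the one that is actually substituted into the right-hand side of \Cref{thm:magic_formula}, whose integral is over $B^c$ — is the $B^c$ (equivalently, whole-space) version, which your argument does establish. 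You should therefore state and prove the lemma with domain $B^c$ rather than $(A\cup B)^c$. A minor additional remark: the relation $h_{AB}+h_{BA}=1$ uses not only path continuity but also that the diffusion reaches $A\cup B$ in finite time almost surely, which follows from the Harris recurrence established earlier.
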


\begin{exercise}
Prove \Cref{lem:symmetry_hAB}, using the fact that $V(-y)=V(y)$ and the 
relations $h_{AB}(y) = h_{BA}(-y)$ and $h_{AB}(y) = 1 - h_{BA}(y)$. 
\end{exercise}

To apply \Cref{thm:magic_formula}, it is thus sufficient to estimate the 
partition function $\cZ$ and the capacity $\capacity(A,B)$. In order to do so, 
it 
turns out to be useful to make a change of variables. Let $(e_0,\dots,e_{N-1})$ 
be an orthonormal basis of $\R^\Lambda$, where 
\begin{equation}
 e_0 := \frac{1}{\sqrt{N}} \transpose{(1,\dots,1)}\;.
\end{equation} 
The precise form of the other basis vectors will not matter -- one possibility 
is to use those appearing in the discrete Fourier transform. We denote by 
$\R^\Lambda_\perp$ the span of $e_1,\dots,e_{N-1}$. The change of variables is 
given by 
\begin{equation}
 y = y_0 e_0 + \sqrt{\eps} y_\perp
\end{equation} 
where 
\begin{equation}
 y_0 = y \cdot e_0 = \frac{1}{\sqrt{N}} \sum_{i=1}^N y^i\;, 
 \qquad 
 y_\perp = \frac{1}{\sqrt{\eps}} \bigpar{y - y_0 e_0} \in \R^\Lambda_\perp\;.
\end{equation} 
The role of the factor $\sqrt{\eps}$ is to highlight the scaling properties of 
some quantities with $\eps$. Due to this factor, the Jacobian of the 
transformation $y\mapsto(y_0,y_\perp)$ is equal to $\eps^{(N-1)/2}$. Performing 
the change of variables in the potential $V$, and using the fact that the sum 
of the coordinates of $y_\perp$ vanishes, we obtain 
\begin{equation}
\label{eq:potential_fourier} 
 \frac{1}{\eps} V(y) = \frac{1}{\eps} V_0(y_0)
 + \frac12 \pscal{y_\perp}{Q_\perp(y_0)y_\perp} 
 + R_\eps(y_0,y_\perp)\;.
\end{equation} 
Here
\begin{equation}
 V_0(y_0) := N U \Bigpar{\frac{y_0}{\sqrt{N}}}
 = \frac{1}{4N}y_0^4 - \frac12 y_0^2 + \frac{N}{4}\;,
\end{equation} 
while $\pscal{y_\perp}{Q_\perp(y_0)y_\perp}$ is the quadratic form defined by 
\begin{equation}
 Q_\perp(y_0) := \Bigpar{\frac{3}{N}y_0^2-1} \one - \gamma \Delta_{\perp,N}
\end{equation} 
where $\Delta_{\perp,N}$ is the discrete Laplacian acting on 
$\R^\Lambda_\perp$, and $R_\eps$ is a remainder given by 
\begin{equation}
R_\eps(y_0,y_\perp) = 
 \sqrt{\frac{\eps}{N}}y_0 \sum_{i\in\Lambda} \bigpar{y^i_\perp}^3 
 + \frac{\eps}{4} \sum_{i\in\Lambda} \bigpar{y^i_\perp}^4\;.
\end{equation} 

\begin{proposition}
The partition function has the asymptotic form 
\begin{equation}
\label{eq:bound_Z} 
 \cZ = 2\sqrt{\frac{(2\pi\eps)^N}{2\det Q_\perp(-\sqrt{N})}} 
\bigbrak{1+\Order{\eps}}
 = 2\sqrt{\frac{(2\pi\eps)^N}{\prod_{k=0}^{N-1} \nu^N_k}} 
\bigbrak{1+\Order{\eps}}\;,
\end{equation} 
where the $\nu^N_k$ are the eigenvalues~\eqref{eq:nuNk} of the Hessian of 
$V(y^*_-)$. 
\end{proposition}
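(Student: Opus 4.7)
The plan is to prove this by the standard multidimensional Laplace method, taking advantage of the change of variables $y=y_0 e_0 + \sqrt{\eps}\,y_\perp$ introduced just before the statement, which is precisely tailored to make the Gaussian part of the integrand explicit. First I would use the symmetry $V(-y)=V(y)$ together with the fact that $\pm(1,\dots,1)$ are the only two global minima of $V$ (as established in \Cref{prop:det_potential} for $\gamma \geqs \gamma_1(N)$, and as follows from \Cref{exo:offdiagonal} combined with the structure of $V_0$) to restrict attention to a small neighbourhood of $y^*_-=-(1,\dots,1)$, picking up a factor of $2$. The complement of this neighbourhood contributes at most $\Order{\e^{-c/\eps}}$ thanks to coercivity of $V$ and the strict positivity of $V$ away from its minima. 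In the new coordinates, $y^*_-$ corresponds to $y_0=-\sqrt{N}$, $y_\perp=0$, and the Jacobian of the change of variables equals $\eps^{(N-1)/2}$.

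Next I would insert the decomposition~\eqref{eq:potential_fourier} and carry out the $y_\perp$ integral first. With $y_0$ frozen near $-\sqrt{N}$, the Gaussian weight $\exp\bigl(-\tfrac12\pscal{y_\perp}{Q_\perp(y_0) y_\perp}\bigr)$ integrates over $\R^\Lambda_\perp$ to $(2\pi)^{(N-1)/2}/\sqrt{\det Q_\perp(y_0)}$, while the non-Gaussian factor $\e^{-R_\eps(y_0,y_\perp)}$ is handled by expanding it as $1-R_\eps+\tfrac12 R_\eps^2+\dots$: the cubic piece $\sqrt{\eps/N}\,y_0\sum_i (y_\perp^i)^3$ is odd in $y_\perp$ and vanishes under the centred Gaussian, while its square and the quartic term $\tfrac{\eps}{4}\sum_i (y_\perp^i)^4$ both produce $\Order{\eps}$ corrections after Gaussian moments are evaluated. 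Then the remaining one-dimensional integral in $y_0$ is
\[
 \int \frac{(2\pi)^{(N-1)/2}}{\sqrt{\det Q_\perp(y_0)}}\, \e^{-V_0(y_0)/\eps}\6y_0\, \bigbrak{1+\Order{\eps}}\;,
\]
to which I apply one-dimensional Laplace asymptotics around the minimum $y_0=-\sqrt{N}$. Since $V_0(-\sqrt{N})=0$ and $V_0''(-\sqrt{N})=2$, this yields $(2\pi)^{(N-1)/2} \sqrt{\pi\eps}/\sqrt{\det Q_\perp(-\sqrt{N})}\,[1+\Order{\eps}]$.

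Multiplying by the Jacobian $\eps^{(N-1)/2}$ and by the symmetry factor $2$ gives
\[
 \cZ = 2\eps^{(N-1)/2}(2\pi)^{(N-1)/2}\sqrt{\pi\eps}\,\frac{1}{\sqrt{\det Q_\perp(-\sqrt{N})}}\bigbrak{1+\Order{\eps}}
 = 2\sqrt{\frac{(2\pi\eps)^N}{2\det Q_\perp(-\sqrt{N})}}\bigbrak{1+\Order{\eps}}\;,
\]
using the identity $(2\pi\eps)^{N-1}\pi\eps=(2\pi\eps)^N/2$. The second equality in the claim follows by observing that on $\R^\Lambda_\perp$ the operator $Q_\perp(-\sqrt{N})=2\one-\gamma\Delta_{\perp,N}$ has eigenvalues $\nu^N_k=2+\gamma\lambda^N_k$ for $k=1,\dots,N-1$, while the direction along $e_0$ carries the eigenvalue $\nu^N_0=2$ of $\Hess V(y^*_-)$, so that $2\det Q_\perp(-\sqrt{N})=\prod_{k=0}^{N-1}\nu^N_k$.

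The routine part is the explicit Gaussian computation; the main obstacle is the careful control of the remainder. Specifically, the $y_0$\nobreakdash-dependence of the covariance $Q_\perp(y_0)$ must be Taylor-expanded around $y_0=-\sqrt{N}$ (where the effective fluctuations of $y_0-(-\sqrt{N})$ are of order $\sqrt{\eps}$), and the cubic term in $R_\eps$ carries a prefactor $\sqrt{\eps/N}\,y_0$ that picks up a contribution of size $\sqrt{\eps}$ from $y_0\approx-\sqrt{N}$; one must check that after Gaussian integration in $y_\perp$ and Laplace in $y_0$, all these corrections combine to genuine $\Order{\eps}$ relative errors, uniformly over the integration domain. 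This is standard but requires a careful bookkeeping of odd/even parities and of moments of the Gaussian to rule out any $\Order{\sqrt\eps}$ contamination.
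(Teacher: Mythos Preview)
Your proposal is correct and follows essentially the same route as the paper: change of variables $y=y_0e_0+\sqrt{\eps}\,y_\perp$, Gaussian integration in $y_\perp$, then one-dimensional Laplace in $y_0$, with the factor $2$ coming from the two symmetric minima and the $\Order{\eps}$ (rather than $\Order{\sqrt\eps}$) error coming from parity.

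The only notable difference is in how the non-Gaussian factor $\e^{-R_\eps}$ is controlled. You expand it as a Taylor series and argue term by term using Gaussian moments and odd/even parity, which works but, as you rightly flag, needs some bookkeeping to justify the remainder. The paper instead writes the $y_\perp$-integral directly as a Gaussian expectation $\E^{g_N(y_0)}[\e^{-R_\eps}]$ with covariance $Q_\perp(y_0)^{-1}$, observes that this expectation is bounded uniformly in $\eps$ thanks to the quadratic lower bound on $V$ from \Cref{exo:offdiagonal}, and then concludes convergence to $1$ by dominated convergence. This sidesteps the Taylor-expansion bookkeeping entirely and is a cleaner way to handle what you identify as the main obstacle.
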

\begin{proof}
This result follows rather directly from standard Laplace asymptotics, but we 
will give some details of the proof as they will be useful later on. 
Using~\eqref{eq:potential_fourier}, we get 
\begin{equation}
\label{eq:proof_Z1} 
 \cZ = \eps^{(N-1)/2} \int_{-\infty}^\infty \e^{-V_0(y_0)/\eps}
 \int_{\R^\Lambda_\perp} \e^{-\pscal{y_\perp}{Q_\perp(y_0)y_\perp}/2}
 \e^{-R_\eps(y_0,y_\perp)}
 \6y_\perp \6y_0\;.
\end{equation} 
The idea is to view the integral over $\R^\Lambda_\perp$ as an expectation 
under the Gaussian measure $g_N(y_0)$ with covariance $Q_\perp(y_0)^{-1}$. 
Taking the normalisation into account, we obtain   
\begin{equation}
 \cZ = \eps^{(N-1)/2} \int_{-\infty}^\infty \e^{-V_0(y_0)/\eps}
 \sqrt{\frac{(2\pi)^{N-1}}{\det Q_\perp(y_0)}} 
\bigexpecin{g_N(y_0)}{\e^{-R_\eps(y_0,y_\perp)}} 
\6y_0\;.
\end{equation} 
The expectation is bounded uniformly in $\eps$ and $N$, because the potential 
satisfies the quadratic lower bound~\eqref{eq:offdiagonal} derived in 
\Cref{exo:offdiagonal}. Therefore it converges to $1$ as $\eps\to0$ by the 
dominated convergence theorem. The result then follows by one-dimensional 
Laplace asymptotics for the integral over $y_0$, since $V_0$ has quadratic 
minima in $\pm\sqrt{N}$. The fact that the error has order $\eps$ instead of 
$\sqrt{\eps}$ is due to the fact that the potential is even in $y_0$.
\end{proof}

\begin{remark}
We have not claimed that the error term $\Order{\eps}$ is~\eqref{eq:bound_Z} is 
uniform in $N$. In fact, this is indeed the case, but proving it needs a little 
bit more work. We will come back to this point in \Cref{sec:1dmeta}. 
\end{remark}

To simplify the computation of the capacity, we will assume that the sets $A$ 
and $B$ are of the form 
\begin{equation}
\label{eq:condAB} 
 A = -B = \bigsetsuch{y}{\abs{y_0+\sqrt{N}}\leqs\delta, y_\perp\in D_\perp}, 
\end{equation} 
where $\delta\ll\sqrt{N}$ and $D_\perp$ is a ball sufficiently large for $A\cup 
B$ to contain most of the mass of the invariant measure $\pi$, in the sense 
that $\pi((A\cup B)^c) = \Order{\eps}$. This holds for $D_\perp$ of radius of 
order $\sqrt{\log(\eps^{-1})}$, see for 
instance~\cite[Lemma~5.9]{Berglund_DiGesu_Weber_16}. 

\begin{proposition}
For $A$ and $B$ satisfying~\eqref{eq:condAB}, one has 
\begin{equation}
\label{eq:bound_capAB} 
 \capacity(A,B) 
 = \frac{1}{2\pi} \sqrt{\frac{(2\pi\eps)^N\abs{\mu^N_0}}{2\det Q_\perp(0)}} 
 \e^{-N/(4\eps)} \bigbrak{1+\Order{\eps}}
 = \frac{\abs{\mu^N_0}}{2\pi} 
 \sqrt{\frac{(2\pi\eps)^N}{\prod_{k=0}^{N-1}\mu^N_k}} 
 \e^{-N/(4\eps)}
\bigbrak{1+\Order{\eps}}\;,
\end{equation} 
where the $\mu^N_k$ are the eigenvalues~\eqref{eq:muNk} of the Hessian of 
$V(0)$. 
\end{proposition}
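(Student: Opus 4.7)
The strategy is to obtain matching upper and lower bounds on $\capacity(A,B)$ by applying the Dirichlet and Thomson variational principles. The starting point is the factorisation~\eqref{eq:potential_fourier} of $V$ in the coordinates $(y_0, y_\perp)$, together with the observation that the saddle at the origin has $e_0$ as its unique unstable direction. The whole computation will be dominated by a narrow neighbourhood of the saddle, where to leading order the potential is Gaussian in $y_\perp$ with covariance $Q_\perp(0)^{-1}$ and quadratic in $y_0$ with curvature $\mu_0^N$.

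For the \emph{upper bound}, I would apply the Dirichlet principle to a test function $h \in \cH_{AB}$ that depends only on the coordinate $y_0 = y\cdot e_0$ along the unstable manifold. The natural choice is $h(y) = f(y_0)$, where $f$ is (a smoothed and cut-off version of) the one-dimensional committor associated with the effective potential $V_0$, extended to be $1$ on the $A$-side and $0$ on the $B$-side. Then $\norm{\nabla h}^2 = f'(y_0)^2$, and performing the change of variables $y = y_0 e_0 + \sqrt\eps\, y_\perp$ with Jacobian $\eps^{(N-1)/2}$ yields
\begin{equation}
 \cE(h) = \eps^{(N+1)/2} \int_\R f'(y_0)^2 \e^{-V_0(y_0)/\eps}
 \biggbrak{\int_{\R^\Lambda_\perp} \e^{-\pscal{y_\perp}{Q_\perp(y_0)y_\perp}/2 - R_\eps(y_0,y_\perp)} \6y_\perp} \6y_0\;.
\end{equation}
Because $f'(y_0)^2\e^{-V_0(y_0)/\eps}$ concentrates sharply near $y_0=0$ (its integral is essentially $1/Z_0$ with $Z_0 = \int \e^{V_0/\eps}\6y_0$), one may replace $Q_\perp(y_0)$ by $Q_\perp(0)$ and drop $R_\eps$ at leading order, reducing the inner integral to a Gaussian. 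One-dimensional Laplace asymptotics around the non-degenerate maximum $V_0(0) = N/4$ with $V_0''(0) = -|\mu_0^N|$ then give the claimed prefactor.

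For the \emph{lower bound}, I would use the Thomson principle (\Cref{prop:thomson}) with a divergence-free unit flow $\ph$ constructed so as to saturate the Cauchy--Schwarz inequality used in its proof. Concretely, $\ph$ should be (approximately) parallel to $e_0$ inside a thin tube around the instanton $\set{y_\perp=0}$ joining $y^*_-$ to $y^*_+$ through the saddle, with magnitude chosen so that its flux across the normal sections equals one and so that $\e^{V/\eps}\ph$ mimics $-\nabla h$ for the test function above. The resulting integral $\cD(\ph)$ is evaluated by the same Laplace asymptotics and reproduces the upper bound, pinning down the prefactor up to a multiplicative $[1+\Order{\eps}]$ error.

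The main obstacle is to make the reduction to the saddle neighbourhood \emph{rigorous}: one has to show that the contributions from $\abs{y_0} \geqs \delta$ are exponentially smaller, and that within the saddle region the replacement $Q_\perp(y_0) \to Q_\perp(0)$ and the dropping of $R_\eps$ produce only $\Order{\eps}$ relative errors. The quadratic-in-$y_\perp$ lower bound~\eqref{eq:offdiagonal} of \Cref{exo:offdiagonal} is the essential tool here, since it controls the $y_\perp$-integral uniformly in $y_0$ (including in the tails, where $Q_\perp(y_0)$ could in principle degenerate) and allows the cubic and quartic terms in $R_\eps$ to be treated by dominated convergence, exactly as in the computation of $\cZ$ in~\eqref{eq:bound_Z}. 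Combining these estimates with the matching upper and lower bounds yields the stated asymptotics, the second equality then following from the identifications $|\mu_0^N|\det Q_\perp(0) = |\det \nabla^2 V(0)|$ and $\det Q_\perp(0) = \prod_{k=1}^{N-1}\mu_k^N$.
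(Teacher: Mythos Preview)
Your proposal is correct and follows the same two-sided strategy as the paper: the Dirichlet principle with the one-dimensional committor $h_0(y_0)=c_0^{-1}\int_{y_0}^a \e^{V_0(\xi)/\eps}\6\xi$ for the upper bound, and the Thomson principle for the lower bound, both evaluated via Laplace asymptotics around the saddle and the same Gaussian reduction in $y_\perp$ as in the computation of $\cZ$.

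The one place where your description is looser than the paper's is the construction of the trial flow. You suggest choosing $\ph$ so that $\e^{V/\eps}\ph$ mimics $-\nabla h$ for the test function above; but $\ph\propto\e^{-V/\eps}f'(y_0)e_0$ is \emph{not} divergence-free, since $V$ depends on $y_0$ both through $V_0$ and through $Q_\perp(y_0)$ and $R_\eps$. The paper avoids this by taking an explicit flow that depends only on $y_\perp$,
\[
 \ph(y) = \frac{1}{K}\,\indicator{D_\perp}(y_\perp)\,
 \e^{-\pscal{y_\perp}{Q_\perp(0)y_\perp}/2}\,e_0\;,
\]
which is trivially divergence-free because it points along $e_0$ and its magnitude is constant in $y_0$. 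Substituting into $\cD(\ph)$ then produces exactly the desired Laplace integral, with the $\Order{\eps}$ error coming from the replacement $Q_\perp(y_0)\to Q_\perp(0)$ and from $R_\eps$ on the bounded set $D_\perp$. This is a small but genuine technical point: to run Thomson cleanly you need an \emph{exact} divergence-free flow, and the simplest way to get one here is to freeze the $y_0$-dependence rather than to approximate the harmonic flow.
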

\begin{proof}
We will apply the Dirichlet and Thomson principles with appropriate choices of 
$h$ and $\ph$. For the upper bound, we use a function depending only on the 
coordinate $y_0$, and which is simply given by the one-dimensional 
committor~\eqref{eq:hAB_1d} derived in \Cref{exo:1d-committor}:
\begin{equation}
 h(y) = h_0(y_0) = \frac{1}{c_0} \int_y^a \e^{V_0(\xi)/\eps}\6\xi\;, 
 \qquad 
 c_0 := \int_{-a}^a \e^{V_0(\xi)/\eps}\6\xi\;, 
\end{equation} 
where $a=\sqrt{N}-\delta$ is the left boundary of $B$. For $\abs{y_0}>a$, 
$h_0(y_0)$ is continuously extended by constant values $1$ or $0$. Inserting 
this in the Dirichlet form, we obtain 
\begin{equation}
 \cE(h) = \frac{\eps}{c_0^2} \eps^{(N-1)/2} 
 \int_{-a}^a \e^{V_0(y_0)/\eps}
 \int_{\R^\Lambda_\perp} \e^{-\pscal{y_\perp}{Q_\perp(y_0)y_\perp}/2}
 \e^{-R_\eps(y_0,y_\perp)}
 \6y_\perp \6y_0\;.
\end{equation} 
Observe that the sign of $V_0$ has changed in the exponent with respect 
to~\eqref{eq:proof_Z1}, which means that $y_0$ close to $0$ will now dominate 
the integral. Writing again the integral over $\R^\Lambda_\perp$ as an 
expectation under the Gaussian measure $g_N$ yields an upper bound of the 
desired form, noting that $c_0 = \e^{V_0(0)/\eps} 
[2\pi\eps/\abs{\mu^N_0}]^{1/2}[1+\Order{\eps}]$ and $V_0(0)=N/4$. 

For the lower bound, we apply the Thomson principle with the unit 
flow\footnote{We could have included the quartic part of the potential in the 
exponent as well, yielding a better control on the decay for large $y_\perp$. 
This is, however, not needed if we do not care about uniformity in $N$.}
\begin{equation}
 \ph(y) = \frac{1}{K} \indicator{D_\perp}(y_\perp) 
\e^{-\pscal{y_\perp}{Q_\perp(0)y_\perp}/2} e_0\;, 
\qquad 
K := \eps^{(N-1)/2} \int_{D_\perp} 
\e^{-\pscal{y_\perp}{Q_\perp(0)y_\perp}/2}\6y_\perp\;.
\end{equation} 
Since $\ph$ depends only on $y_\perp$ and is directed along $e_0$, it is indeed 
divergence-free. In addition, it has intensity $1$ by definition of $K$. We 
obtain 
\begin{equation}
 \cD(\ph) = \frac{1}{\eps K} \int_{-a}^a \e^{V_0(y_0)/\eps}
 \bigexpecin{g_N(0)}{\indicator{D_\perp} 
 \e^{\pscal{\cdot}{[Q_N(\ph_0)-Q_\perp(0)]\cdot}/2}
 \e^{R_\eps(y_0,y_\perp)}} \6y_0\;.
\end{equation} 
The expectation is bounded because $D_\perp$ is bounded, and equal to 
$1+\Order{\eps}$ thanks to the assumption on $D_\perp$. The result then follows 
by computing the Gaussian integral $K$ and performing Laplace asymptotics on 
the integral over $y_0$. 
\end{proof}

Combining the above estimates, we obtain the following sharp asymptotics on 
the transition time, which is the main result of this section.

\begin{theorem}[Eyring--Kramers law for the double-well situation]
Assume $\gamma > \gamma_1(N)$ and $B$ is as in~\eqref{eq:condAB}. Then 
\begin{equation}
\label{eq:Eyring_Kramers_finite-dim} 
 \bigexpecin{y^*_-}{\tau_B} 
 = \frac{2\pi}{\abs{\mu^N_0}} \sqrt{\frac{\abs{\det\Hess V(0)}}{\det\Hess 
V(y^*_-)}} \e^{[V(0)-V(y^*_-)]/\eps} \bigbrak{1+\Order{\eps}}\;,
\end{equation} 
where $\Hess V(y)$ denotes the Hessian matrix of $V$ at $y$. 
\end{theorem}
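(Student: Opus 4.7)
The plan is to combine Theorem \ref{thm:magic_formula} with the two asymptotic estimates \eqref{eq:bound_Z} and \eqref{eq:bound_capAB} established just above, and then to handle the gap between the averaged quantity $\expecin{\nu_{AB}}{\tau_B}$ and the quantity $\expecin{y^*_-}{\tau_B}$ with which the theorem is formulated.

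First I would fix $A$ and $B$ as in \eqref{eq:condAB}, so that $B=-A$, $A$ is a small neighbourhood of $y^*_-$, and together they carry all but an $\Order{\eps}$ fraction of the mass of $\pi$. The magic formula \eqref{eq:magic_formula} then reads
\begin{equation}
\expecin{\nu_{AB}}{\tau_B}
= \frac{1}{\capacity(A,B)}
\int_{B^c} \e^{-V(y)/\eps}\, h_{AB}(y)\,\dd y\;.
\end{equation}
Split the domain as $B^c = A \cup (A\cup B)^c$. On $A$ the committor equals $1$, so $\int_A \e^{-V/\eps}h_{AB}\,\dd y = \int_A \e^{-V/\eps}\,\dd y$. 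Laplace asymptotics at the minimum $y^*_-$ (the same computation underlying \eqref{eq:bound_Z}) give $\int_A \e^{-V/\eps}\,\dd y = \frac12\cZ[1+\Order{\eps}]$, while \Cref{lem:symmetry_hAB} shows that $\int_{(A\cup B)^c} \e^{-V/\eps} h_{AB}\,\dd y$ equals half of $\int_{(A\cup B)^c} \e^{-V/\eps}\,\dd y$, which by our choice of $A,B$ is $\Order{\eps}\cdot\cZ$. Combining, the numerator in the magic formula equals $\tfrac12 \cZ [1+\Order{\eps}]$.

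Second, I would insert the two asymptotic formulas. The estimate \eqref{eq:bound_Z} gives $\cZ = 2[(2\pi\eps)^N/(2\det Q_\perp(-\sqrt N))]^{1/2}[1+\Order{\eps}]$, and $\det Q_\perp(-\sqrt N) = \prod_{k=1}^{N-1}\nu^N_k$; including the missing one-dimensional factor $\nu^N_0 = 2+\gamma\lambda^N_0 = 2$ coming from the $y_0$-direction yields $\det \Hess V(y^*_-) = \prod_{k=0}^{N-1}\nu^N_k$. Similarly \eqref{eq:bound_capAB} reads
\begin{equation}
\capacity(A,B)
= \frac{|\mu^N_0|}{2\pi}
\sqrt{\frac{(2\pi\eps)^N}{\prod_{k=0}^{N-1}\mu^N_k}}
\,\e^{-V(0)/\eps}[1+\Order{\eps}]\;,
\end{equation}
and $\prod_{k=0}^{N-1}\mu^N_k = |\det \Hess V(0)|$ (with the sign coming from the unique unstable direction, which provides the prefactor $|\mu^N_0|$). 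Taking the ratio, the powers of $(2\pi\eps)^N$ cancel, leaving
\begin{equation}
\expecin{\nu_{AB}}{\tau_B}
= \frac{2\pi}{|\mu^N_0|}
\sqrt{\frac{|\det \Hess V(0)|}{\det \Hess V(y^*_-)}}
\,\e^{[V(0)-V(y^*_-)]/\eps}[1+\Order{\eps}]\;,
\end{equation}
using also that $V(y^*_-) = 0$ for our normalisation so that $V(0)-V(y^*_-)=N/4$.

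Finally — and this is where I expect the main difficulty to lie — one must replace the averaged hitting time $\expecin{\nu_{AB}}{\tau_B}$ by $\expecin{y^*_-}{\tau_B}$. The two quantities differ only by the short time needed for the process, started at $y^*_-$, to locally equilibrate on $\partial A$ according to $\nu_{AB}$. Concretely, one proceeds by a standard coupling/regeneration argument: show that $y^*_-\in A$ (so $\tau_A=0$ from $y^*_-$), and use the strong Markov property together with the fact that before reaching $B$ the process returns to a neighbourhood of $y^*_-$ many times, each excursion losing only an $\Order{1}$ (more precisely $\Order{\eps^\alpha}$ for some $\alpha>0$) amount of time compared to the exponentially long transition time. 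One standard route, essentially due to Bovier--Eckhoff--Gayrer--Klein, uses a renewal-type estimate based on the elliptic Harnack inequality to show $\expecin{y}{\tau_B} = \expecin{\nu_{AB}}{\tau_B}[1+\Order{\eps^\alpha}]$ uniformly for $y$ in a neighbourhood of $y^*_-$; this absorbs into the $[1+\Order{\eps}]$ factor and yields \eqref{eq:Eyring_Kramers_finite-dim}. The delicate point is controlling the constants as $N$ grows, but for fixed $N$ the argument is standard.
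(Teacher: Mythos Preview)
Your proposal is correct and matches the paper's approach: apply \Cref{thm:magic_formula} together with \Cref{lem:symmetry_hAB} and the asymptotics \eqref{eq:bound_Z}--\eqref{eq:bound_capAB} to obtain the formula for $\expecin{\nu_{AB}}{\tau_B}$, then upgrade to $\expecin{y^*_-}{\tau_B}$ via either Harnack inequalities (as in BEGK) or a coupling argument (Martinelli--Olivieri--Scoppola), which are exactly the two options the paper cites. One small simplification: the symmetry $h_{AB}(-y)=1-h_{AB}(y)$ already gives $\int_{B^c}\e^{-V/\eps}h_{AB}\,\dd y=\tfrac12\cZ$ \emph{exactly}, so your decomposition $B^c=A\cup(A\cup B)^c$ and the separate Laplace estimate on $A$ are unnecessary.
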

\begin{proof}
The result when starting with the distribution $\nu_{AB}$ follows directly 
by inserting~\eqref{eq:bound_Z} and~\eqref{eq:bound_capAB} in the exact 
relation obtained in \Cref{thm:magic_formula}. To extend this to solutions 
starting in $y^*_-$, there are two possibilities. One of them is to use Harnack 
inequalities, which bound the oscillation of harmonic functions, to show that 
$w_B(y)$ does not depend too badly on $y$, cf.~\cite[Lemma~4.6]{BEGK}. An 
alternative is to use a coupling argument as 
in~\cite{Martinelli_Olivieri_Scoppola_89}. 
\end{proof}

Note that~\eqref{eq:Eyring_Kramers_finite-dim} is indeed a generalisation to 
higher dimension of the one-dimensional expression~\eqref{eq:Kramers} obtained 
in \Cref{exo:1dKramers}. 

\begin{remark}
Similar results as~\eqref{eq:Eyring_Kramers_finite-dim} hold in much more 
general finite-dimensional situations, with a less sharp control of the error 
term, including situations with more than $2$ wells~\cite{BEGK}. Furthermore, 
the spectral gap of the generator $\cL$ can be shown to be exponentially close 
to the inverse of the expected transition 
time~\eqref{eq:Eyring_Kramers_finite-dim}~\cite{BGK}. 
\end{remark}

%%%%%%%%%%%%%%%%%%%%%%%%%%%%%%%%%%%%%%%%%%%%%%%%%%%%%%%%%%%%%%%%%%%%%%%%%%%%%%%%

\section{Bibliographical notes}
\label{sec:diffbib} 

The system~\eqref{eq:SDE} of coupled diffusions was introduced 
in~\cite{BFG06a,BFG06b} to understand the general theory of metastability in a 
specific example. These works provide a number of results on the potential 
landscape, both for small and large coupling $\gamma$, and asymptotic results 
for large $N$. In particular, \Cref{prop:det_potential} 
is~{\cite[Proposition~2.2]{BFG06a}}.

General results on solutions of SDEs can be found in the 
monographs~\cite{McKean69,Oeksendal,KaratzasShreve,Mao_book}. 
The use of Lyapunov functions to prove non-explosion, Harris recurrence, and 
various ergodicity results has been developed by Meyn and Tweedie in a series 
of works~\cite{Meyn_Tweedie_92,Meyn_Tweedie_1993a,Meyn_Tweedie_1993b}, as well 
as the monograph~\cite{MeynTweedie_book}.

The theory of large deviations for SDEs is developed by Freidlin and Wenzell 
in the monograph~\cite{FW}. Other general monographs on large deviations 
include~\cite{DZ,DS}. 

The potential-theoretic approach to metastability was mainly developed 
in~\cite{BEGK_MC} for Markov chains, and in~\cite{BEGK,BGK,Eckhoff05} for 
reversible diffusions. A comprehensive account of the potential-theoretic 
approach can be found in the monograph~\cite{Bovier_denHollander_book}. Short 
overviews are also found in~\cite{Slowik_12,Berglund_irs_MPRF}. The Thomson 
principle is proved (in a more general, non-reversible setting) 
in~\cite{Landim_Mariani_Seo_17}.  

Another successful approach to sharp asymptotics for metastable transition 
times is based on semiclassical analysis of the Witten Laplacian, and was 
initiated in~\cite{HelfferKleinNier04,HelfferNier05}. Extensions can be found, 
e.g., in~\cite{LePeutrec_2010,LePeutrec_2011,LPNV_2013}. 

There exist several extensions of the results on the Eyring--Kramers 
formula presented here. The case of saddles with vanishing Hessian determinant 
has been considered in~\cite{Berglund_Gentz_MPRF}. Situations with many 
degeneracies due to symmetries have been considered in~\cite{BD15} for 
markovian jump processes, and~\cite{Dutercq_thesis,BD16} for diffusions.  
Uniformity in $N$ of the error terms in the Eyring--Kramers formula for the 
system~\eqref{eq:SDE} was obtained in~\cite{BarretBovierMeleard}. Results on 
the spectral gap that are uniform in $N$ have been obtained 
in~\cite{DiGesu_LePeutrec17}.

%%%%%%%%%%%%%%%%%%%%%%%%%%%%%%%%%%%%%%%%%%%%%%%%%%%%%%%%%%%%%%%%%%%%%%%%%%%%%%%%

\chapter{Allen--Cahn SPDE in one space dimension}
\label{ch:dim1} 

Consider the formal limit of the system~\eqref{eq:SDE} of coupled diffusions as 
$N\to\infty$ with $\gamma\sim N^2$. Given a parameter $L>0$, that we will 
choose below as a function of the limit of $\gamma/N^2$, we define a function 
$\phi(t,x)$ by setting 
\begin{equation}
 y^i_t = \phi\biggpar{t, \frac{i}{N}L}\;, 
 \qquad 
 i\in\Lambda
\end{equation}
and interpolating linearly (or with some higher-order polynomials) between 
lattice points. The discrete Laplacian then formally satisfies, for $x = 
(i/N)L \in (0,L]$, 
\begin{align}
 y^{i+1}_t - 2y^i_t + y^{i-1}_t
 &= \phi\biggpar{t, x + \frac{L}{N}} - 2 \phi(t,x) + 
\phi\biggpar{t, x-\frac{L}{N}} \\
 &= \frac{L^2}{N^2} \partial_{xx}\phi(t,x) 
 + \BigOrder{\frac{L^4}{N^4}}\;.
\end{align} 
Taking $L$ such that 
\begin{equation}
 L^2 = \lim_{N\to\infty} \frac{2N^2}{\gamma}\;, 
\end{equation} 
we see that the SDE~\eqref{eq:SDE} converges formally, as $N\to\infty$, to the 
equation 
\begin{equation}
\label{eq:AC-1d} 
 \partial_t \phi(t,x) = \partial_{xx}\phi(t,x) + \phi(t,x) - \phi(t,x)^3 
 + \sqrt{2\eps} \xi(t,x)\;,
\end{equation} 
where $\xi(t,x)$ is a stochastic process called \emph{space-time white noise}, 
which we will have to properly define. In what follows, we will write $\Delta 
\phi$ instead of $\partial_{xx}\phi$, even though $x$ is one-dimensional, 
because the same notation will apply for higher-dimensional $x$. Note in 
particular that for the critical value $\gamma_1(N)$ of $\gamma$ obtained in 
\Cref{prop:det_potential}, we have 
\begin{equation}
 \lim_{N\to\infty} \frac{2N^2}{\gamma_1(N)} = (2\pi)^2\;,
\end{equation} 
indicating that the value $L=2\pi$ will play a special role. 

\begin{remark}
\label{rem:Phi4} 
The equation with a negative coefficient in front of $\phi$ 
\begin{equation}
\label{eq:phi4-1d} 
 \partial_t \phi(t,x) = \Delta\phi(t,x) - m^2\phi(t,x) - \phi(t,x)^3 
 + \sqrt{2\eps} \xi(t,x)
\end{equation} 
(where $m^2\geqs 0$) is called the \emph{$\Phi^4$ 
model}~\cite{Glimm_Jaffe_68,Glimm1975,Glimm_Jaffe_81} (massive $\Phi^4$ model if 
$m>0$) or \emph{stochastic quantisation equation}~\cite{Parisi_Wu}, and plays an 
important role in Quantum Field Theory. The solution theory is the same 
for~\eqref{eq:AC-1d} and~\eqref{eq:phi4-1d}, but their long-time behaviour is 
very different, since~\eqref{eq:AC-1d} displays metastability 
while~\eqref{eq:phi4-1d} does not.
\end{remark}

%%%%%%%%%%%%%%%%%%%%%%%%%%%%%%%%%%%%%%%%%%%%%%%%%%%%%%%%%%%%%%%%%%%%%%%%%%%%%%%%

\section{Deterministic dynamics}
\label{sec:1ddet} 

We start by briefly analysing~\eqref{eq:AC-1d} in the deterministic case 
$\eps=0$, where it takes the form of the PDE 
\begin{equation}
\label{eq:AC-1d-det} 
 \partial_t \phi(t,x) = \Delta\phi(t,x) + \phi(t,x) - \phi(t,x)^3\;. 
\end{equation} 
This equation is commonly known as \emph{Allen--Cahn equation}, though one 
finds other names in the literature, including \emph{Chaffee--Infante 
equation}, or \emph{real Ginzburg--Landau equation}.

In the present setting, $x$ belongs to the scaled circle $\Lambda=\R/(L\Z)$, 
which implicitly implies that we consider~\eqref{eq:AC-1d-det} with periodic 
boundary conditions. 

A first useful observation is that the right-hand side of~\eqref{eq:AC-1d-det} 
derives again from a potential, obtained as the continuum limit of the 
potential~\eqref{eq:V_finite}, which is given by 
\begin{equation}
\label{eq:potential_infdim} 
 V(\phi) := \int_0^L \biggbrak{\frac12 \norm{\nabla\phi(x)}^2 + 
 \frac14 \Bigpar{\phi(x)^2 -1}^2}\;.
\end{equation} 
Here we have written $\nabla\phi(x)$ instead of $\partial_x\phi(x)$, to have a 
notation also valid in higher dimensions. Indeed, the G\^ateaux derivative of 
$V$ at $\phi$ in the direction $\psi$ is given by 
\begin{align}
\nabla_\psi V(\phi) 
&:= \dpar{}{\lambda} V(\phi+\lambda\psi) \Bigr\vert_{\lambda=0} \\
&= \int_0^L \bigbrak{\nabla\phi(x) \cdot \nabla\psi(x) - \phi(x)\psi(x) + 
\phi(x)^3\psi(x)} \6x \\
&= - \pscal{\Delta\phi+\phi-\phi^3}{\psi}_{L^2}\;,
\end{align}
where we have used integration by parts and the periodic boundary conditions in 
the last step. In particular, this shows that stationary solutions 
of~\eqref{eq:AC-1d-det} are critical points of $V$. 

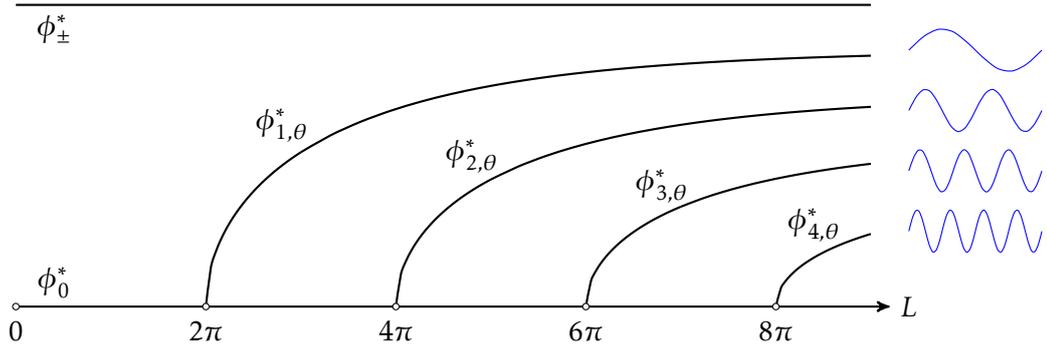
\begin{figure}[tb]
\begin{center}
\begin{tikzpicture}[>=stealth',main
node/.style={circle,inner sep=0.03cm,fill=white,draw},x=2.5cm,y=4cm,
declare function={
branch(\x,\a) = (1 - 0.15*\a )*sqrt(1 - exp(0.9*(-\x + \a )));
kink(\x,\w) = 0.07*(sin(\w * 180*\x + sin(3* \w * 180*\x)/3
+ sin(5* \w * 180*\x)/5);
}]

\draw[->,thick] (0,0) -- (4.6,0);
\draw[-,thick] (0,1) -- (4.5,1);

% curve starting in \pi

\draw[black,thick,-,smooth,domain=1:1.5,samples=20,/pgf/fpu,
/pgf/fpu/output format=fixed] plot (\x,{branch(\x,1)});
\draw[black,thick,-,smooth,domain=1.5:4.5,samples=20,/pgf/fpu,
/pgf/fpu/output format=fixed] plot (\x,{branch(\x,1)});

% curve starting in 2\pi

\draw[black,thick,-,smooth,domain=2:2.5,samples=20,/pgf/fpu,
/pgf/fpu/output format=fixed] plot (\x,{branch(\x,2)});
\draw[black,thick,-,smooth,domain=2.5:4.5,samples=15,/pgf/fpu,
/pgf/fpu/output format=fixed] plot (\x,{branch(\x,2)});

% curve starting in 3\pi

\draw[black,thick,-,smooth,domain=3:3.5,samples=20,/pgf/fpu,
/pgf/fpu/output format=fixed] plot (\x,{branch(\x,3)});
\draw[black,thick,-,smooth,domain=3.5:4.5,samples=10,/pgf/fpu,
/pgf/fpu/output format=fixed] plot (\x,{branch(\x,3)});

% curve starting in 4\pi

\draw[black,thick,-,smooth,domain=4:4.5,samples=20,/pgf/fpu,
/pgf/fpu/output format=fixed] plot (\x,{branch(\x,4)});

% kinks with w=1

\draw[blue,-,smooth,domain=0:1,samples=10,/pgf/fpu,
/pgf/fpu/output format=fixed] plot ({4.7 + 0.7*\x}, {0.85 + kink(\x,2)});

% kinks with w=2

\draw[blue,-,smooth,domain=0:1,samples=20,/pgf/fpu,
/pgf/fpu/output format=fixed] plot ({4.7 + 0.7*\x}, {0.65 + kink(\x,4)});

% kinks with w=3

\draw[blue,-,smooth,domain=0:1,samples=30,/pgf/fpu,
/pgf/fpu/output format=fixed] plot ({4.7 + 0.7*\x}, {0.45 + kink(\x,6)});

% kinks with w=3

\draw[blue,-,smooth,domain=0:1,samples=40,/pgf/fpu,
/pgf/fpu/output format=fixed] plot ({4.7 + 0.7*\x}, {0.25 + kink(\x,8)});

\node[main node] at (0,0) {};
\node[main node] at (1,0) {};
\node[main node] at (2,0) {};
\node[main node] at (3,0) {};
\node[main node] at (4,0) {};

\node[] at (0,-0.08) {$0$};
\node[] at (1,-0.08) {$2\pi$};
\node[] at (2,-0.08) {$4\pi$};
\node[] at (3,-0.08) {$6\pi$};
\node[] at (4,-0.08) {$8\pi$};
\node[] at (4.7,0) {$L$};

\node[] at (0.2,0.08) {$\phi^*_0$};
\node[] at (1.4,0.6) {$\phi^*_{1,\theta}$};
\node[] at (2.4,0.5) {$\phi^*_{2,\theta}$};
\node[] at (3.4,0.4) {$\phi^*_{3,\theta}$};
\node[] at (4.2,0.27) {$\phi^*_{4,\theta}$};
\node[] at (0.2,0.92) {$\phi^*_{\pm}$};

\end{tikzpicture}
\vspace{-3mm}
\end{center}
\caption[]{Schematic bifurcation diagram of the deterministic Allen--Cahn 
equation~\eqref{eq:AC-1d-det}. The vertical coordinate represents roughly the 
amplitude of the stationary solution.}
\label{fig:stationary_1d}
\end{figure}

The role of the Hessian of $V$ at $\phi$ is played by the 
bilinear form mapping two periodic functions $\psi_1$, $\psi_2$ to 
\begin{align}
 \nabla^2_{\psi_1,\psi_2} V(\phi) 
&:= \dpar{^2}{\lambda_1\partial\lambda_2} 
 V(\phi+\lambda_1\psi_1+\lambda_2\psi_2)
 \Bigr\vert_{\lambda_1=\lambda_2=0}\\
&= \int_0^L \bigbrak{\nabla\psi_1(x) \cdot \nabla\psi_2(x) - 
\psi_1(x)\psi_2(x) + 3\phi(x)^2\psi_1(x)\psi_2(x)} \6x \\
&= \pscal{\psi_1}{[-\Delta-1+3\phi(\cdot)^2]\psi_2}_{L^2}\;. 
\label{eq:Hessian_AC-1d} 
\end{align} 
This reflects the fact that the linearisation of~\eqref{eq:AC-1d-det} around a 
stationary solution $\phi^*$ is given by the variational equation 
\begin{equation}
\label{eq:Sturm-Liouville} 
 \partial_t \psi(t,x) = \Delta \psi(t,x) + \bigbrak{1-3\phi^*(x)^2} \psi(t,x)\;.
\end{equation} 
Determining the stability of $\phi^*$ is equivalent to solving the 
Sturm--Liouville problem consisting in computing the spectrum 
of~\eqref{eq:Sturm-Liouville} with periodic boundary conditions. 

The PDE~\eqref{eq:AC-1d-det} has three obvious stationary solutions, which are 
constant in space, and natural analogues of the constant stationary solutions 
of the discrete system. We will denote by $\phi^*_\pm$ the constant solutions 
equal to $\pm1$, and by $\phi^*_0$ the constant solution equal to $0$. The 
following result describes the set of all stationary solutions and their 
stability. The bifurcation diagram is sketched in \Cref{fig:stationary_1d}.

\begin{proposition}
\label{prop:AC-1d-det} 
The stationary solutions $\phi^*_\pm$ are always stable. The solution
$\phi^*_0$ has one unstable direction if $0<L<2\pi$, and more generally $2k+1$ 
unstable directions if $2k\pi < L < 2(k+1)\pi$ for any $k\in\N_0$. 
In addition, at every multiple of $2\pi$, a one-parameter family of 
non-constant solutions bifurcates from the origin. These solutions are 
unstable, and of the form $\phi^*_{k,\theta}(x) = \phi^*_{k,0}(x+\theta)$.   
\end{proposition}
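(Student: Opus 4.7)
The plan is to dispatch each constant stationary solution by direct spectral analysis of the Hessian \eqref{eq:Hessian_AC-1d}, then to construct the bifurcating families by phase-plane analysis of the stationary ODE, and finally to establish their instability. The hard part will be the last step.

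\textbf{The constant solutions.} At $\phi^*_\pm$ one has $3(\phi^*_\pm)^2\equiv 3$, so \eqref{eq:Hessian_AC-1d} identifies the Hessian with the operator $-\Delta+2$ on periodic functions on $[0,L]$. Its spectrum is $\bigset{(2\pi n/L)^2+2:n\in\N_0}$, contained in $[2,\infty)$, so $V$ has a nondegenerate local minimum at $\phi^*_\pm$ and \eqref{eq:Sturm-Liouville} is linearly stable. At $\phi^*_0\equiv 0$ the Hessian reduces to $-\Delta-1$: the value $-1$ is a simple eigenvalue with the constant eigenfunction, and for each $n\geqs 1$ the value $(2\pi n/L)^2-1$ has the two-dimensional eigenspace spanned by $\cos(2\pi nx/L)$ and $\sin(2\pi nx/L)$. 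These are negative exactly when $n<L/(2\pi)$, and for $2k\pi<L<2(k+1)\pi$ this gives the claimed $1+2k$ unstable directions.

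\textbf{The bifurcating families.} Stationary solutions solve the planar Hamiltonian ODE $\phi''+\phi-\phi^3=0$, with conserved energy
\begin{equation*}
 E(\phi,\phi')=\tfrac12(\phi')^2+\tfrac12\phi^2-\tfrac14\phi^4.
\end{equation*}
The origin is a center surrounded by closed orbits of energies $E\in(0,\tfrac14)$, bounded by the heteroclinic separatrix $E=\tfrac14$ connecting the saddles $(\pm 1,0)$. The period of the orbit of energy $E$ is given by the quadrature
\begin{equation*}
 T(E)=2\int_{-a(E)}^{a(E)}\frac{\6\phi}{\sqrt{2E-\phi^2+\tfrac12\phi^4}}\;,\qquad a(E)^2=1-\sqrt{1-4E}\;,
\end{equation*}
and a standard computation shows that $T$ is smooth and strictly increasing on $(0,\tfrac14)$ with $T(0^+)=2\pi$ (from the linearisation at the center) and $T(\tfrac14^-)=+\infty$ (logarithmic divergence near the separatrix). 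Hence for every integer $k\geqs 1$ and every $L>2k\pi$ there is a unique $E_k(L)$ with $T(E_k(L))=L/k$; wrapping the corresponding orbit $k$ times around the cycle yields a stationary solution $\phi^*_{k,0}$ of minimal spatial period $L/k$, and translation invariance of the ODE produces the family $\phi^*_{k,\theta}(x):=\phi^*_{k,0}(x+\theta)$. Since $a(E_k(L))\to 0$ as $L\searrow 2k\pi$, the entire family collapses onto $\phi^*_0$ in this limit, which is the asserted bifurcation.

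\textbf{Instability.} The main obstacle is controlling the spectrum of the Hill operator $\cH_{k,\theta}:=-\Delta-1+3(\phi^*_{k,\theta})^2$ on $[0,L]$ with periodic boundary conditions. Translation invariance forces $\partial_x\phi^*_{k,\theta}$ to lie in the kernel of $\cH_{k,\theta}$, and by construction this eigenfunction has exactly $2k$ zeros on $[0,L)$. Haupt's oscillation theorem for periodic Sturm--Liouville problems then locates the zero eigenvalue at position $\lambda_{2k-1}$ or $\lambda_{2k}$ in the ordering $\lambda_0<\lambda_1\leqs\lambda_2<\lambda_3\leqs\lambda_4<\dots$, leaving at least $2k-1$ strictly negative eigenvalues below it and thus at least one unstable direction. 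A softer variational argument also yields the qualitative claim: since the first part shows that $\phi^*_\pm$ are the only nondegenerate local minima of $V$, the stationary point $\phi^*_{k,\theta}$ cannot be a minimum and must admit a descent direction.
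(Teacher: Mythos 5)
Your treatment of the constant solutions and of the construction of the bifurcating branches is essentially identical to the paper's: same Hessians $-\Delta+2$ and $-\Delta-1$, same count of negative eigenvalues, same phase-plane reduction to the Hamiltonian system with the same quadrature formula for the period $T(E)$, and the same appeal to the standard fact that $T$ increases from $2\pi$ to $+\infty$ on $(0,\tfrac14)$. Where you genuinely diverge is the instability of the non-constant solutions. The paper disposes of this in one sentence, invoking \lq\lq topological reasons\rq\rq\ (degree conservation) and a local bifurcation analysis near $L=2k\pi$, without details. Your argument via the translation mode is sharper and global: $\partial_x\phi^*_{k,\theta}$ lies in the kernel of the linearised operator, has exactly $2k$ zeros per period (two turning points per traversal of the closed orbit, traversed $k$ times), and Haupt's oscillation theorem for periodic Hill operators then forces at least $2k-1$ strictly negative eigenvalues below the zero eigenvalue. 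This not only proves instability for all $L$ in the existence range (not just near the bifurcation point) but also gives a quantitative lower bound on the Morse index, which the paper's sketch does not. One caveat: your fallback \lq\lq softer variational argument\rq\rq\ does not actually work as stated --- the first part of the proof only analyses the three \emph{constant} stationary points, so it does not establish that $\phi^*_\pm$ are the \emph{only} local minima of $V$, and in any case a non-minimum critical point could still have a positive semi-definite Hessian. Since the Haupt argument carries the proof on its own, this is harmless, but you should either drop that remark or acknowledge it as heuristic.
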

\begin{proof}
Setting $\phi=\pm1$ in~\eqref{eq:Hessian_AC-1d}, we find that the Hessian of 
$V$ at $\phi^*_\pm$ is given by $-\Delta+2$. This has eigenvalues 
\begin{equation}
 \nu_k = \biggpar{\frac{2k\pi}{L}}^2 + 2\;, 
 \qquad 
 k\in\Z\;,
\end{equation}
which are all positive. Therefore, the Sturm--Liouville 
equation~\eqref{eq:Sturm-Liouville} has negative spectrum, and $\phi^*_\pm$ are 
stable. In the case of $\phi^*_0$, we obtain a Hessian equal to $-\Delta-1$, 
which has eigenvalues
\begin{equation}
  \mu_k = \biggpar{\frac{2k\pi}{L}}^2 - 1\;, 
 \qquad 
 k\in\Z\;.
\end{equation}
Therefore, $\mu_0$ is always negative, while two additional $\mu_k$ become 
negative whenever $L$ exceeds $2k\pi$. 
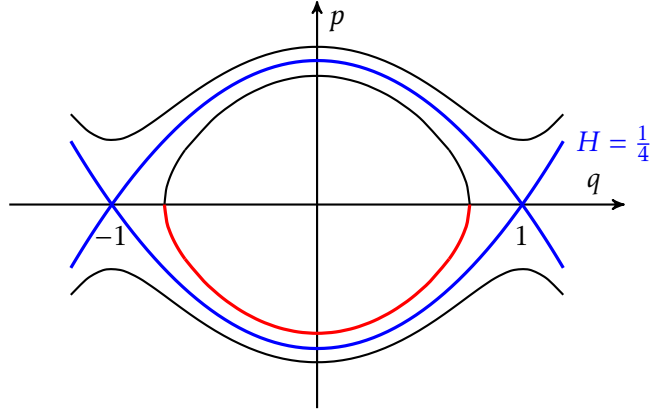
\begin{figure}[tb]
\begin{center}
  \begin{tikzpicture}[>=stealth',main node/.style={circle,minimum
size=0.25cm,fill=white,draw},x=3cm,y=3cm,scale=0.9,
declare function={
pplus(\q,\H) = sqrt(2* \H - \q^2 + 0.5 * \q^4);
qmax(\H) = sqrt(1-sqrt(1-4* \H));
}]

\draw[->,thick] (-1.5,0) -- (1.5,0);
\draw[->,thick] (0,-1) -- (0,1);

% level curve H=1/4 (drawn in several parts for better resolution)

\draw[blue,very thick,-,smooth,domain=-1:1,samples=30,/pgf/fpu,
/pgf/fpu/output format=fixed] plot (\x,{pplus(\x,1/4)});
\draw[blue,very thick,-,smooth,domain=-1:1,samples=30,/pgf/fpu,
/pgf/fpu/output format=fixed] plot (\x,{-pplus(\x,1/4)});

\draw[blue,very thick,-,smooth,domain=-1.2:-1,samples=10,/pgf/fpu,
/pgf/fpu/output format=fixed] plot (\x,{pplus(\x,1/4)});
\draw[blue,very thick,-,smooth,domain=-1.2:-1,samples=10,/pgf/fpu,
/pgf/fpu/output format=fixed] plot (\x,{-pplus(\x,1/4)});

\draw[blue,very thick,-,smooth,domain=1:1.2,samples=10,/pgf/fpu,
/pgf/fpu/output format=fixed] plot (\x,{pplus(\x,1/4)});
\draw[blue,very thick,-,smooth,domain=1:1.2,samples=10,/pgf/fpu,
/pgf/fpu/output format=fixed] plot (\x,{-pplus(\x,1/4)});

\node[blue] at (1.45,0.3) {$H=\frac14$};

%\node[blue] at (-0.8,0.9) {$H=\frac12(u')^2 + \frac12u^2 - \frac14u^4$};

% level curve H=0.3

\draw[black,thick,-,smooth,domain=-1.2:1.2,samples=30,/pgf/fpu,
/pgf/fpu/output format=fixed] plot (\x,{pplus(\x,0.3)});
\draw[black,thick,-,smooth,domain=-1.2:1.2,samples=30,/pgf/fpu,
/pgf/fpu/output format=fixed] plot (\x,{-pplus(\x,0.3)});

% level curve H=0.2 (drawn in several parts for better resolution)

\pgfmathsetmacro{\qmaxx}{qmax(0.2)}
\pgfmathsetmacro{\qmed}{qmax(0.2)-0.1}

\draw[black,thick,-,smooth,domain={-\qmaxx}:{-\qmed},samples=15,/pgf/fpu,
/pgf/fpu/output format=fixed] plot (\x,{pplus(\x,0.2)});
\draw[black,thick,-,smooth,domain={-\qmed}:\qmed,samples=10,/pgf/fpu,
/pgf/fpu/output format=fixed] plot (\x,{pplus(\x,0.2)});
\draw[black,thick,-,smooth,domain=\qmed:\qmaxx,samples=10,/pgf/fpu,
/pgf/fpu/output format=fixed] plot (\x,{pplus(\x,0.2)}) -- (\qmaxx,0);

% red part of level curve H=0.2 (drawn in several parts for better resolution)

\draw[red,very thick,-,smooth,domain={-\qmed}:\qmed,samples=15,/pgf/fpu,
/pgf/fpu/output format=fixed] plot (\x,{-pplus(\x,0.2)});

\draw[red,very thick,-,smooth,domain={-\qmed}:{-\qmaxx},samples=10,/pgf/fpu,
/pgf/fpu/output format=fixed] plot (\x,{-pplus(\x,0.2)}) -- ({-\qmaxx},0);
\draw[red,very thick,-,smooth,domain=\qmed:\qmaxx,samples=10,/pgf/fpu,
/pgf/fpu/output format=fixed] plot (\x,{-pplus(\x,0.2)}) -- (\qmaxx,0);

% axis labels etc

\node[] at (1.35,0.1) {$q$};
\node[] at (0.1,0.9) {$p$};
\node[] at (-1,-0.15) {$-1$};
\node[] at (1,-0.15) {$1$};

\end{tikzpicture}
\vspace{-3mm}
\end{center}
\caption[]{Solutions of the Hamilton equations~\eqref{eq:Hamilton}. Closed 
orbits correspond to periodic boundary conditions, while the orbit shown in red 
corresponds to Neumann boundary conditions. The curves $\set{H=\frac14}$ are 
separatrices reached in the limit of the period going to infinity.}
\label{fig:Hamiltonian}
\end{figure}
The other stationary solutions are periodic solutions of 
\begin{equation}
 \phi''(x) = \phi(x)^3 - \phi(x)
\end{equation} 
of period $L$. 
This second-order ODE is equivalent to the Hamiltonian system
\begin{align}
q'(x) &= p(x) \\
p'(x) &= q(x)^3 - q(x)\;,
\label{eq:Hamilton} 
\end{align}
which derives from the Hamiltonian
\begin{equation}
 H(p,q) = \frac12 p^2 + \frac12 q^2 - \frac14 q^4\;.
\end{equation} 
The energy $H$ is constant along solutions of~\eqref{eq:Hamilton}, and 
therefore orbits are contained in level curves of $H$. Some level curves are 
shown in \Cref{fig:Hamiltonian}. Non-constant stationary solutions 
of~\eqref{eq:AC-1d-det} correspond to closed level curves whose period is of 
the form $L/k$ for some $k\in\N$ (as they can be tracked multiple times). 
Expressing $p=q'$ in terms of $H$ and $q$ and integrating, one obtains that the 
period of the closed level curve $\set{H(q,p)=E}$ is given by 
\begin{equation}
 T(E) = 2\int_{q_-(E)}^{q_+(E)} \frac{\6q}{\sqrt{2E-q^2+\frac12q^4}}\;,
 \qquad 
 q_\pm(E) = \pm \sqrt{1-\sqrt{1-4E}}\;,
\end{equation} 
which is a Jacobi elliptic integral that is known to be an increasing function 
of $E$, starting at $2\pi$ for $E=0$. 

It follows that whenever $L$ is a multiple of $2\pi$, a new family of 
non-constant stationary solutions bifurcates from $\phi^*_0$. This is a family 
of solutions because one can choose the origin $x=0$ anywhere on the closed 
level curve. The fact that these solutions are unstable has topological reasons 
(conservation of the degree of stationary points of functions depending on a 
parameter), and can be checked locally by a bifurcation analysis. 
\end{proof}

In what follows, we will again mostly concentrate for simplicity on the case 
$0<L<2\pi$, when $V$ is a double-well potential with two local minima located 
at $\phi^*_\pm$ and a saddle at the origin. However, the results hold in more 
generality. 

\begin{exercise}
Discuss the case of zero-flux Neumann boundary conditions 
\begin{equation}
 \dpar{\phi}{x}(0) = \dpar{\phi}{x}(L) = 0\;.
\end{equation} 
This setting corresponds to the orbit shown in red in \Cref{fig:Hamiltonian}. 
Derive the bifurcation diagram representing the stationary solutions and their 
stability as a function of $L$. 
\end{exercise}

%%%%%%%%%%%%%%%%%%%%%%%%%%%%%%%%%%%%%%%%%%%%%%%%%%%%%%%%%%%%%%%%%%%%%%%%%%%%%%%%

\section{Space-time white noise}
\label{sec:1dnoise}

We now turn to the precise definition of the space-time white noise process 
$\xi$ formally introduced in~\eqref{eq:AC-1d}. It should have the following 
properties:
\begin{itemize}
\item 	each $\xi(t,x)$ should be a Gaussian random variable;
\item 	each $\xi(t,x)$ should be centred;
\item 	the values of $\xi$ at different space-time points should be 
independent, and thus uncorrelated, which is sometimes written informally as 
\begin{equation}
\label{eq:xi_cov_formal} 
 \bigexpec{\xi(t,x)\xi(s,y)} = \delta(t-s)\delta(x-y)\;.
\end{equation}
\end{itemize}

It turns out that there is no random function having the above properties, but 
that there exists a random Schwartz distribution doing the job, i.e., one can 
consider $\xi$ as a random linear functional acting on test functions. 
We will denote by $\cH$ be the Hilbert space $L^2(\R\times\Lambda)$, where we 
recall that $\Lambda=\R/(L\Z)$. Let $\cS'(\cH)$ be the space of Schwartz 
distributions, and denote by $\pscal{\zeta}{\ph}$ the duality pairing between a 
distribution $\zeta\in\cS'(\cH)$ and a test function $\ph\in\cH$. 

\begin{definition}[Space-time white noise]
\label{def:space-time-white-noise} 
Space-time white noise on $\R\times\Lambda$ is a random distribution $\xi$ on a 
probability space $(\Omega,\cF,\fP)$ such that for any smooth test function 
$\ph\in\cH$, $\pscal{\xi}{\ph}$ is a centred Gaussian random variable of 
variance $\norm{\ph}_{\cH}^2$, and the covariances are given by 
\begin{equation}
\label{eq:xi_cov} 
 \bigexpec{\pscal{\xi}{\ph_1}\pscal{\xi}{\ph_2}}
 = \pscal{\ph_1}{\ph_2}_{\cH}
\end{equation} 
for any two smooth test functions $\ph_1, \ph_2\in\cH$. 
\end{definition}

The fact that such a process $\xi$ exists is a consequence of general results 
on Gaussian measure theory. Details can be found, for instance, in Chapter~3 of 
Martin Hairer's lecture notes~\cite{Hairer_LN_2009}. 

Note that we recover the expression~\eqref{eq:xi_cov_formal} by formally 
replacing the test functions $\ph_1$ and $\ph_2$ by Dirac distributions. Let us 
consider a couple more examples.

\begin{example}
%\item 	
Let $A_1, A_2\subset\Lambda$ be open sets, fix $T_1,T_2>0$ and 
take 
\begin{equation}
 \ph_i(t,x) = \indexfct{0\leqs t\leqs T_i} \indexfct{x\in A_i}\;, 
 \qquad
 i= 1,2\;.
\end{equation}
These test functions are not smooth, but can be obtained as limits of smooth 
functions. Define 
\begin{equation}
 W^{A_i}_{T_i} = \pscal{\xi}{\ph_i}
 = \int_0^{T_i}\int_{A_i}  \xi(t,x)\6x\6t
 \qquad
 i= 1,2
\end{equation}
(where the last integral is of course formal). 
Then~\eqref{eq:xi_cov} takes the form 
\begin{equation}
 \bigexpec{W^{A_1}_{T_1}W^{A_2}_{T_2}} = (T_1\wedge T_2) \abs{A_1\cap A_2}\;,
\end{equation} 
where $\abs{\cdot}$ denotes the Lebesgue measure of a set. Thus the 
$W^{A_i}_{T_i}$ behave like Brownian motions, which are independent if the sets 
$A_1$ and $A_2$ are disjoint. 
\end{example}

%\item 	
\begin{example}
\label{ex:noise_fourier} 
Let $\set{e_k}_{k\in\Z}$ be an orthonormal basis of $L^2(\Lambda)$, for 
instance a Fourier basis, fix $T_1,T_2>0$ and 
take 
\begin{equation}
 \ph_i(t,x) = \indexfct{0\leqs t\leqs T_i} e_{k_i}(x)\;, 
 \qquad
 i= 1,2\;.
\end{equation}
Let 
\begin{equation}
 W^{(k_i)}_{T_i} = \pscal{\xi}{\ph_i}
 = \int_0^{T_i} \pscal{\xi(t,\cdot)}{e_{k_i}} \6t
 \qquad
 i= 1,2\;.
\end{equation}
Then~\eqref{eq:xi_cov} yields 
\begin{equation}
 \bigexpec{W^{(k_1)}_{T_1}W^{(k_2)}_{T_2}} = (T_1\wedge T_2) \delta_{k_1k_2}\;,
\end{equation} 
showing that the $W^{(k_i)}_T$ are independent Brownian motions. 
\end{example}

An important property of space-time white noise is related to scaling, 
in a way that should be reminiscent of Brownian motion. Given $\tau,\lambda>0$, 
define a scaling operator acting on test functions by 
\begin{equation}
\label{eq:S-tau-lambda} 
 (\cS^{\tau,\lambda}\ph)(t,x)
 := \frac{1}{\tau\lambda} \ph \biggpar{\frac{t}{\tau},\frac{x}{\lambda}}\;.
\end{equation} 
Then we define a scaled version $\xi_{\tau,\lambda}$ of space-time white noise 
by 
\begin{equation}
\label{eq:xi_tau_lambda} 
 \pscal{\xi_{\tau,\lambda}}{\ph} = \pscal{\xi}{\cS^{\tau,\lambda}\ph}
\end{equation} 
for any test function $\ph$.

\begin{exercise}
Show that if $\xi$ is replaced by a smooth function $f$ 
in~\eqref{eq:xi_tau_lambda}, then one simply has $f_{\tau,\lambda}(t,x) = 
f(\tau t,\lambda x)$.
\end{exercise}

\begin{proposition}[Scaling property of space-time white noise]
\label{prop:xi_scaling}
We have 
\begin{equation}
 \xi_{\tau,\lambda} \eqinlaw \frac{1}{\sqrt{\tau\lambda}}\xi\;,
\end{equation} 
where $\eqinlaw$ denotes equality in distribution. 
\end{proposition}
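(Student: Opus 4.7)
The plan is to observe that both $\xi_{\tau,\lambda}$ and $\frac{1}{\sqrt{\tau\lambda}}\xi$ are centred Gaussian random distributions, so by \Cref{def:space-time-white-noise} (together with the fact that the law of a Gaussian process is determined by its covariance), equality in distribution reduces to checking that the two sides induce the same covariance form on pairs of smooth test functions.

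First I would verify the linearity and Gaussianity of $\pscal{\xi_{\tau,\lambda}}{\ph}$: by~\eqref{eq:xi_tau_lambda}, this equals $\pscal{\xi}{\cS^{\tau,\lambda}\ph}$, which is a centred Gaussian random variable (since $\cS^{\tau,\lambda}\ph\in\cH$). The key computation is then the effect of the scaling operator on the $L^2$ inner product. For any $\ph_1,\ph_2\in\cH$, the substitution $s=t/\tau$, $y=x/\lambda$ gives
\begin{equation}
\pscal{\cS^{\tau,\lambda}\ph_1}{\cS^{\tau,\lambda}\ph_2}_{\cH}
= \frac{1}{\tau^2\lambda^2}\int_\R\int_\Lambda
\ph_1\Bigpar{\tfrac{t}{\tau},\tfrac{x}{\lambda}}
\ph_2\Bigpar{\tfrac{t}{\tau},\tfrac{x}{\lambda}} \6x\6t
= \frac{1}{\tau\lambda}\pscal{\ph_1}{\ph_2}_{\cH}\;.
\end{equation}
(A minor technical point is that $\Lambda=\R/(L\Z)$ is compact, so the rescaling in $x$ should be interpreted on the universal cover; but since $\xi$ is considered on $\R\times\Lambda$ and we restrict to smooth periodic test functions, the change of variables goes through without issue, provided $\lambda$ is chosen compatibly with the period, or equivalently one works on $\R\times\R$ and projects at the end.)

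Applying~\eqref{eq:xi_cov} to $\cS^{\tau,\lambda}\ph_1$ and $\cS^{\tau,\lambda}\ph_2$ then yields
\begin{equation}
\bigexpec{\pscal{\xi_{\tau,\lambda}}{\ph_1}\pscal{\xi_{\tau,\lambda}}{\ph_2}}
= \pscal{\cS^{\tau,\lambda}\ph_1}{\cS^{\tau,\lambda}\ph_2}_{\cH}
= \frac{1}{\tau\lambda}\pscal{\ph_1}{\ph_2}_{\cH}\;,
\end{equation}
which is precisely the covariance one computes for $\frac{1}{\sqrt{\tau\lambda}}\xi$ from~\eqref{eq:xi_cov}. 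Since the family $\set{\pscal{\xi_{\tau,\lambda}}{\ph}}_{\ph\in\cH}$ is jointly Gaussian and centred (as a linear image of the jointly Gaussian family $\set{\pscal{\xi}{\psi}}_{\psi\in\cH}$), having identical covariance as $\set{\frac{1}{\sqrt{\tau\lambda}}\pscal{\xi}{\ph}}_{\ph\in\cH}$ implies equality of all finite-dimensional distributions, and hence equality in law as random distributions.

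There is no real obstacle here; the only mildly delicate point is bookkeeping the scaling exponents in the prefactor $1/(\tau\lambda)$ of $\cS^{\tau,\lambda}$ to ensure the Jacobian factor $\tau\lambda$ produces exactly one power of $1/(\tau\lambda)$ in the covariance, matching the square of $1/\sqrt{\tau\lambda}$. This is precisely why the normalisation $1/(\tau\lambda)$ was put into~\eqref{eq:S-tau-lambda} rather than, say, $1/\sqrt{\tau\lambda}$.
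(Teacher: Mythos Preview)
Your proof is correct and follows essentially the same approach as the paper: both reduce to checking that the two centred Gaussian families share the same covariance, and both compute $\pscal{\cS^{\tau,\lambda}\ph_1}{\cS^{\tau,\lambda}\ph_2}_{\cH} = \frac{1}{\tau\lambda}\pscal{\ph_1}{\ph_2}_{\cH}$ via the change of variables. Your version is somewhat more explicit about the change of variables and about why covariance equality suffices, and your parenthetical remark on the torus rescaling is a fair caveat that the paper simply glosses over.
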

\begin{proof}
Both processes are Gaussian and centred. Therefore, it suffices to show that 
they have the same covariance. Given two compactly supported test functions 
$\ph_1, \ph_2$, we have 
by~\eqref{eq:xi_cov} 
\begin{align}
\bigexpec{\pscal{\xi_{\tau,\lambda}}{\ph_1} \pscal{\xi_{\tau,\lambda}}{\ph_2}}
&= \bigexpec{\pscal{\xi}{\cS^{\tau,\lambda}\ph_1} 
{\pscal{\xi}{\cS^{\tau,\lambda}\ph_2}}} \\
&= \int_{-\infty}^\infty \int_\Lambda (\cS^{\tau,\lambda}\ph_1)(t,x) 
(\cS^{\tau,\lambda}\ph_2)(t,x) \6x\6t \\
&= \frac{1}{\tau\lambda} \pscal{\ph_1}{\ph_2}_\cH\;.
\label{eq:proof_xi_scaling} 
\end{align}
This is indeed the covariance of $\xi/\sqrt{\lambda\tau}$. 
\end{proof}

In what follows, it will be important to find some functions spaces to which 
the relevant objects belong. A first important family of such spaces 
are scaled versions of the classical H\"older spaces. The scaling comes from 
the fact that it turns out to be useful to work with the \emph{parabolic 
distance} 
\begin{equation}
 \label{eq:parabolic_distance-1d}
 \norm{(t,x) - (s,y)}_\fraks := \abs{t-s}^{1/2} + \abs{x-y}
\end{equation} 
between space-time points, owing to the parabolic nature of the 
SPDE~\eqref{eq:AC-1d}, which is first-order in time and second-order in space. 
This is not a genuine distance because it fails to satisfy the triangle 
inequality, but this will not be important. 

\begin{definition}[Parabolic H\"older space]
\label{def:Holder_0-1} 
For $0<\alpha<1$, the parabolic H\"older space 
$\cC^\alpha_\fraks(\R\times\Lambda)$ consists of all continuous functions 
$f:\R\times\Lambda\to\R$ such that for every compact $\fK\subset\R\times\Lambda$
\begin{equation}
 \norm{f}_{\cC^\alpha_\fraks(\fK)}
 := \sup_{z\in\fK} \bigabs{f(z)} + 
 \sup_{\substack{z_1, z_2\in\fK \\ z_1\neq z_2}}
 \frac{\abs{f(z_1)-f(z_2)}}{\norm{z_1-z_2}_\fraks^\alpha} < \infty\;.
\end{equation} 
\end{definition}

There exists an extension of these spaces to negative index $\alpha$, which are 
a scaled version of a particular class of Besov spaces, commonly denoted 
$\cB^\alpha_{\infty,\infty}$. Since these are closely related to the above 
H\"older spaces, as we will see in the next section, we keep the notation 
$\cC^\alpha_\fraks$ as in~\cite{Hairer2014}. To define these spaces, we 
slightly adapt the notation~\eqref{eq:S-tau-lambda} to allow for zooming in 
at any space-time point, setting 
\begin{equation}
\label{eq:S-lambda-z} 
 (\cS^{\lambda}_{t,x}\ph)(s,y)
 := \frac{1}{\lambda^3} \ph 
\biggpar{\frac{s-t}{\lambda^2},\frac{y-x}{\lambda}}\;.
\end{equation} 
Given a positive integer $r$, we further let $B_r$ be the set of smooth test 
functions $\ph:\R\times\Lambda\to\R$ supported in the unit 
$\norm{\cdot}_\fraks$-ball, such that  
\begin{equation}
\label{eq:def_Br} 
 \norm{\ph}_{\cC^r} := \sup_{k \colon \abs{k}_\fraks \leqs r} 
\sup_{z\in\R\times\Lambda} \bigabs{\DD^k \ph(z)} \leqs 1\;.
\end{equation} 
Here $k=(k_0,k_1)\in\N_0^2$ is a multiindex, $\abs{k}_\fraks := 
2k_0+k_1$ is its scaled norm, and $\DD^k := 
\partial_t^{k_0}\partial_x^{k_1}$. 

\begin{definition}[Parabolic negative index Besov--H\"older space]
\label{def:Holder-negative} 
For $\alpha < 0$, the space $\cC^\alpha_\fraks(\R\times\Lambda)$ consists of 
all 
Schwartz distributions $\zeta\in\cS'(\R\times\Lambda)$ such that for every 
compact $\fK\subset\R\times\Lambda$
\begin{equation}
 \norm{\zeta}_{\cC^\alpha_\fraks(\fK)}
 := \sup_{z\in\fK} \sup_{\ph\in B_r} \sup_{\lambda\in(0,1]}
 \biggabs{\frac{\pscal{\zeta}{\cS^\lambda_z\ph}}{\lambda^\alpha}} < \infty\;,
\end{equation} 
where $r=\intpartplus{-\alpha}$. 
\end{definition}

In the case where $\zeta=\xi$ is space-time white noise, it follows 
from~\eqref{eq:proof_xi_scaling} that 
\begin{equation}
\label{eq:xi_moments} 
 \bigexpec{\pscal{\xi}{\cS^\lambda_z\ph}^2} 
 = \frac{1}{\lambda^3} \norm{\ph}_\cH^2\;.
\end{equation} 
Since for Gaussian random variables, second moments determine all other 
moments, this suggests that $\xi$ might belong to $\cC^{-3/2}_\fraks$. The 
following important result shows that this is almost the case. 

\begin{theorem}[Regularity of space-time white noise]
\label{thm:space_time_white_noise_reg} 
We have $\xi \in \cC^{-3/2-\kappa}_\fraks$ for any $\kappa>0$. 
\end{theorem}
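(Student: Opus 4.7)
The plan is to combine Gaussian moment bounds with a Kolmogorov-type chaining argument, in the spirit of Hairer's treatment of random distributions. Since $\xi$ is stationary and $\Lambda$ is compact, I fix a compact $\fK\subset\R\times\Lambda$ and show that $\norm{\xi}_{\cC^\alpha_\fraks(\fK)}$ is almost surely finite for $\alpha=-3/2-\kappa$.

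First I would record the moment bound. For fixed $\ph\in B_r$, $z\in\fK$ and $\lambda\in(0,1]$, the pairing $\pscal{\xi}{\cS^\lambda_z\ph}$ is a centred Gaussian with variance $\lambda^{-3}\norm{\ph}_\cH^2$ by~\eqref{eq:xi_moments}, and $\norm{\ph}_\cH$ is uniformly bounded over $B_r$ since each such $\ph$ is supported in the unit parabolic ball and bounded by $1$. Gaussian hypercontractivity therefore gives, for every $p\geqs 2$,
\begin{equation*}
 \Bigexpec{\bigabs{\pscal{\xi}{\cS^\lambda_z\ph}}^p}^{1/p} \leqs C_p\lambda^{-3/2}\;,
\end{equation*}
uniformly in $\ph\in B_r$, $z\in\fK$ and $\lambda\in(0,1]$.

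The main obstacle is that the $\cC^\alpha_\fraks$-seminorm requires a supremum over three parameters simultaneously — the centre $z$, the scale $\lambda$, and the test function $\ph\in B_r$ — and $B_r$ is infinite-dimensional, so a naive union bound over $\ph$ fails. The standard way around this is to pass to a compactly supported wavelet basis of $L^2(\R\times\Lambda)$ adapted to the parabolic scaling (for instance tensor products of Daubechies wavelets indexed by a dyadic parabolic grid). Any $\ph\in B_r$ expands in this basis with coefficients decaying fast enough that $\norm{\xi}_{\cC^\alpha_\fraks(\fK)}$ becomes equivalent, up to a deterministic constant, to a weighted supremum over the \emph{countable} family of wavelet pairings $\pscal{\xi}{\psi^{(n,y)}_k}$, where $\psi^{(n,y)}_k$ sits at dyadic scale $2^{-n}$ and parabolic grid point $y$. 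Only $\Order{2^{3n}}$ such wavelets have support intersecting $\fK$ at scale $n$.

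With this reduction in hand, the proof is concluded by Markov's inequality and Borel--Cantelli. Applied to $p$-th moments at scale $\lambda=2^{-n}$, the probability that any one of the $\Order{2^{3n}}$ relevant wavelet coefficients exceeds $2^{n(3/2+\kappa)}$ is at most $C_p\, 2^{3n}\cdot 2^{-np\kappa} = C_p\, 2^{n(3-p\kappa)}$, which is summable over $n$ as soon as $p>3/\kappa$. Borel--Cantelli then produces the claimed almost-sure bound, yielding $\xi\in\cC^{-3/2-\kappa}_\fraks(\fK)$, and covering $\R\times\Lambda$ by countably many compacts finishes the argument. The technical heart of the proof is the wavelet-characterisation step, which I would invoke as a black box (essentially Proposition~3.20 of Hairer's Inventiones paper on regularity structures).
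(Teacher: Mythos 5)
Your argument is correct and follows essentially the same route as the paper's sketch: the same Gaussian equivalence-of-moments bound, the same reduction of the three-parameter supremum to a countable dyadic family with $\Order{2^{3k}}$ points per scale via a wavelet-type characterisation (the paper constructs its single test function $\ph$ exactly this way), and the same exponent count $p>3/\kappa$. The only cosmetic difference is that you conclude by Markov and Borel--Cantelli where the paper sums the $p$-th moments and invokes a Kolmogorov-type continuity criterion; both yield the almost sure finiteness of $\norm{\xi}_{\cC^{-3/2-\kappa}_\fraks(\fK)}$.
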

\begin{proof}[\Sketch]
We follow the argument outlined in~\cite[Theorem~2.7]{Chandra_Weber_LN17}.
First recall the fact that if $X$ is a Gaussian random variable, then there 
exists, for every integer $p>0$, a finite constant $C_p$ such that 
\begin{equation}
\label{eq:moments_Gaussian} 
 \bigexpec{\abs{X}^p} \leqs C_p \Bigpar{\bigexpec{X^2}}^{p/2}
\end{equation} 
(see also \Cref{exo:Isserlis}). 
It thus follows from~\eqref{eq:xi_moments} that for every integer $p>0$,  
\begin{equation}
\label{eq:proof_xi_moments} 
 \Bigexpec{\bigabs{\pscal{\xi}{\cS^\lambda_z\ph}}^p}
 \leqs C_p \lambda^{-3p/2} \norm{\ph}_\cH^p\;.
\end{equation} 
Fix a compact $\fK \subset \R\times\Lambda$, and let $\bar\fK$ be the set of 
points in $\R\times\Lambda$ at parabolic distance at most $1$ from $\fK$ 
(called the \emph{$1$-fattening} of $\fK$). Given $k\in\N_0$, we introduce a 
dyadic lattice discretisation of $\bar\fK$ on scale $2^{-k}$, given by 
\begin{equation}
 \fK_k = \bigpar{2^{-2k}\Z \times 2^{-k}\Z} \cap \bar\fK\;.
\end{equation} 
The crucial observation is that one can show that for any $\alpha\in\R$, there 
exists a test function $\ph$ and a constant $C$ such that   
\begin{equation}
 \norm{\xi}_{\cC^\alpha_\fraks(\fK)} 
 \leqs C \sup_{k\geqs0} \sup_{z\in\fK_k} 2^{k\alpha} 
 \bigabs{\pscal{\xi}{\cS^{2^{-k}}_z\ph}}\;.
\end{equation} 
The fact that $\ph$ can be chosen independent of $k$ is not trivial, but there 
are various ways of constructing such a $\ph$ (using for instance wavelets, or 
Paley--Littlewood blocks). Bounding the suprema by sums and taking the $p$th 
power, we arrive at 
\begin{equation}
 \Bigpar{\norm{\xi}_{\cC^\alpha_\fraks(\fK)}}^p 
 \leqs C^p \sum_{k\geqs0} \sum_{z\in\fK_k} 2^{k\alpha p} 
 \bigabs{\pscal{\xi}{\cS^{2^{-k}}_z\ph}}^p\;.
\end{equation} 
Note that so far, the argument is purely deterministic. At this point, we take 
expectations and use~\eqref{eq:proof_xi_moments} to arrive at 
\begin{equation}
 \Bigexpec{\Bigpar{\norm{\xi}_{\cC^\alpha_\fraks(\fK)}}^p} 
 \leqs %C^p C_p \norm{\ph}_\cH^p 
 \overline C_p \sum_{k\geqs0} 2^{3k} 2^{k\alpha p} 2^{3kp/2}
\end{equation} 
for some constant $\overline C_p$, where we have used the fact that $\fK_k$ 
contains of the order of $2^{3k}$ points. If $\alpha < \smash{- \frac32 - 
\frac3p}$, one can sum the geometric series, yielding the bound 
\begin{equation}
 \Bigexpec{\Bigpar{\norm{\xi}_{\cC^\alpha_\fraks(\fK)}}^p} < \infty 
 \qquad 
 \forall \alpha < -\frac32 - \frac3p\;.
\end{equation}
It then follows from a version of Kolmogorov's continuity theorem that there 
exists a modification of $\xi$ with bounded $\cC^\alpha_\fraks(\fK)$-norm. 
Taking $p$ large enough yields the result. 
\end{proof}

%%%%%%%%%%%%%%%%%%%%%%%%%%%%%%%%%%%%%%%%%%%%%%%%%%%%%%%%%%%%%%%%%%%%%%%%%%%%%%%%

\section{The stochastic heat equation}
\label{sec:1dheat} 

Before turning to the Allen--Cahn SPDE~\eqref{eq:AC-1d}, we consider the 
linear \emph{stochastic heat equation} on $\R_+\times\Lambda$, given by 
\begin{equation}
\label{eq:stoch_heat} 
\partial_t \phi(t,x) = \Delta\phi(t,x) + \xi(t,x)\;.
\end{equation} 
Here we have taken $2\eps=1$, since the case of general $\eps$ can easily be 
recovered by scaling. 

First recall that the \emph{heat equation} 
\begin{equation}
 \partial_t \phi(t,x) = \Delta\phi(t,x)\;, 
 \qquad \phi(0,x) = \phi_0(x) 
\end{equation} 
admits the solution $\phi(t,x) = (\e^{t\Delta}\phi_0)(x)$, where $\e^{t\Delta}$ 
is the \emph{heat semigroup} defined by 
\begin{equation}
 (\e^{t\Delta}\phi_0)(x) := \int_\Lambda P(t,x-y)\phi_0(y)\6y\;.
\end{equation} 
Here $P(t,x)$ is the \emph{heat kernel}. If $\Lambda$ were equal to $\R$, it 
would be given by  
\begin{equation}
 P_{\,\R}(t,x) := \frac{1}{\sqrt{4\pi t}} \e^{-x^2/(4t)} \indexfct{t>0}\;.
\end{equation} 
Since we are working in $\Lambda=\R/(L\Z)$, however, we have to use a 
periodicised version of the heat kernel, defined by 
\begin{equation}
\label{eq:heat_kernel_1d} 
 P(t,x) = P_\Lambda(t,x) := \sum_{k\in\Z} P_{\,\R}(t,x-kL)\;.
\end{equation} 
Next recall that the \emph{forced heat equation}
\begin{equation}
  \partial_t \phi(t,x) = \Delta\phi(t,x) + f(t,x)\;, 
 \qquad \phi(0,x) = \phi_0(x)\;,
\end{equation} 
where $f$ is some smooth forcing, can be solved by the method of variation of 
constant, also known as \emph{Duhamel principle} in the theory of PDEs. The 
result is 
\begin{align}
 \phi(t,x) 
 &= (\e^{t\Delta}\phi_0)(x) + \int_0^t \bigpar{\e^{(t-s)\Delta}f}(s,x)\6s \\
 &= (\e^{t\Delta}\phi_0)(x) + \int_0^t \int_\Lambda P(t-s,x-y)f(s,y)\6y\6s\;.
\label{eq:Duhamel} 
\end{align} 
Note that the second term on the right-hand side is the space-time convolution
$(P*f)(t,x)$. Therefore, we will sometimes write~\eqref{eq:Duhamel} in the 
short form 
\begin{equation}
 \phi = P\phi_0 + P*f\;,
\end{equation} 
keeping in mind that the star denotes space-time convolution, while no star 
stands for convolution in space only. 

It is thus natural to define the solution of the stochastic heat 
equation~\eqref{eq:stoch_heat} by 
\begin{equation}
\label{eq:SHE-fixed-point} 
 \phi = P\phi_0 + P*\xi\;,
\end{equation} 
where $(P*\xi)(t,x)$ is called the \emph{stochastic convolution}. It seems 
reasonable to expect that this definition makes sense, since the stochastic 
convolution looks similar to pairing $\xi$ with a test function. One should be 
careful, however, with the fact that $P$ has a singularity at the origin, and 
is thus not strictly speaking a test function.   

We will discuss two equivalent ways of analysing the stochastic convolution. 
The first one is based on Fourier series and fractional Sobolev spaces, while 
the second one uses Besov-- H\"older spaces and the so-called Schauder estimate.

%%%%%%%%%%%%%%%%%%%%%%%%%%%%%%%%%%%%%%%%%%%%%%%%%%%%%%%%%%%%%%%%%%%%%%%%%%%%%%%%

\subsection{Fourier series and fractional Sobolev spaces}
\label{ssec:1d-Sobolev} 

Define an orthonormal basis $(e_k)_{k\in\Z}$ of $L^2(\Lambda)$ by
\begin{equation}
 e_k(x) := \sqrt{\dfrac2L} \cos\biggpar{\dfrac{2k\pi x}{L}} 
 \quad\text{if $k>0$\;,}
 \qquad 
 e_0(x) := \dfrac1{\sqrt{L}}\;, 
 \qquad
 e_k(x) := \sqrt{\dfrac2L} \sin\biggpar{\dfrac{2k\pi x}{L}} 
 \quad\text{if $k<0$\;.} 
 \label{eq:Fourier_basis} 
\end{equation} 
These basis functions satisfy the eigenvalue problem 
\begin{equation}
\label{eq:Laplace_eigenvalues} 
 \Delta e_k = -\lambda_k e_k\;, \qquad 
 \lambda_k := \biggpar{\frac{2k\pi}{L}}^2\;.
\end{equation} 
We will write the expansion of $\phi\in L^2(\Lambda)$ in the Fourier 
basis~\eqref{eq:Fourier_basis} 
\begin{equation}
\label{eq:Fourier_series} 
 \phi(x) = \sum_{k\in\Z} \hat \phi_k e_k(x)\;.
\end{equation}

\begin{remark}
It might seem more convenient to use a complex Fourier basis of the form 
$e_k(x)=\e^{2\icx k\pi x/L}/\sqrt{L}$. While this simplifies certain 
computations involving nonlinear terms, it has the drawback that the modes $k$ 
and $-k$ become correlated. This is not really a problem, but makes it simpler 
to work with real Fourier series, at least in the case of linear equations. 
\end{remark}

Fourier series are intimately related to the scale of \emph{fractional Sobolev 
spaces} (also called \emph{Bessel potential spaces}).

\begin{definition}[Fractional Sobolev spaces]
For $s\geqs0$, the fractional Sobolev space $H^s(\Lambda)$ is given by the 
subspace of functions $\phi\in L^2(\Lambda)$ such that 
\begin{equation}
\label{eq:Sobolev_norm} 
 \norm{\phi}_{H^s}^2 := \sum_{k\in\Z} (1+k^2)^s \hat\phi_k^2 < \infty\;.
\end{equation} 
In particular, $H^0(\Lambda) = L^2(\Lambda)$. For $s<0$, $H^s(\Lambda)$ is 
the closure of $L^2(\Lambda)$ under the norm~\eqref{eq:Sobolev_norm}.
\end{definition}

A first use of fractional Sobolev spaces is that they allow to quantify the 
regularising effect of the heat semigroup.

\begin{lemma}
\label{lem:heat_Sobolev} 
For any $s\geqs0$, there exists a constant $C(s)<\infty$ such that 
\begin{equation}
 \norm{\e^{t\Delta}\phi_0}_{H^s} \leqs \bigpar{1+C(s)t^{-s/2}} 
\norm{\phi_0}_{L^2} 
\end{equation} 
holds for all $\phi_0\in L^2(\Lambda)$. 
\end{lemma}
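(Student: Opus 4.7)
The plan is to reduce the statement to a pointwise (in the Fourier index) bound on the multiplier $(1+k^2)^{s}\e^{-2\lambda_k t}$, which can then be optimised by elementary calculus.

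First, expand $\phi_0 = \sum_{k\in\Z} \hat\phi_{0,k} e_k$ in the basis~\eqref{eq:Fourier_basis}. Since each $e_k$ is an eigenfunction of $\Delta$ with eigenvalue $-\lambda_k$ by~\eqref{eq:Laplace_eigenvalues}, the heat semigroup acts diagonally,
\begin{equation}
 (\e^{t\Delta}\phi_0)(x) = \sum_{k\in\Z} \e^{-\lambda_k t}\hat\phi_{0,k}\,e_k(x)\;,
\end{equation}
so the definition of the $H^s$-norm yields
\begin{equation}
 \norm{\e^{t\Delta}\phi_0}_{H^s}^2 = \sum_{k\in\Z} (1+k^2)^s \e^{-2\lambda_k t}\hat\phi_{0,k}^2\;.
\end{equation}
The strategy is now to factor out
\begin{equation}
 M(s,t) := \sup_{k\in\Z} (1+k^2)^s \e^{-2\lambda_k t}
\end{equation}
from the sum, so that $\norm{\e^{t\Delta}\phi_0}_{H^s}^2 \leqs M(s,t)\norm{\phi_0}_{L^2}^2$.

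To control $M(s,t)$, I would note that for $k=0$ the factor is $1$, while for $\abs{k}\geqs1$ one has $(1+k^2)^s \leqs 2^s k^{2s}$ and $\lambda_k = (2\pi/L)^2 k^2$. Setting $c = 2(2\pi/L)^2$ and $u = k^2 t$, the estimate becomes
\begin{equation}
 (1+k^2)^s \e^{-2\lambda_k t} \leqs 2^s t^{-s} u^s \e^{-cu} \leqs 2^s t^{-s} \sup_{u\geqs0} u^s \e^{-cu}\;.
\end{equation}
The elementary maximum of $u\mapsto u^s \e^{-cu}$ is attained at $u=s/c$ with value $(s/(c\e))^s$, so $M(s,t)\leqs \max\bigpar{1,\,K(s)^2 t^{-s}}$ for an explicit constant $K(s)$ depending only on $s$ and $L$.

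Taking square roots and using $\sqrt{\max(1,a^2)}\leqs 1+a$ for $a\geqs 0$ gives $\sqrt{M(s,t)}\leqs 1 + C(s)t^{-s/2}$ with $C(s) = K(s)$, whence
\begin{equation}
 \norm{\e^{t\Delta}\phi_0}_{H^s} \leqs \sqrt{M(s,t)}\,\norm{\phi_0}_{L^2} \leqs \bigpar{1+C(s)t^{-s/2}}\norm{\phi_0}_{L^2}\;,
\end{equation}
which is the claim. There is no real obstacle here: the proof is essentially a one-line calculus exercise on the Fourier side, the only point to be slightly careful about is the zero mode $k=0$ (which contributes the ``$1$'' in the prefactor and prevents us from getting a purely $t^{-s/2}$ bound).
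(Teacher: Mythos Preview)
Your proof is correct and follows essentially the same approach as the paper, which leaves the proof as an exercise with the hint to use the Fourier representation of $\e^{t\Delta}\phi_0$ together with the boundedness of $x\mapsto x^s\e^{-x}$. Your substitution $u=k^2t$ and separate treatment of the zero mode are exactly what is needed.
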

%
% \begin{proof}
% In Fourier representation, we have 
% \begin{equation}
%  (\e^{t\Delta}\phi_0)(x) = \sum_{k\in\Z} \e^{-\lambda_k t} \hat\phi_{0,k} 
% e_k(x)\;.
% \end{equation}
% Computing the $H^s$-norm and using the fact that $x\mapsto x^s\e^{-x}$ is 
% bounded uniformly in $x$ by a constant depending on $s$. 
% \end{proof}

\begin{exercise}
Prove \Cref{lem:heat_Sobolev}, using the Fourier representation of 
$\e^{t\Delta}\phi_0$ and the fact that the map $x\mapsto x^s\e^{-x}$ is 
bounded uniformly in $x>0$ by a constant depending on $s$.
\end{exercise}

Projecting the stochastic heat equation~\eqref{eq:stoch_heat} on the basis 
function $e_k$, and using the observation on $\pscal{\xi}{\indexfct{0\leqs 
t\leqs T}e_k}$ made in \Cref{ex:noise_fourier}, we obtain that each Fourier 
mode 
evolves according to the SDE 
\begin{equation}
 \6\hat\phi_k = -\lambda_k \hat\phi_k \6t + \6W^{(k)}_t\;,
\end{equation} 
where the $(W^{(k)}_t)_{t\geqs0}$ are independent Wiener processes. The 
solution is given by
\begin{equation}
 \hat\phi_k(t) = \e^{-\lambda_k t}\hat\phi_k(0)
 + \int_0^t \e^{-\lambda_k(t-s)} \6W^{(k)}_s\;.
\end{equation} 
Hence $\hat\phi_0(t)$ is a Brownian motion, while all other $\hat\phi_k(t)$ are 
Orstein--Uhlenbeck processes. The stochastic convolution can thus be defined as 
\begin{equation}
\label{eq:stoch_conv_Fourier} 
 (P*\xi)(t,x) = \sum_{k\in\Z} \int_0^t \e^{-\lambda_k(t-s)} \6W^{(k)}_s 
e_k(x)\;,
\end{equation} 
which together with the initial-condition term $\e^{t\Delta}\phi_0$ defines the 
so-called \emph{mild solution} of the stochastic heat equation. Using It\^o's 
isometry, we obtain 
\begin{equation}
 \Bigexpec{\norm{(P*\xi)(t,\cdot)}_{H^s}^2}
 = t + \sum_{k\in\Z^*} (1+k^2)^s \frac{1-\e^{-2\lambda_k t}}{2\lambda_k}\;,
\end{equation} 
which is finite for finite $t$ for all $s<1/2$. In fact, we have the following 
result, which is a special case of~\cite[Theorem~5.13]{Hairer_LN_2009}. 

\begin{theorem}[Sobolev regularity of the stochastic convolution]
The stochastic convolution $(P*\xi)(t,\cdot)$ belongs to $H^s(\Lambda)$ for 
every $s < \frac12$ and $t\in\R_+$. 
\end{theorem}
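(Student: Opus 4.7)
The plan is to exploit the Fourier representation \eqref{eq:stoch_conv_Fourier} of the stochastic convolution and to show that the series defining the squared $H^s$-norm converges in $L^2(\Omega)$, and hence almost surely, whenever $s<\frac12$. Since the goal is pointwise-in-$t$ membership rather than path regularity, no continuity argument is needed; the proof reduces to a direct second-moment estimate.

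First I would isolate the Fourier modes
\[
 \hat\psi_k(t) := \int_0^t \e^{-\lambda_k(t-s)} \6W^{(k)}_s\;, \qquad k\in\Z\;,
\]
which, since the $W^{(k)}$ are independent Wiener processes (\Cref{ex:noise_fourier}), are independent centred Gaussians; It\^o's isometry gives $\E[\hat\psi_k(t)^2] = (1-\e^{-2\lambda_k t})/(2\lambda_k)$ for $k\neq0$ and $\E[\hat\psi_0(t)^2]=t$. By orthonormality of the basis $\set{e_k}$, the partial sums $\psi^N(t,\cdot) := \sum_{\abs{k}\leqs N}\hat\psi_k(t)\,e_k$ satisfy
\[
 \bigexpec{\norm{\psi^N(t,\cdot)}_{H^s}^2}
 = \sum_{\abs{k}\leqs N} (1+k^2)^s \bigexpec{\hat\psi_k(t)^2}\;.
\]

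Second, I would estimate the individual terms. Using $1-\e^{-2\lambda_k t}\leqs 1$ together with $\lambda_k=(2k\pi/L)^2$ yields, for some $C=C(L,t)<\infty$,
\[
 (1+k^2)^s \bigexpec{\hat\psi_k(t)^2} \leqs C(1+k^2)^{s-1} \qquad \forall k\neq 0\;.
\]
The series $\sum_{k\in\Z^*}(1+k^2)^{s-1}$ converges iff $2(s-1)<-1$, i.e.\ iff $s<\frac12$. Hence for every such $s$,
\[
 \sup_{N\geqs 1} \bigexpec{\norm{\psi^N(t,\cdot)}_{H^s}^2} < \infty\;,
\]
and by the same orthogonality $(\psi^N(t,\cdot))_N$ is Cauchy in $L^2(\Omega;H^s(\Lambda))$. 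Its limit defines $(P*\xi)(t,\cdot)$ as an element of $L^2(\Omega;H^s(\Lambda))$, and in particular the stochastic convolution lies in $H^s(\Lambda)$ almost surely.

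The computation is essentially routine once the Fourier representation is in place. The only conceptual subtlety is the borderline: for $s=\frac12$ the estimate $(1+k^2)^{s-1}\sim\abs{k}^{-1}$ is \emph{not} summable, which reflects the well-known loss of one Sobolev derivative for space-time white noise in one spatial dimension relative to what the heat semigroup can recover. I expect no real obstacle for $s<\frac12$ strictly; the full statement follows immediately by letting $s\uparrow\frac12$ and noting that the spaces $H^s$ are nested.
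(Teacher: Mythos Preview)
Your proposal is correct and follows essentially the same approach as the paper: the paper computes, just before stating the theorem, the identity $\E\bigl[\norm{(P*\xi)(t,\cdot)}_{H^s}^2\bigr] = t + \sum_{k\in\Z^*} (1+k^2)^s (1-\e^{-2\lambda_k t})/(2\lambda_k)$ via It\^o's isometry and observes that this is finite for $s<\frac12$, then defers to a reference for the full statement. Your write-up is actually more self-contained, since you make explicit the Cauchy-in-$L^2(\Omega;H^s)$ argument that upgrades finiteness of the expected norm to almost-sure membership in $H^s$.
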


\begin{exercise}
Compute the covariance $\bigexpec{(P*\xi)(t,x)(P*\xi)(s,y)}$ of the stochastic 
convolution~\eqref{eq:stoch_conv_Fourier}.  
\Hint It may help to distinguish the cases $t>s$, $t=s$ and $t<s$. 
\end{exercise}

%%%%%%%%%%%%%%%%%%%%%%%%%%%%%%%%%%%%%%%%%%%%%%%%%%%%%%%%%%%%%%%%%%%%%%%%%%%%%%%%

\subsection{Besov--H\"older spaces and Schauder estimate}
\label{ssec:1d-Besov} 

It is also possible to quantify the H\"older regularity of the stochastic 
convolution. One way of doing this is to use 
\Cref{thm:space_time_white_noise_reg} on the regularity of space-time white 
noise in conjunction with a regularising estimate for the heat semigroup, 
called \emph{Schauder estimate}. To state this result in full generality, we 
extend the definition of parabolic H\"older spaces given in 
\Cref{def:Holder_0-1} to exponents $\alpha>1$ in the following way.

\begin{definition}[Parabolic H\"older spaces of positive index]
\label{def:Holder-positive}
Let $\alpha\geqs0$. The space $\cC^\alpha_\fraks(\R\times\Lambda)$ consists of 
all $f:\R\times\Lambda\to\R$ such that there exist polynomials 
$\set{P_z}_{z\in\R\times\Lambda}$ of parabolic degree less than $\alpha$, such 
that for every compact $\fK\subset\R\times\Lambda$
\begin{equation}
\label{eq:norm_C-alpha} 
 \norm{f}_{\cC^\alpha_\fraks(\fK)}
 := \sup_{z\in\fK} \sup_{\ph\in B_0} \sup_{\lambda\in(0,1]}
 \biggabs{\frac{\pscal{f-P_z}{\cS^\lambda_z\ph}}{\lambda^\alpha}} < \infty\;,
\end{equation} 
where $B_0$ is defined right before~\eqref{eq:def_Br}. 
\end{definition}

At first sight, this definition may seem redundant with \Cref{def:Holder_0-1} 
when $0<\alpha<1$, but it is in fact equivalent. To see this, consider the case 
$z=(0,0)$, and take $P_0(t,x) = f(0,0)$. Then 
\begin{align}
 \pscal{f-P_0}{\cS^\lambda_z\ph}
 &= \int_\R \int_\Lambda \bigbrak{f(t,x)-f(0,0)} \frac{1}{\lambda^3}
 \ph\biggpar{\frac{t}{\lambda^2},\frac{x}{\lambda}} \6x\6t \\
 &= \int_\R \int_\Lambda \bigbrak{f(\lambda^2t,\lambda x)-f(0,0)}
 \ph(t,x) \6x\6t\;,
\end{align} 
which has indeed order $\lambda^\alpha$ if $f$ is $\alpha$-H\"older in the 
parabolic norm. For $\alpha>1$, one can easily check that $P_z$ must be  
the Taylor expansion of $f$ at $z$ to parabolic order $\intpart{\alpha}$. 

\begin{exercise}
\label{exo:Holder_derivative} 
Prove that if $f\in\cC^\alpha_\fraks$ and $k\in\N_0^2$ is a multiindex, then 
$\DD^k f\in\cC^{\alpha-\abs{k}_\fraks}_\fraks$, where derivatives are 
understood in the sense of distributions if $\alpha < \abs{k}_\fraks$. 
\Hint Show that 
\begin{equation}
 \pscal{\DD^k(f-P_z)}{\cS^\lambda_z\ph} = -\frac{1}{\lambda^{\abs{k}_\fraks}} 
\pscal{f-P_z}{\cS^\lambda_z\DD^k\ph}
\end{equation} 
using integration by parts. 
\end{exercise}

The main result quantifying the regularising properties of the heat semigroup 
is the following Schauder estimate. 

\begin{theorem}[Schauder estimate]
\label{thm:Schauder}
For every $\alpha\in\R\setminus\Z$ and $t>0$, there exists a constant $C$ such 
that for every $\zeta\in\cC^\alpha_\fraks(\R_+\times\Lambda)$, one has
\begin{equation}
 \norm{P*\zeta}_{\cC^{\alpha+2}_\fraks([0,t]\times\Lambda)} 
 \leqs C \norm{\zeta}_{\cC^\alpha_\fraks([0,t]\times\Lambda)}\;.
\end{equation} 
Therefore $P*\zeta \in \cC^{\alpha+2}_\fraks(\R_+\times\Lambda)$.  
The heat semigroup is said to be \emph{regularising of 
parabolic order $2$.}
\end{theorem}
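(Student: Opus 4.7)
The plan is to prove the Schauder estimate by a dyadic decomposition of the heat kernel combined with the testing characterisation of the H\"older--Besov spaces in \Cref{def:Holder-negative} and \Cref{def:Holder-positive}. First I would localise by writing the periodic kernel as $P = P_{\R} + R$: the terms with $k \neq 0$ in~\eqref{eq:heat_kernel_1d} have no singularity on any fixed compact $\fK\subset \R\times\Lambda$, so $R*\zeta$ is smooth and its $\cC^{\alpha+2}_\fraks(\fK)$-seminorm is trivially bounded by $\norm{\zeta}_{\cC^\alpha_\fraks(\fK)}$; the real issue is therefore the singularity of $P_{\R}$ at the origin.

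Next, I would decompose $P_{\R}$ via a smooth radial cutoff $\rho$ on $\norm{\cdot}_\fraks$: setting $P_n(z) := [\rho(2^{n+1}\norm{z}_\fraks)-\rho(2^n\norm{z}_\fraks)]P_{\R}(z)$ for $n\geqs 0$, each $P_n$ is smooth and supported in the parabolic shell $\norm{z}_\fraks \asymp 2^{-n}$. Using the exact scale invariance $P_{\R}(\lambda^2 t,\lambda x) = \lambda^{-1}P_{\R}(t,x)$, a direct computation yields $P_n = 2^{-2n}\, \cS^{2^{-n}}_0 \varphi_n$ for a sequence $(\varphi_n)_{n\geqs 0}$ of functions supported in the parabolic unit ball and uniformly bounded in every $\cC^r$-norm; the residual $P_{\R}-\sum_n P_n$ is smooth at the origin and handled as $R$ above.

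To bound the testing pairing $\pscal{P*\zeta - Q_z}{\cS^\lambda_z\psi}$ for $\psi\in B_r$ and $\lambda\in(0,1]$ -- where $Q_z$ is the parabolic Taylor polynomial of $P*\zeta$ of order $\intpart{\alpha+2}$ at $z$ when $\alpha+2>0$ and $Q_z=0$ otherwise -- I use the adjoint identity $\pscal{P_n*\zeta}{\cS^\lambda_z\psi} = \pscal{\zeta}{\check P_n * \cS^\lambda_z\psi}$ and split the sum over $n$ at $2^{-n}\asymp\lambda$. A direct computation shows that $\check P_n * \cS^\lambda_z\psi$ equals $2^{-2n}$ times a rescaled test function of the form $\cS^{\mu_n}_{z_n}\tilde\psi_n$ with $\tilde\psi_n$ uniformly in $B_r$ and $\mu_n \asymp \max(\lambda,2^{-n})$. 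Applying the $\cC^\alpha_\fraks$-bound on $\zeta$ (with its own Taylor subtraction when $\alpha>0$), each term is controlled by $2^{-2n}\mu_n^\alpha \norm{\zeta}_{\cC^\alpha_\fraks}$. Summing over fine scales $2^{-n}\leqs\lambda$ yields $\sum 2^{-2n}\lambda^\alpha \lesssim \lambda^{\alpha+2}$, while summing over coarse scales $2^{-n}\geqs\lambda$ yields $\sum 2^{-n(\alpha+2)} \lesssim \lambda^{\alpha+2}$, provided $\alpha+2\notin\Z$ -- the non-integer hypothesis being precisely what rules out a logarithmic divergence at the borderline. For $\alpha+2>0$ the na\"ive coarse-scale estimate is only $\Order{1}$; the Taylor subtraction $Q_z$ is exactly what converts this into the required $(\lambda/2^{-n})^{\intpart{\alpha+2}+1}$ gain, obtained by redistributing derivatives onto $\cS^\lambda_z\psi$ through integration by parts, in the spirit of \Cref{exo:Holder_derivative}.

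The main obstacle is the book-keeping in the positive-index case: one must verify that $P*\zeta$ is regular enough for the parabolic Taylor polynomial $Q_z$ at $z$ to be well defined pointwise (this uses the fact that the scales $2^{-n}\geqs 1$ already provide a genuine smoothing from the heat semigroup, a property that can be bootstrapped from the negative-index estimate proved first), and that the $\cC^r$-norms acquired by the $\tilde\psi_n$ after convolution with $\check P_n$ and after the Taylor subtraction remain controlled uniformly in $n$. A minor additional point is that the polynomial $Q_z$ must be shown to be independent of the testing scale $\lambda$, which follows by a uniqueness argument on letting $\lambda\downarrow 0$.
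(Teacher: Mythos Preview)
Your approach is essentially the paper's: dyadic decomposition $P = \sum_n P_n + R$ with each $P_n$ behaving as $2^{-2n}$ times a scaled test function at parabolic scale $2^{-n}$, a split into coarse and fine scales at $2^{-n}\asymp\lambda$, and Taylor subtraction for the positive-index case. The only differences are cosmetic --- the paper evaluates $\pscal{\zeta}{P_n(\bar z-\cdot)}$ pointwise on coarse scales before integrating against $\cS^\lambda_z\ph$, and distributes the Taylor subtraction to each $P_n$ rather than subtracting a global $Q_z$ --- and the mechanism you call ``integration by parts'' is really the Taylor remainder estimate for the smooth function $P_n*\zeta$.
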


\begin{remark}
This result is in general \emph{not} true if $\alpha$ is an integer. In fact, 
our definition of the spaces $\cC^\alpha_\fraks$ implies a kind of 
discontinuity at integer values of $\alpha$. For instance, $\alpha$-H\"older 
functions approach continuously differentiable functions as $\alpha$ decreases 
to $1$ from above, and Lipschitz functions as $\alpha$ increase to $1$ from 
below. This will, however, not cause any problems, since we will always be able 
to slightly decrease $\alpha$. 
\end{remark}

\begin{proof}[\Sketch]
The proof we sketch here is not the only possible one, and perhaps not the 
shortest, but it will prepare us for the theory of regularity structures. 
The main idea is to decompose $P$ as a sum
\begin{equation}
\label{eq:P_sum} 
 P(z) = \sum_{n\geqs 0} P_n(z) + R(z)\;,
\end{equation} 
where $R$ is smooth and bounded, and each $P_n$ is supported in a ball 
$\setsuch{z}{\norm{z}_\fraks \leqs 2^{-n}}$, and satisfies the bound 
\begin{equation}
\label{eq:Pn_norm} 
 \bigabs{\DD^k P_n(z)} \leqs C 2^{(1+\abs{k}_\fraks)n} 
\end{equation} 
for every multiindex $k$. Such a decomposition is possible as shown 
in~\cite[Lemma~5.5]{Hairer2014}. We then have 
\begin{equation}
 (P*\zeta)(z) = \sum_{n\geqs0} \pscal{\zeta}{P_n(z-\cdot)} + 
\pscal{\zeta}{R(z-\cdot)}\;.
\end{equation} 
The term involving $R$ is smooth, so we only need to concern ourselves with 
the sum over $n$. 

Consider first the case $\alpha < -2$. The aim is to show that for a test 
function $\ph$, 
\begin{equation}
\label{eq:proof_Schauder_bound} 
 \biggabs{\sum_{n\geqs0} \int_{[0,t]\times\Lambda} 
 \pscal{\zeta}{P_n(\bar z-\cdot)}
 (\cS^\lambda_z \ph)(\bar z)\6\bar z} \lesssim \lambda^{\alpha+2}\;.
\end{equation} 
The proof proceeds differently depending on the relative value of $2^{-n}$ and 
$\lambda$. If $2^{-n} > \lambda$, it follows from the fact that 
$\zeta\in\cC^\alpha_\fraks$ and the properties of $P_n$ that 
\begin{equation}
\label{eq:proof_Schauder_nlarge} 
 \bigabs{\pscal{\zeta}{P_n(\bar z-\cdot)}} \lesssim 2^{-(\alpha+2)n}\;.
\end{equation} 
Indeed, $P_n$ can be bounded above by a scaled test function multiplied by  
$2^{-2n}$, cf.~\cite[Remark~2.21]{Hairer2014}. Since $\alpha+2<0$, the sum 
over all $n$ such that $2^{-n} > \lambda$ is dominated by the largest $n$, and 
satisfies~\eqref{eq:proof_Schauder_bound} as required.  

If $2^{-n}\leqs\lambda$, we write the quantity to be bounded as 
$\pscal{\zeta}{Y^\lambda_n}$, where, for fixed $z$, 
\begin{equation}
\label{eq:proof_Ylambda} 
 Y^\lambda_n(z') := \int_{[0,t]\times\Lambda} P_n(\bar z-z') 
 (\cS^\lambda_z \ph)(\bar z)\6\bar z\;.
\end{equation} 
Here we note that $Y^\lambda_n$ is supported in a ball of parabolic radius 
$2\lambda$ around $z$ (see \Cref{fig:Schauder}). Bounding $\cS^\lambda_z \ph$ 
by $\lambda^{-3}$ and using the properties of $P_n$, we obtain 
\begin{equation}
 \bigabs{Y^\lambda_n(z')} \lesssim 2^{-2n}\lambda^{-3}\;.
\end{equation} 
A similar bound can be obtained for derivatives of $Y^\lambda_n$ 
(cf.~\cite[Lemma~5.19]{Hairer2014}), and using once again the fact that 
$\zeta\in\cC^\alpha_\fraks$, we arrive at 
\begin{equation}
 \bigabs{\pscal{\zeta}{Y^\lambda_n}} \lesssim \lambda^\alpha 2^{-2n}\;.
\end{equation} 
Summing over all $n$ such that $2^{-n}\leqs\lambda$ yields again a bound of the 
form~\eqref{eq:proof_Schauder_bound}. 

\begin{figure}[tb]
\begin{center}
%\vspace{-5mm}
\begin{tikzpicture}[>=stealth',main node/.style={circle,minimum
size=0.1cm,inner sep=0pt,fill=white,draw},scale=1]

% grid to help positioning
%\draw[help lines] (-5,-1) grid (5,5);

\draw[blue,fill=blue!50, fill opacity=0.5] (0,0) circle (2.5); 
\draw[teal,fill=teal!50, fill opacity=0.5] (3,0) circle (1.3); 

\draw[semithick,->] (0,0) -- +(120:2.5);
\draw[semithick,->] (3,0) -- +(60:1.3);

\node[main node] at (0,0) {};
\node[main node] at (3,0) {};

\node[] at (0,-0.3) {$z$};
\node[] at (3,-0.3) {$z'$};
\node[] at (-0.3,1.2) {$\lambda$};
\node[] at (3.7,0.3) {$2^{-n}$};

\node[blue] at (-1,-1) {$\cS^\lambda_z\ph$};
\node[teal] at (3.5,-0.8) {$P_n$};

\end{tikzpicture}
\vspace{-3mm}
\end{center}
\caption[]{Domain of the integral~\eqref{eq:proof_Ylambda} defining 
$Y^\lambda_n(z')$. The functions $\cS^\lambda_z$ and $P_n(\bar z-\cdot)$ are 
supported, respectively, in a ball of (parabolic) radius $\lambda$ centred in 
$z$, and in a ball of radius $2^{-n}$ centred in $z'$. Therefore, 
$Y^\lambda_n(z')$ vanishes if $\norm{z'-z}_\fraks > \lambda+2^{-n}$. The 
support of the integrand lies in the intersection of the balls, and is thus 
contained in a ball of radius $2^{-n}$.}
\label{fig:Schauder}
\end{figure}
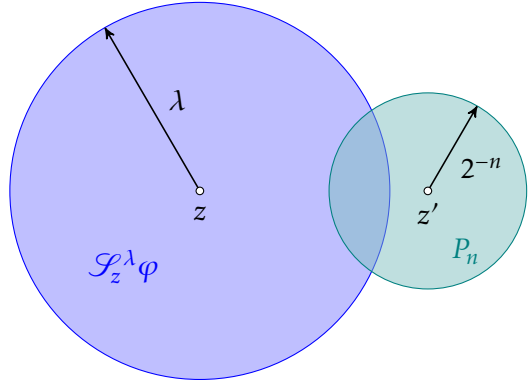

Consider next the case $-2<\alpha<-1$. Then, instead of the 
bound~\eqref{eq:proof_Schauder_bound}, we have to show that 
\begin{equation}
\label{eq:proof_Schauder_bound2} 
 \biggabs{\sum_{n\geqs0} \int_{[0,t]\times\Lambda} 
 \pscal{\zeta}{P_n(\bar z-\cdot) - P_n(z-\cdot)}
 (\cS^\lambda_z \ph)(\bar z)\6\bar z} \lesssim \lambda^{\alpha+2}\;.
\end{equation} 
The difference is that we now subtract the term $P_n(z-\cdot)$, which accounts 
for the polynomial $P_z$ in~\eqref{eq:norm_C-alpha}. If $2^{-n}>\lambda$, using 
a Taylor expansion, one obtains instead 
of~\eqref{eq:proof_Schauder_nlarge}
\begin{equation}
 \bigabs{\pscal{\zeta}{P_n(\bar z-\cdot) - P_n(z-\cdot)}} \lesssim 
 \sum_{k\in\set{(1,0),(0,1)}} \norm{z-\bar z}^{\abs{k}_\fraks}_\fraks
 2^{(\abs{k}_\fraks-\alpha-2)n}\;.
\end{equation} 
Combining this with the bound 
\begin{equation}
 \int_{[0,t]\times\Lambda} \norm{z-\bar z}^{\abs{k}_\fraks}_\fraks 
 (\cS^\lambda_z \ph)(\bar z)\6\bar z \lesssim \lambda^{\abs{k}_\fraks}
\end{equation} 
and summing over $n$ yields the result. 
For $2^{-n}\leqs\lambda$, the term $Y^\lambda_n(z')$ has to be replaced by 
\begin{equation}
 Y^\lambda_n(z') - P_n(z-z')
\end{equation} 
since the scaled test function integrates to $1$. When tested against $\zeta$, 
the new term $P_n(z-\cdot)$ produces an extra term $2^{-(\alpha+2)n}$, whose 
sum over $n$ again satisfies the required bound.

For large values of $\alpha$, the proof is similar, except that one has to 
extract more terms in the Taylor expansion of the $P_n$.
\end{proof}

We now directly obtain the nontrivial fact that the stochastic convolution is 
H\"older continuous of exponent almost $1/2$. This will become very useful when 
solving nonlinear equations with a fixed-point argument. 

\begin{corollary}[H\"older regularity of the stochastic convolution]
\label{cor:stoch_conv_Holder} 
The stochastic convolution $P*\xi$ belongs to 
$\cC^\alpha_\fraks(\R_+\times\Lambda)$ for all $\alpha < 1/2$.
\end{corollary}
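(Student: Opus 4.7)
The plan is to chain together the two preceding theorems. Theorem~\ref{thm:space_time_white_noise_reg} places $\xi$ in $\cC^{-3/2-\kappa}_\fraks$ for every $\kappa>0$, and Theorem~\ref{thm:Schauder} guarantees that convolution with the heat kernel $P$ is regularising of parabolic order $2$, provided the starting regularity exponent is not an integer. Since $-3/2-\kappa\notin\Z$ for all small $\kappa>0$, Schauder applies and yields $P*\xi\in\cC^{1/2-\kappa}_\fraks$.

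Now given any $\alpha<1/2$, I would fix $\kappa>0$ small enough that $\alpha\leqs 1/2-\kappa$ and $1/2-\kappa\notin\Z$. The natural embedding $\cC^\beta_\fraks(\fK)\hookrightarrow\cC^\alpha_\fraks(\fK)$ for $\alpha<\beta$ (both non-integer) on a fixed compact $\fK$, which is a direct consequence of Definitions~\ref{def:Holder_0-1} and~\ref{def:Holder-positive} upon bounding $\lambda^\beta\leqs\lambda^\alpha$ for $\lambda\in(0,1]$, then concludes the argument.

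The one small point that needs care is the domain of definition: space-time white noise lives on $\R\times\Lambda$, whereas the stochastic convolution $P*\xi$ and the conclusion concern $\R_+\times\Lambda$. I would handle this by replacing $\xi$ with its restriction $\xi\,\indexfct{t\geqs0}$ before convolving; inspection of the proof of Theorem~\ref{thm:space_time_white_noise_reg} shows that the second-moment bound~\eqref{eq:proof_xi_moments} is unchanged by this truncation, so the restricted noise still sits in $\cC^{-3/2-\kappa}_\fraks$. There is no real obstacle, since the two ingredients have already been established; this corollary is essentially bookkeeping. The endpoint $\alpha=1/2$ is genuinely unreachable by this argument because of the strict inequality in Theorem~\ref{thm:space_time_white_noise_reg}, but the loss of an arbitrarily small $\kappa$ will be harmless for the fixed-point argument that we will set up later to solve the Allen--Cahn equation, where all that matters is a strictly positive parabolic H\"older exponent for the solution.
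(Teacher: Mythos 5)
Your argument is exactly the paper's: Corollary~\ref{cor:stoch_conv_Holder} is proved there in one line by combining Theorem~\ref{thm:space_time_white_noise_reg} with the Schauder estimate of Theorem~\ref{thm:Schauder}, which is precisely your main step. The extra remarks on the embedding $\cC^\beta_\fraks\hookrightarrow\cC^\alpha_\fraks$ for $\alpha<\beta$ and on restricting to $\R_+\times\Lambda$ are correct elaborations of points the paper leaves implicit.
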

\begin{proof}
This follows immediately by combining \Cref{thm:space_time_white_noise_reg} and 
the Schauder estimate of \Cref{thm:Schauder}. 
\end{proof}

%%%%%%%%%%%%%%%%%%%%%%%%%%%%%%%%%%%%%%%%%%%%%%%%%%%%%%%%%%%%%%%%%%%%%%%%%%%%%%%%

\section{Existence and uniqueness of solutions}
\label{sec:1dex} 

We turn now to semilinear SPDEs of the form 
\begin{equation}
\label{eq:AC-1d-rep} 
 \partial_t \phi(t,x) = \Delta\phi(t,x) + F(\phi(t,x)) 
 + \sqrt{2\eps} \xi(t,x)
\end{equation} 
with initial condition $\phi_0(x)$ and locally Lipschitz nonlinearity $F$. For 
instance, in the case of the Allen--Cahn SPDE~\eqref{eq:AC-1d}, 
$F(\phi)=\phi-\phi^3$.  By analogy with the Duhamel principle 
yielding~\eqref{eq:SHE-fixed-point} in the case of the stochastic heat 
equation, 
one way of defining a notion of solution to~\eqref{eq:AC-1d-rep} is as a fixed 
point of 
\begin{equation}
\label{eq:AC-1d-FP} 
 \phi = P\phi_0 + P*F(\phi) + \sqrt{2\eps} P*\xi\;.
\end{equation} 
A more explicit notation of this relation, analogous to~\eqref{eq:Duhamel}, is
\begin{equation}
 \label{eq:AC-1d-FP-semigroup}
 \phi(t) = \e^{t\Delta}\phi_0 
 + \int_0^t \e^{(t-s)\Delta} F(\phi(s)) \6s 
 + \sqrt{2\eps} \int_0^t \e^{(t-s)\Delta} \6\xi(s)\;.
\end{equation} 

\begin{definition}[Mild solution] \hfill
\begin{itemize}
\item 	A stochastic process $t\mapsto \phi(t)$ with values in a Banach space 
$\cB$ satisfying~\eqref{eq:AC-1d-FP-semigroup} almost surely for every $t>0$ is 
called a \emph{mild solution} of the SPDE~\eqref{eq:AC-1d-rep}. 

\item 	A \emph{local mild solution} of~\eqref{eq:AC-1d-rep} is a pair 
$(\phi,\tau)$, where $t\mapsto \phi(t)$ is a stochastic process and $\tau$ is a 
stopping time such that~\eqref{eq:AC-1d-FP-semigroup} is satisfied almost 
surely 
for all $t\leqs\tau$. 

\item 	A local mild solution $(\phi,\tau)$ is \emph{maximal} if for any local 
mild solution $(\tilde\phi,\tilde\tau)$, one has $\tilde\tau \leqs \tau$ almost 
surely.
\end{itemize}
\end{definition}

Existence and uniqueness of a maximal local mild solution can be proved in a 
way which is analogous to what is done for ODEs and SDEs. 

\begin{theorem}[Existence of a unique maximal local mild solution]
\label{thm:semilinear_local_existence} 
Let $\cC^0$ be the Banach space of continuous functions $f:\Lambda\to\R$, 
equipped with the supremum norm. For every $\phi_0\in\cC^0$, the semilinear  
equation~\eqref{eq:AC-1d-rep} admits a unique maximal local mild 
solution $\phi$ taking values in $\cC^0$. Moreover, this solution has 
continuous sample paths, and 
satisfies $\lim_{t\nearrow\tau}\norm{\phi(t)}_{\cC^0} = \infty$ almost surely 
on the set $\set{\tau < \infty}$. 
\end{theorem}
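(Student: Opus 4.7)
The plan is to reduce the SPDE to a random deterministic PDE by subtracting the stochastic convolution, and then apply a standard Banach fixed-point argument. Write $\Psi_\xi := \sqrt{2\eps}\,P\mathord{*}\xi$ for the stochastic convolution. By \Cref{cor:stoch_conv_Holder}, $\Psi_\xi$ has almost surely continuous sample paths with values in $\cC^0$, so we may fix a realisation of $\omega\in\Omega$ on which $\Psi_\xi\in\cC([0,\infty),\cC^0)$ and work pathwise. Setting $v := \phi - \Psi_\xi$, the fixed-point problem~\eqref{eq:AC-1d-FP} becomes $v = \Phi(v)$ where
\begin{equation*}
  \Phi(v)(t) := \e^{t\Delta}\phi_0 + \int_0^t \e^{(t-s)\Delta} F\bigpar{v(s)+\Psi_\xi(s)} \6s\;.
\end{equation*}
The key analytic inputs are that $\e^{t\Delta}$ is a strongly continuous contraction semigroup on $\cC^0$ (by the maximum principle applied to the heat kernel~\eqref{eq:heat_kernel_1d}, which is a probability kernel), and that $F(\phi)=\phi-\phi^3$ is locally Lipschitz: on the ball $\norm{\phi}_{\cC^0}\leqs R$ one has $\abs{F(\phi_1)-F(\phi_2)} \leqs L(R)\abs{\phi_1-\phi_2}$ with $L(R)=1+3R^2$.

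First I would fix $T_0>0$, let $M := \norm{\phi_0}_{\cC^0}+\sup_{t\in[0,T_0]}\norm{\Psi_\xi(t)}_{\cC^0}$, and consider the closed ball
$\cB_T := \bigsetsuch{v\in\cC([0,T],\cC^0)}{\sup_{t\leqs T}\norm{v(t)-\e^{t\Delta}\phi_0}_{\cC^0}\leqs 1}$
inside $\cC([0,T],\cC^0)$ for $T\in(0,T_0]$ to be chosen. Using the contraction property of $\e^{t\Delta}$ and the bound $\sup_{v\in\cB_T,\,s\leqs T}\norm{F(v(s)+\Psi_\xi(s))}_{\cC^0} \leqs K(M+1)$ for some constant $K$ depending on the polynomial growth of $F$, one obtains
\begin{equation*}
  \bignorm{\Phi(v)(t)-\e^{t\Delta}\phi_0}_{\cC^0} \leqs T\cdot K(M+1)\;,
\end{equation*}
which is $\leqs 1$ for $T$ small enough. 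A similar estimate using the local Lipschitz constant $L(M+1)$ yields
\begin{equation*}
  \bignorm{\Phi(v_1)(t)-\Phi(v_2)(t)}_{\cC^0} \leqs T\,L(M+1) \sup_{s\leqs T}\norm{v_1(s)-v_2(s)}_{\cC^0}\;,
\end{equation*}
so $\Phi$ is a contraction on $\cB_T$ for $T>0$ sufficiently small (depending only on $M$). Banach's fixed-point theorem then yields a unique $v\in\cB_T$ satisfying $v=\Phi(v)$, hence a unique mild solution $\phi = v+\Psi_\xi$ on $[0,T]$, pathwise. Continuity of sample paths follows from continuity of $t\mapsto\e^{t\Delta}\phi_0$ in $\cC^0$ together with continuity of the integral term (itself a consequence of dominated convergence and the strong continuity of $\e^{t\Delta}$) and of $\Psi_\xi$.

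To construct the maximal solution, I would iterate: given a solution on $[0,\tau_n]$ with $\tau_n<\infty$, restart from $\phi(\tau_n)\in\cC^0$ using the Markov property of the driving noise, obtaining a solution on $[0,\tau_n+\delta_n]$ for some $\delta_n>0$ depending on $\norm{\phi(\tau_n)}_{\cC^0}$ and $\sup_{t\in[\tau_n,\tau_n+T_0]}\norm{\Psi_\xi(t)-\Psi_\xi(\tau_n)}_{\cC^0}$. Setting $\tau := \sup_n \tau_n$, uniqueness on each interval forces the concatenated object to be the unique maximal mild solution. For the blow-up criterion, one argues by contradiction: if $\tau<\infty$ and $\liminf_{t\nearrow\tau}\norm{\phi(t)}_{\cC^0}<\infty$ along a sequence $t_k\nearrow\tau$, then $M$ associated with the restarted problem from $t_k$ would remain bounded, yielding a uniform lower bound $\delta_*>0$ on the local existence time and contradicting maximality once $\tau-t_k<\delta_*$. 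The main obstacle is this last step: one must ensure that the local existence time $\delta_n$ depends only on $\norm{\phi(\tau_n)}_{\cC^0}$ (and on quantities controlled by $\Psi_\xi$ on a neighbourhood of $\tau_n$, which are almost surely locally bounded), which requires a careful uniform-in-starting-point version of the fixed-point argument above.
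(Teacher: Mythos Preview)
Your argument is correct and follows essentially the same route as the paper: both reduce the problem to a pathwise deterministic fixed-point equation in $\cC([0,T],\cC^0)$, use the contraction property of the heat semigroup together with the local Lipschitz bound on $F$, and invoke Banach's theorem. The only cosmetic difference is that you first subtract the stochastic convolution and solve for $v=\phi-\Psi_\xi$ on a ball around $\e^{t\Delta}\phi_0$, whereas the paper works directly with $\phi$ and absorbs both $\e^{t\Delta}\phi_0$ and $\Psi_\xi$ into a single inhomogeneous term $g$, setting up the contraction on a ball around $g$; the estimates and the maximality/blow-up argument are the same.
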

\begin{proof}
Given $T>0$ and a continuous function $g:\R_+\to\cC^0$, we define a map $\cT$ 
from the space of continuous functions from $[0,T]$ to $\cC^0$ to itself by 
\begin{equation}
 (\cT\phi)(t) := \int_0^t \e^{(t-s)\Delta} F(\phi(s))\6s + g(t)\;, 
 \qquad t\in[0,T]\;.
\end{equation} 
H\"older's inequality shows that $\norm{\e^{t\Delta}f}_{\cC^0} \leqs 
\norm{f}_{\cC^0}$ for any $t\geqs0$. Therefore, we get  
\begin{align}
 \sup_{t\in[0,T]} \norm{(\cT\phi)(t) - (\cT\psi)(t)}_{\cC^0}
 &\leqs T \sup_{t\in[0,T]} \norm{F(\phi(t)) - F(\psi(t))}_{\cC^0} \\
 \sup_{t\in[0,T]} \norm{(\cT\phi)(t) - g(t)}_{\cC^0}
 &\leqs T \sup_{t\in[0,T]} \norm{F(\phi(t))}_{\cC^0}\;.  
\end{align} 
Now fix a constant $R>0$. Since $F$ is locally Lipschitz, we obtain the 
existence of a constant $M=M(R)>0$ such that for any $\phi,\psi$ in a ball of 
radius $R$ around $g$, one has 
\begin{align}
 \sup_{t\in[0,T]} \norm{(\cT\phi)(t) - (\cT\psi)(t)}_{\cC^0}
 &\leqs TM \sup_{t\in[0,T]} \norm{\phi(t)-\psi(t)}_{\cC^0} \\
 \sup_{t\in[0,T]} \norm{(\cT\phi)(t) - g(t)}_{\cC^0}
 &\leqs TM \;.  
\end{align}
Choosing $T$ small enough, one can guarantee that $\cT$ maps the ball of radius 
$R$ into itself, and is a contraction of constant $\lambda<1/2$ there. 
Therefore, Banach's fixed point theorem shows that $\cT$ admits a unique fixed 
point in that ball. Choosing 
\begin{equation}
 g(t) = \e^{t\Delta}\phi_0 
 + \sqrt{2\eps} \int_0^t \e^{(t-s)\Delta} \6\xi_s\;,
\end{equation} 
which is in $\cC^0$ for all $t$ according to \Cref{cor:stoch_conv_Holder},
we obtain the existence of a unique local mild solution on $[0,T]$. Note that 
this is a pathwise solution, that is, for each realisation of $\xi$, we find 
such a solution, where $T$ may depend on the particular realisation of the 
noise. 

To show maximality, we argue exactly as in the deterministic case. Since $R$ is 
finite, we may restart the process to extend it to a slightly longer time 
interval. This procedure can be iterated as long as the solution remains 
bounded, proving that either $\tau=\infty$, or $\phi(t)$ diverges as $t$ 
approaches $\tau$. Finally, uniqueness and continuity can be shown in the same 
way as for finite-dimensional SDEs, see for 
instance~\cite[Theorem~5.2.1]{Oeksendal}. 
\end{proof}

To obtain global existence of solutions, we have again to exploit the sign of 
the nonlinearity. There exist various one-sided conditions on the nonlinearity 
allowing to do that. One of them is that $F$ satisfy the following 
property: for any $R>0$, there exists a $C>0$ such that 
\begin{equation}
\label{eq:F_dissipative} 
 u F(u+v) \leqs Cu^2 
 \qquad \text{when $\abs{v}\leqs R$\;.}
\end{equation} 
One easily checks that this holds in the case $F(u)=u-u^3$ of the Allen--Cahn 
equation. 

\begin{theorem}[Global existence of the solution]
\label{thm:semilinear_global_existence} 
If $F$ is locally Lipschitz and satisfies~\eqref{eq:F_dissipative}, 
then for any $\phi_0\in\cC^0$, the solution of the semilinear stochastic 
PDE~\eqref{eq:AC-1d-rep} exists globally in time.
\end{theorem}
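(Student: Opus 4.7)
The plan is to use the \emph{da Prato--Debussche splitting}: decompose the candidate solution as $\phi = w + Z$, where $Z(t,x) := \sqrt{2\eps}(P*\xi)(t,x)$ is the stochastic convolution. By \Cref{cor:stoch_conv_Holder}, $Z$ has almost surely H\"older-continuous sample paths, so for every $T>0$ there exists a random constant $R = R(T,\omega) < \infty$ with $\norm{Z}_{L^\infty([0,T]\times\Lambda)} \leqs R$. Subtracting the identity $Z = \sqrt{2\eps}\, P*\xi$ from \eqref{eq:AC-1d-FP}, the remainder $w$ satisfies pathwise the random but \emph{noise-free} mild equation associated to
\begin{equation}
 \partial_t w = \Delta w + F(w+Z)\;, \qquad w(0,\cdot) = \phi_0\;.
\end{equation}
By the explosion dichotomy of \Cref{thm:semilinear_local_existence}, it is enough to show that $\norm{w(t,\cdot)}_{\cC^0}$ cannot blow up in finite time, for then $\norm{\phi(t,\cdot)}_{\cC^0} \leqs \norm{w(t,\cdot)}_{\cC^0} + R$ also stays finite.

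The key step is to bootstrap the dissipativity hypothesis~\eqref{eq:F_dissipative} into a pathwise $L^\infty$ bound on $w$ via $L^{2p}$ energy estimates followed by a passage to $p\to\infty$. Multiplying the equation by $w^{2p-1}$ and integrating over $\Lambda$ formally gives
\begin{equation}
 \frac{1}{2p}\dtot{}{t} \int_\Lambda w(t,x)^{2p}\6x
 = -(2p-1)\int_\Lambda w^{2p-2}(\nabla w)^2\6x
 + \int_\Lambda w^{2p-1} F(w+Z)\6x\;.
\end{equation}
The Dirichlet term is non-positive. For the nonlinear term, I would apply~\eqref{eq:F_dissipative} with $(u,v)=(w(t,x),Z(t,x))$, which is admissible because $\abs{Z}\leqs R$; combined with $w^{2p-2}\geqs 0$, this yields the pointwise bound $w^{2p-1} F(w+Z) \leqs Cw^{2p}$. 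Gronwall's inequality then gives $\norm{w(t,\cdot)}_{L^{2p}(\Lambda)} \leqs \e^{Ct}\norm{\phi_0}_{L^{2p}(\Lambda)}$, and since $\Lambda$ has finite measure and $\phi_0\in\cC^0$, sending $p\to\infty$ produces $\norm{w(t,\cdot)}_{L^\infty} \leqs \e^{Ct}\norm{\phi_0}_{L^\infty}$, which is finite for every $t\geqs 0$. This excludes finite-time blow-up of $w$, hence of $\phi$, so $\tau = \infty$ almost surely.

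The main obstacle is that the mild solution $w$ is a priori not regular enough to justify the above integration by parts and time differentiation. I would handle this by approximation: either by mollifying the noise (replacing $\xi$ by $\xi^\delta := \xi*\rho_\delta$, so that $Z^\delta$ and hence $w^\delta$ become classical), carrying out the $L^{2p}$ computation for the smooth approximation, and then passing to the limit $\delta\to 0$ using continuous dependence of the Picard fixed point from the proof of \Cref{thm:semilinear_local_existence} on its driver; or by Galerkin-projecting~\eqref{eq:AC-1d-rep} onto the first $N$ Fourier modes of~\eqref{eq:Fourier_basis} and passing to $N\to\infty$. In either case, the decisive point is that the a priori $L^\infty$ bound depends only on $\norm{\phi_0}_{L^\infty}$, on $R$, and on the constant $C$ of~\eqref{eq:F_dissipative}, all of which are preserved under the approximation, so the bound survives in the limit.
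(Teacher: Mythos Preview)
Your argument is correct and shares with the paper the same da Prato--Debussche splitting $\phi = w + Z$ with $Z = \sqrt{2\eps}\,P*\xi$, reducing the problem to a pathwise bound on the noise-free remainder $w$. The difference lies in how that bound is obtained. You run $L^{2p}$ energy estimates and send $p\to\infty$; the paper instead works directly with the Lyapunov functional $\cV(\psi) = \sup_{x\in\Lambda}\psi(x)^2$ and exploits two facts at the mild-solution level: the expansion $\psi(t+h) = \e^{h\Delta}\bigbrak{\psi(t)+hG(t)} + o(h)$ coming straight from Duhamel, and the contractivity $\cV(\e^{h\Delta}\psi)\leqs\cV(\psi)$ of the heat semigroup on the sup norm. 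Combining these with the one-sided condition~\eqref{eq:F_dissipative} yields $\limsup_{h\to0}h^{-1}\bigbrak{\cV(\psi(t+h))-\cV(\psi(t))}\leqs 2C\cV(\psi(t))$, and Gronwall finishes.

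The trade-off is this: your $L^{2p}$ route is perhaps more familiar from deterministic PDE theory and generalises readily, but as you correctly flag, it needs an approximation layer (mollification or Galerkin) to justify the integration by parts for a mere mild solution. The paper's semigroup argument sidesteps that entirely --- no spatial derivatives of $w$ are ever taken, and the only ingredients are continuity of sample paths and the sup-norm contractivity of $\e^{h\Delta}$, both of which hold directly for the mild solution. This makes the paper's proof shorter and self-contained, at the cost of being slightly less transparent about where the dissipation is used.
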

\begin{proof}
We follow the proof given in~\cite[Proposition~6.23]{Hairer_LN_2009}, using a 
Lyapunov function to show that $\phi(t)$ cannot blow up. Let $W_\Delta(t)$ 
denote the stochastic convolution in~\eqref{eq:AC-1d-FP-semigroup}. Then 
$\psi(t) = \phi(t)-\sqrt{2\eps}W_\Delta(t)$ satisfies 
\begin{equation}
 \psi(t) = \e^{t\Delta}\phi_0 
 + \int_0^t \e^{(t-s)\Delta} G(s) \6s\;, 
 \qquad 
 G(s) = F(\psi(s)+\sqrt{2\eps}W_\Delta(s))\;.
\end{equation} 
Continuity of sample paths and the Markov property imply  
\begin{equation}
 \psi(t+h) = \e^{h\Delta} \bigbrak{\psi(t)+hG(t)} + \order{h}\;.
\end{equation} 
Consider now the Lyapunov function 
\begin{equation}
 \cV(\psi) := \sup_{x\in\Lambda} \psi(x)^2\;.
\end{equation} 
Note that since the heat semigroup is contracting,\footnote{This holds actually 
for any convex function $\cV$, cf.~\cite[Lemma~6.22]{Hairer_LN_2009}.} 
$\cV(\e^{h\Delta}\psi) \leqs \cV(\psi)$. Thus we have 
\begin{equation}
 \limsup_{h\to0} \frac{1}{h} \bigbrak{\cV(\psi(t+h)) - \cV(\psi(t))}
 \leqs \limsup_{h\to0} \frac{1}{h} \bigbrak{\cV(\psi(t)+hG(t)) - 
\cV(\psi(t))}\;.
\end{equation} 
Now it follows from the one-sided condition~\eqref{eq:F_dissipative} that as 
long as $\sqrt{2\eps}\abs{W_\Delta(t)}\leqs R$, one has   
\begin{align}
\cV(\psi(t)+hG(t)) 
&= \sup_{x\in\Lambda} \Bigpar{\psi(t,x)^2 + 2h\psi(t,x)G(t,x) + h^2G(t,x)^2} \\
&\leqs \cV(\psi(t)) + 2hC \cV(\psi(t)) + h^2 \cV(G(t))\;. 
\end{align}
It follows that 
\begin{equation}
 \limsup_{h\to0} \frac{1}{h} \bigbrak{\cV(\psi(t+h)) - \cV(\psi(t))}
 \leqs 2C\cV(\psi(t))\;.
\end{equation} 
By a Gronwall argument, $\psi(t)$ cannot blow up unless $W_\Delta(t)$ does, 
which is not the case. 
\end{proof}

We note that for SPDEs with confining nonlinearities, one can obtain much 
better bounds on solutions. For instance, \cite[Proposition~3.4]{Cerrai_1999} 
applied to $F(u)=u-u^3$ provides the bound
\begin{equation}
 \sup_{\phi_0\in\cC^0} \norm{\phi(t)}_{\cC^0}
 \leqs \frac{C}{\sqrt{t}} \Bigpar{1+\sqrt{2\eps} \sup_{0\leqs s\leqs t} 
\norm{W_\Delta(s)}_{\cC^0}} + \sqrt{2\eps} \norm{W_\Delta(t)}_{\cC^0}\;.
\end{equation} 
The remarkable point is that $\phi(t)$ decreases like $1/\sqrt{t}$ for small 
times, whatever the size of the initial condition. This property is known as 
\emph{coming down from infinity}, and is very useful to obtain continuation 
results for solutions with very little prior knowledge. This phenomenon 
actually already exists for very simple ODEs, as shows the following exercise. 

\begin{exercise}
Show that the solution of the ODE 
\begin{equation}
 \dot x = -x^3\;, 
 \qquad 
 x(0) = x_0
\end{equation}
can be bounded by a function independent of the value of $x_0$. 
\end{exercise}

%%%%%%%%%%%%%%%%%%%%%%%%%%%%%%%%%%%%%%%%%%%%%%%%%%%%%%%%%%%%%%%%%%%%%%%%%%%%%%%%

\section{Invariant measure}
\label{sec:1dinv} 

%%%%%%%%%%%%%%%%%%%%%%%%%%%%%%%%%%%%%%%%%%%%%%%%%%%%%%%%%%%%%%%%%%%%%%%%%%%%%%%%

\subsection{The Gaussian free field}
\label{ssec:1dGFF} 

Consider first the case of linear SPDEs. The stochastic heat 
equation~\eqref{eq:stoch_heat} does not admit an invariant measure, since we 
have seen that its zeroth Fourier mode performs a Brownian motion. This 
problem, however, is easily cured by considering the equation
\begin{equation}
\label{eq:stoch_heat_dissipative} 
 \partial_t \phi(t,x) = \Delta\phi(t,x) - \phi(t,x) + \sqrt{2\eps}\xi(t,x)\;,
\end{equation} 
where we have reintroduced the parameter $\eps$ to keep track of its effect. 
The modes of the Fourier series~\eqref{eq:Fourier_series} then obey the SDEs 
\begin{equation}
 \6\hat\phi_k(t) = -(\lambda_k+1)\hat\phi_k(t)\6t + \sqrt{2\eps}\6W^{(k)}_t\;,
\end{equation} 
where $-\lambda_k$ are the eigenvalues of the Laplacian, 
cf.~\eqref{eq:Laplace_eigenvalues}. Hence each $\hat\phi_k$ is an 
Ornstein--Uhlenbeck process given by 
\begin{equation}
 \hat\phi_k(t) = \e^{-(\lambda_k+1)t} \hat\phi_k(0) 
 + \sqrt{2\eps}\int_0^t \e^{-(\lambda_k+1)(t-s)} \6W^{(k)}_s\;.
\end{equation} 
Asymptotically as $t\to\infty$, each $\hat\phi_k$ follows a centred normal law 
of variance $\eps/(\lambda_k+1)$. This motivates the following definition.

\begin{definition}[Gaussian free field]
The \defwd{massive Gaussian free field (GFF)} with covariance 
$(-\Delta+1)^{-1}$ on $\Lambda$ is the random 
function $\GFF\in L^2(\Lambda)$ with Fourier series 
\begin{equation}
\label{eq:def_GFF} 
 \GFF(x) := \sum_{k\in\Z} \frac{Z_k}{\sqrt{\lambda_k+1}} 
e_k(x)\;,
\end{equation} 
where the $Z_k\sim\cN(0,1)$ are i.i.d.\ standard normal random variables. 
\end{definition}

This definition is justified by the fact that $\GFF$ is centred, and thus has 
covariance
\begin{equation}
\label{eq:covariance_GFF} 
\bigexpec{\GFF(x)\GFF(y)} 
= \sum_{k\in\Z} \frac{1}{\lambda_k+1} e_k(x)e_k(y) 
= \sum_{k\in\Z} e_k(x) (-\Delta+1)^{-1} e_k(y)\;.
\end{equation}
Note in particular that the trace of the covariance is given by 
\begin{equation}
 \int_\Lambda \bigexpec{\GFF(x)^2} \6x 
 = \Tr\Bigbrak{(-\Delta+1)^{-1}}
 = \sum_{k\in\Z}\frac{1}{\lambda_k+1}
 < \infty\;.
\end{equation} 
The covariance operator is said to be \emph{trace class}. This is indeed a 
necessary and sufficient condition for a Gaussian invariant measure to exist, 
cf.~\cite[Theorem~6.2.1]{DaPrato_Zabczyk_Ergodicity}; see 
also~\cite[Proposition~3.15]{Hairer_LN_2009}.  

\begin{exercise}
Show that for any test functions $\ph_1, \ph_2:L^2(\Lambda)\to\R$, one has 
\begin{equation}
 \bigexpec{\pscal{\GFF}{\ph_1}\pscal{\GFF}{\ph_2}}
 = \pscal{\ph_1}{(-\Delta+1)^{-1}\ph_2}_{L^2}\;,
% \tag*{\qedhere} 
\end{equation} 
where we have identified elements of $L^2(\Lambda)$ and its dual in the 
canonical way. 
\end{exercise}

More generally, $\sqrt{\eps}\GFF$ is a Gaussian free field with covariance 
$\eps(-\Delta+1)^{-1}$, which defines a Gaussian probability measure 
$\smash{\muGFF^{(\eps)}}$ on $L^2(\Lambda)$. This measure is invariant under 
the dynamics of~\eqref{eq:stoch_heat_dissipative}, meaning that 
\begin{equation}
\label{eq:def_muGFFeps} 
 \Bigexpecin{\muGFF^{(\eps)}}{f(\phi_t)} = \Bigexpec{f(\sqrt{\eps}\GFF)}
 \qquad \forall t\geqs 0
\end{equation} 
for any integrable functional $f:L^2(\Lambda)\to\R$. Note that in particular, 
any linear functional $\ell:L^2(\Lambda)\to\R$ follows a centred normal 
distribution, which is in fact a general way to characterise Gaussian measures 
on infinite-dimensional spaces. See for 
instance~\cite[Definition~3.2]{Hairer_LN_2009}. 

\begin{exercise}
Show that for a linear functional $\ell:L^2(\Omega)\to\R$, one has 
\begin{equation}
 \bigexpec{\e^{\icx\pscal{\ell}{\GFF}}} 
 = \exp\biggset{-\frac12\pscal{\ell}{(-\Delta+1)^{-1}\ell}_{L^2}}\;. 
\end{equation} 
What is the variance of $\pscal{\ell}{\GFF}$? 
\end{exercise}

%%%%%%%%%%%%%%%%%%%%%%%%%%%%%%%%%%%%%%%%%%%%%%%%%%%%%%%%%%%%%%%%%%%%%%%%%%%%%%%%

\subsection{Invariant Gibbs measures}
\label{ssec:1dGibbs} 

We return now to semilinear SPDEs of the form 
\begin{equation}
\label{eq:AC-1d-rep2} 
 \partial_t \phi(t,x) = \Delta\phi(t,x) + F(\phi(t,x)) 
 + \sqrt{2\eps} \xi(t,x)\;,
\end{equation} 
where the nonlinearity $F$ is such that \Cref{thm:semilinear_global_existence} 
applies. We denote its Markov semigroup 
\begin{equation}
 P_t(\phi_0,A) := \bigprobin{\phi_0}{\phi_t\in A}
\end{equation} 
(where $P_t$ should not be confused with the heat kernel). More generally, for 
a test function $f:\cH=L^2(\Lambda)\to\R$, we write 
\begin{equation}
 (P_t f)(\phi_0) := \bigexpecin{\phi_0}{f(\phi_t)}\;.
\end{equation} 
One can show that $P_t$ has the \emph{Feller property}, meaning that it maps 
continuous functions into continuous functions. A probability measure $\pi$ on 
$\cH$ is \emph{invariant} if 
\begin{equation}
 (\pi P_t)(A) := \int_{\cH} \pi(\6\phi) P_t(\phi,A) = \pi(A)
\end{equation} 
for every $t\geqs0$ and every measurable $A\subset \cH$. 

A classical way of establishing the existence of an invariant probability 
measure, due to Krylov and Bogoliubov, relies on a tightness argument.

\begin{definition}[Tightness]
A family $\set{\mu_t}$ of probability measures on $\cH$ is~\emph{tight} if for 
any $\delta>0$, there exists a compact set $K\subset\cH$ such that 
$\mu_t(K)\geqs1-\delta$ for all $t$. 
\end{definition}

Indeed one can show (cf.~\cite[Corollary~3.1.2]{DaPrato_Zabczyk_Ergodicity}) 
that if the family of ergodic averages  
\begin{equation}
 \biggsetsuch{\frac{1}{T} \int_0^T P_t(\phi_0,\cdot)\6t}{T\geqs 1}
\end{equation} 
is tight, then there exists an invariant probability measure. In the present 
situation, tightness follows from stochastic stability of $\phi_t$, meaning 
that for any $\phi_0\in\cH$ and any $\delta>0$, there exists $R>0$ such that 
for all $T\geqs1$, one has 
\begin{equation}
 \frac{1}{T} \int_0^T \bigprobin{\phi_0}{\norm{\phi_t}>R}\6t \leqs \delta\;.
\end{equation} 
See~\cite[Theorem~6.1.2]{DaPrato_Zabczyk_Ergodicity} for details. 

In the case of the semilinear SPDE~\eqref{eq:AC-1d-rep2}, we can 
apply~\cite[Theorem~6.3.3]{DaPrato_Zabczyk_Ergodicity} to obtain the following 
exponential ergodicity result. The main ingredient of the proof is a suitable 
growth bound on $\expecin{\phi_0}{\phi_t}$. 

\begin{theorem}[Existence of a unique invariant measure and exponential 
ergodicity]
When $\eps>0$, there exists a unique invariant probability measure $\pi$ 
for~\eqref{eq:AC-1d-rep2}, which is strongly mixing\footnote{Strong mixing 
means that $\pscal{P_tf}{g}$ converges, as $t\to\infty$, to 
$\pscal{f}{1}\pscal{1}{g}$, where the inner products are in 
$L^2(\cH,\pi)$.} and such that $\nu P_t$ converges weakly to $\pi$ as 
$t\to\infty$ for any probability measure $\nu$ on $\cH$. Furthermore, there 
exist constants $C,\beta>0$ such that 
\begin{equation}
\label{eq:AC-1d-ergodicity} 
 \bigabs{(P_tf)(\phi_0) - \pscal{\pi}{f}} 
 \leqs C \bigpar{1+\norm{\phi_0}_{\cH}} \e^{-\beta t}
 \norm{f}_{\math{Lip}}
\end{equation} 
for all $t\geqs0$, all $\phi_0\in\cH$, and all bounded Lipschitz continuous 
test functions $f:\cH\to\R$ with Lipschitz constant $\norm{f}_{\math{Lip}}$. 
\end{theorem}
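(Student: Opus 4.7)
The overall strategy, following the framework of~\cite{DaPrato_Zabczyk_Ergodicity}, is to establish four ingredients that together yield the theorem: (i) tightness of time-averages to produce an invariant measure via Krylov--Bogoliubov, (ii) the strong Feller property of $P_t$, (iii) topological irreducibility of the process, and (iv) a quantitative dissipative estimate that upgrades qualitative ergodicity to the explicit bound~\eqref{eq:AC-1d-ergodicity}.

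First, I would establish existence. From the proof of~\Cref{thm:semilinear_global_existence} together with the one-sided condition~\eqref{eq:F_dissipative}, a Gronwall argument applied to $\cV(\psi)=\sup_x \psi(x)^2$ yields a moment bound of the form $\expecin{\phi_0}{\norm{\phi_t}_{\cC^0}^2}\leqs C(1+\norm{\phi_0}_{\cC^0}^2)$ uniformly in $t\geqs 1$. Since the heat semigroup maps $\cC^0$ into $\cC^{\alpha}_\fraks$ for a small $\alpha>0$ (Schauder, \Cref{thm:Schauder}) and the stochastic convolution lies in $\cC^{1/2-\kappa}_\fraks$ (\Cref{cor:stoch_conv_Holder}), the law of $\phi_t$ is concentrated on a compactly embedded subset of $\cH$, giving tightness of $\frac{1}{T}\int_0^T P_t(\phi_0,\cdot)\,\6t$ and hence an invariant probability measure $\pi$.

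Second, I would prove uniqueness by verifying the hypotheses of Doob's theorem. The strong Feller property follows from the non-degeneracy of the noise, which excites every Fourier mode: a Bismut--Elworthy--Li formula (or equivalently Girsanov's theorem applied to mode-by-mode perturbations, as in the Ornstein--Uhlenbeck analysis of~\Cref{ssec:1d-Sobolev}) shows that $P_t f$ is Lipschitz for every bounded measurable $f$ and every $t>0$. Irreducibility follows from an approximate-controllability argument: given any target open set $U\subset\cH$, one constructs a smooth deterministic control $u$ steering the linearised equation from $\phi_0$ into $U$; since such $u$ lies in the Cameron--Martin space of $\xi$, Girsanov yields $P_t(\phi_0,U)>0$. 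Combining strong Feller and irreducibility, Doob's theorem gives uniqueness of $\pi$, strong mixing, and weak convergence $\nu P_t\to\pi$ for any initial law $\nu$.

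Third, for the exponential ergodicity bound~\eqref{eq:AC-1d-ergodicity}, I would use a coupling argument. Run two copies $\phi_t,\tilde\phi_t$ driven by the same noise $\xi$; their difference $w_t=\phi_t-\tilde\phi_t$ solves a deterministic linear PDE with coefficient depending on $\phi_t,\tilde\phi_t$. Testing against $w_t$ in $\cH$ and using the cubic dissipativity of $F$ at large values, one derives $\tdtot{}{t}\norm{w_t}_\cH^2 \leqs \bigpar{2-2\inf_x(\phi_t^2+\phi_t\tilde\phi_t+\tilde\phi_t^2)(x)}\norm{w_t}_\cH^2$. Combined with the moment bounds above and the fact that trajectories spend most of their time in the region where $\phi^2+\phi\tilde\phi+\tilde\phi^2\gtrsim 1$, one obtains $\expec{\norm{w_t}_\cH^2}\leqs Ce^{-2\beta t}(1+\norm{\phi_0}_\cH^2+\norm{\tilde\phi_0}_\cH^2)$. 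Integrating against $\pi$ the coupling with $\tilde\phi_0\sim\pi$ yields~\eqref{eq:AC-1d-ergodicity} for Lipschitz test functions.

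The main obstacle is the lack of global monotonicity of the drift: the linearisation at $\phi^*_0=0$ has a positive eigenvalue, so one cannot expect a naive contraction $e^{-\beta t}$ with $\beta$ uniform. The coupling argument must therefore be combined with the Lyapunov structure to control the time spent in the non-dissipative region; concretely, this is where the hypothesis on $F$ from~\cite[Theorem~6.3.3]{DaPrato_Zabczyk_Ergodicity} is used, and it is this growth estimate, rather than the abstract Doob theorem, that yields both the exponential rate $\beta$ and the linear prefactor $1+\norm{\phi_0}_\cH$.
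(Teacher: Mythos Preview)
The paper does not give a self-contained proof of this theorem; it simply invokes \cite[Theorem~6.3.3]{DaPrato_Zabczyk_Ergodicity} and remarks that the main hypothesis to check is a growth bound on $\expecin{\phi_0}{\phi_t}$. Your outline is a faithful expansion of what that reference actually does: Krylov--Bogoliubov for existence, strong Feller plus irreducibility for uniqueness and strong mixing via Doob's theorem, and then a dissipativity mechanism for the exponential rate. So at the level of architecture you are aligned with the paper.

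The one genuine deviation is your Step~3. You propose a synchronous coupling and a pathwise differential inequality for $\norm{w_t}_\cH^2$, whereas the route in \cite{DaPrato_Zabczyk_Ergodicity} deduces the exponential bound more abstractly from the moment/growth estimate combined with the strong Feller and irreducibility properties (a Doeblin/Harris-type mechanism rather than a direct contraction). Your approach is more transparent about where the rate $\beta$ comes from, but it forces you to confront head-on the obstacle you correctly identify: the drift is not globally monotone, so the coupled difference does not contract uniformly, and you must control the occupation time of the non-dissipative region. That is doable but is exactly the work the abstract machinery packages for you. Either route reaches the same conclusion; the paper's choice is simply to cite the packaged version.
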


In the particular case $F(\phi)=-\phi$ in~\eqref{eq:AC-1d-rep2}, the 
invariant measure is given by $\muGFF^{(\eps)}(\6\phi)$, the distribution of 
the scaled GFF defined in~\eqref{eq:def_muGFFeps}. 

In the general case, the semilinear equation~\eqref{eq:AC-1d-rep2} can 
be written in gradient form as 
\begin{equation}
  \partial_t \phi(t,x) = \bigbrak{\Delta-\one}\phi(t,x) - \nabla 
\widetilde V(\phi(t,x)) + \sqrt{2\eps} \xi(t,x)\;,
\end{equation} 
where 
\begin{equation}
 \widetilde V(\phi) = \int_\Lambda \widetilde U(\phi(x))\6x\;,
 \qquad 
 \widetilde U'(\phi) = \phi-F(\phi)\;.
\end{equation} 
Then it turns out that one has the following explicit expression for the 
invariant measure, which is in fact a natural extension of the Gibbs 
measure~\eqref{eq:pi_finite_dim} we encountered in the finite-dimensional case. 
Indeed, the following result is a particularisation 
of~\cite[Theorem~8.6.3]{DaPrato_Zabczyk_Ergodicity}.

\begin{theorem}[Invariance and reversibility of the Gibbs measure]
For $\eps>0$, the unique invariant measure of~\eqref{eq:AC-1d-rep2} is 
absolutely continuous with respect to the scaled Gaussian free field, with 
density $\e^{\widetilde V/\eps}$:
\begin{equation}
\label{eq:Gibbs_1dSPDE} 
 \pi(\6\phi) = \frac{1}{\cZ_0} \e^{-\widetilde V(\phi)/\eps} 
 \muGFF^{(\eps)}(\6\phi)\;,
%  \qquad
%  \cZ_0 = \int_{\cH} \e^{-\widetilde V(\phi)/\eps} 
%  \muGFF^{(2\eps)}(\6\phi)\;.
\end{equation} 
where $\cZ_0$ is the normalisation. 
In addition, $\pi$ satisfies the detailed-balance property 
\begin{equation}
\label{eq:Gibbs_det_balance} 
 \int_{\cH} f(\phi) (P_t g)(\phi)\pi(\6\phi)
 = \int_{\cH} g(\phi) (P_t f)(\phi)\pi(\6\phi)
\end{equation} 
for all bounded test functions $f, g:\cH\to\R$. 
\end{theorem}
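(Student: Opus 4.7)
The plan is to extend the finite-dimensional argument from Section~\ref{sec:diffinv} to the infinite-dimensional setting, leveraging the fact that the scaled Gaussian free field $\muGFF^{(\eps)}$ is already known to be invariant (and reversible) for the linear part of the dynamics, i.e.\ the Ornstein--Uhlenbeck SPDE~\eqref{eq:stoch_heat_dissipative}. The key observation is that the formal potential governing~\eqref{eq:AC-1d-rep2} is
\begin{equation*}
 V(\phi) = \tfrac12 \pscal{\phi}{(-\Delta+1)\phi}_{L^2} + \widetilde V(\phi)\;,
\end{equation*}
and the quadratic part is precisely what produces $\muGFF^{(\eps)}$, so the claimed $\pi$ is the natural analogue of $\e^{-V/\eps}\6y/\cZ$.

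First, I would work on the dense subspace of smooth cylindrical functionals $F(\phi) = f\bigpar{\pscal{\phi}{\ph_1},\dots,\pscal{\phi}{\ph_n}}$, with $f$ smooth bounded and $\ph_i$ smooth test functions, for which Itô's formula gives a well-defined generator $\cL = \cL_0 + \cL_1$ with
\begin{equation*}
 (\cL_0 F)(\phi) = \eps\,\Tr\bigbrak{D^2 F(\phi)} + \pscal{(\Delta-1)\phi}{DF(\phi)}\;,
 \qquad
 (\cL_1 F)(\phi) = -\pscal{\nabla \widetilde V(\phi)}{DF(\phi)}\;.
\end{equation*}
The classical integration-by-parts formula for Gaussian measures (the Cameron--Martin identity) yields the symmetry
\begin{equation*}
 \int_\cH F\,\cL_0 G\,\6\muGFF^{(\eps)} = -\eps \int_\cH \pscal{DF}{DG}_\cH \,\6\muGFF^{(\eps)}\;,
\end{equation*}
which is the infinite-dimensional analogue of the Green identity used to prove Proposition on reversibility in Section~\ref{sec:diffinv}.

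Second, plugging $G \e^{-\widetilde V/\eps}$ into this formula and applying the chain rule $D\bigpar{G\e^{-\widetilde V/\eps}} = \e^{-\widetilde V/\eps}\bigbrak{DG - \tfrac{1}{\eps}G\,\nabla \widetilde V}$ gives, after rearrangement,
\begin{equation*}
 \int_\cH F\,\cL G\, \e^{-\widetilde V/\eps}\, \6\muGFF^{(\eps)}
 = -\eps \int_\cH \pscal{DF}{DG}_\cH\,\e^{-\widetilde V/\eps}\, \6\muGFF^{(\eps)}\;,
\end{equation*}
the drift term $\cL_1$ cancelling exactly against the extra contribution produced by differentiating the density. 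The right-hand side is manifestly symmetric in $F$ and $G$, which establishes $\pscal{F}{\cL G}_\pi = \pscal{\cL F}{G}_\pi$ on the core of cylindrical functionals. Choosing $F\equiv 1$ yields $\int \cL G\,\6\pi = 0$, proving invariance via Kolmogorov's forward equation, while the symmetry of $\cL$ lifts to the Markov semigroup, $\pscal{F}{P_t G}_\pi = \pscal{P_t F}{G}_\pi$, by the standard exponentiation $P_t = \e^{t\cL}$, which gives the detailed-balance identity~\eqref{eq:Gibbs_det_balance}. Uniqueness of $\pi$ is inherited from the preceding ergodicity theorem.

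The main technical obstacle is making the above formal manipulations rigorous: the trace in $\cL_0$ is not finite when interpreted against the noise covariance on all of $\cH$, and $\nabla\widetilde V(\phi)$ need not lie in $\cH$ for general $\phi$ in the support of $\muGFF^{(\eps)}$. The standard remedy, following~\cite{DaPrato_Zabczyk_Ergodicity}, is to introduce Galerkin projections $\Pi_N$ onto the span of the first $N$ Fourier modes~\eqref{eq:Fourier_basis}, prove the identity for the finite-dimensional system (where it reduces exactly to the computation of Section~\ref{sec:diffinv}), and then pass to the limit using the $L^2(\muGFF^{(\eps)})$-convergence of the truncated generators on cylindrical functions together with the coercivity estimates behind Theorem~\ref{thm:semilinear_global_existence}. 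Once the identity is established on a core, essential self-adjointness of $\cL$ on $L^2(\pi)$ transfers reversibility to $P_t$.
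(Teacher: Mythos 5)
Your proposal follows essentially the same route the paper takes: it reduces everything to detailed balance, verifies it by the same Dirichlet-form/integration-by-parts computation already carried out in the finite-dimensional setting of Section~\ref{sec:diffinv}, and makes this rigorous via spectral Galerkin approximation and a passage to the limit, which is exactly the strategy the paper attributes to \cite[Theorem~8.6.3]{DaPrato_Zabczyk_Ergodicity} in the remark following the theorem. The extra detail you give on cylindrical functionals, the Cameron--Martin integration by parts, and essential self-adjointness correctly fleshes out the steps the paper leaves to the reference.
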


\begin{remark}
As in the case of Markov chains, reversibility of a measure implies its 
invariance, as can be seen by choosing $f(\phi)=1$ and 
$g(\phi)=\indicator{A}(\phi)$ in~\eqref{eq:Gibbs_det_balance}. Thus it is in 
fact sufficient to prove that the measure defined in~\eqref{eq:Gibbs_1dSPDE} 
satisfies detailed balance to prove the theorem. The proof proceeds by using a 
finite-dimensional spectral Galerkin approximation of~\eqref{eq:AC-1d-rep2} and 
passing to the limit. 
\end{remark}

To see why (in the case $F(\phi)=\phi-\phi^3$) this is indeed the Gibbs measure 
associated with the potential $V$ defined in~\eqref{eq:potential_infdim}, it 
suffices to notice that 
\begin{equation}
 V(\phi) - \widetilde V(\phi) = \frac12 \int_{\Lambda} 
\bigbrak{\norm{\nabla\phi(x)}^2+\phi(x)^2}\6x\;,
\end{equation} 
which is exactly the quadratic form associated with the linear 
case $F(\phi)=-\phi$. This suggests using the sloppy notation 
\begin{equation}
 \pi(\6\phi) = \frac{1}{\cZ} \e^{-V(\phi)/\eps} 
 \6\phi\;,
 \qquad 
 \cZ = \int_{\cH} \e^{-V(\phi)/\eps} \6\phi\;,
\end{equation} 
but one should not forget that there is no Lebesgue measure on 
$\cH=L^2(\Lambda)$!

%%%%%%%%%%%%%%%%%%%%%%%%%%%%%%%%%%%%%%%%%%%%%%%%%%%%%%%%%%%%%%%%%%%%%%%%%%%%%%%%

\section{Large deviations}
\label{sec:1dldp} 

Large deviation estimates for the SPDE
\begin{equation}
\label{eq:AC-1d-rep3} 
 \partial_t \phi(t,x) = \Delta\phi(t,x) + F(\phi(t,x)) 
 + \sqrt{2\eps} \xi(t,x)
\end{equation}
as $0<\eps\ll1$ have been obtained in a way very similar to the 
finite-dimensional situation discussed in \Cref{sec:diffldp} 
\cite{Faris_JonaLasinio82,Freidlin88,ChenalMillet1997}. One starts by showing 
that scaled space-time white noise $\sqrt{2\eps}\xi(t,x)$ satisfies an LDP on 
$[0,T]$ with good rate function 
\begin{equation}
 \cI_{[0,T]}(\gamma) = \frac12 \int_0^T \int_\Lambda \biggbrak{\dpar{}{t} 
\gamma(t,x)}^2 \6x\6t\;,
\end{equation} 
with the understanding that $\cI_{[0,T]}(\gamma)=+\infty$ if the integral on 
the 
right-hand side does not converge. Then one uses a contraction principle to 
extend the LDP to the general case. 

\begin{theorem}[Large-deviation principle for SPDEs on the one-dimensional 
torus]
The semilinear SPDE~\eqref{eq:AC-1d-rep3} satisfies an LDP on $[0,T]$ with good 
rate function 
\begin{equation}
\cI_{[0,T]}(\gamma) := 
 \begin{cases}
 \displaystyle
 \frac12 \int_0^T \int_\Lambda \biggbrak{\dpar{}{t} \gamma(t,x) - 
\Delta\gamma(t,x) - F(\gamma(t,x))}^2 \6x\6t
 & \text{if the integral is finite\;, } \\
 + \infty 
 & \text{otherwise\;.}
 \end{cases}
\end{equation}
\end{theorem}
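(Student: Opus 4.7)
The plan is to follow the infinite-dimensional Freidlin--Wentzell strategy outlined in the excerpt: first establish an LDP for the Gaussian noise input, then push it through the solution map by a contraction principle, exactly as for finite-dimensional diffusions in \Cref{sec:diffldp}.

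First, I would prove a Schilder-type LDP for the scaled stochastic convolution
\begin{equation}
 W^{\eps}(t,x) := \sqrt{2\eps} \int_0^t \bigpar{\e^{(t-s)\Delta}\6\xi(s)}(x)
\end{equation}
on the Banach space $\cC([0,T],\cC^0(\Lambda))$. Since $W^\eps$ is a centred Gaussian process taking values in this space by \Cref{cor:stoch_conv_Holder}, a standard LDP for small-noise Gaussian measures on a Banach space applies, giving a good rate function equal to $1/2$ times the squared norm of the Cameron--Martin space. Using the Fourier representation from \Cref{ex:noise_fourier}, this Cameron--Martin space is identified with the image under the linear map $g\mapsto P*\dot g$ of the Hilbert space of controls
\begin{equation}
 \cK := \bigsetsuch{g\in \cC([0,T], L^2(\Lambda))}{g(0)=0,\ \dot g \in L^2([0,T]\times\Lambda)}\;,
\end{equation}
and the associated rate function is $J_{[0,T]}(g) := \frac12 \int_0^T \int_\Lambda \dot g(t,x)^2 \6x\6t$.

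Second, for each $g\in \cK$ I would define $\phi^g$ as the unique mild solution of the deterministic controlled equation $\partial_t \phi^g = \Delta\phi^g + F(\phi^g) + \dot g$ with $\phi^g(0) = \phi_0$; global existence follows from the same fixed-point and Lyapunov arguments as in \Cref{thm:semilinear_local_existence} and \Cref{thm:semilinear_global_existence}, with $\dot g$ playing the role of the noise. The associated solution map $\Psi:W\mapsto\phi$ sending $W\in \cC([0,T],\cC^0)$ to the mild solution of
\begin{equation}
 \phi(t) = \e^{t\Delta}\phi_0 + \int_0^t \e^{(t-s)\Delta}F(\phi(s))\6s + W(t)
\end{equation}
is continuous on every bounded ball of $\cC([0,T],\cC^0)$, by a local Lipschitz plus Gronwall estimate applied to $\phi - W - \e^{t\Delta}\phi_0$. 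An extended contraction principle (Puhalskii's lemma) then yields an LDP for $\phi^\eps := \Psi(W^\eps)$ with good rate function
\begin{equation}
 \cI_{[0,T]}(\gamma) = \inf\bigsetsuch{J_{[0,T]}(g)}{\phi^g = \gamma}\;.
\end{equation}
The infimum is realised at the unique $g$ satisfying $\dot g = \partial_t \gamma - \Delta\gamma - F(\gamma)$ whenever this distributional identity defines a function in $L^2$, giving exactly the rate function in the statement; otherwise it is $+\infty$.

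The main obstacle is that the continuity of $\Psi$ is only local rather than global, because $F(\phi)=\phi-\phi^3$ is only locally Lipschitz. This is handled by a cutoff argument: replace $F$ by a globally Lipschitz truncation $F_R$ agreeing with $F$ on $\set{\abs{\phi}\leqs R}$, run the above scheme for the cutoff equation, and then let $R\to\infty$. Consistency between the cutoff LDPs and the LDP for the true equation requires a super-exponential estimate of the form
\begin{equation}
 \lim_{R\to\infty} \limsup_{\eps\to 0} 2\eps\log \Bigprobin{\phi_0}{\sup_{t\in[0,T]}\norm{\phi^\eps(t)}_{\cC^0} > R} = -\infty\;,
\end{equation}
which follows from combining the coercivity estimate~\eqref{eq:F_dissipative} used in \Cref{thm:semilinear_global_existence} with the exponential tail bound on $\norm{W^\eps}_{\cC^0}$ provided by Fernique's theorem. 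An alternative, technically lighter route is the Budhiraja--Dupuis variational approach, which replaces the contraction principle and exponential-tightness step by a direct weak-convergence analysis of variational representations of exponential functionals of $\xi$.
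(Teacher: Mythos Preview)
Your proposal is correct and follows the same route the paper indicates: the paper does not give a detailed proof of this theorem but states, just before it, that one first proves a Schilder-type LDP for $\sqrt{2\eps}\,\xi$ and then applies a contraction principle, citing \cite{Faris_JonaLasinio82,Freidlin88,ChenalMillet1997}. Your version starts from the stochastic convolution $W^\eps$ rather than $\xi$ itself, which is an equivalent and arguably cleaner choice (the solution map is then continuous on a space of continuous functions rather than on a distribution space), and you supply the details the paper omits---the Cameron--Martin identification, the cutoff-plus-super-exponential-tightness argument to handle the merely locally Lipschitz $F$, and the alternative via Budhiraja--Dupuis---all of which are standard and correct.
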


From this result, one can for instance deduce information on the exit time and 
location from a domain in $\cH$ in exactly the same way as in the 
finite-dimensional case.

\begin{exercise}
Use a similar argument as in~\eqref{eq:ratefct_gradient} to show that the lower 
bound
\begin{equation}
 \cI_{[0,T]}(\phi) \geqs 2 \bigbrak{V(\phi_T)-V(\phi_0)}
\end{equation} 
holds for any given $T$, where $V$ is the potential 
given by~\eqref{eq:potential_infdim} in the case $F(\phi)=\phi-\phi^3$.
\end{exercise}

%%%%%%%%%%%%%%%%%%%%%%%%%%%%%%%%%%%%%%%%%%%%%%%%%%%%%%%%%%%%%%%%%%%%%%%%%%%%%%%%

\section{Metastability}
\label{sec:1dmeta} 

We examine now the dynamics of the stochastic Allen--Cahn equation
\begin{equation}
\label{eq:AC-1d-rep4} 
 \partial_t \phi(t,x) = \Delta\phi(t,x) + \phi(t,x) - \phi(t,x)^3 + 
\sqrt{2\eps}\xi(t,x) 
\end{equation} 
for small positive $\eps$. For simplicity, we concentrate on the case 
$L<2\pi$, when the associated potential $V$ is a double-well potential, 
cf.~\Cref{prop:AC-1d-det}. As in the finite-dimensional case, the 
estimate~\eqref{eq:AC-1d-ergodicity} shows that any initial measure will 
converge exponentially fast to the invariant measure $\pi$, but it provides no 
control on the dependence of the constants $C$ and $\beta$ on $\eps$. 

The large-deviation approach provides similar results as those in 
\Cref{thm:arrhenius}, including an Arrhenius law for the first-hitting time 
$\tau_+$ of a neighbourhood of $\phi^*_+$ of the form 
\begin{equation}
 \lim_{\eps\to0} \eps\log \bigexpecin{\phi^*_-}{\tau_+} = H\;,
\end{equation} 
where for $L<2\pi$, one has $H=V(\phi^*_0)-V(\phi^*_-) = L/4$.

\begin{remark}
For $L>2\pi$, $\phi^*_0$ is no longer a saddle with one unstable direction, and 
the role of the transition state is played by the family 
$\set{\phi^*_{1,\theta}}$ of non-constant stationary solutions that bifurcate 
from $\phi^*_0$ at $L=2\pi$. The value of $H$ can be expressed in terms of 
elliptic integrals, cf.~\cite{Maier_Stein_PRL_01,Maier_Stein_SPIE_2003}. 
\end{remark}

The next question is thus whether an analogue of the Eyring--Kramers 
formula~\eqref{eq:Eyring_Kramers_finite-dim} holds. We first have to check  
whether the ratio of determinants of Hessians in that formula makes any  
sense in the infinite-dimensional case. Recall that we have shown in 
\Cref{prop:AC-1d-det} that the Hessians are given by $-\Delta-1$ and 
$-\Delta+2$. Though each of them has an infinite determinant, it turns out that 
their ratio converges. To see this, let us write this ratio as 
\begin{equation}
\label{eq:Fredholm} 
 \det\Bigpar{(-\Delta-1)(-\Delta+2)^{-1}}
 = \det\Bigpar{\one - 3(-\Delta+2)^{-1}}\;.
\end{equation} 
This is a so-called \emph{Fredholm determinant}. To see that it indeed 
converges, we note that it is equal to $-\smash{\frac12}\det\bigpar{\one - 
3(-\Delta_\perp+2)^{-1}}$ where $\Delta_\perp$ acts on mean-zero functions, and
write 
\begin{align}
 \log\det\Bigpar{\one - 3(-\Delta_\perp+2)^{-1}}
 = \Tr\log\Bigpar{\one - 3(-\Delta_\perp+2)^{-1}} 
 = -\sum_{n\geqs1} \frac{3^n}{n} \Tr\Bigpar{(-\Delta_\perp+2)^{-n}}\;.
\end{align} 
The series converges because $(-\Delta_\perp+2)^{-1}$ is trace class (and 
therefore all its powers as well), and its eigenvalues decrease like 
$((2k\pi/L)^2+2)^{-1}$. 

\begin{remark}
The ratio~\eqref{eq:Fredholm} can also be computed explicitly, writing it as 
\begin{equation}
 \prod_{k\in\Z} \frac{\nu_k}{\mu_k}
 = \prod_{k\in\Z} 
\frac{\Bigpar{\frac{2k\pi}{L}}^2+2}{\Bigpar{\frac{2k\pi}{L}}^2-1}
 = -2 \left( \prod_{k\in\N} 
\frac{1+2\Bigpar{\frac{L}{2k\pi}}^2}{1-\Bigpar{\frac{L}{2k\pi}}^2}
\right)^2
 = - \frac{\sinh^2(L/\sqrt{2})}{\sin^2(L/2)}\;,
\end{equation} 
where we used two of Euler's infinite product identities in the last step. 
However, the interpretation in terms of Fredholm determinant will be more 
useful in higher dimensions.  
\end{remark}

\begin{exercise}
\label{exo:Fredholm} 
By working in Fourier space, show that 
\begin{equation}
 \det\Bigpar{\one + c(-\Delta+a)^{-1}}
 = \frac{1}{\det\Bigpar{\one -c(-\Delta+b)^{-1}}}
\end{equation} 
for every $a, b\in\R$ such that $a+c=b$ and the Fredholm determinants make 
sense.
\end{exercise}

In order to prove an Eyring--Kramers law, we will approximate the 
infinite-dimensional system by a finite-dimensional one and pass to the limit. 
One way of doing this is to take the limit $N\to\infty$ the lattice 
system~\eqref{eq:SDE}. While this is possible, we prefer the slightly easier 
approach of using a spectral Galerkin approximation, which has the advantage 
that the relevant eigenvalues are independent of $N$. For 
$\phi\in L^2(\Lambda)$, we define its Galerkin projection as 
\begin{equation}
\label{eq:Galerkin} 
 (\Pi_N \phi)(x) = \sum_{\substack{k\in\Z \\ \abs{k}\leqs N}} 
\pscal{\phi}{e_k}e_k(x)\;, 
\end{equation} 
where the $e_k$ denote the Fourier basis functions~\eqref{eq:Fourier_basis}. 
We then consider the projected equation 
\begin{equation}
 \partial_t \phi_N(t,x) = \Delta\phi_N(t,x) + \phi_N(t,x) - \Pi_N\phi_N(t,x)^3 
+ \sqrt{2\eps}\bigpar{\Pi_N\xi}(t,x)
\end{equation} 
on the finite-dimensional subspace $E_N$ of $L^2(\Lambda)$ spanned by the $e_k$ 
with $\abs{k}\leqs N$ (we have used the fact that $\Pi_N$ and $\Delta$ 
commute). This is equivalent to the finite-dimensional SDE for Fourier 
coefficients  
\begin{equation}
 \6\hat\phi_t = -\nabla \widehat V(\hat\phi_t)\6t + \sqrt{2\eps}\6W_t\;,  
\end{equation} 
where $\widehat V$ is simply the potential~\eqref{eq:potential_infdim} 
expressed in Fourier coordinates. This form is suitable for the 
potential-theoretic approach described in \Cref{ssec:potential}

One easily checks that for symmetric sets $B=-A\subset E_N$ 
\Cref{lem:symmetry_hAB} still applies here, so that it is essentially 
sufficient 
to estimate the partition function $\cZ$ and the capacity $\capacity(A,B)$ 
uniformly in the cut-off $N$. 

As in the finite-dimensional case, it turns out to be useful to perform the 
change of variables 
\begin{equation}
\label{eq:1d_transversal} 
 \phi_N(x) = \frac1{\sqrt L}\phi_0 + \sqrt{\eps}\phi_\perp(x)\;,
 \qquad
 \phi_0 = \pscal{e_0}{\phi_N} 
 = \frac1{\sqrt L} \int_\Lambda \phi_N(x)\6x\;, 
\end{equation} 
where $\phi_\perp$ has zero mean. Inserting this in the 
potential~\eqref{eq:potential_infdim} yields 
\begin{equation}
\label{eq:potential_1d_decomposition} 
 \frac{1}{\eps} V \biggpar{\frac1{\sqrt L}\phi_0 + \sqrt{\eps}\phi_\perp(x)} 
 = \frac{1}{\eps} V_0(\phi_0) 
 + \frac12 \pscal{\phi_\perp}{Q_\perp(\phi_0)\phi_\perp}
 + R_\eps(\phi_0,\phi_\perp)\;,
\end{equation} 
where
\begin{align}
V_0(\phi_0) &= \frac{L}{4} \biggpar{\frac{\phi_0^2}{L}-1}^2\;, \\
Q_\perp(\phi_0) &= 
-\Delta_{\perp}-1+\frac{3\phi_0^2}{L}\;, \\
R_\eps(\phi_0,\phi_\perp) 
&= \sqrt{\frac{\eps}{L}}\phi_0 \int_{\Lambda} \phi_\perp(x)^3 \6x
  + \frac{\eps}{4}\int_{\Lambda} \phi_\perp(x)^4 \6x\;.
\end{align}
Here the Laplacian $\Delta_\perp=\Delta_{\perp,N}$ acts on $E_{\perp,N}$, the 
subspace of zero-mean functions in $E_N$. Therefore, the eigenvalues of 
$Q_\perp(\phi_0)$ are given by $\mu_k + 3\phi_0^2/L \geqs 2\pi/L - 1 > 0$, where 
the $\mu_k=\lambda_k-1$ are the eigenvalues of $-\Delta_\perp-1$. We will denote 
by $g_N(\phi_0)$ the Gaussian measure with covariance $Q_\perp(\phi_0)^{-1}$ on 
$E_{\perp,N}$, called a (truncated) Gaussian free field with mass 
$m=(3\phi_0^2/L)^{1/2}$. A crucial estimate is the following one.

\begin{lemma}
\label{lem:moment_GFF} 
There exist constants $C_n$, uniform in $N$ and $\phi_0$, such that the bound   
\begin{equation}
 \biggexpecin{g_N(\phi_0)}{\int_\Lambda \phi_\perp(x)^{2n} \6x} 
 \leqs C_n 
\end{equation} 
holds for all even moments of $\phi_\perp$, while the odd moments vanish.
\end{lemma}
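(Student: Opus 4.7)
The plan is to reduce the bound to a statement about the pointwise variance $C_N(x,x;\phi_0)$ of the centred Gaussian field $\phi_\perp$ under $g_N(\phi_0)$, and to control this variance uniformly in $N$ and $\phi_0$ by exploiting the spectral gap of $Q_\perp(\phi_0)$ on the zero-mean subspace $E_{\perp,N}$.

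First I would note that $\phi_\perp$ is a centred Gaussian random element of $E_{\perp,N}$, so by symmetry $\phi_\perp \eqinlaw -\phi_\perp$, which immediately kills the odd moments of the integrand pointwise and hence of the integral. For even moments, I would apply Fubini to write
\begin{equation}
\biggexpecin{g_N(\phi_0)}{\int_\Lambda \phi_\perp(x)^{2n}\6x}
= \int_\Lambda \bigexpecin{g_N(\phi_0)}{\phi_\perp(x)^{2n}}\6x\;,
\end{equation}
and then invoke Isserlis' theorem (the Gaussian moment identity used in~\eqref{eq:moments_Gaussian}) to obtain $\expecin{g_N(\phi_0)}{\phi_\perp(x)^{2n}} = (2n-1)!!\,C_N(x,x;\phi_0)^n$, where
\begin{equation}
C_N(x,y;\phi_0) = \sum_{0<\abs{k}\leqs N} \frac{e_k(x)e_k(y)}{\mu_k + 3\phi_0^2/L}
\end{equation}
is the kernel of $Q_\perp(\phi_0)^{-1}$ on $E_{\perp,N}$, with $\mu_k=(2k\pi/L)^2-1$ the eigenvalues of $-\Delta_\perp-1$.

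Next I would bound $C_N(x,x;\phi_0)$ uniformly. Since $3\phi_0^2/L\geqs 0$, we have $\mu_k+3\phi_0^2/L \geqs \mu_k$ for all $k\neq 0$. In the regime $L<2\pi$ considered here, $\mu_k\geqs\mu_1 = (2\pi/L)^2-1 > 0$, and $\mu_k \sim (2k\pi/L)^2$ as $\abs{k}\to\infty$. Using $\abs{e_k(x)}^2\leqs 2/L$, this yields
\begin{equation}
C_N(x,x;\phi_0) \leqs \frac{2}{L}\sum_{k\neq 0}\frac{1}{\mu_k} =: K < \infty\;,
\end{equation}
uniformly in $x$, $N$ and $\phi_0$, because the series converges like $\sum k^{-2}$. (For $L\geqs 2\pi$ one would instead exploit that on the relevant bounded range of $\phi_0$ one has a uniform lower bound $\mu_k+3\phi_0^2/L\geqs c>0$, in line with the coercivity~\eqref{eq:offdiagonal}; this is the only point where the argument is slightly delicate.) Combining the pieces gives the claim with $C_n = (2n-1)!!\, L\, K^n$.

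The only genuine obstacle, should one wish to remove the assumption $L<2\pi$, is ensuring uniform positivity of $Q_\perp(\phi_0)$ on $E_{\perp,N}$ when the unperturbed operator $-\Delta_\perp-1$ has some negative eigenvalues; this is handled by restricting $\phi_0$ to a neighbourhood of the minima of $V_0$ (where $3\phi_0^2/L$ dominates) and splitting the sum into finitely many low modes plus a uniformly summable high-mode tail. The rest of the argument, being purely Gaussian, is then routine.
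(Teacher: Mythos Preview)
Your argument is correct and rests on the same core idea as the paper's proof: both reduce to the summability of $\sum_{k\neq 0}(\mu_k+m^2)^{-1}$, uniform in $N$ and in the mass $m^2=3\phi_0^2/L$. The paper expands $\phi_\perp$ in the Fourier basis first, writes the $2n$-fold sum, and then uses that $\expec{Z_{k_1}\cdots Z_{k_{2n}}}$ vanishes unless the indices pair up, arriving at the bound $\bigl(\sum_k(\mu_k+m^2)^{-1}\bigr)^n$. You instead apply Fubini first and recognise that $\phi_\perp(x)$ is a scalar centred Gaussian for each fixed $x$, which lets you invoke the one-dimensional moment identity $(2n-1)!!\,\sigma^{2n}$ directly and then bound the pointwise variance $C_N(x,x;\phi_0)$ by the same sum. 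Your route is slightly cleaner, since it bypasses the explicit combinatorics of the multi-index sum and the integral of products of basis functions; the paper's route, on the other hand, makes the Fourier structure more visible, which is useful later when the same computation is revisited via Wick calculus. Either way the content is the same.
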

\begin{proof}
Let $\cK_N = \setsuch{k\in\Z}{k\neq0, \abs{k}\leqs N}$. Then we can represent 
$\phi_\perp$ as 
\begin{equation}
 \phi_\perp(x) = \sum_{k\in\cK_N} \frac{Z_k}{\sqrt{\mu_k+m^2}} e_k(x)\;,
\end{equation} 
where the $Z_k$ are i.i.d.\ standard normal random variables. Therefore 
\begin{equation}
 \biggexpecin{g_N(\phi_0)}{\int_\Lambda \phi_\perp(x)^{2n} \6x} 
 = \sum_{k_1,\dots,k_{2n}\in\cK_N} 
 \frac{\expec{Z_{k_1}\dots Z_{k_{2n}}}} 
{\sqrt{\mu_{k_1}+m^2}\dots\sqrt{\mu_{k_{2n}}+m^2}}
\int_\Lambda e_{k_1}(x)^2 \dots e_{k_{2n}}(x)^2\6x\;.
\end{equation} 
The expectation vanishes unless the $k_i$ are pairwise equal. Therefore, we 
obtain 
\begin{equation}
 \biggexpecin{g_N(\phi_0)}{\int_\Lambda \phi_\perp(x)^{2n} \6x} 
 \lesssim \sum_{k_1,\dots,k_n\in\cK_N}
 \frac{1}{(\mu_{k_1}+m^2)\dots(\mu_{k_n}+m^2)}
 = \Biggpar{\sum_{k\in\cK_N} \frac{1}{\mu_k+m^2}}^n\;,
\end{equation} 
which is bounded uniformly in $N$ and $m$. Odd moments vanish by symmetry. 
\end{proof}

We will see in the next chapter that this bound can be substantially improved 
using Wick calculus. For now, it will be sufficient to obtain the following key 
estimate.

\begin{proposition}
\label{prop:exp_moment} 
We have 
\begin{equation}
 \e^{-c\eps} \leqs 
 \bigexpecin{g_N(\phi_0)}{\e^{-R_\eps(\phi_0,\cdot)}}
 \leqs 1 + \e^{C(1+\phi_0^2)}\eps\;,
\end{equation} 
where $c$ and $C$ are uniform in $N$. 
\end{proposition}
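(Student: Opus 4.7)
The plan is to treat the two inequalities separately, both exploiting the symmetry $\phi_\perp \mapsto -\phi_\perp$ of the centred Gaussian measure $g_N(\phi_0)$. Decompose $R_\eps = A + B$ with
\begin{equation*}
 A := \sqrt{\eps/L}\,\phi_0\int_\Lambda \phi_\perp(x)^3\6x\;, \qquad
 B := \tfrac{\eps}{4}\int_\Lambda\phi_\perp(x)^4\6x\;,
\end{equation*}
so that $A$ is odd in $\phi_\perp$ while $B$ is even and nonnegative. Averaging over $\phi_\perp$ and $-\phi_\perp$ gives $\bigexpecin{g_N(\phi_0)}{A} = 0$ and
\begin{equation*}
 \bigexpecin{g_N(\phi_0)}{\e^{-R_\eps}}
 = \bigexpecin{g_N(\phi_0)}{\cosh(A)\,\e^{-B}}\;.
\end{equation*}

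For the lower bound, apply Jensen's inequality to $u\mapsto \e^{-u}$. Since $A$ has mean zero, $\bigexpecin{g_N(\phi_0)}{R_\eps} = \tfrac{\eps}{4}\bigexpecin{g_N(\phi_0)}{\int_\Lambda\phi_\perp^4\6x}$, which is bounded by $c\eps$ uniformly in $N$ and $\phi_0$ by the case $n=2$ of \Cref{lem:moment_GFF}. This yields $\bigexpecin{g_N(\phi_0)}{\e^{-R_\eps}}\geqs \e^{-c\eps}$.

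For the upper bound, since $\e^{-B}\leqs 1$ and $\bigexpecin{g_N(\phi_0)}{\e^{-B}}\leqs 1$, it suffices to show $\bigexpecin{g_N(\phi_0)}{(\cosh(A)-1)\e^{-B}}\leqs \e^{C(1+\phi_0^2)}\eps$. The natural route is to expand $\cosh(A)-1 = \sum_{n\geqs 1}A^{2n}/(2n)!$. The leading term $\tfrac{1}{2}\bigexpecin{g_N(\phi_0)}{A^2} \leqs \tfrac{\eps\phi_0^2}{2L}\bigexpecin{g_N(\phi_0)}{X^2}$ with $X := \int_\Lambda\phi_\perp^3\6x$ is already of the right order, since $\E[X^2]$ is bounded uniformly in $N$ by Wick's theorem combined with \Cref{lem:moment_GFF}. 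The main difficulty is controlling all higher-order terms together with the damping $\e^{-B}$: bare hypercontractivity applied to $X = \int_\Lambda\Wick{\phi_\perp^3}\6x$, which lies in the third Wiener chaos because $\int_\Lambda\phi_\perp\6x = 0$, only yields $\E[X^{2n}]\lesssim (2n)^{3n}\E[X^2]^n$, and the resulting series $\sum_n (\eps\phi_0^2/L)^n(2n)^{3n}/(2n)!$ already diverges, so the quartic damping $\e^{-B}$ must genuinely be used.

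The concrete strategy is to split the expectation on an event of the form $\{\|\phi_\perp\|_{L^4} \leqs M(1+|\phi_0|)\}$ for a suitable constant $M$. On this good event, both $|A|$ and $B$ are bounded pointwise, and a direct Taylor expansion of $\cosh(A)-1$ combined with the uniform moment bounds of \Cref{lem:moment_GFF} produces a contribution of order $\e^{C(1+\phi_0^2)}\eps$. On its complement, one uses the pointwise bound $\cosh(A)\e^{-B} \leqs \e^{|A|-B} \leqs \e^{27\phi_0^4/(4\eps L)}$ (coming from the optimisation $\sqrt{\eps/L}|\phi_0|u^3 - \tfrac{\eps}{4}u^4 \leqs \tfrac{27\phi_0^4}{4\eps L}$ for each $u\in\R$), and absorbs this blow-up using the stretched-exponential concentration of the quartic form $\int_\Lambda\phi_\perp^4\6x$ around its mean, itself a consequence of the moment bounds of \Cref{lem:moment_GFF} via Chebyshev's inequality applied at high $n$. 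The main obstacle will be the delicate balancing of the two contributions so that the pointwise blow-up on the bad event is swallowed by the Gaussian tail probabilities, which is precisely what forces the exponential prefactor $\e^{C(1+\phi_0^2)}$ rather than a polynomial one.
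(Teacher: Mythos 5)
Your lower bound is exactly the paper's argument (Jensen plus the vanishing of the odd moment), and your diagnosis that the quartic damping $\e^{-B}$ must be used is correct --- indeed $\bigexpecin{g_N(\phi_0)}{\cosh(A)}=\infty$ since $A$ lives in the third Wiener chaos. However, the way you propose to use $B$ in the upper bound has a genuine gap. On the bad event you bound the integrand by its unconstrained supremum $\e^{27\phi_0^4/(4L\eps)}$, obtained by optimising $\sqrt{\eps/L}\,|\phi_0|u^3-\tfrac{\eps}{4}u^4$ over $u$; this quantity blows up like $\e^{c/\eps}$ as $\eps\to0$. But the measure $g_N(\phi_0)$ does not depend on $\eps$ at all, so the probability of any fixed bad event $\{\norm{\phi_\perp}_{L^4}>M(1+\abs{\phi_0})\}$ is a fixed, $\eps$-independent positive number (Gaussian concentration gives at best $\e^{-cM^2(1+\phi_0^2)}$). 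The product therefore diverges as $\eps\to0$ and can never be absorbed into $\e^{C(1+\phi_0^2)}\eps$; letting $M$ grow with $\eps^{-1}$ does not help, because then $\abs{A}$ is no longer small on the good event and the Taylor expansion there no longer produces a factor $\eps$. No choice of threshold balances the two contributions.

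The fix --- and the paper's route --- is to use the quartic term \emph{pointwise in the field} via Young's inequality rather than by global optimisation: for each $u$ and each $p\leqs 3/2$,
\begin{equation}
 \sqrt{\tfrac{\eps}{L}}\,\abs{\phi_0}\,\abs{u}^3 \leqs \frac{p\eps}{4}u^4 + \frac{3\phi_0^2}{2pL}u^2\;,
\end{equation}
which is the content of the completion of squares in~\eqref{eq:proof_exp_moment}. This yields the pointwise bound $-pR_\eps(\phi_0,\phi_\perp)\leqs \tfrac{3\phi_0^2}{2L}\int_\Lambda\phi_\perp(x)^2\6x$, so that $\bigexpecin{g_N(\phi_0)}{\e^{-pR_\eps}}$ is controlled by a Gaussian change of mass, giving the a priori estimate $\bigexpecin{g_N(\phi_0)}{\e^{-(3/2)R_\eps}}\leqs\e^{C_N\phi_0^2}$ uniformly in $N$ (the ratio of determinants is where $\e^{C\phi_0^2}$ comes from). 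The factor $\eps$ is then extracted not by a Taylor expansion of $\cosh$, but from the elementary inequality $\e^{-x}\leqs 1-x+\tfrac12 x^2\e^{-x}$ for $x<0$, followed by H\"older with exponents $(3,3/2)$ to separate $\bigexpecin{g_N(\phi_0)}{R_\eps^6}^{1/3}=\Order{\eps\phi_0^2}$ from the exponential moment just obtained. I recommend replacing your good/bad splitting by this Young-inequality-plus-change-of-mass mechanism; your symmetrisation and your treatment of the leading term $\tfrac12\bigexpecin{g_N(\phi_0)}{A^2}$ can be kept, but they do not by themselves close the argument.
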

\begin{proof}
By \Cref{lem:moment_GFF}, $\expecin{g_N(\phi_0)}{R_\eps} = c_N\eps$, where 
$c_N\leqs c$ is bounded uniformly in $N$ and does not depend on $\phi_0$. 
Therefore the lower bound follows directly from Jensen's inequality, which 
yields 
\begin{equation}
 \bigexpecin{g_N(\phi_0)}{\e^{-R_\eps(\phi_0,\cdot)}}
 \geqs \exp\Bigset{-\bigexpecin{g_N(\phi_0)}{R_\eps(\phi_0,\cdot)}}
 \geqs \exp\bigset{-c\eps}\;.
\end{equation}
For the other direction, we first derive a rough bound on a slightly higher 
exponential moment, using a change of mass in the Gaussian measure. For 
any $p\geqs1$, we have
\begin{equation}
 \bigexpecin{g_N(\phi_0)}{\e^{-pR_\eps(\phi_0,\cdot)}} 
 = \sqrt{\frac{\det Q_\perp(\phi_0)}{\det Q_\perp(0)}}
 \Biggexpecin{g_N(0)}{\exp\biggset{
 -\frac{3\phi_0^2}{2L}\int_{\Lambda} \phi_\perp(x)^2 \6x
 -pR_\eps(\phi_0,\cdot)}}\;.
\end{equation} 
For the ratio of determinants, we note that 
\begin{equation}
 \log \Biggpar{\frac{\det Q_\perp(\phi_0)}{\det Q_\perp(0)}} 
 = \log\prod_{k\in\cK_N} \Biggpar{1+\frac{3\phi_0^2}{L\mu_k}}
 = \sum_{k\in\cK_N} \log\Biggpar{1+\frac{3\phi_0^2}{L\mu_k}}
 \leqs \frac{3\phi_0^2}{L} \sum_{k\in\cK_N} \frac{1}{\mu_k}
 =: 2C_N\phi_0^2\;,
\end{equation} 
where $C_N$ is bounded uniformly in $N$.
Furthermore, using completion of squares, we get 
\begin{align}
\frac{3\phi_0^2}{2L}\int_{\Lambda} \phi_\perp(x)^2 \6x
 +pR_\eps(\phi_0,\phi_\perp)
 &= \int_\Lambda \phi_\perp(x)^2 \Biggbrak{\frac{3\phi_0^2}{2L}
 + p\sqrt{\frac{\eps}{L}}\phi_0 \phi_\perp(x)
  + p\frac{\eps}{4}\phi_\perp(x)^2 }\6x \\
 &= \frac14 \int_\Lambda \phi_\perp(x)^2 \Biggbrak{ 
\Biggpar{\frac{2\sqrt{p}\phi_0}{\sqrt{L}} + \sqrt{\eps p}\phi_\perp(x)}^2 
+ \frac{2(3-2p)\phi_0^2}{L}} \6x\;,
\label{eq:proof_exp_moment} 
\end{align}
which is positive for $p\leqs3/2$. Therefore 
$\bigexpecin{g_N(\phi_0)}{\e^{-(3/2)R_\eps(\phi_0,\cdot)}} \leqs 
\e^{C_N\phi_0^2}$. We now decompose
\begin{align}
\bigexpecin{g_N(\phi_0)}{\e^{-R_\eps}}
&= \bigexpecin{g_N(\phi_0)}{\e^{-R_\eps}\indexfct{R_\eps\geqs0}}
+  \bigexpecin{g_N(\phi_0)}{\e^{-R_\eps}\indexfct{R_\eps<0}} \\
&\leqs \bigprobin{g_N(\phi_0)}{R_\eps\geqs0} 
+ \bigexpecin{g_N(\phi_0)}{(1-R_\eps+\e^{-R_\eps}-1+R_\eps) 
\indexfct{R_\eps<0}} \\
&\leqs \bigprobin{g_N(\phi_0)}{R_\eps\geqs0} 
+ \bigprobin{g_N(\phi_0)}{R_\eps<0} 
- \bigexpecin{g_N(\phi_0)}{R_\eps}
+ \frac12\bigexpecin{g_N(\phi_0)}{R_\eps^2\e^{-R_\eps}} \\
&\leqs 1 - \bigexpecin{g_N(\phi_0)}{R_\eps}
+ \frac12\bigexpecin{g_N(\phi_0)}{R_\eps^6}^{1/3} 
\bigexpecin{g_N(\phi_0)}{\e^{-(3/2)R_\eps}}^{2/3}\;,
\label{eq:proof_exp_moment2} 
\end{align}
where we have used the fact that $\e^{-x}-1+x \leqs \frac12x^2\e^{-x}$ for 
$x<0$ and H\"older's inequality. Since by \Cref{lem:moment_GFF}, 
%$\expecin{g_N(\phi_0)}{R_\eps} = \Order{\eps}$ and 
$\expecin{g_N(\phi_0)}{\smash{R_\eps^6}} = \Order{\eps^3\phi_0^6}$, the 
result follows from the bound on 
$\expecin{g_N(\phi_0)}{\smash{\e^{-(3/2)R_\eps(\phi_0,\cdot)}}}$ for an 
appropriate choice of $C$. 
\end{proof}

We can now estimate the partition function and capacity, in a similar way as we 
did in the finite-dimensional case. We define the sets $A$ and $B$ as 
\begin{equation}
\label{eq:condAB_1d} 
 A = -B = \bigsetsuch{y}{\abs{\phi_0+\sqrt{L}}\leqs\delta, \phi_\perp\in 
D_\perp}, 
\end{equation} 
with $D_\perp$ containing a ball of radius $\sqrt{\log(\eps^{-1})}$ in 
$H^s(\Lambda)$ for some $s<\frac12$. Then we have the following result. 

\begin{proposition}
\label{prop:Z-cap_AC-1d} 
The partition function and the capacity satisfy 
\begin{align}
\cZ_N := \int_{E_N} \e^{-V(\phi)/\eps} \6\phi 
&= 2 \frac{(2\pi\eps)^{N+1/2}} {\sqrt{\nu_0\det(-\Delta_\perp+2)}} 
\bigbrak{1+\Order{\eps}}\;, \\
\capacity(A,B) &= \frac{(2\pi\eps)^{N+1/2}\sqrt{\abs{\mu_0}}\e^{-V_0(0)/\eps}}
{2\pi\sqrt{\det(-\Delta_\perp-1)}}
\bigbrak{1+\Order{\eps}}\;, 
\end{align}
where $\Delta_\perp=\Delta_{\perp,N}$, $\nu_0=2$, $\mu_0=-1$, and the error 
terms are uniform in the cut-off $N$. 
\end{proposition}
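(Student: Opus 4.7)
The plan is to mimic the finite-dimensional derivation of the Eyring--Kramers formula (Section~1.5), applying Laplace asymptotics to the scalar variable $\phi_0$ after performing a Gaussian integration over the transverse space $E_{\perp,N}$, where the crucial novelty is that \Cref{prop:exp_moment} supplies uniformity in $N$ for the remainder term $R_\eps$. Note that $E_{\perp,N}$ has dimension $2N$, so the change of variables~\eqref{eq:1d_transversal} carries Jacobian $\eps^N$, which will combine with the $\sqrt{2\pi\eps}$ from the one-dimensional Laplace integration to produce the expected $(2\pi\eps)^{N+1/2}$ prefactor.

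For the partition function, inserting the decomposition~\eqref{eq:potential_1d_decomposition} and integrating out $\phi_\perp$ against the Gaussian measure $g_N(\phi_0)$ yields
\begin{equation}
\cZ_N = \eps^N \int_\R \e^{-V_0(\phi_0)/\eps} \sqrt{\frac{(2\pi)^{2N}}{\det Q_\perp(\phi_0)}} \bigexpecin{g_N(\phi_0)}{\e^{-R_\eps(\phi_0,\cdot)}}\6\phi_0\;.
\end{equation}
By \Cref{prop:exp_moment}, the inner expectation equals $1+\Order{\eps}$ uniformly in $N$ when $\phi_0$ is bounded, and is controlled by $\e^{C(1+\phi_0^2)}$ in general, which is dominated by the quartic decay of $\e^{-V_0/\eps}$. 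Laplace asymptotics around the two symmetric minima $\phi_0=\pm\sqrt{L}$, where $V_0=0$, $V_0''=2=\nu_0$ and $Q_\perp(\pm\sqrt{L})=-\Delta_\perp+2$, then yields the announced formula, with the factor $2$ accounting for the two minima and the error remaining $\Order{\eps}$ uniformly in $N$ thanks to the evenness of $V_0$ in $\phi_0$. For the capacity, we apply the Dirichlet principle with the product test function $h(\phi)=h_0(\phi_0)$, where $h_0$ is the one-dimensional committor associated with the potential $V_0$ as in \Cref{exo:1d-committor}. Substituting into the Dirichlet form and integrating out $\phi_\perp$ leads to an upper bound whose Laplace integration is now localised near $\phi_0=0$, where $V_0(0)=L/4$, $V_0''(0)=-1=\mu_0$ and $Q_\perp(0)=-\Delta_\perp-1$. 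For the matching lower bound, the Thomson principle (\Cref{prop:thomson}) is applied with the divergence-free unit flow
\begin{equation}
\ph(\phi) = \frac{1}{K}\indicator{D_\perp}(\phi_\perp)\e^{-\pscal{\phi_\perp}{Q_\perp(0)\phi_\perp}/2} e_0\;,
\end{equation}
directed along $e_0$ and normalised by $K$; its $\cD$-functional is estimated by the same mechanism, using the assumption that $D_\perp$ contains a ball of radius $\sqrt{\log(\eps^{-1})}$ to guarantee that truncating the Gaussian integral contributes only $\Order{\eps}$.

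The hard part is precisely the uniformity of the error terms in $N$. In the finite-dimensional argument, the bound $\expecin{g_N(\phi_0)}{\e^{-R_\eps}} = 1+\Order{\eps}$ was established by dominated convergence with constants possibly deteriorating with dimension. Here it is \Cref{prop:exp_moment}, whose proof combines the uniform moment bound of \Cref{lem:moment_GFF} with the completion-of-squares/change-of-mass trick in~\eqref{eq:proof_exp_moment}--\eqref{eq:proof_exp_moment2}, that provides $N$-independent constants $c,C$ and makes the whole argument go through uniformly. Once this analytic input is in place, the remaining work is bookkeeping: tracking how the Jacobian $\eps^N$, the $2N$-dimensional Gaussian normalisation, and the scalar Laplace factor assemble into the announced Fredholm-like ratios of determinants, and controlling the contribution from $\phi_0$ outside a small neighbourhood of the relevant critical value by standard Gaussian tail bounds.
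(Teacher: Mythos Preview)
Your treatment of $\cZ_N$ and of the Dirichlet upper bound for the capacity matches the paper exactly. The gap is in the Thomson lower bound, and it sits precisely on the uniformity-in-$N$ issue you correctly single out as the hard part.

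With the purely Gaussian flow $\ph\propto\indicator{D_\perp}\e^{-\frac12\pscal{\phi_\perp}{Q_\perp(0)\phi_\perp}}e_0$, substitution into $\cD(\ph)$ leaves an integrand carrying $\e^{+R_\eps}$: the quartic $\tfrac{\eps}{4}\int_\Lambda\phi_\perp^4$ now appears with the \emph{wrong} sign. Your appeal to ``the same mechanism'' does not go through, because \Cref{prop:exp_moment} controls $\bigexpec{\e^{-R_\eps}}$ and its proof hinges on the confining sign of the quartic via the completion of squares~\eqref{eq:proof_exp_moment}. With the sign flipped, the expectation is not obviously $1+\Order{\eps}$ uniformly in $N$; the indicator of a ball in $H^s$, $s<\tfrac12$, does not by itself control $\int\phi_\perp^4$ independently of the cut-off. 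The paper's own footnote in the finite-dimensional analogue of this computation flags exactly this point: the purely Gaussian flow is adequate only if one does \emph{not} insist on uniformity in $N$.

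The paper's remedy is to build the quartic into the flow, taking
\[
\ph(\phi) = \frac{1}{K}\,\indicator{D_\perp}(\phi_\perp)\,\e^{-V_\perp(\phi_\perp)}\,e_0\;,
\qquad
V_\perp(\phi_\perp) := \tfrac12\pscal{\phi_\perp}{Q_\perp(0)\phi_\perp} + \tfrac{\eps}{4}\int_\Lambda\phi_\perp(x)^4\,\6x\;.
\]
In $\cD(\ph)$ the two quartics then combine to leave $\e^{-\frac{\eps}{4}\int\phi_\perp^4}$ (favourable sign) times $\e^{V_+}$ with $V_+=\tfrac{3\phi_0^2}{2L}\int\phi_\perp^2+\sqrt{\eps}\phi_0\int\phi_\perp^3$, and an argument parallel to \Cref{prop:exp_moment} goes through uniformly in $N$ despite the sign of the cubic and the extra quadratic term. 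So your outline stands once you upgrade the unit flow in this way; if you wish to keep the Gaussian flow, you owe a separate argument (for instance via the embedding $H^s\hookrightarrow L^4$ for $s\in(\tfrac14,\tfrac12)$) bounding $\int\phi_\perp^4$ on $D_\perp$ uniformly in $N$, and you should expect a logarithmic loss in the error term.
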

\begin{proof}
The partition function can be written as 
\begin{equation}
 \cZ_N = \eps^N \int_{-\infty}^\infty \e^{-V_0(\phi_0)/\eps}
 \frac{(2\pi)^N}{\sqrt{\det Q_\perp(\phi_0)}}
 \bigexpecin{g_N(\phi_0)}{\e^{-R_\eps(\phi_0,\cdot)}} \6\phi_0\;.
\end{equation} 
The result then follows from \Cref{prop:exp_moment}, using Laplace asymptotics 
for the integral over $\phi_0$, which is dominated by values of $\phi_0$ near 
$\pm\sqrt{L}$. The error term $\eps\e^{C(1+\phi_0^2)}$ only yields an error of 
order $\eps$, because it is dominated by the factor $\e^{-V_0(\phi_0)/\eps}$. 

For the upper bound on the capacity, we use as before the Dirichlet principle, 
with a test function 
\begin{equation}
\label{eq:cap_h0} 
 h(\phi) = h_0(\phi_0) = \frac{1}{c_0} \int_{\phi_0}^a 
\e^{V_0(\xi)/\eps}\6\xi\;, 
\qquad 
c_0 =  \int_{-a}^a \e^{V_0(\xi)/\eps}\6\xi\;. 
\end{equation} 
Writing $\cE(h)$ in terms of 
$\bigexpecin{g_N(\phi_0)}{\e^{-R_\eps(\phi_0,\cdot)}}$ yields the upper bound 
in a similar way as for the partition function. For the lower bound on the 
capacity, we use the Thomson principle. Here it is useful to decompose the 
potential as 
\begin{equation}
 \frac{1}{\eps} V \biggpar{\frac1{\sqrt L}\phi_0 + \sqrt{\eps}\phi_\perp(x)} 
 = \frac{1}{\eps} V_0(\phi_0) 
 + V_\perp(\phi_\perp) + V_+(\phi_0,\phi_\perp)\;,
\end{equation} 
where
\begin{align}
V_\perp(\phi_\perp) &= \frac12 \pscal{\phi_\perp}{Q_\perp(0)\phi_\perp}
+ \frac{\eps}{4} \int_\Lambda \phi_\perp(x)^4 \6x\;, \\
V_+(\phi_0,\phi_\perp) &= \frac{3\phi_0^2}{2L} \int_\Lambda \phi_\perp(x)^2\6x 
+ \sqrt{\eps}\phi_0 \int_\Lambda \phi_\perp(x)^3\6x\;.
\end{align}
We define the divergence-free unit flow 
\begin{equation}
 \ph(\phi) = \frac{1}{K} \indicator{D_\perp}(\phi_\perp) 
\e^{-V_\perp(\phi_\perp)} e_0\;, 
\qquad 
K = \eps^N \int_{D_\perp}\e^{-V_\perp(\phi_\perp)} \6\phi_\perp\;.
\end{equation} 
Substituting in the quadratic form~\eqref{eq:def_D} yields 
\begin{equation}
 \cD(\ph) = \frac{1}{\eps K^2} \int_{-\infty}^\infty \e^{V_0(\phi_0)/\eps}
 \frac{(2\pi\eps)^N}{\sqrt{\det Q_\perp(0)}}
 \Biggexpecin{\mu_N(0)}{\indicator{D_\perp} \exp\biggset{-\frac{\eps}{4} 
\int_\Lambda \phi_\perp(x)^4 \6x + V_+(\phi_0,\cdot)}} \6\phi_0\;.
\end{equation} 
Proceeding as in~\Cref{prop:exp_moment}, one checks that the gaussian 
expectation is again equal to $1+\Order{\eps}$, despite the different sign of 
the cubic term and the extra quadratic term. A similar analysis can be 
performed for $K$. The lower bound then follows from Thomson's principle.
\end{proof}

\Cref{prop:Z-cap_AC-1d} is one of the essential building blocks of the 
following theorem, which is the main result of this section. 

\begin{theorem}[Eyring-Kramers law for the one-dimensional Allen--Cahn SPDE]
\label{thm:EK-1d} 
Assume that $L<2\pi$ and $A$ and $B$ are as in~\eqref{eq:condAB_1d}. Then 
\begin{equation}
 \bigexpecin{\phi^*_-}{\tau_B}
 = \frac{2\pi}{\abs{\mu_0}}
 \frac{\e^{[V(\phi^*_0)-V(\phi^*_-)]/\eps}}{\sqrt{\bigabs{\det\bigpar{\one + 
3(-\Delta-1)^{-1}}}}} \bigbrak{1+\Order{\eps}}\;,
\end{equation} 
where $V(\phi^*_0)-V(\phi^*_-) = L/4$ and 
the determinant is to be interpreted as the inverse of the Fredholm 
determinant~\eqref{eq:Fredholm} (cf.~\Cref{exo:Fredholm}). 
\end{theorem}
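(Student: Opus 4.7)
The plan is to combine the Galerkin approximation with the potential-theoretic building blocks already assembled, and then pass to the limit $N\to\infty$. I would first work at the level of the finite-dimensional spectral projection $\Pi_N$, where all the results of \Cref{ssec:potential} apply directly: the Galerkin dynamics is a gradient SDE on $E_N$ with potential $\widehat V_N$, admitting the Gibbs measure $\pi_N\propto\e^{-\widehat V_N/\eps}\6\phi$ as its invariant probability. Since the sets $A,B$ in \eqref{eq:condAB_1d} satisfy $B=-A$, the symmetry \Cref{lem:symmetry_hAB} gives
\begin{equation}
\int_{(A\cup B)^c}\e^{-V(\phi)/\eps}h_{AB}(\phi)\6\phi = \frac{1}{2}\cZ_N\;,
\end{equation}
and then the magic formula \Cref{thm:magic_formula} yields the exact identity
\begin{equation}
\expecin{\nu_{AB}}{\tau_B} = \frac{\cZ_N}{2\,\capacity_N(A,B)}\;.
\end{equation}

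Next, I would substitute the sharp asymptotics of \Cref{prop:Z-cap_AC-1d}. The factors $(2\pi\eps)^{N+1/2}$ cancel between numerator and denominator, and using $\nu_0=2$, $\mu_0=-1$ and $V_0(0)-V_0(\sqrt L)=L/4=V(\phi^*_0)-V(\phi^*_-)$, I would obtain
\begin{equation}
\expecin{\nu_{AB}}{\tau_B} = \frac{2\pi}{\abs{\mu_0}}
\sqrt{\frac{\det(-\Delta_{\perp,N}-1)}{\nu_0\,\det(-\Delta_{\perp,N}+2)}}\,
\e^{[V(\phi^*_0)-V(\phi^*_-)]/\eps}\bigbrak{1+\Order{\eps}}\;,
\end{equation}
where \emph{crucially} the $\Order{\eps}$ is uniform in $N$ thanks to \Cref{prop:exp_moment}. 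Including the zero modes (which contribute the explicit factor $\nu_0\abs{\mu_0}^{-1}$ already extracted) I can rewrite the ratio of determinants as
\begin{equation}
\frac{\det(-\Delta_N-1)}{\det(-\Delta_N+2)}=\det\Bigpar{(-\Delta_N-1)(-\Delta_N+2)^{-1}}=\det\Bigpar{\one-3(-\Delta_N+2)^{-1}}\;,
\end{equation}
which by \Cref{exo:Fredholm} equals $1/\det(\one+3(-\Delta_N-1)^{-1})$. The main technical step is then the limit $N\to\infty$: since $(-\Delta+2)^{-1}$ is trace class on $L^2(\Lambda)$ (its eigenvalues decay like $k^{-2}$), the Fredholm determinants $\det(\one-3(-\Delta_N+2)^{-1})$ converge to $\det(\one-3(-\Delta+2)^{-1})$, and the absolute value appears because the product includes the negative eigenvalue $-1$ associated with the single unstable mode at the saddle.

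The remaining obstacle, and the one I expect to be most delicate, is replacing the starting distribution $\nu_{AB}$ by the deterministic initial condition $\phi^*_-$. Here I would argue as in the last remark of the proof sketch: either use elliptic Harnack inequalities for the function $w_B(\phi)=\expecin{\phi}{\tau_B}$ (which is harmonic in $B^c$ for $\cL_N$) to show that $w_B$ oscillates by at most a multiplicative $1+\Order{\eps}$ over $\partial A$, so that $\expecin{\nu_{AB}}{\tau_B}$ and $\expecin{\phi^*_-}{\tau_B}$ are asymptotically equivalent; or alternatively invoke a coupling argument \emph{à la} Martinelli--Olivieri--Scoppola, exploiting the fact that the deterministic flow contracts exponentially in a neighbourhood of the stable point $\phi^*_-$ so that two solutions starting respectively from $\phi^*_-$ and from $\nu_{AB}$-typical configurations meet well before the metastable exit occurs. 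The hardest aspect will be to make this comparison quantitative \emph{uniformly in $N$}, in order to preserve the uniformity of the $\Order{\eps}$ error through the Galerkin limit; this is where the coming-down-from-infinity estimates mentioned after \Cref{thm:semilinear_global_existence}, combined with the quadratic lower bound \eqref{eq:offdiagonal} transferred to the infinite-dimensional setting, should provide the required control.
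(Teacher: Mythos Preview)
Your overall strategy matches the paper's: derive the formula at the Galerkin level via \Cref{thm:magic_formula}, \Cref{lem:symmetry_hAB} and \Cref{prop:Z-cap_AC-1d} with error uniform in $N$, let the truncated Fredholm determinant converge, and upgrade the initial condition from $\nu_{AB}$ to $\phi^*_-$ by Harnack or coupling. However, you treat the limit $N\to\infty$ as if it concerned only the right-hand side (the prefactor), whereas the paper singles out a second, logically separate ingredient: one must show that the expected first-hitting time of the \emph{Galerkin} system converges to that of the \emph{infinite-dimensional} SPDE. Uniformity of the $\Order{\eps}$ error in $N$ together with $b_N\to b$ in a relation $a_N=b_N[1+\Order{\eps}]$ does not by itself identify the limit of $a_N$; and the infinite-dimensional equation has no Lebesgue measure, so one cannot rerun the potential-theoretic argument directly there. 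The paper closes this gap by combining an a priori bound on $\expec{\tau_B^2}$ with a sample-path approximation result for the spectral Galerkin scheme (Bl\"omker--Jentzen), which is independent of the coupling argument you outline for the initial condition.
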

\begin{proof}[\Sketch]
For finite cut-off $N$, and when starting in the equilibrium distribution on 
$\partial A$, the result follows from \Cref{prop:Z-cap_AC-1d} and 
\Cref{thm:magic_formula}. To extend this to the infinite-dimensional equation, 
we have to check two more things. The first one is that first-hitting times of 
$B$ in the finite-dimensional system converge to those of the 
infinite-dimensional one. This can be done by combining an a priori estimate on 
$\expec{\tau_B^2}$ with a sample-path approximation argument given 
in~\cite[Theorem~3.1]{Blomker_Jentzen_13}, cf.~\cite[Theorem~3.1 
and~Proposition~3.4]{BG12a}. The second thing to be done is to prove that one 
can replace the initial distribution on $\partial A$ by a start in $\phi^*_-$. 
This can be achieved by using the coupling argument given 
in~\cite[Corollary~3.1]{Martinelli_Olivieri_Scoppola_89}, cf.~\cite[Theorem~3.5 
and~Proposition~3.6]{BG12a}. 
\end{proof}

%%%%%%%%%%%%%%%%%%%%%%%%%%%%%%%%%%%%%%%%%%%%%%%%%%%%%%%%%%%%%%%%%%%%%%%%%%%%%%%%

\section{Bibliographical notes}
\label{sec:1dbib} 

The deterministic PDE commonly called Allen--Cahn equation was introduced 
in~\cite{ChafeeInfante_74} and \cite{AllenCahn}. Detailed descriptions of its 
dynamics can be found for instance in~\cite{Carr_Pego89,Chen_2004}. 

The definitions of parabolic H\"older spaces and related Schauder estimates 
have been taken from~\cite{Hairer2014} and~\cite{Chandra_Weber_LN17}, while 
properties of Gaussian fields, in particular in relation with fractional 
Sobolev spaces, can be found in~\cite{Hairer_LN_2009}. 

Existence and uniqueness of solutions of the Allen--Cahn SPDE have been first 
established in~\cite{Faris_JonaLasinio82}. Regarding the general existence and 
uniqueness theory, as well as properties of invariant measures, we have mainly 
followed~\cite{DaPrato_Zabczyk_Ergodicity} with some input 
from~\cite{Hairer_LN_2009}. The sharper bounds on the decay of supremum norms 
are from~\cite{Cerrai_1999}. 

The large-deviation principle for the stochastic Allen--Cahn equation has been 
first obtained in~\cite{Faris_JonaLasinio82}, and later extended 
in~\cite{Freidlin88,Sowers92,ChenalMillet1997}. 

Eyring--Kramers laws for the one-dimensional Allen--Cahn equation have been 
obtained in~\cite{BG12a} (using spectral Galerkin approximations) and 
in~\cite{Barret15} (using lattice discretisations). Results in these papers are 
much more general than those presented here, as they apply to equations with a 
general confining nonlinearity, with some smoothness assumptions. Results 
in~\cite{BG12a} include the bifurcation regime and the dynamics for $L>2\pi$ 
for periodic boundary conditions, in which the transition states are 
degenerate. 
The proof presented here is somewhat different from the proofs given in those 
works, which did not use Gaussian measure theory and the Thomson principle, but 
were instead based on Hausdorff--Young inequalities and direct approximations 
of Dirichlet forms, in the spirit of~\cite{BEGK}. 

%%%%%%%%%%%%%%%%%%%%%%%%%%%%%%%%%%%%%%%%%%%%%%%%%%%%%%%%%%%%%%%%%%%%%%%%%%%%%%%%

\chapter{Allen--Cahn SPDE in two space dimensions}
\label{ch:dim2} 

The particle system considered in \Cref{ch:diff} makes also sense in two 
dimensions, that is, for a two-dimensional lattice of $N^2$ particles, each one 
interacting with its $4$ nearest neighbours. Proceeding as in \Cref{ch:dim1}, 
we obtain formally a continuum limit given by the SPDE 
\begin{equation}
\label{eq:AC-2d} 
 \partial_t \phi(t,x) = \Delta \phi(t,x) + \phi(t,x) - \phi(t,x)^3 
 + \sqrt{2\eps} \xi(t,x)\;,
\end{equation} 
where $x$ now belongs to the two-dimensional torus $\Lambda=(\R/(L\Z))^2$, and 
the Laplacian $\Delta=\partial_{x_1x_1}+\partial_{x_2x_2}$ acts on both 
components of $x$. The PDE obtained in the deterministic case $\eps=0$ is 
perfectly well-defined. 

\begin{exercise}
Compute the eigenvalues of the Laplacian $\Delta$ for periodic boundary 
conditions on $\Lambda$, and use it to determine the stability of the constant 
stationary solutions of the deterministic PDE. Argue that for $L<2\pi$, we are 
again in the double-well situation, as there can be no non-constant stationary 
solutions. Prove this fact, using 
\begin{equation}
 W(\phi) = \frac12 \int_\Lambda \norm{\nabla\phi(x)}^2\6x
\end{equation} 
as a Lyapunov function. 
\end{exercise}

It turns out, however, that the SPDE~\eqref{eq:AC-2d} is not well-posed. This 
is related to the fact that space-time white noise is more irregular in 
dimension $2$ than in dimension $1$. Before discussing a cure for this problem, 
we first have to understand in more detail where the problem comes from. 

%%%%%%%%%%%%%%%%%%%%%%%%%%%%%%%%%%%%%%%%%%%%%%%%%%%%%%%%%%%%%%%%%%%%%%%%%%%%%%%%

\section{The need for renormalisation}
\label{sec:dim2_renorm} 

Space-time white noise on $\R\times\Lambda$ can be defined exactly as in 
\Cref{def:space-time-white-noise}, except that $\Lambda$ now denotes the 
two-dimensional torus. The scaling operator in~\eqref{eq:S-tau-lambda} now 
takes the form 
\begin{equation}
 (\cS^{\tau,\lambda}\ph)(t,x)
 := \frac{1}{\tau\lambda^2} \ph \biggpar{\frac{t}{\tau},\frac{x}{\lambda}}\;,
\end{equation} 
and the scaling property of \Cref{prop:xi_scaling} becomes 
\begin{equation}
 \xi_{\tau,\lambda} \eqinlaw \frac{1}{\sqrt{\tau}\lambda}\xi\;.
\end{equation} 
The parabolic H\"older spaces are defined as before, with the parabolic norm 
defined by 
\begin{equation}
 \norm{(t,x)-(s,y)}_\fraks = \abs{t-s}^{1/2} + \sum_{i=1}^2 \abs{x_i-y_i}\;,
\end{equation} 
and the scaling operator in~\eqref{eq:S-lambda-z} replaced by 
\begin{equation}
 (\cS^{\lambda}_{t,x}\ph)(s,y)
 := \frac{1}{\lambda^4} \ph 
\biggpar{\frac{s-t}{\lambda^2},\frac{y-x}{\lambda}}\;.
\end{equation} 
Making the necessary changes in the proof of 
\Cref{thm:space_time_white_noise_reg}, one now finds that 
\begin{equation}
\label{eq:reg_noise_d2} 
 \xi\in\cC^{-2-\kappa}_\fraks
 \qquad \text{for any $\kappa>0$\;.}
\end{equation} 

\begin{exercise}
Prove~\eqref{eq:reg_noise_d2} by adapting the argument used in 
\Cref{thm:space_time_white_noise_reg}.
\end{exercise}

The Schauder estimate stated in \Cref{thm:Schauder} remains true in the present 
setting. The problem is, however, that the stochastic convolution $P*\xi$ 
belongs to $\cC^\alpha_\fraks$ for any $\alpha<0$ only, instead any 
$\alpha<\frac12$ in the one-dimensional case. Therefore, the stochastic 
convolution is only a distribution, albeit rather close to being a function. 
Since there is no canonical way of multiplying two distributions, the 
fixed-point equation~\eqref{eq:AC-1d-FP} becomes ill-defined, as it requires  
the nonlinear function $F$ to be applied to the stochastic convolution. 

As the stochastic convolution is a Gaussian process, its singularity can be 
traced to the behaviour of the covariance, which diverges near the diagonal 
$y=x$ (see also~\eqref{eq:covariance_GFF}). Therefore, a regularisation becomes 
possible by subtracting an \lq\lq infinite constant\rq\rq\ from the right-hand 
side of~\eqref{eq:AC-2d}, a process known as \emph{renormalisation}. An 
important tool to understand how it works is Wick calculus for Gaussian random 
variables.  

%%%%%%%%%%%%%%%%%%%%%%%%%%%%%%%%%%%%%%%%%%%%%%%%%%%%%%%%%%%%%%%%%%%%%%%%%%%%%%%%

\section{Wick calculus}
\label{sec:2d_wick} 

%%%%%%%%%%%%%%%%%%%%%%%%%%%%%%%%%%%%%%%%%%%%%%%%%%%%%%%%%%%%%%%%%%%%%%%%%%%%%%%%

\subsection{Isserlis' theorem}
\label{ssec:2d_isserlis} 

Consider a Gaussian random vector $X$ on $\R^N$ with covariance matrix $C$. 
Recall that this means that for any test function $F$, one has 
\begin{equation}
 \bigexpec{F(X)} = \frac{1}{\cZ} \int_{\R^N} F(x) \e^{-\pscal{x}{C^{-1}x}/2} 
\6x\;, 
\end{equation} 
where $C$ has entries $C_{ij}=\expec{X_iX_j}$, and $\cZ$ is such that 
$\expec{1}=1$. Moments of $X$ can easily be computed by the following 
integration-by-parts formula, which is a finite-dimensional version of the 
Schwinger--Dyson equations from Quantum Field Theory. 

\begin{lemma}
\label{lem:ibp} 
For any $i\in\set{1,\dots,N}$ we have 
\begin{equation}
\label{eq:ibp} 
 \bigexpec{X_i F(X)} = \sum_{j=1}^N C_{ij} \bigexpec{\partial_j F(X)}
\end{equation} 
for all real-valued differentiable $F$ such that both sides of the equality are 
well-defined. 
\end{lemma}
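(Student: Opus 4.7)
The plan is to exploit the specific exponential form of the Gaussian density and turn the identity into a pure integration by parts in $\R^N$. Write $\rho(x) = \cZ^{-1} \e^{-\pscal{x}{C^{-1}x}/2}$ for the density of $X$, and note that the exponent is quadratic in $x$, so its gradient is linear.

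The first step is to compute $\nabla\rho$ explicitly. Differentiating the exponent gives
\begin{equation}
 \partial_j \rho(x) = -\sum_{k=1}^N (C^{-1})_{jk} x_k \, \rho(x)\;.
\end{equation}
Contracting with $C_{ij}$ and summing over $j$, using $\sum_j C_{ij}(C^{-1})_{jk} = \delta_{ik}$, yields the key identity
\begin{equation}
\label{eq:key_identity}
 x_i \, \rho(x) = -\sum_{j=1}^N C_{ij}\, \partial_j \rho(x)\;.
\end{equation}
This is the point of the whole calculation: the factor of $x_i$ that makes the left-hand side of~\eqref{eq:ibp} hard to compute can be traded for a derivative falling on the density.

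The second step is to multiply~\eqref{eq:key_identity} by $F(x)$, integrate over $\R^N$, and integrate by parts. Since $\rho$ decays faster than any polynomial at infinity, and since we assumed $\expec{X_iF(X)}$ and $\expec{\partial_jF(X)}$ are well-defined (which implicitly controls the growth of $F$ and its derivatives), all boundary terms at infinity vanish, giving
\begin{align}
\bigexpec{X_i F(X)}
 &= \int_{\R^N} x_i F(x) \rho(x)\6x
 = -\sum_{j=1}^N C_{ij} \int_{\R^N} F(x) \partial_j\rho(x) \6x \\
 &= \sum_{j=1}^N C_{ij} \int_{\R^N} \partial_j F(x)\, \rho(x)\6x
 = \sum_{j=1}^N C_{ij}\bigexpec{\partial_j F(X)}\;.
\end{align}

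There is essentially no obstacle here beyond the minor bookkeeping of justifying the integration by parts, which follows from the Gaussian decay of $\rho$ together with the hypothesis that both sides of~\eqref{eq:ibp} are well-defined. If $C$ is degenerate (so that $C^{-1}$ does not exist and the formula for $\rho$ is not directly valid), one can either restrict to the subspace on which $X$ is supported, or argue by approximating $C$ by $C + \delta\, \Id$ and passing to the limit $\delta\to 0$, since both sides of~\eqref{eq:ibp} depend continuously on the covariance.
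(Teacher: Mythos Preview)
Your proof is correct and follows essentially the same approach as the paper: both compute the gradient of the Gaussian density to obtain the identity $x_i\rho(x) = -\sum_j C_{ij}\partial_j\rho(x)$, then integrate by parts. The only cosmetic difference is that the paper applies Leibniz' rule to $F(x)\rho(x)$ directly rather than first isolating $\nabla\rho$; your added remark on degenerate $C$ goes slightly beyond what the paper states.
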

\begin{proof}
Leibniz' rule yields 
\begin{equation}
 \dpar{}{x_j} \Bigpar{F(x)\e^{-\pscal{x}{C^{-1}x}/2}}
 = \biggbrak{\dpar{F}{x_j}(x)
 -\frac12 F(x)\dpar{}{x_j} \pscal{x}{C^{-1}x} }
 \e^{-\pscal{x}{C^{-1}x}/2} \;.
\end{equation} 
Since we have 
\begin{equation}
 \frac12 \sum_{j=1}^N C_{ij} \dpar{}{x_j} \pscal{x}{C^{-1}x} 
 = \sum_{j,k=1}^N C_{ij}C^{-1}_{jk} x_k 
 = \sum_{k=1}^N \delta_{ik} x_k = x_i\;,
\end{equation} 
it follows that 
\begin{equation}
 x_i F(x)\e^{-\pscal{x}{C^{-1}x}/2} 
 = \sum_{j=1}^N C_{ij} 
 \biggbrak{-\dpar{}{x_j} \Bigpar{F(x)\e^{-\pscal{x}{C^{-1}x}/2}} + 
 \dpar{F}{x_j} \e^{-\pscal{x}{C^{-1}x}/2}}\;.  
\end{equation} 
Integrating over the whole space, we see that the boundary terms vanish, and
the result follows at once.
\end{proof}

An immediate consequence of this result is the following theorem, due to 
Leon Isserlis. 

\begin{theorem}[Isserlis]
\label{thm:Isserlis} 
 For any $1\leqs n\leqs N/2$, we have 
 \begin{align}
  \bigexpec{X_1\dots X_{2n}} &= 
  \sum_{\text{pairings $\cP$ of $\set{1,\dots, 2n}$}}
  \;\prod_{\set{i,j}\in\cP} \bigexpec{X_iX_j}\;, \\
  \bigexpec{X_1\dots X_{2n-1}} &= 0\;,
 \end{align} 
 where a \emph{pairing} of $\set{1,\dots, 2n}$ is a partition 
$\cP=\set{\set{i_1,j_1},\dots,\set{i_n,j_n}}$ of this set into disjoint subsets 
of two elements.
\end{theorem}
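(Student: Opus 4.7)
The plan is to prove both statements by induction on the total number of factors, using the integration-by-parts identity of \Cref{lem:ibp} as the sole analytical input; everything else is combinatorial bookkeeping.

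First, I would handle the odd case and the even case simultaneously. Set $m_k := \bigexpec{X_1 \cdots X_k}$ and observe that, by symmetry (replacing the test function $F$ by $-F$, or, more concretely, using that the Gaussian density is even when $k$ is odd and all $X_i$ are centred), $m_1 = 0$. For the inductive step with $k \geqs 2$, I would apply \Cref{lem:ibp} with the test function $F(X) = X_2 \cdots X_k$, whose $j$-th partial derivative is $\prod_{\ell \in \set{2,\dots,k} \setminus \set{j}} X_\ell$ when $j \in \set{2,\dots,k}$ and vanishes otherwise. This yields the recursion
\begin{equation}
\bigexpec{X_1 X_2 \cdots X_k} = \sum_{j=2}^{k} C_{1j} \,\bigexpec{\prod_{\ell \in \set{2,\dots,k} \setminus \set{j}} X_\ell}\;.
\end{equation}

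For the odd case $k = 2n-1$, each expectation on the right involves $2n-3$ factors, and so vanishes by the induction hypothesis; this gives $\bigexpec{X_1 \cdots X_{2n-1}} = 0$. For the even case $k = 2n$, each expectation on the right involves $2n-2$ factors, and the induction hypothesis expresses it as a sum over pairings of $\set{2,\dots,2n} \setminus \set{j}$. Multiplying by $C_{1j} = \bigexpec{X_1 X_j}$ amounts to adjoining the pair $\set{1,j}$ to each such pairing, producing exactly the pairings of $\set{1,\dots,2n}$ whose block containing $1$ is $\set{1,j}$. Summing over $j \in \set{2,\dots,2n}$ partitions the set of all pairings of $\set{1,\dots,2n}$ according to the partner of $1$, and each pairing appears exactly once.

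The only subtlety worth checking is that the integration-by-parts formula~\eqref{eq:ibp} applies to the polynomial test functions $F(X) = X_2 \cdots X_k$; this is clear because polynomials and their derivatives have at most polynomial growth, so both sides of \eqref{eq:ibp} are finite Gaussian integrals. The combinatorial step, namely that grouping pairings by the partner of the first index yields the claimed bijection, is routine but is really the heart of the argument and the only place where one could trip up.
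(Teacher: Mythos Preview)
Your proposal is correct and follows precisely the approach the paper indicates: induction on the number of factors, applying \Cref{lem:ibp} with $i=1$ and $F(X)=X_2\cdots X_m$, then interpreting the resulting sum over $j$ combinatorially as a decomposition of pairings according to the partner of the index $1$. One small wording slip: the base case $m_1=\bigexpec{X_1}=0$ does not follow from ``replacing $F$ by $-F$'' (that leaves \eqref{eq:ibp} unchanged) but rather from the evenness of the Gaussian density, or more cleanly from \eqref{eq:ibp} itself with $F\equiv1$.
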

\begin{proof}
By induction on $n$, applying~\eqref{eq:ibp} with $i=1$ and $F(X)$ of the form 
$X_2\dots 
X_m$. 
\end{proof}

\begin{remark}
An alternative proof relies on the identity 
$\expec{\e^{\icx\pscal{\ell}{X}}} = \e^{-\pscal{\ell}{C\ell}/2}$, expanding 
both sides in powers of $\ell=\transpose{(\ell_1,\dots,\ell_n)}$ and 
identifying coefficients. 
\end{remark}

For instance, in the case $2n=4$, we obtain 
\begin{equation}
 \bigexpec{X_1X_2X_3X_4} 
 = \bigexpec{X_1X_2}\bigexpec{X_3X_4} + \bigexpec{X_1X_3}\bigexpec{X_2X_4} + 
\bigexpec{X_1X_4}\bigexpec{X_2X_3}\;.
\end{equation} 
A convenient graphical way of representing this relation is the following:
\begin{equation}
 \bigexpec{X_1X_2X_3X_4} 
 =
\raisebox{-25pt}{\tikz{
 \path[use as bounding box] (-0.5, -1.5) rectangle (1.5,0.5);
 \node[mynode,label=above:{\scriptsize 1}] (1) at (0,0) {};
 \node[mynode,label=below:{\scriptsize 2}] (2) at (0,-1) {};
 \node[mynode,label=above:{\scriptsize 3}] (3) at (1,0) {};
 \node[mynode,label=below:{\scriptsize 4}] (4) at (1,-1) {};
 \draw[thick,bare] (1) -- (2);  
 \draw[thick,bare] (3) -- (4);  
 \drawbox;
 }}
 +
\raisebox{-25pt}{\tikz{
 \path[use as bounding box] (-0.5, -1.5) rectangle (1.5,0.5);
 \node[mynode,label=left:{\scriptsize 1}] (1) at (0,0) {};
 \node[mynode,label=left:{\scriptsize 2}] (2) at (0,-1) {};
 \node[mynode,label=right:{\scriptsize 3}] (3) at (1,0) {};
 \node[mynode,label=right:{\scriptsize 4}] (4) at (1,-1) {};
 \draw[thick,bare] (1) -- (3);  
 \draw[thick,bare] (2) -- (4);  
 \drawbox;
 }}
+
\raisebox{-25pt}{\tikz{
 \path[use as bounding box] (-0.5, -1.5) rectangle (1.5,0.5);
 \node[mynode,label=above left:{\scriptsize 1}] (1) at (0,0) {};
 \node[mynode,label=below left:{\scriptsize 2}] (2) at (0,-1) {};
 \node[mynode,label=above right:{\scriptsize 3}] (3) at (1,0) {};
 \node[mynode,label=below right:{\scriptsize 4}] (4) at (1,-1) {};
 \draw[thick,bare] (1) -- (4);  
 \draw[thick,bare] (3) -- (2);  
 \drawbox;
 }}
\end{equation} 

\begin{exercise}
\label{exo:Isserlis} 
Determine the number of pairings of $\set{1,\dots,2n}$. Use this information to 
compute $\expec{X^{2n}}$ when $X$ follows a standard normal distribution, and 
prove~\eqref{eq:moments_Gaussian}. 
\end{exercise}

%%%%%%%%%%%%%%%%%%%%%%%%%%%%%%%%%%%%%%%%%%%%%%%%%%%%%%%%%%%%%%%%%%%%%%%%%%%%%%%%

\subsection{Hermite polynomials}
\label{ssec:2d_hermite} 

Consider the one-dimensional Ornstein--Uhlenbeck process, solving the SDE  
\begin{equation}
 \6X_t = -X_t \6t + \sqrt{2\eps}\6W_t\;.
\end{equation} 
We know from \Cref{sec:diffinv} that its invariant distribution $\pi$ has 
density $\e^{-x^2/(2\eps)}/\sqrt{2\pi\eps}$ with respect to Lebesgue measure, 
while its generator 
\begin{equation}
 \cL = \eps \e^{x^2/(2\eps)} \dtot{}{x} \e^{-x^2/(2\eps)} \dtot{}{x}
\end{equation} 
is self-adjoint in $L^2(\R,\pi(\6x))$. Furthermore, one easily checks that the 
operators 
\begin{align}
 a &= \e^{x^2/(2\eps)} \biggpar{x + \eps \dtot{}{x}} 
\e^{-x^2/(2\eps)}
 = \eps \dtot{}{x}\;, \\
 a^\dagger &= -\e^{x^2/(2\eps)} \eps \dtot{}{x} \e^{-x^2/(2\eps)}
 = x - \eps \dtot{}{x}
\end{align} 
are mutually adjoint in $L^2(\R,\pi(\6x))$, and satisfy the relations 
\begin{equation}
 a^\dagger a = -\eps\cL\;, 
 \qquad
 aa^\dagger - a^\dagger a = \eps\;.
\end{equation} 
These relations are useful to find eigenfunctions of $\cL$. Indeed, we 
obviously have $\cL H_0=0$, where $H_0(x)=1$ is constant. Now if $H$ is an 
eigenfunction of $\cL$ with eigenvalue $-\lambda$, we have 
\begin{equation}
 -\cL(a^\dagger H) = \frac{1}{\eps} a^\dagger a a^\dagger H 
 = \frac{1}{\eps} a^\dagger (a^\dagger a + \eps)H
 = (\lambda+1) a^\dagger H\;,
\end{equation} 
showing that $a^\dagger H$ is an eigenfunction of $\cL$ with eigenvalue 
$-(\lambda+1)$. We conclude that the functions in the family $(H_n)_{n\in\N_0}$ 
defined recursively by 
\begin{equation}
\label{eq:hermite_recursive} 
 H_{n+1}(x;\eps) = (a^\dagger H_n)(x;\eps)
 = x H_n(x;\eps) - \eps\dtot{}{x} H_n(x;\eps)
\end{equation} 
are orthogonal, and satisfy 
\begin{equation}
 \cL H_n(x;\eps) = - n H_n(x;\eps)\;, 
 \qquad n\in\N_0\;.
\end{equation} 
The corresponding eigenfunctions of $\cL^\dagger$ are simply given by
$\e^{-x^2/(2\eps)}H_n(x;\eps)$. 

\begin{remark}
The operators $a^\dagger$ and $a$ behave in the same way as creation and 
annihilation operators for the quantum harmonic oscillator. In fact, the 
generator $\cL$ is conjugated to the Hamiltonian of the harmonic oscillator via 
the transformation $\cL\mapsto\e^{-x^2/(4\eps)}\cL \e^{x^2/(4\eps)}$, and the 
operators $a^\dagger$ and $a$ are conjugated in the same way to those of the 
harmonic oscillator. 
\end{remark}

In fact, one can show that the family $(H_n)_{n\in\N_0}$ forms a complete 
orthonormal basis of $L^2(\R,\pi(\6x))$. See for 
instance~\cite[Lemma~1.1.2]{nualart2006malliavin}. 

\begin{definition}[Hermite polynomials]
\label{def:Hermite} 
The polynomial 
\begin{equation}
 H_n(x;\eps) = \bigpar{a^\dagger}^n H_0(x;\eps)
 = (-\eps)^n \e^{x^2/(2\eps)} \dtot{^n}{x^n} \e^{-x^2/(2\eps)}
\end{equation} 
is called the \emph{$n$th Hermite polynomial} with variance $\eps$. 
\end{definition}

Hermite polynomials can easily be computed by the recurrence 
relation~\eqref{eq:hermite_recursive}. The first few of them are given by 
\begin{align}
H_0(x;\eps) &= 1 \\
H_1(x;\eps) &= x \\
H_2(x;\eps) &= x^2-\eps \\
H_3(x;\eps) &= x^3-3\eps x \\
H_4(x;\eps) &= x^4-6\eps x^2+3\eps^2\;.
\end{align}
A useful alternative expression for Hermite polynomials relies on their 
generating function.

\begin{lemma}
\label{lem:hermite_generating} 
For every $t,x\in\R$ and $\eps\geqs 0$, one has 
\begin{equation}
\label{eq:hermite_generating} 
G(t,x;\eps) :=
\e^{tx - \eps t^2/2} = \sum_{n=0}^\infty \frac{t^n}{n!} H_n(x;\eps)\;.
\end{equation} 
\end{lemma}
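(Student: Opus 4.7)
The plan is to recognise the generating function as a Gaussian shift and reduce the identity to a straightforward Taylor expansion, which matches exactly the Rodrigues-type formula in \Cref{def:Hermite}.

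First I would complete the square in the exponent: since
\begin{equation*}
 tx - \frac{\eps t^2}{2}
 = \frac{x^2}{2\eps} - \frac{(x-t\eps)^2}{2\eps}\;,
\end{equation*}
we can rewrite
\begin{equation*}
 G(t,x;\eps) = \e^{x^2/(2\eps)}\, f(x-t\eps)\;,
 \qquad
 f(y) := \e^{-y^2/(2\eps)}\;.
\end{equation*}
The factor $\e^{x^2/(2\eps)}$ is exactly the prefactor appearing in \Cref{def:Hermite}, so the identity~\eqref{eq:hermite_generating} is equivalent to showing that the Taylor series of $t \mapsto f(x-t\eps)$ in $t$ around $0$ produces the factors $(-\eps)^n f^{(n)}(x)$.

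Next I would apply Taylor's theorem to $t\mapsto f(x-t\eps)$, which is entire since $f$ is entire. By the chain rule,
\begin{equation*}
 \dtot{^n}{t^n} f(x-t\eps) = (-\eps)^n f^{(n)}(x-t\eps)\;,
\end{equation*}
so evaluating at $t=0$ gives
\begin{equation*}
 f(x-t\eps) = \sum_{n=0}^\infty \frac{t^n}{n!} (-\eps)^n \dtot{^n}{x^n} \e^{-x^2/(2\eps)}\;.
\end{equation*}
Multiplying both sides by $\e^{x^2/(2\eps)}$ and using \Cref{def:Hermite} yields~\eqref{eq:hermite_generating}.

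There is no real obstacle here: the exchange of the sum and the multiplication by $\e^{x^2/(2\eps)}$ is trivial (no limit is involved beyond the Taylor series itself), and convergence of the Taylor series of $y\mapsto \e^{-y^2/(2\eps)}$ on all of $\R$ is standard. If a sanity check is desired, one can verify the first few terms against the explicit list of $H_n$ given just before the lemma, or equivalently check that $\partial_t G = (x-\eps t) G$, which translates term-by-term into the recursion~\eqref{eq:hermite_recursive}.
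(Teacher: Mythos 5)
Your proof is correct, but it takes a different route from the one the paper intends (the exercise's hint is to differentiate the identity in $t$ and match coefficients using the recursion $H_{n+1}=xH_n-\eps H_n'$, i.e.\ to show that both sides solve the same first-order equation $\partial_t G=(x-\eps t)G$ with the same initial condition; carried out properly this also requires the companion identity $\partial_x H_n=nH_{n-1}$, which follows inductively from the recursion). You instead start from the Rodrigues-type closed form in \Cref{def:Hermite}, complete the square to write $\e^{tx-\eps t^2/2}=\e^{x^2/(2\eps)}f(x-t\eps)$ with $f(y)=\e^{-y^2/(2\eps)}$, and read off the coefficients from the Taylor expansion of the entire function $t\mapsto f(x-t\eps)$. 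This is a clean, self-contained argument that avoids any manipulation of the recursion, at the price of relying on the explicit formula $H_n(x;\eps)=(-\eps)^n\e^{x^2/(2\eps)}\dtot{^n}{x^n}\e^{-x^2/(2\eps)}$ rather than on the defining recursion; the paper's route is the more robust one when only the three-term recursion is available. One small caveat: your completion of the square divides by $\eps$, so the argument as written excludes the boundary case $\eps=0$ that the lemma allows; there the statement is immediate since $H_n(x;0)=x^n$ and $\e^{tx}=\sum_n (tx)^n/n!$, so a one-line remark disposes of it.
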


\begin{exercise}
Prove \Cref{lem:hermite_generating}. \Hint Take the derivative 
with respect to $t$ on both sides, and use the recurrence  
relation~\eqref{eq:hermite_recursive}. 
Another proof (with a slightly different normalisation) can be found 
in~\cite[Lemma~1.1.1]{nualart2006malliavin}. 
\end{exercise}

A very important consequence of \Cref{lem:hermite_generating} is the following 
identity, which can be seen as an analogue of Isserlis' theorem for Hermite 
polynomials of Gaussian random variables. 

\begin{lemma}
\label{lem:hermite_moments} 
Let $X$ and $Y$ be jointly Gaussian centred random variables, of respective 
variance $\eps_1$ and $\eps_2$. Then for any $n,m\geqs0$, one has 
\begin{equation}
\label{eq:Wick_identity} 
 \bigexpec{H_n(X;\eps_1) H_m(Y;\eps_2)} = 
 \begin{cases}
  n! \bigexpec{XY}^n & \text{if $n=m$\;,} \\
  0 & \text{otherwise\;.}
 \end{cases}
\end{equation} 
\end{lemma}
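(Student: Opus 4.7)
The plan is to exploit the generating function identity from \Cref{lem:hermite_generating} to compute both sides simultaneously, via the classical trick of evaluating the joint exponential moment of $X$ and $Y$.

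First I would fix real parameters $s, t$ and consider the product of generating functions
\begin{equation*}
 G(s,X;\eps_1) G(t,Y;\eps_2)
 = \exp\Bigset{sX + tY - \tfrac{1}{2}\eps_1 s^2 - \tfrac{1}{2}\eps_2 t^2}\;.
\end{equation*}
Since $(X,Y)$ is jointly Gaussian and centred, the random variable $sX + tY$ is centred Gaussian with variance $s^2 \eps_1 + 2 s t \expec{XY} + t^2 \eps_2$. Taking expectations and using the standard formula for the Laplace transform of a Gaussian therefore gives
\begin{equation*}
 \bigexpec{G(s,X;\eps_1) G(t,Y;\eps_2)}
 = \exp\bigset{s t \expec{XY}}
 = \sum_{n \geqs 0} \frac{s^n t^n}{n!} \bigexpec{XY}^n\;.
\end{equation*}

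On the other hand, expanding the left-hand side using the series representation \eqref{eq:hermite_generating} and interchanging sum and expectation (which is justified because the series converges absolutely, e.g.\ by a Gaussian moment bound analogous to \eqref{eq:moments_Gaussian}) yields
\begin{equation*}
 \bigexpec{G(s,X;\eps_1) G(t,Y;\eps_2)}
 = \sum_{n,m \geqs 0} \frac{s^n t^m}{n!\, m!} \bigexpec{H_n(X;\eps_1) H_m(Y;\eps_2)}\;.
\end{equation*}
Comparing the coefficients of $s^n t^m$ in the two expansions, the right-hand side of the first identity contributes only to terms with $n = m$, and identifying the coefficient of $s^n t^n$ gives $\expec{H_n(X;\eps_1) H_n(Y;\eps_2)} = n!\, \expec{XY}^n$, while all other coefficients vanish.

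This argument is essentially routine once one thinks of using the generating function, so no substantial obstacle is expected; the only mild subtlety is the justification of the interchange of sum and expectation, which follows from the fact that the double series $\sum_{n,m} \frac{|s|^n|t|^m}{n!m!} \expec{|H_n(X;\eps_1) H_m(Y;\eps_2)|}$ converges for all $s, t \in \R$ (using Cauchy--Schwarz together with the fact that $\expec{H_n(X;\eps_1)^2} = n! \eps_1^n$, which incidentally follows from the lemma itself applied with $Y = X$).
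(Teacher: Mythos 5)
Your proof is correct and follows essentially the same route as the paper's: multiply the two generating functions, evaluate the expectation of the resulting Gaussian exponential to obtain $\e^{st\expec{XY}}$, and identify coefficients of the double power series. The only caveat is that your justification of the sum--expectation interchange via $\expec{H_n(X;\eps_1)^2}=n!\,\eps_1^n$ is circular as stated (that identity is the $n=m$, $Y=X$ case of the lemma being proved); it should instead be obtained from a direct bound using the explicit coefficients of $H_n$ together with Gaussian moment estimates such as \eqref{eq:moments_Gaussian} --- a point the paper's own proof does not address at all.
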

\begin{proof}
Let $C$ denote the covariance matrix of $Z=\transpose{(X,Y)}$. Then the 
expression 
\begin{equation}
 \Bigexpec{\e^{\pscal{\ell}{Z}}} = \exp\biggset{\frac12\pscal{\ell}{C\ell}}
\end{equation} 
for the Laplace transform of $Z$, applied with $\ell=\transpose{(t,s)}$, 
implies that 
\begin{equation}
 \Bigexpec{\e^{tX-\eps_1t^2/2}\e^{sY-\eps_2s^2/2}}
 = \e^{ts\expec{XY}}\;.
\end{equation} 
Using~\eqref{eq:hermite_generating} and expanding both sides yields 
\begin{equation}
 \sum_{n,m=0}^\infty \frac{t^ns^m}{n!m!} \bigexpec{H_n(X;\eps_1) H_m(Y;\eps_2)} 
= \sum_{k=0}^\infty \frac{(ts)^k}{k!} \bigexpec{XY}^k\;.
\end{equation} 
Identifying coefficients of the power series gives the result. 
\end{proof}

A convenient graphical way of representing~\eqref{eq:Wick_identity}, for 
instance in the case $n=4$, is
\begin{equation}
\label{eq:Wick_graphical} 
\biggexpec{\FDVfour{vedge}{vedge}{vedge}{vedge}
\FDVfour{vedge}{vedge}{vedge}{vedge}} =  
4!\FDQzero{vedge}{vedge}{vedge}{vedge}\;.
\end{equation} 
Here each diagram with four legs represents a term $H_4$, and each edge on the 
right-hand side represents a covariance $\expec{XY}$. The combinatorial factor 
$4!$ counts the number of ways of pairing the four legs, with the rule that 
only legs from different diagrams can be paired. This is the main difference 
with Isserlis' theorem, which allowed for all possible pairings. Similar 
identities as~\eqref{eq:Wick_identity} can be obtained for products of more 
than two Hermite polynomials, by summing over all pairings of legs belonging to 
different terms. 

Note that if we put $m=0$ in~\eqref{eq:Wick_identity}, we obtain  
\begin{equation}
\label{eq:Hermite_centered} 
 \bigexpec{H_n(X;\eps)} = \delta_{n0}\;,
\end{equation} 
which shows that Hermite polynomials of Gaussian random variables are centred 
for $n\geqs1$. 
Another useful consequence of \Cref{lem:hermite_generating} is the following 
binomial formula for Hermite polynomials.

\begin{lemma}
\label{lem:hermite_binomial}
For any $x,y\in\R$, $\eps_1,\eps_2\geqs0$ and $n\in\N_0$, 
\begin{equation}
 H_n(x+y,\eps_1+\eps_2) = \sum_{m=0}^n \binom{n}{m} 
 H_m(x,\eps_1)H_{n-m}(y,\eps_2)\;.
\end{equation} 
\end{lemma}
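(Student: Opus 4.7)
The plan is to exploit the generating function identity of Lemma~\ref{lem:hermite_generating}, which characterises the Hermite polynomials uniquely through the formal power series expansion of $G(t,x;\eps)=\e^{tx-\eps t^2/2}$. The key observation is that this generating function factorises nicely under splitting of both the argument $x$ and the variance $\eps$.

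First I would write down the multiplicative identity
\begin{equation*}
 G(t, x+y;\eps_1+\eps_2)
 = \e^{t(x+y) - (\eps_1+\eps_2)t^2/2}
 = \e^{tx - \eps_1 t^2/2}\,\e^{ty - \eps_2 t^2/2}
 = G(t,x;\eps_1)\,G(t,y;\eps_2)\;,
\end{equation*}
which holds simply because the exponential is multiplicative and the exponents add linearly in the splitting variables.

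Next I would expand each of the three generating functions as a power series in $t$ using Lemma~\ref{lem:hermite_generating}. The left-hand side becomes $\sum_{n\geqs 0}\frac{t^n}{n!}H_n(x+y;\eps_1+\eps_2)$, while the right-hand side is a Cauchy product of two such series. Collecting the terms of order $t^n$ on the right via the Cauchy product formula yields
\begin{equation*}
 \sum_{n\geqs 0} \frac{t^n}{n!}
 \sum_{m=0}^n \binom{n}{m} H_m(x;\eps_1)\,H_{n-m}(y;\eps_2)\;,
\end{equation*}
after multiplying and dividing by $n!$ to produce the binomial coefficient. Identifying the coefficient of $t^n$ on both sides yields the claimed identity.

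There is no real obstacle here: the two power series in $t$ converge for all $t\in\R$ (being exponentials of polynomials in $t$), so the identification of coefficients is justified without any subtlety, and the argument is essentially just the Cauchy product combined with the factorisation of the exponential. The whole proof is a direct analogue of the classical binomial identity $\e^{t(x+y)}=\e^{tx}\e^{ty}$, to which it reduces when $\eps_1=\eps_2=0$.
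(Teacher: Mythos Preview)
Your proof is correct and follows exactly the same route as the paper: factorise the generating function via $\e^{t(x+y)-(\eps_1+\eps_2)t^2/2}=\e^{tx-\eps_1t^2/2}\e^{ty-\eps_2t^2/2}$, expand both sides using Lemma~\ref{lem:hermite_generating}, and identify coefficients of $t^n$ through the Cauchy product.
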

\begin{proof}
Expanding the identity
\begin{equation}
 \e^{t(x+y)-(\eps_1+\eps_2)t^2/2}
  = \e^{tx-\eps_1t^2/2}\e^{ty-\eps_2t^2/2}
\end{equation} 
yields 
\begin{equation}
 \sum_{n=0}^\infty \frac{t^n}{n!} H_n(x+y;\eps_1+\eps_2)
= \sum_{m=0}^\infty \frac{t^m}{m!} H_m(x;\eps_1)
 \sum_{k=0}^\infty \frac{t^k}{k!} H_k(y;\eps_2)\;. 
\end{equation} 
Comparing coefficients of $t^n$ gives the result. 
\end{proof}

%%%%%%%%%%%%%%%%%%%%%%%%%%%%%%%%%%%%%%%%%%%%%%%%%%%%%%%%%%%%%%%%%%%%%%%%%%%%%%%%

\subsection{The two-dimensional Gaussian free field}
\label{ssec:2d_GFF} 

We define a Fourier basis of $\Lambda=(\R/(L\Z))^2$ by 
\begin{equation}
\label{eq:Fourier_2d} 
 e_k(x) = e_{k_1}(x_1) e_{k_2}(x_2)\;, 
 \qquad 
 k=(k_1,k_2)\in\Z^2\;,
\end{equation} 
where the $e_{k_i}$ are the one-dimensional basis functions defined 
in~\eqref{eq:Fourier_basis}. The slight abuse of notation made by using the 
same symbol for one- and two-dimensional basis functions will not matter in 
what follows. The $e_k$ are eigenfunctions of the Laplacian on $\Lambda$ with 
periodic boundary conditions, satisfying
\begin{equation}
 \Delta e_k = -\lambda_k e_k\;, 
 \qquad 
 \lambda_k = \biggpar{\frac{2\norm{k}\pi}{L}}^2\;.
\end{equation} 
For $N\in\N$, define sets of indices $\cK_N = \setsuch{k\in\Z^2}{\abs{k}\leqs 
N}$ and $\cK_N^+ = \setsuch{k\in\cK_N}{k_1,k_2>0}$, where 
$\abs{k}=\abs{k_1}+\abs{k_2}$. Let $E_N$ be the space spanned by 
$\setsuch{e_k}{k\in\cK_N}$. Note the identities 
\begin{align}
 e_{(k_1,k_2)}(x)^2 + e_{(k_1,-k_2)}(x)^2 
 +  e_{(-k_1,k_2)}(x)^2 + e_{(-k_1,-k_2)}(x)^2 &= \frac{4}{L^2} 
 && \forall (k_1,k_2)\in \cK_N^+\;, \\
 e_{(k_1,0)}(x)^2 + e_{(-k_1,0)}(x)^2 
 =  e_{(0,k_2)}(x)^2 + e_{(0,-k_2)}(x)^2 &= \frac{2}{L^2} 
 && \forall k_1, k_2 \neq 0\;. 
 \label{eq:fourier_2d_symmetry} 
\end{align} 

\begin{definition}[Two-dimensional Gaussian free field]
The truncated two-dimensional \emph{Gaussian free field (GFF)} with covariance 
$(-\Delta_N+1)^{-1}$ on $\Lambda$ is defined as 
\begin{equation}
 \GFFN(x) := \sum_{k\in\cK_N}
 \frac{Z_k}{\sqrt{\lambda_k+1}} e_k(x)\;,
\end{equation} 
where $\Delta_N$ is the restriction of $\Delta$ to $E_N$ and 
the $Z_k$ are i.i.d.\ standard normal random variables. 
\end{definition}

Thanks to the symmetry relations~\eqref{eq:fourier_2d_symmetry}, 
$\expec{\GFFN(x)^2}$ does not depend on $x$, and is thus equal to 
\begin{align}
 C_N &:= \frac{1}{L^2} \int_\Lambda \expec{\GFFN(x)^2} \6x 
 = \frac{1}{L^2} \int_\Lambda \sum_{k\in\cK_N} \frac{1}{\lambda_k+1} 
e_k(x)^2\6x \\
 &= \frac{1}{L^2} \sum_{k\in\cK_N} \frac{1}{\lambda_k+1}
 = \frac{1}{L^2} \Tr\bigbrak{(-\Delta_N+1)^{-1}}\;.
\label{eq:def_CN} 
\end{align} 
Since $k$ is two-dimensional and $\lambda_k$ grows like $\norm{k}^2$, this 
constant diverges like $\log N$. 

\begin{exercise}
Show that 
\begin{equation}
\label{eq:asymp_CN} 
C_N = \frac{\log N}{2\pi} + \Order{1} 
\end{equation} 
as $N\to\infty$. \Hint View~\eqref{eq:def_CN} as a Riemann sum and integrate 
using polar coordinates. 
\end{exercise}

Thus we see that the non-truncated two-dimensional GFF has infinite variance at 
every point! 
Fortunately, since this variance is constant, we can subtract it in 
order to get a meaningful object in the limit, a procedure which is akin to an 
It\^o correction. This leads to the following definition.

\begin{definition}[Wick powers of the GFF]
For any integer $n\in\N$, the $n$th Wick power of the truncated GFF is defined 
as 
\begin{equation}
 \Wick{\GFFN^n(x)} =
 \Wick{\GFFN^n(x)}_{C_N} := H_n(\GFFN(x);C_N)\;,
 \qquad 
 x\in\Lambda\;.
\end{equation} 
\end{definition}

Note in particular that owing to~\eqref{eq:Hermite_centered}, all Wick powers 
are centred random variables at any $x\in\Lambda$. Furthermore, 
\Cref{lem:hermite_moments} implies the following important result, which 
is a generalisation of \Cref{lem:moment_GFF} to the two-dimensional case.

\begin{proposition}
\label{prop:Wick_powers} 
For every $n\in\N$, we have
\begin{equation}
 \sup_{N\geqs1} \Biggexpec{\Biggpar{\frac{1}{L^2} \int_\Lambda 
\Wick{\GFFN^n(x)}\6x}^2}  < \infty\;.
\end{equation} 
\end{proposition}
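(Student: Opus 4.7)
The plan is to reduce the second moment to an integral of a power of the GFF covariance kernel, and then to control that integral using the mild (logarithmic) nature of the Green-function singularity in two dimensions.

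First, I expand the square using Fubini and apply \Cref{lem:hermite_moments} to the jointly Gaussian centred pair $(\GFFN(x),\GFFN(y))$, both of variance $C_N$:
\begin{equation}
\Biggexpec{\Biggpar{\frac{1}{L^2} \int_\Lambda \Wick{\GFFN^n(x)}\6x}^2}
= \frac{n!}{L^4} \int_\Lambda \int_\Lambda K_N(x-y)^n\6x\6y
= \frac{n!}{L^2} \int_\Lambda K_N(z)^n\6z\;,
\end{equation}
where $K_N(z) := \bigexpec{\GFFN(x)\GFFN(x+z)}$ is the translation-invariant covariance kernel of the truncated GFF.

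Second, I pass to a complex Fourier basis, writing $K_N(z) = L^{-2}\sum_{k\in\cK_N}(\lambda_k+1)^{-1}\e^{2\pi\icx k\cdot z/L}$. Expanding the $n$th power and using the orthogonality identity $\int_\Lambda \e^{2\pi\icx(k_1+\dots+k_n)\cdot z/L}\6z = L^2\indicator{k_1+\dots+k_n=0}$, one obtains
\begin{equation}
 \int_\Lambda K_N(z)^n\6z
 = \frac{1}{L^{2n-2}} \sum_{\substack{k_1,\dots,k_n\in\cK_N \\ k_1+\dots+k_n=0}} \prod_{i=1}^n \frac{1}{\lambda_{k_i}+1}\;.
\end{equation}
Every summand is positive, so the truncated sum is bounded above, uniformly in $N$, by the same sum over unrestricted $k_i\in\Z^2$.

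Third, reversing the Fourier computation, this untruncated sum equals $\int_\Lambda K_\infty(z)^n\6z$, where the Green function $K_\infty(z) = L^{-2}\sum_{k\in\Z^2}(\lambda_k+1)^{-1}\e^{2\pi\icx k\cdot z/L}$ of $-\Delta+1$ on $\Lambda$ is smooth away from the origin, and near $z=0$ agrees, up to a smooth periodised correction, with the free-space Green function of $-\Delta+1$ on $\R^2$. The latter is a modified Bessel function of the second kind with only a logarithmic singularity at the origin, so that $K_\infty(z) = -(2\pi)^{-1}\log\norm{z} + \Order{1}$ as $z\to 0$. Since $\abs{\log\norm{z}}^n$ is integrable in two dimensions for every $n\in\N$, the integral $\int_\Lambda K_\infty^n$ is finite and the proof is complete.

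The main conceptual content, and the reason the statement is special to dimension two, is precisely this last step: in $d\geqs 3$ the Green function has a stronger power singularity $\norm{z}^{2-d}$, whose $n$th power ceases to be integrable once $n$ is large enough, so that subtracting a single constant $C_N$ no longer tames the divergences of $\Wick{\GFFN^n}$. The rest of the argument -- Fubini plus \Cref{lem:hermite_moments} plus Fourier orthogonality -- is dimension-independent bookkeeping.
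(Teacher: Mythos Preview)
Your proof is correct and follows the paper's argument closely through the first two steps: both use Fubini together with \Cref{lem:hermite_moments} to reduce to $n!\int\!\!\int \bigexpec{\GFFN(x)\GFFN(y)}^n\6x\6y$, and both expand the covariance in a Fourier basis to obtain a sum over $(k_1,\dots,k_n)$ constrained to $k_1+\dots+k_n=0$. The only divergence is in the final estimate. The paper stays in Fourier space and bounds the constrained sum by invoking a Young-type convolution inequality (citing an external lemma). You instead bound by the untruncated sum, recognise it as $\int_\Lambda K_\infty^n$ via Parseval, and then work in physical space, using that the Green function of $-\Delta+1$ on the two-torus has only a logarithmic singularity so that $\abs{\log\norm{z}}^n$ is locally integrable. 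Your route is more self-contained and makes the dimensional obstruction transparent, at the cost of invoking the (standard) local asymptotics of the massive Green function; the paper's route is terser but defers the analytic content to a reference.
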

\begin{proof}
By~\eqref{eq:Wick_identity}, we have 
\begin{align}
\Biggexpec{\Biggpar{\int_\Lambda \Wick{\GFFN^n(x)}\6x}^2}
&= \int_\Lambda\int_\Lambda \bigexpec{ \Wick{\GFFN^n(x)}  
\Wick{\GFFN^n(y)}} \6x\6y \\
&= n!\int_\Lambda\int_\Lambda \bigexpec{\GFFN(x)\GFFN(y)}^n \6x\6y \\
&= n!\int_\Lambda\int_\Lambda \Biggpar{\sum_{k,\ell\in\cK_N} 
\frac{\expec{Z_kZ_\ell}}{\sqrt{(\lambda_k+1)(\lambda_\ell+1)}} 
e_k(x)e_\ell(y)}^n \6x\6y \\
&= n!\int_\Lambda\int_\Lambda \Biggpar{\sum_{k\in\cK_N} 
\frac{1}{\lambda_k+1} e_k(x)e_k(y)}^n \6x\6y \\
% &= n! \sum_{k_1,\dots,k_n\in\cK_N} 
% \frac{1}{(\lambda_{k_1}+1)\dots(\lambda_{k_n}+1)} 
% \int_\Lambda\int_\Lambda e_{k_1}(x)\dots e_{k_n}(x) e_{k_1}(y)\dots 
% e_{k_n}(y) 
% \6x\6y \\
&= n! \sum_{k_1,\dots,k_n\in\cK_N} 
\frac{1}{(\lambda_{k_1}+1)\dots(\lambda_{k_n}+1)} 
\Biggpar{\int_\Lambda e_{k_1}(x)\dots e_{k_n}(x)\6x}^2\;.
\end{align}
By orthogonality of the eigenfunctions, the integral in the last expression 
vanishes unless some linear combination $k_1\pm k_2 \pm \dots \pm k_n$ is equal 
to zero. Therefore, we have the bound 
\begin{equation}
 \Biggexpec{\Biggpar{\int_\Lambda \Wick{\GFFN^n(x)}\6x}^2} 
 \lesssim 
 \sum_{\substack{k_1,\dots,k_n\in\cK_N \\ k_1+\dots+k_n=0}} 
\frac{1}{(\lambda_{k_1}+1)\dots(\lambda_{k_n}+1)}\;.
\end{equation} 
Thanks to the condition that the sum of the $k_i$ vanishes, this sum is bounded 
uniformly in $N$, by a Young-type inequality, see for 
instance~\cite[Lemma~3.10]{zhu2015three}.
\end{proof}

\begin{remark}
In the graphical notation introduced in~\eqref{eq:Wick_graphical}, we  
have $C_N = \FDtadpolenolabel{vedge}$, since $C_N$ involves the covariance of 
any $\GFFN(x)$ with itself. 
\end{remark}

%%%%%%%%%%%%%%%%%%%%%%%%%%%%%%%%%%%%%%%%%%%%%%%%%%%%%%%%%%%%%%%%%%%%%%%%%%%%%%%%

\subsection{Wick powers of the stochastic convolution}
\label{ssec:2d_heat} 

We consider now the stochastic heat equation 
\begin{equation}
\label{eq:heat_2d} 
 \partial_t \phi(t,x) = \Delta\phi(t,x) + %\sqrt{2\eps}
 \xi_N(t,x)\;,
\end{equation} 
where $\xi_N = \Pi_N\xi$ is a spectral Galerkin approximation of space-time 
white noise (cf.~\eqref{eq:Galerkin}). Similarly to~\eqref{eq:Duhamel}, the 
solution of~\eqref{eq:heat_2d} can be written as 
\begin{equation}
 \phi = P\phi_0 + %\sqrt{2\eps} 
 P*\xi_N\;,
\end{equation} 
where $P$ denotes the analogue of the heat kernel~\eqref{eq:heat_kernel_1d} in 
two dimensions, namely 
\begin{equation}
\label{eq:heat_kernel_2d} 
 P(t,x) := \sum_{k\in\Z^2} P_{\,\R^2}(t,x-kL)\;, 
 \qquad 
 P_{\,\R^2}(t,x) := \frac{1}{4\pi t} \e^{-\norm{x}^2/(4t)} 
\indexfct{t>0}\;. 
\end{equation} 
The stochastic convolution $P*\xi_N$ can be analysed as in \Cref{sec:1dheat}. In 
particular, subtracting the Brownian motion $\smash{W^{(0)}_t}$ of the zeroth 
Fourier mode, we find that $(P*\xi_N)(t,x) - \smash{W^{(0)}_t}$ converges as 
$t\to\infty$ to a Gaussian free field with covariance 
$(-\Delta_{\perp,N})^{-1}$, where $\Delta_{\perp,N}=\Pi_N\Delta_\perp$ acts on 
zero-mean functions. By \Cref{prop:Wick_powers}, defining $C_N$ as 
in~\eqref{eq:def_CN}, all Wick powers $\Wick{(P*\xi_N)^n}_{C_N}$ will have a 
variance uniformly bounded in $N$, and should have well-defined limits (one 
easily checks that adding a constant to the $\lambda_k$ in~\eqref{eq:def_CN} 
does not change its divergent part). 

There is a slightly different way to regularise the stochastic heat equation, 
which turns out to be equivalent to~\eqref{eq:heat_2d}. Let 
$\varrho:\R\times\Lambda\to\R$ be smooth, compactly supported and of 
integral~$1$. Then for any $\delta\in(0,1]$, define the \emph{mollifier}  
\begin{equation}
 \varrho_\delta(t,x) = \cS^{\delta}_0\varrho(t,x) 
 := \frac{1}{\delta^4} \varrho\biggpar{\frac{t}{\delta^2},\frac{x}{\delta}}\;.
\end{equation} 
We can then set $\xi^\delta(t,x) = (\varrho_\delta *\xi)(t,x)$ (interpreted as 
testing $\xi$ against $\varrho((t,x)-\cdot)$), and consider the regularised 
stochastic heat equation 
\begin{equation}
  \partial_t \phi(t,x) = \Delta\phi(t,x) + %\sqrt{2\eps}
  \xi^\delta(t,x)\;,
\end{equation} 
which admits solutions in the classical sense since $\xi^\delta$ is smooth, 
namely
\begin{equation}
 \phi = P\phi_0 + %\sqrt{2\eps} 
 P*\xi^\delta\;,
\end{equation} 
with a stochastic convolution $P*\xi^\delta$ given by 
\begin{equation}
\label{eq:stoch_conv_integral} 
 (P*\xi^\delta)(t,x) 
 = \int_{-\infty}^\infty \int_\Lambda P(t-s,x-y)\xi^\delta(s,y) \6y\6s 
 = \pscal{\xi}{P*\varrho_\delta}(t,x)\;.
\end{equation} 
Since this object and its powers will play a central role in the following, we 
introduce the graphical notation (borrowed 
from~\cite{Hairer2014,Chandra_Weber_LN17})
\begin{equation}
 (P*\xi^\delta)(t,x) = \RSI_\delta(t,x)\;, 
 \qquad 
 (P*\xi^\delta)(t,x)^2 = \RSV_\delta(t,x)\;, 
 \qquad 
 (P*\xi^\delta)(t,x)^3 = \RSW_\delta(t,x)\;. 
\end{equation} 
Writing $P_\delta = P*\varrho_\delta$ and using the defining 
property~\eqref{eq:xi_cov} of space-time white noise, we obtain  
\begin{equation}
\label{eq:def_Cdelta} 
 \bigexpec{\RSV_\delta(t,x)} 
 = \bigexpec{\pscal{\xi}{P_\delta}(t,x)^2}
 = \pscal{P_\delta}{P_\delta}_{L^2(\R\times\Lambda)} 
 = \int_{-\infty}^\infty \int_{\Lambda} P_\delta(t,x)^2\6x\6t
 =: C_\delta\;,
\end{equation} 
which is independent of $(t,x)$.

\begin{proposition}
\label{prop:asymp_Cdelta} 
Assume that $\varrho(t,x)=\varrho^{(0)}(t)\varrho^{(1)}(x)$ where 
$\varrho^{(1)}$ is even. Then the constant $C_\delta$ has the asymptotic 
behaviour
\begin{equation}
\label{eq:asymp_Cdelta} 
 C_\delta = \frac{\log(\delta^{-1})}{4\pi} + \Order{1}
 \qquad 
 \text{as $\delta\to0$\;.}
\end{equation} 
\end{proposition}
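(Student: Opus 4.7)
The plan is to compute $C_\delta = \|P_\delta\|_{L^2(\R\times\Lambda)}^2$ via Fourier analysis, thus isolating the logarithmic ultraviolet singularity of the two--dimensional heat kernel at $(t,x)=(0,0)$. Passing to an orthonormal Fourier basis of $\Lambda$ together with the time Fourier transform, the symbol of $P$ (with the zero mode removed, as is implicit in order that $P_\delta$ be an element of $L^2(\R\times\Lambda)$) is $1/[L(i\omega+\lambda_k)]$ with $\lambda_k=(2\pi|k|/L)^2$; that of $\varrho_\delta$ scales as $\hat\varrho(\delta^2\omega,\,2\pi\delta k/L)/L$, where $\hat\varrho$ is a Schwartz function satisfying $\hat\varrho(0,0)=1$. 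Parseval's theorem then gives
\begin{equation*}
  C_\delta \;=\; \frac{1}{L^2}\sum_{k\in\Z^2\setminus\{0\}}\int_{-\infty}^{\infty}\frac{|\hat\varrho(\delta^2\omega,\,2\pi\delta k/L)|^2}{\omega^2+\lambda_k^2}\,\frac{d\omega}{2\pi}\;,
\end{equation*}
and the mollification effectively cuts the sum/integral off at the scales $|\omega|\lesssim\delta^{-2}$, $|k|\lesssim\delta^{-1}$, from which one expects a logarithmic divergence.

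I would next integrate out $\omega$ for each $k\neq 0$. Reality of $\varrho$ together with the factorisation $\varrho=\varrho^{(0)}\otimes\varrho^{(1)}$ and the evenness of $\varrho^{(1)}$ imply that $|\hat\varrho|^2$ is an even function of both its arguments, so its Taylor expansion in $\delta^2\omega$ at the origin starts at order $(\delta^2\omega)^2$. Combining this with the elementary identity $\int(\omega^2+\lambda^2)^{-1}\,d\omega/(2\pi)=1/(2\lambda)$ and splitting the $\omega$-axis at the scales $\lambda_k$ and $\delta^{-2}$, one obtains
\begin{equation*}
  \int_\R\frac{|\hat\varrho(\delta^2\omega,\,2\pi\delta k/L)|^2}{\omega^2+\lambda_k^2}\,\frac{d\omega}{2\pi}
  \;=\;\frac{f(\delta k)}{2\lambda_k}\;+\;r_\delta(k)\;,\qquad f(\xi):=|\hat\varrho(0,\,2\pi\xi/L)|^2\;,
\end{equation*}
with $f$ Schwartz, $f(0)=1$, and $\sum_k|r_\delta(k)|=O(L^2)$ uniformly in $\delta$ thanks to the Schwartz decay of $\hat\varrho$ at infinity.

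The leading term is then a Schwartz--weighted Riemann sum of $1/|k|^2$ at lattice scale $\delta$; substituting $\lambda_k=4\pi^2|k|^2/L^2$ and switching to polar coordinates yields
\begin{equation*}
  \frac{1}{L^2}\sum_{k\neq 0}\frac{f(\delta k)}{2\lambda_k}
  \;=\;\frac{1}{8\pi^2}\sum_{k\neq 0}\frac{f(\delta k)}{|k|^2}
  \;=\;\frac{1}{8\pi^2}\biggbrak{2\pi\int_{\delta}^{\infty}\frac{f_{\mathrm{rad}}(s)}{s}\,ds+O(1)}
  \;=\;\frac{\log(\delta^{-1})}{4\pi}+O(1)\;,
\end{equation*}
where $f_{\mathrm{rad}}$ is the angular average with $f_{\mathrm{rad}}(0)=1$, and the polar integration converts the two--dimensional sum into a one--dimensional logarithmic divergence at the lower endpoint $s=\delta$. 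The main obstacle will be the quantitative control of the error at the transition scale $|k|\sim\delta^{-1}$, where the $\omega$-remainder $r_\delta(k)$ peaks and the Riemann sum departs most from its continuum approximation; the evenness hypothesis on $\varrho^{(1)}$ is precisely what kills the first-order correction in the Taylor expansion of $|\hat\varrho|^2$ at the origin and so keeps the total error $O(1)$ rather than producing a spurious sub--logarithmic term.
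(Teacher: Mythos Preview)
Your Fourier--space computation is correct and lands on the right constant $\tfrac{1}{4\pi}$, but it is a genuinely different route from the paper's. The paper works entirely in position space: it uses the semigroup identity $\int_\Lambda P(t,\cdot-y)P(s,y)\,\6y = P(t+s,\cdot)$ together with $\int_\R P(t,x)\,\6t = -G(x)$ (the Green function of the periodic Laplacian on mean--zero functions) to rewrite $C_\delta = -\tfrac12\tilde G_\delta(0)$ for a mollified Green function $\tilde G_\delta$, and then reads off the logarithm from the classical singularity $G(x)\sim\tfrac{1}{2\pi}\log\norm{x}$. You instead compute $\norm{P_\delta}_{L^2}^2$ by Parseval, integrate out the time frequency, and extract the divergence from the regularised lattice sum $\sum_{k\neq 0} f(\delta k)/\abs{k}^2$. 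The paper's route is shorter and makes the connection with the Green function explicit, which is reused later (for instance in the three--dimensional computation of $C_\delta^{(2)}$); yours makes the ultraviolet cut--off mechanism more transparent and is closer in spirit to the Galerkin estimate~\eqref{eq:asymp_CN}.

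One small correction to your closing remark: the vanishing of the first--order Taylor term in $\abs{\hat\varrho}^2$ does \emph{not} rely on the evenness hypothesis on $\varrho^{(1)}$. For any real $\varrho$ one has $\hat\varrho(-\omega,-\xi)=\overline{\hat\varrho(\omega,\xi)}$, and the product structure $\varrho=\varrho^{(0)}\otimes\varrho^{(1)}$ then forces $\abs{\hat\varrho}^2$ to be even in each variable separately; so your argument actually works without the hypothesis. In the paper's proof, evenness of $\varrho^{(1)}$ is used for a different reason: it makes $P_\delta$ even in $x$, which is what turns $\int_\Lambda P_\delta(t,x)^2\,\6x$ into the convolution $\tilde P_\delta(2t,0)$.
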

\begin{proof}
We present a proof which is perhaps not the shortest possible, but will 
highlight the link with other object such as the Green function. First note 
that the Markov property implies that for any $t,s>0$ and $x\in\Lambda$, 
\begin{equation}
 \int_\Lambda P(t,x-y) P(s,y) \6y = P(t+s,x)\;.
\end{equation} 
Furthermore, for any $x\in\Lambda$, 
\begin{equation}
 \int_{-\infty}^\infty P(t,x)\6t 
 = \int_0^\infty \e^{t\Delta}(x)\6t 
 = - G(x)\;,
\end{equation}
where $G=\Delta^{-1}$ is the Green function of the Laplacian acting on 
zero-mean functions, that is, the solution of $(\Delta G)(x)=\delta(x)$. Taking 
into account the properties of $\varrho$, one finds that for $\delta>0$, these 
relations become 
\begin{align}
\int_\Lambda P_\delta(t,x-y) P_\delta(s,y) \6y &= \tilde P_\delta(t+s,x)\;, \\
 \int_{-\infty}^\infty P_\delta(t,x)\6t 
 = - G_\delta(x)\;,
 \label{eq:Pdelta_convolution} 
\end{align}
where $G_\delta=G*\varrho_\delta^{(1)}$ and $\tilde 
P_\delta=P*(\varrho*\varrho)_\delta$ is a regularised version of $P$ with a 
different mollifier. It follows, using that $P$ is even in $x$, that 
\begin{equation}
 \int_{-\infty}^\infty \int_{\Lambda} P_\delta(t,x)^2 \6x\6t
 = \int_{-\infty}^\infty \tilde P_\delta(2t,0) \6t 
 = -\frac12 \tilde G_\delta(0)\;,
\end{equation} 
where again $\tilde G_\delta$ is a regularisation of $G$ with a different 
mollifier. Using the fact that the Green function of the two-dimensional 
Laplacian behaves like $\log{\norm{x}}/(2\pi)$ near the origin, one easily 
obtains $\tilde G_\delta(0) = \log(\delta)/(2\pi) + \Order{1}$, which 
proves~\eqref{eq:asymp_Cdelta}. 
\end{proof}

\begin{remark}
Note that the expressions~\eqref{eq:asymp_Cdelta} and~\eqref{eq:asymp_CN} have 
the same divergent behaviour. Furthermore, the divergent part of $C_\delta$ is 
independent of the mollifier $\varrho$. The term of order $1$, however, will 
depend on $\varrho$. In other words, there is no canonical choice for the 
bounded part of the renormalisation constants.  
\end{remark}

\Cref{prop:Wick_powers} shows that the Wick powers 
\begin{align}
\Wick{\RSI_\delta} &= H_1(\RSI_\delta;C_\delta) 
= \RSI_\delta\;, \\
\label{eq:Wick_powers_limit} 
\Wick{\RSV_\delta} &= H_2(\RSI_\delta;C_\delta) 
= \RSV_\delta - C_\delta\;, \\
\Wick{\RSW_\delta} &= H_3(\RSI_\delta;C_\delta) 
= \RSW_\delta - 3C_\delta\RSI_\delta
\end{align}
have a variance uniformly bounded in $\delta$. In fact, one can show that they 
admit well-defined limits $\RSI$, $\RSV$ and $\RSW$ as $\delta\searrow0$ in 
terms of iterated stochastic integrals, which belong to $\cC^{-\kappa}_\fraks$ 
for any $\kappa>0$. See~\cite[Appendix~A]{Chandra_Weber_LN17}, 
\cite[Section~10.1]{Hairer2014} and~\cite[Lemma~3.2]{daPratoDebussche} for 
details. The same works of course for higher powers of the stochastic 
convolution. 

\begin{remark}
A formal way of writing these limits is to introduce 
\begin{align}
\xi(\6z_1)\diamond\xi(\6z_2)
={}& \xi(\6z_1)\xi(\6z_2) - \delta(z_1-z_2)\;, \\
\xi(\6z_1)\diamond\xi(\6z_2)\diamond\xi(\6z_3)
={}& \xi(\6z_1)\xi(\6z_2)\xi(\6z_3) \\ 
{}&- \xi(\6z_1)\delta(z_2-z_3)
- \xi(\6z_2)\delta(z_3-z_1)
- \xi(\6z_3)\delta(z_1-z_2)\;. 
\end{align}
The quantities on the left-hand side, which define the limiting objects $\RSV$ 
and $\RSW$ when integrated against a product of $P_\delta$, can be given a 
rigorous meaning as elements of the Wiener chaos decomposition, 
see~\cite[Section~1.1]{nualart2006malliavin},  
and~\cite[Appendix~A]{Chandra_Weber_LN17}.  
\end{remark}

%%%%%%%%%%%%%%%%%%%%%%%%%%%%%%%%%%%%%%%%%%%%%%%%%%%%%%%%%%%%%%%%%%%%%%%%%%%%%%%%

\section{Existence and uniqueness of solutions}
\label{sec:2d_existence} 

The discussion so far suggests that the renormalised version of the 
two-dimensional stochastic Allen--Cahn equation we should consider is 
\begin{align}
 \partial_t \phi_\delta(t,x) 
 &= \Delta\phi_\delta(t,x) + \phi_\delta(t,x) - 
\Wick{\phi_\delta(t,x)^3}_{C_\delta} + \xi^\delta(t,x) \\
 &= \Delta\phi_\delta(t,x) + \phi_\delta(t,x) - \Bigpar{\phi_\delta(t,x)^3 - 
3C_\delta\phi_\delta(t,x)} + \xi^\delta(t,x)\;.
\label{eq:AC-2d-reg} 
\end{align}
For $\delta>0$, this is a smooth PDE (for any realisation of the noise), whose 
solution with initial condition $\phi_{\delta,0}$ satisfies the fixed-point 
equation 
\begin{equation}
\label{eq:FP_2d} 
 \phi_\delta = P\phi_{\delta,0} + \RSI_\delta + 
P*\bigbrak{\phi_\delta-\phi_\delta^3+3C_\delta\phi_\delta}\;.
\end{equation} 
The idea exploited by Da Prato and Debussche in \cite{daPratoDebussche} is 
that since the most irregular term on the right-hand side should be the 
stochastic convolution $\RSI_\delta$, one can write 
\begin{equation}
 \phi_\delta = \RSI_\delta + \psi_\delta\;,
\end{equation} 
where $\psi_\delta$ is expected to be more regular than $\phi_\delta$. 
Substituting in~\eqref{eq:FP_2d}, we obtain 
\begin{align}
\psi_\delta 
&= P\psi_{\delta,0} + P*\bigbrak{\RSI_\delta + \psi_\delta - (\RSI_\delta + 
\psi_\delta)^3 
+ 3C_\delta(\RSI_\delta + \psi_\delta)} \\
&= P\psi_{\delta,0} + P*\bigbrak{\RSI_\delta + \psi_\delta - \psi_\delta^3 - 
3\RSI_\delta\psi_\delta^2 - 3(\RSV_\delta-C_\delta)\psi_\delta - 
(\RSW_\delta-3C_\delta\RSI)}\;.
\end{align}
Taking formally the limit $\delta\searrow0$ and 
using~\eqref{eq:Wick_powers_limit} yields the fixed-point equation 
\begin{equation}
\label{eq:FP_2d_psi} 
 \psi = P\psi_0 + P*\bigbrak{\RSI + \psi - \psi^3 
- 3\RSI\psi^2 - 3\RSV\psi - \RSW}\;.
\end{equation} 
The key result allowing to analyse this fixed-point equation is the following, 
see~\cite[Proposition~4.14]{Hairer2014} as well 
as~\cite[Section~2.6]{Bahouri_Chemin_Danchin_book}.

\begin{theorem}[Product in Besov spaces]
\label{thm:product_Holder} 
Let $\alpha,\beta\in\R$ satisfy $\alpha+\beta>0$. Then there exists a bilinear 
map $B:\cC^\alpha_\fraks\times\cC^\beta_\fraks \to 
\cC^{\alpha\wedge\beta}_\fraks$ with the following properties:
\begin{enumerate}
\item 	If $f\in\cC^\alpha_\fraks$ and $g\in\cC^\beta_\fraks$ are continuous 
functions, then $B(f,g)(z) = f(z)g(z)$.
\item 	For arbitrary $f\in\cC^\alpha_\fraks$ and $g\in\cC^\beta_\fraks$, one 
has the bound 
\begin{equation}
 \norm{B(f,g)}_{\cC^{\alpha\wedge\beta}_\fraks} 
 \lesssim \norm{f}_{\cC^\alpha_\fraks} \norm{g}_{\cC^\beta_\fraks}\;.
\end{equation} 
\end{enumerate}
If $\alpha+\beta\leqs0$, then no bilinear map satisfying these two properties 
exists. 
\end{theorem}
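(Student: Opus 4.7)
The plan is to prove the positive part via Bony's paraproduct decomposition, which I would construct using a dyadic multiresolution analysis analogous to the decomposition $P = \sum_n P_n + R$ employed in the sketch of the Schauder estimate. For smooth $f, g$, I would write
\begin{equation}
 fg = \pi_{\prec}(f,g) + \pi_{\succ}(f,g) + \pi_{\circ}(f,g)\;,
\end{equation}
where $\pi_{\prec}(f,g) = \sum_{m < n-1} f_m g_n$ isolates terms in which $g$ is much rougher than $f$ (and symmetrically for $\pi_{\succ}$), while the resonant part $\pi_{\circ}(f,g) = \sum_{|m-n| \leqs 1} f_m g_n$ couples terms living on comparable scales. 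Here $f_n, g_n$ denote the Paley--Littlewood pieces of $f$ and $g$ localised at parabolic scale $2^{-n}$, which for $f \in \cC^\alpha_\fraks$ satisfy $\abs{f_n(z)} \lesssim 2^{-n\alpha} \norm{f}_{\cC^\alpha_\fraks}$ (with an analogous bound on derivatives).

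First I would treat $\pi_{\prec}$: testing $\pi_{\prec}(f,g)$ against $\cS^\lambda_z \ph$ and splitting the sum according to whether $2^{-n} > \lambda$ or $2^{-n} \leqs \lambda$, one obtains the bound $\lambda^\beta$ (when $\beta < 0$) or simply $\norm{f}_\infty$ times the $\cC^\beta_\fraks$-norm of $g$ (when $\beta \geqs 0$); the key point is that \emph{no} sign condition on $\alpha + \beta$ is needed, because the low-frequency factor $f_m$ is always bounded in $\cC^0$ on the relevant scale. The term $\pi_{\succ}$ is symmetric. The delicate piece is the resonant product $\pi_{\circ}$: here only the bound $\abs{f_n g_n(z)} \lesssim 2^{-n(\alpha+\beta)}$ is available, so after testing against $\cS^\lambda_z \ph$ and summing over $n$, the geometric series converges only if $\alpha + \beta > 0$, yielding a contribution in $\cC^{\alpha+\beta}_\fraks$. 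Combining the three estimates gives $B(f,g) \in \cC^{\alpha \wedge \beta}_\fraks$ with the claimed bilinear bound. The map $B$ is then extended to all of $\cC^\alpha_\fraks \times \cC^\beta_\fraks$ by density of smooth functions in these spaces (in the weak-$*$ sense of testing against $\cS^\lambda_z \ph$), and it agrees with the pointwise product on continuous functions because this is already true at the smooth level.

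For the negative part I would give a scaling counterexample. Fix a point $z_0$ and two nonzero bump functions $\ph, \psi$ supported in the unit parabolic ball with $\int \ph \psi > 0$, and consider, for $\lambda \in (0,1]$,
\begin{equation}
 f_\lambda(z) := \lambda^\alpha (\cS^\lambda_{z_0}\ph)(z)\;, \qquad
 g_\lambda(z) := \lambda^\beta (\cS^\lambda_{z_0}\psi)(z)\;.
\end{equation}
A direct check shows that the $\cC^\alpha_\fraks$-norm of $f_\lambda$ and the $\cC^\beta_\fraks$-norm of $g_\lambda$ remain bounded uniformly in $\lambda$. On the other hand, the pointwise product satisfies $\pscal{f_\lambda g_\lambda}{\unit} = \lambda^{\alpha+\beta-3}\int \ph\psi$ (up to the mollifier convention), which in absolute value, when integrated against a fixed test function supported near $z_0$, diverges as $\lambda \to 0$ whenever $\alpha + \beta \leqs 0$. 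Any bilinear extension $B$ satisfying property~(1) on smooth pairs and property~(2) would therefore produce a uniformly bounded family in $\cC^{\alpha \wedge \beta}_\fraks$, contradicting this blow-up.

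I expect the main technical obstacle to be establishing the uniform bound on the resonant sum in the borderline-negative regime $\alpha, \beta < 0$, $\alpha+\beta > 0$: there the individual frequency blocks $f_n g_n$ are genuine distributions rather than functions, and one must carefully justify that their sum converges as a distribution, which is precisely where the positivity of $\alpha+\beta$ is exploited. The rest of the argument is bookkeeping with the dyadic scales, very much in the spirit of the Schauder estimate proof.
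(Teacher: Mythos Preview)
The paper does not prove this theorem; it is stated with a reference to \cite[Proposition~4.14]{Hairer2014} and \cite[Section~2.6]{Bahouri_Chemin_Danchin_book}. Your paraproduct approach is precisely the argument given in those references, so for the positive part you are reconstructing the standard proof rather than offering an alternative.

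Two points deserve care. First, the extension step: smooth functions are \emph{not} norm-dense in $\cC^\alpha_\fraks = \cB^\alpha_{\infty,\infty}$, and weak-$*$ density does not by itself allow you to extend a jointly norm-continuous bilinear map. The standard fix is to bypass the extension altogether: the Littlewood--Paley blocks $f_n$ are smooth for \emph{any} $f\in\cC^\alpha_\fraks$, so the three paraproduct pieces can be defined directly as sums of smooth functions, and one checks convergence of those sums in $\cC^{\alpha\wedge\beta}_\fraks$ (respectively $\cC^{\alpha+\beta}_\fraks$ for the resonant part).

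Second, your counterexample is miscalibrated. With $f_\lambda := \lambda^\alpha \cS^\lambda_{z_0}\ph$ and $\alpha<0$, testing against a \emph{fixed} test function at $z_0$ already gives $\pscal{f_\lambda}{\psi}\sim\lambda^\alpha$, which diverges, so $\norm{f_\lambda}_{\cC^\alpha_\fraks}$ is not uniformly bounded. The correct normalisation is $f_\lambda(z) = \lambda^\alpha \ph((z-z_0)/\lambda)$ (no $\lambda^{-d_\fraks}$ prefactor). Even then, the scaling argument you outline only produces a contradiction when both $\alpha,\beta<0$; it does not reach the sharp threshold $\alpha+\beta\leqs 0$ in the case where one exponent is positive. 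For the full negative statement one typically exhibits explicit sequences (e.g.\ oscillatory functions of the form $\sin(n\cdot x)/n^\alpha$, or principal-value-type distributions) whose products fail to converge, rather than a pure scaling construction.
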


We then have the following existence and uniqueness result. 

\begin{theorem}[Global existence and uniqueness for the two-dimensional 
Allen--Cahn SPDE]
\label{thm:AC-2d-global} 
Assume $\psi_0\in\cC^{-\kappa}_\fraks(\Lambda)$.  
For sufficiently small $\kappa>0$, equation \eqref{eq:FP_2d_psi} with initial 
condition $\psi_0$ admits an almost surely unique global solution in 
$\cC^{\alpha}_\fraks(\R_+\times\Lambda)$ for any $\alpha < 2-\kappa$. 
Therefore, the solution of \eqref{eq:AC-2d-reg} with initial condition 
$\varrho_\delta *\psi_0$ converges almost surely, as $\delta\searrow0$, to a 
stochastic process $(\psi_t)_{t\geqs0}$ such that $\psi-\RSI \in 
\cC^{\alpha}_\fraks(\R_+\times\Lambda)$. 
\end{theorem}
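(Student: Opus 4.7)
The plan is to implement the Da~Prato--Debussche scheme outlined just before the theorem: rather than solving for $\phi$ directly, solve the fixed-point equation \eqref{eq:FP_2d_psi} for the remainder $\psi$, which is less singular than $\phi$. First I would fix a small $\kappa>0$, chosen so that $\RSI,\RSV,\RSW\in\cC^{-\kappa}_\fraks(\R_+\times\Lambda)$ almost surely (as asserted below \eqref{eq:Wick_powers_limit}), and pick an auxiliary exponent $\alpha\in(\kappa,2-\kappa)$. Introducing the map
\begin{equation*}
 \cT\psi \defby P\psi_0 + P*\RSI + P*\bigbrak{\psi - \psi^3 - 3\RSI\,\psi^2 - 3\RSV\,\psi - \RSW}\;,
\end{equation*}
the aim is to realise $\psi$ as its unique fixed point in $\cC^\alpha_\fraks([0,T]\times\Lambda)$.

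For local well-posedness I would combine the Schauder estimate (\Cref{thm:Schauder}, regularity gain $+2$) with the Besov product rule (\Cref{thm:product_Holder}). Since $\alpha>\kappa$, the products $\RSV\cdot\psi$ and $\RSI\cdot\psi^2$ are well-defined elements of $\cC^{-\kappa}_\fraks$; the pure polynomial part $\psi-\psi^3$ lies in $\cC^\alpha_\fraks$; and $\RSW\in\cC^{-\kappa}_\fraks$. Hence the image under $P*$ of each term lies in $\cC^{2-\kappa}_\fraks\subset\cC^\alpha_\fraks$. A short-time version of Schauder gives an extra factor $T^{\gamma}$ with $\gamma=(2-\kappa-\alpha)/2>0$ when one measures in the weaker norm $\cC^\alpha_\fraks$, which lets one close a Banach fixed-point argument on a ball of $\cC^\alpha_\fraks([0,T]\times\Lambda)$ for sufficiently small $T$, exactly as in the proof of \Cref{thm:semilinear_local_existence}. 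The same estimates applied to two candidate solutions yield uniqueness and continuous dependence on the data $(\psi_0,\RSI,\RSV,\RSW)$.

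Global existence then follows from an a~priori bound exploiting the strong dissipativity of $-\psi^3$. I would adapt the coming-down-from-infinity argument used after \Cref{thm:semilinear_global_existence}: working with the convex functional $\cV(\psi)=\sup_{x}\psi(x)^{2p}$ (for a large even integer $2p$), the driftless contribution to $\tdtot{}{t}\cV(\psi_t)$ is non-positive by contractivity of the heat semigroup, while the nonlinear contribution obeys a pointwise dissipative inequality of the form $\psi^{2p-1}\bigbrak{\psi-\psi^3-3\RSI\psi^2-3\RSV\psi-\RSW}\leqs C\bigpar{1+\RSI^{4p}+\RSV^{2p}+\RSW^{2p/3}}-c\psi^{2p+2}$, valid as long as the stochastic data have finite sup norm on $[0,T]$. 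A Gronwall argument then prevents $\psi_t$ from blowing up in finite time, and the local solution can be restarted indefinitely.

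Finally, for the convergence statement, the Wick constants $C_\delta$ were chosen precisely so that $(\RSI_\delta,\RSV_\delta,\RSW_\delta)$ converges almost surely to $(\RSI,\RSV,\RSW)$ in $\cC^{-\kappa}_\fraks$ (cf.\ the discussion after \eqref{eq:Wick_powers_limit}); since the mollification commutes with the decomposition, $\phi_\delta=\RSI_\delta+\psi_\delta$ where $\psi_\delta$ solves the smoothed version of \eqref{eq:FP_2d_psi}. Continuous dependence of the fixed point on the stochastic data yields $\psi_\delta\to\psi$ in $\cC^\alpha_\fraks$, and hence $\phi_\delta\to\RSI+\psi$. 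The main obstacle, in my view, is the global a~priori bound: the dissipative estimate must survive the presence of the distributional drifts $\RSI$ and $\RSV$, so the interpolation between the good $-\psi^3$ term and the rough stochastic terms has to be carried out carefully. Once this is in place, local-to-global extension and the passage to the limit in $\delta$ are essentially soft consequences of the contraction argument.
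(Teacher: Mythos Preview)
Your local existence argument is essentially the paper's: combine the Schauder estimate with the Besov product rule to place the right-hand side in $\cC^{-\kappa}_\fraks$, then sacrifice a little regularity to extract a power of $T$ and close a contraction on a ball of $\cC^\alpha_\fraks([0,T]\times\Lambda)$. This part is fine.

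The global a~priori bound, however, has a genuine gap. Your dissipative inequality
\[
\psi^{2p-1}\bigbrak{\psi-\psi^3-3\RSI\psi^2-3\RSV\psi-\RSW}
\leqs C\bigpar{1+\RSI^{4p}+\RSV^{2p}+\RSW^{2p/3}}-c\,\psi^{2p+2}
\]
is written as if $\RSI,\RSV,\RSW$ were bounded functions with a finite sup norm. In two space dimensions they are not: they live in $\cC^{-\kappa}_\fraks$ for $\kappa>0$ and are honest distributions, so objects like $\RSV^{2p}$ or $\RSW^{2p/3}$ are undefined, and the pointwise estimate makes no sense. This is precisely why the one-dimensional Lyapunov argument of \Cref{thm:semilinear_global_existence} (which relied on the stochastic convolution being continuous) does not carry over verbatim. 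You identify this as ``the main obstacle'', but the inequality you write down is already ill-posed, so the obstacle has not been overcome.

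The actual proofs in the references the paper cites (Da~Prato--Debussche, Tsatsoulis--Weber, Mourrat--Weber) do exploit the dissipativity of $-\psi^3$, but they work with spatially integrated $L^p$-type quantities and use duality or paraproduct estimates to pair $\psi^{2p-1}$ against the distributional drifts $\RSV\psi$ and $\RSW$; Young's inequality is then applied to norms in negative-regularity Besov spaces, not pointwise. Replacing your sup-norm functional by $\int_\Lambda\psi^{2p}$ and controlling the cross terms through the product rule of \Cref{thm:product_Holder} (rather than through pointwise bounds) is the missing ingredient.
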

\begin{proof}[\Sketch]
We first prove a local existence result, in the spirit 
of~\Cref{thm:semilinear_local_existence}. We know that $\RSI, \RSV, \RSW$ 
belong 
to $\cC^{-\kappa}_\fraks$ for any $\kappa>0$, while an analogue of the Schauder 
estimate in \Cref{thm:Schauder} shows that 
$P\psi_0\in\cC^{2-\kappa}_\fraks([0,T]\times\Lambda)$.  Using 
\Cref{thm:product_Holder}, we obtain for any $\alpha,T>0$ the estimate 
\begin{equation}
 \norm{\RSI + \psi - \psi^3 
- 3\RSI\psi^2 - 3\RSV\psi - \RSW}_{\cC^{-\kappa}_\fraks([0,T]\times\Lambda)}
 \lesssim \bigpar{\norm{\psi}_{\cC^{\alpha}_\fraks([0,T]\times\Lambda)} + M}^3
\end{equation} 
for some constant $M$. The Schauder estimate shows that the convolution of this 
quantity with $P$ belongs to $\cC^{2-\kappa}_\fraks([0,T]\times\Lambda)$.
This is not sufficient to obtain a contraction, since the norm does not become 
small for small $T$. Such a $T$-dependence can, however, be obtained by 
sacrificing a bit of regularity. One can indeed show that, for any $\sigma>0$,
\begin{equation}
 \norm{P*[\RSI + \psi - \psi^3 
- 3\RSI\psi^2 - 3\RSV\psi - 
\RSW]}_{\cC^{2-\kappa-\sigma}_\fraks([0,T]\times\Lambda)}
 \lesssim 
 T^\sigma \bigpar{\norm{\psi}_{\cC^{\alpha}_\fraks([0,T]\times\Lambda)} + 
M}^3\;.
\end{equation} 
Choosing $\alpha=2-\kappa-\sigma$, we obtain that the right-hand side 
of~\eqref{eq:FP_2d_psi} maps a ball into itself for sufficiently small $T$. A 
similar argument shows that this map is a contraction in that ball for small 
$T$, so that the existence of a local solution follows from Banach's 
fixed-point theorem. Since $\sigma$ was arbitrary, the fixed point has the 
claimed regularity.  

Global existence then follows from showing non-explosion, by using an 
appropriate Lyapunov function and a Gronwall-type argument. 
See~\cite[p.~1914]{daPratoDebussche} 
and~\cite[Section~3.4]{Tsatsoulis_Weber_16}, as well 
as~\cite[Section~9]{Mourrat_Weber_17} for an argument working when $\Lambda$ 
is replaced by $\R^2$. 
\end{proof}

\begin{remark}
This result appears with different variants of function spaces in the 
literature. The proof given in~\cite{daPratoDebussche} uses spaces mixing 
Lebesgue spaces $L^p$ in the time variable and Besov spaces 
$\cB^{\alpha}_{q,r}$ 
in the space variable. In~\cite[Section~3]{Tsatsoulis_Weber_16}, one finds a 
formulation with H\"older regularity in space and continuity in time. The 
formulation here follows~\cite{Hairer2014}. The result remains true for any 
polynomial nonlinearity of odd degree and negative leading-order coefficient. 
\end{remark}

%%%%%%%%%%%%%%%%%%%%%%%%%%%%%%%%%%%%%%%%%%%%%%%%%%%%%%%%%%%%%%%%%%%%%%%%%%%%%%%%

\section{Invariant measure}
\label{sec:2d_invariant} 

We now consider the stochastic Allen--Cahn equation written in the form
\begin{equation}
 \partial_t \phi(t,x) = \Delta\phi(t,x) + \phi(t,x) - 
\Wick{\phi(t,x)^3} + \sqrt{2\eps}\xi(t,x)\;, 
\label{eq:AC-2d-eps} 
\end{equation}
where we have reintroduced the small parameter in front of the noise for later 
use. Here the Wick power is with respect to the constant $\eps 
\overline{C}_\delta = 2\eps C_\delta$ since the variance of the noise has been 
multiplied by $2\eps$. 

By analogy with~\eqref{eq:Gibbs_1dSPDE}, a natural candidate for the invariant 
measure of~\eqref{eq:AC-2d-eps} is 
\begin{equation}
\label{eq:potential-2d} 
 \pi(\6\phi) := \frac{1}{\cZ_0} \e^{-\widetilde V(\phi)/\eps}
\muGFF^{(\eps)}(\6\phi)\;, 
\qquad
\widetilde V(\phi) := \int_\Lambda 
\biggbrak{\frac14 \Wick{\phi^4(x)} - \Wick{\phi^2(x)} + \frac14}\6x\;,
\end{equation} 
where $\muGFF^{(\eps)}$ is the measure of the Gaussian free field with 
covariance $\eps(-\Delta+1)^{-1}$ (and the constant term $1/4$ plays no role 
but will be convenient in computations). Note that the integral over $\Lambda$ 
has finite expectation and variance by~\Cref{prop:Wick_powers}. 
However, this does not automatically imply that the partition function $\cZ_0$ 
is finite. 

In the case of the $\Phi^4$ model (without the linear term $\phi(t,x)$ 
in~\eqref{eq:AC-2d-eps}, cf. \Cref{rem:Phi4}), this problem is in fact an old 
problem in Quantum Field Theory, which has been solved by various methods. A 
particularly elegant approach has recently been developed by Barashkov and 
Gubinelli~\cite{Barashkov_Gubinelli_18}. To adapt it to the nonconvex 
potential of the Allen--Cahn equation, we start by performing, in a similar way 
as in~\Cref{sec:1dmeta}, cf.~\eqref{eq:1d_transversal}, the change of variables 
\begin{equation}
\label{eq:potential_decompose} 
 \phi(x) = \frac{1}{L}\phi_0 + \sqrt{\eps}\phi_\perp(x)\;,
\end{equation} 
where $\phi_\perp$ has zero mean. Using \Cref{lem:hermite_binomial} to 
transform Wick powers, we obtain 
\begin{equation}
 \frac1\eps \widetilde V(\phi)
= \frac1\eps \biggbrak{V_0(\phi_0) - \frac12\phi_0^2} + 
\frac12\cQ_\perp(\phi_0,\phi_\perp) + R_\eps(\phi_0,\phi_\perp)\;,
\end{equation} 
where
\begin{align}
V_0(\phi_0) &= \frac{L^2}{4} \biggpar{\frac{\phi_0^2}{L^2}-1}^2\;, \\
\cQ_\perp(\phi_0,\phi_\perp) &= \biggpar{\frac{3\phi_0^2}{L^2}-2} 
\int_\Lambda\Wick{\phi_\perp(x)^2}\6x\;, \\
R_\eps(\phi_0,\phi_\perp) 
&= \frac{\sqrt{\eps}}{L}\phi_0 \int_{\Lambda} \Wick{\phi_\perp(x)^3} \6x
  + \frac{\eps}{4}\int_{\Lambda} \Wick{\phi_\perp(x)^4} \6x\;.
\label{eq:V0Reps} 
\end{align}
This yields the expression 
\begin{equation}
\label{eq:Z0_perp} 
\cZ_0 = \bigexpecin{\muGFF^{(\eps)}}{\e^{-\widetilde V/\eps}} 
= \frac{1}{\sqrt{2\pi\eps}} \int_{-\infty}^\infty \e^{-V_0(\phi_0)/\eps}
\Bigexpecin{\muGFFperp}{\e^{-\cQ_\perp(\phi_0,\cdot)/2 - 
R_\eps(\phi_0,\cdot)}} \6\phi_0
\end{equation}
for the partition function, where $\muGFFperp$ has covariance 
$(-\Delta_\perp+1)^{-1}$. 

Our aim is now to bound the expectation in~\eqref{eq:Z0_perp}. One way of doing 
this, which was used in~\cite{Berglund_DiGesu_Weber_16}, is based on the 
so-called Nelson estimate, a refinement of \Cref{prop:Wick_powers}. We outline 
here instead the approach used in~\cite{Barashkov_Gubinelli_18}, which relies on 
the Bou\'e--Dupuis formula. The idea is that if $\mu_\perp(a)$ is a Gaussian 
measure with covariance $(-\Delta_\perp+a)^{-1}$, it can be regularised in the 
following way. Let $\rho:\R_+\to\R_+$ be a compactly supported smooth function 
such that $\rho(0)=0$, and set, for each $k\in\Z^2$, 
\begin{equation}
 v_k(t) := \frac{\sigma_k(t)}{\sqrt{\lambda_k^2+a}}\;, 
 \qquad
 \sigma_k(t)^2 := \dtot{}{t} \rho\biggpar{\frac{\norm{k}}{t}}\;.
\end{equation} 
Then introduce the stochastic process 
\begin{equation}
 Y_t(x) := \sum_{k\in\Z^2\setminus\set{0}} \int_0^t v_k(s)\6W^{(k)}_s e_k(x)\;,
\end{equation} 
where the $W^{(k)}_t$ are independent Wiener processes. Using It\^o's isometry, 
we obtain 
\begin{equation}
 \bigexpec{\pscal{Y_t}{\ph}\pscal{Y_s}{\psi}}
 = \sum_{k\in\Z^2\setminus\set{0}} \int_0^{t\wedge s} v_k(u)^2\6u 
\,\overline{\hat\ph_k}\hat\psi_k
= \sum_{k\in\Z^2\setminus\set{0}} \rho\biggpar{\frac{\norm{k}}{t\wedge s}}
\,\overline{\hat\ph_k}\hat\psi_k
\end{equation} 
for any test functions $\ph$ and $\psi$ with Fourier coefficients $\hat\ph_k$ 
and $\hat\psi_k$. In particular, 
\begin{equation}
 \lim_{t\to\infty} \bigexpec{\pscal{Y_t}{\ph}\pscal{Y_t}{\psi}}
 = \sum_{k\in\Z^2\setminus\set{0}} \frac{1}{\lambda_k^2+a} 
\,\overline{\hat\ph_k}\hat\psi_k
 = \pscal{\ph}{(-\Delta_\perp+a)^{-1}\psi}\;.
\end{equation} 
Then the Bou\'e--Dupuis variational formula, whose proof relies on the Girsanov 
transformation~\cite[Section~2]{Barashkov_Gubinelli_18}, reads as follows. 

\begin{theorem}[Bou\'e--Dupuis formula, \cite{Boue_Dupuis_98}]
For any $T>0$ and functional $\cV$, we have 
\begin{equation}
\label{eq:Boue-Dupuis} 
 - \log \bigexpec{\e^{-\cV(Y_T)}}
 = \inf_{u\in\mathbb{H}} \Biggexpec{\cV(Y_T+Z_T(u)) + 
\frac{\abs{\Lambda}}2 \int_0^T \norm{u_t}_{L^2}^2\6t}\;,
\end{equation} 
where 
\begin{equation}
 Z_T(u,x) := \sum_{k\in\Z^2\setminus\set{0}} \int_0^T u_k(t)\6t\, e_k(x)
\end{equation} 
and $\mathbb{H}$ is the space of progressively measurable processes which are 
almost surely in $L^2(\R_+\times\Lambda)$. 
\end{theorem}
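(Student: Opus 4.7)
The plan is to prove the two inequalities separately, using Girsanov's theorem for the upper bound and a martingale representation argument for the lower bound. Throughout, it will be convenient to view $Y_T$ as a measurable functional $Y_T=\Phi(W_\cdot)$ of the driving Brownian motions $W=(W^{(k)})_{k\neq 0}$, with the shifted process $Y_T+Z_T(u)=\Phi(W_\cdot+\int_0^\cdot u_s\,\6s)$ obtained by translating $W$ by the Cameron--Martin drift associated with $u$.

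For the upper bound, I would fix an admissible control $u\in\mathbb{H}$ satisfying suitable integrability, and introduce the exponential martingale
\begin{equation*}
  \cE_T(u) := \exp\biggset{-\sqrt{|\Lambda|}\int_0^T u_t\cdot\6W_t - \frac{|\Lambda|}{2}\int_0^T \norm{u_t}_{L^2}^2\6t}\;.
\end{equation*}
Under the measure $\6\mathbb{Q}_u=\cE_T(u)\6\mathbb{P}$, Girsanov's theorem asserts that $W_t+\sqrt{|\Lambda|}\int_0^t u_s\6s$ is a $\mathbb{Q}_u$-Brownian motion, so that $Y_T+Z_T(u)$ under $\mathbb{Q}_u$ has the same law as $Y_T$ under $\mathbb{P}$. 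Writing $\mathbb{E}[e^{-\cV(Y_T)}]=\mathbb{E}_{\mathbb{Q}_u}[\cE_T(u)^{-1}e^{-\cV(Y_T+Z_T(u))}]$ and applying Jensen's inequality to the convex function $-\log$ yields
\begin{equation*}
  -\log\mathbb{E}\bigbrak{\e^{-\cV(Y_T)}} \leqs \mathbb{E}_{\mathbb{Q}_u}\bigbrak{\cV(Y_T+Z_T(u))} + \mathbb{E}_{\mathbb{Q}_u}\bigbrak{\log\cE_T(u)}\;,
\end{equation*}
and the second term simplifies, after using that the stochastic integral is a $\mathbb{Q}_u$-martingale, precisely to $\frac{|\Lambda|}{2}\mathbb{E}_{\mathbb{Q}_u}[\int_0^T\norm{u_t}_{L^2}^2\6t]$. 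Since $(Y,Z(u))$ has the same joint law under $\mathbb{P}$ and $\mathbb{Q}_u$ once one reinterprets $u$ through the Girsanov shift, one recovers the claimed upper bound after taking the infimum over $u$.

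For the lower bound, the plan is to exhibit a (near-)optimizer. Define the martingale $M_t:=\mathbb{E}[\e^{-\cV(Y_T)}\mid\cF_t]$ and set $F_t:=-\log M_t$, so that $F_0=-\log\mathbb{E}[\e^{-\cV(Y_T)}]$ and $F_T=\cV(Y_T)$. By the martingale representation theorem there exists an adapted process $H=(H_k)_{k\neq 0}$ with $\6M_t=\sum_k H_k(t)\6W^{(k)}_t$, and It\^o's formula applied to $-\log M$ gives
\begin{equation*}
  F_0 = \cV(Y_T) + \int_0^T \frac{H_t}{M_t}\cdot\6W_t - \frac12\int_0^T \frac{|H_t|^2}{M_t^2}\6t\;.
\end{equation*}
Choosing the candidate drift $u^*_t:=-H_t/(\sqrt{|\Lambda|}\,M_t)$ one identifies, by completing the square and taking expectations under the Girsanov-shifted measure, that $F_0=\mathbb{E}[\cV(Y_T+Z_T(u^*))+\tfrac{|\Lambda|}{2}\int_0^T\norm{u^*_t}_{L^2}^2\6t]$, which is the desired equality up to checking that $u^*$ is admissible.

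The main obstacles I anticipate are analytic, not conceptual. First, one must justify the Girsanov change of measure, which requires verifying Novikov's condition (or a localisation argument) for the chosen $u$; when $\cV$ is only lower-bounded and sufficiently integrable (as is the case for the Wick $\Phi^4$ functional) this can be handled by stopping and a monotone passage to the limit, as carried out in \cite{Boue_Dupuis_98} and refined in \cite{Barashkov_Gubinelli_18}. Second, the candidate optimizer $u^*$ need not a priori lie in $\mathbb{H}$, so one must approximate it by truncated processes $u^{*,n}$ and check that the associated cost converges; this is the step where the specific regularisation by $\rho$ in the definition of $Y_T$ becomes essential, since it provides the trace-class structure needed to control $\mathbb{E}[\int_0^T\norm{u^*_t}^2_{L^2}\6t]$ via the $L^2$-boundedness of $\cE_T(u^*)^{-1}$. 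The remainder of the argument is a routine exchange of limits.
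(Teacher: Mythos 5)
Your sketch is correct and follows exactly the route the paper points to: the paper does not prove this theorem itself but cites \cite{Boue_Dupuis_98} and \cite{Barashkov_Gubinelli_18}, whose argument is precisely the Girsanov change of measure (plus Jensen) for one inequality and the martingale representation of $\e^{-\cV(Y_T)}$ with It\^o's formula applied to $-\log M_t$ for the near-optimiser, together with the localisation/admissibility issues you correctly identify as the real technical content. The only caveat is bookkeeping: the displayed identity $\E[\e^{-\cV(Y_T)}]=\E_{\mathbb{Q}_u}[\cE_T(u)^{-1}\e^{-\cV(Y_T+Z_T(u))}]$ silently combines the change of measure with the re-expression of $Y_T$ as a functional of the new Brownian motion (with a sign flip in the drift), which is harmless here since one takes an infimum over $u$, but should be stated as two separate steps in a full write-up.
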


Note that here the parameter $T$ does not represent time, but a continuous 
regularisation parameter such that in the limit $T\to\infty$, one recovers the 
expectation under the Gaussian measure $\muGFFperp$. This leads to the 
following estimate, which extends \Cref{prop:exp_moment} to the two-dimensional 
case (the bounds' $\eps$-dependence can be improved, cf.\ 
\Cref{prop:Z-cap_AC-2d}).

\begin{proposition}
\label{prop:exp_moment_2d} 
Assume that $L<2\pi$. Then there exists a constant $c$ such that  
\begin{equation}
 1 \leqs 
 \Bigexpecin{\muGFFperp}{\e^{-\cQ_\perp(\phi_0,\cdot)/2 - 
R_\eps(\phi_0,\cdot)}}
 \leqs \e^{c\phi_0^4}\;.
\end{equation} 
As a consequence, the partition function $\cZ_0$ is bounded. 
\end{proposition}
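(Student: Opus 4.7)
My plan is to treat the two inequalities by different means: Jensen's inequality for the lower bound, and the Bou\'e--Dupuis formula \eqref{eq:Boue-Dupuis} for the upper bound.

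The lower bound is immediate. By \Cref{lem:hermite_moments} (applied with $m=0$), $\bigexpec{\Wick{\phi_\perp(x)^n}}=0$ for every $n\geqs1$ and every $x\in\Lambda$. Combined with Fubini this shows that, as random variables under $\muGFFperp$, both $\cQ_\perp(\phi_0,\phi_\perp)$ and $R_\eps(\phi_0,\phi_\perp)$ are centred. Applying Jensen's inequality to the convex function $x\mapsto\e^{-x}$ yields
\[
\Bigexpecin{\muGFFperp}{\e^{-\cQ_\perp(\phi_0,\cdot)/2-R_\eps(\phi_0,\cdot)}}\geqs \exp\Bigset{-\Bigexpecin{\muGFFperp}{\cQ_\perp/2+R_\eps}}=1\;.
\]

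For the upper bound I apply the Bou\'e--Dupuis formula with mass $a=1$ and functional $\cV=\cQ_\perp(\phi_0,\cdot)/2+R_\eps(\phi_0,\cdot)$, using that the Wick powers of $Y_T$ converge to those of the zero-mean GFF as $T\to\infty$. It then suffices to bound the right-hand side of \eqref{eq:Boue-Dupuis} below by $-c\phi_0^4$, uniformly in $T$. To this end I would expand every Wick power via \Cref{lem:hermite_binomial}, $\Wick{(Y_T+Z_T)^n}=\sum_{k=0}^n\binom{n}{k}\Wick{Y_T^k}Z_T^{n-k}$ for $n=2,3,4$, insert into $\cV(Y_T+Z_T)$, and take expectation: the terms containing only pure Wick powers of $Y_T$ (with no $Z_T$ factor) vanish by centring. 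Two manifestly non-negative quantities survive: the deterministic quartic $\frac{\eps}{4}\int_\Lambda Z_T(x)^4\,\6x$ coming from $\frac{\eps}{4}\Wick{\phi_\perp^4}$, and the entropy cost $\frac{\abs{\Lambda}}{2}\int_0^T\norm{u_t}_{L^2}^2\,\6t$, which controls $\norm{Z_T}_{H^1}^2$ via Cauchy--Schwarz. The plan is then to absorb each remaining mixed term---the potentially negative quadratic $\frac12(3\phi_0^2/L^2-2)\int Z_T^2$, cross terms such as $\sqrt{\eps}\phi_0\int Y_TZ_T^2$, $\eps\int\Wick{Y_T^3}Z_T$, $\eps\int\Wick{Y_T^2}Z_T^2$, and so on---into those two positive quantities by repeated applications of Young's inequality $\abs{ab}\leqs\delta a^p+C_\delta b^q$, arranged so that all dependencies on $\phi_0$ collapse into a single $c\phi_0^4$ contribution. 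The residual purely stochastic errors are uniformly bounded in $T$ by \Cref{prop:Wick_powers}. The assumption $L<2\pi$ enters through the Poincar\'e inequality $\norm{\phi_\perp}_{L^2}\leqs(L/2\pi)\norm{\nabla\phi_\perp}_{L^2}$, which ensures that the quadratic part of $\cV$ can be controlled regardless of the sign of $3\phi_0^2/L^2-2$.

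The main obstacle will be the rigorous treatment of the distributional products $\Wick{Y_T^k}Z_T^{n-k}$ in the limit $T\to\infty$: the Wick powers $\Wick{Y^k}$ live in a Besov space of negative regularity, whereas $Z_T$ only inherits an $H^1$-type regularity from the constraint $\int_0^T\norm{u_t}_{L^2}^2\6t<\infty$. Making these products meaningful and estimating them in a form compatible with Young-type absorption requires the Besov duality underpinning \Cref{thm:product_Holder}, mirroring the regularity trade-offs already encountered in the proof sketch of \Cref{thm:AC-2d-global}. Once the upper bound $\e^{c\phi_0^4}$ is secured, the boundedness of $\cZ_0$ follows from the representation \eqref{eq:Z0_perp}: the integrand is at most $\e^{-V_0(\phi_0)/\eps+c\phi_0^4}$, and since $V_0(\phi_0)=\frac{1}{4L^2}\phi_0^4+\Order{\phi_0^2}$ as $\abs{\phi_0}\to\infty$, the $\phi_0$-integral converges whenever $\eps<1/(4L^2c)$.
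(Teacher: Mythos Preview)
Your proposal is essentially correct and follows the paper's overall strategy: Jensen's inequality for the lower bound, the Bou\'e--Dupuis formula for the upper bound, expansion of the Wick powers of $Y_T+Z_T$ via \Cref{lem:hermite_binomial}, and absorption of the mixed terms into the good positive terms by Young-type inequalities. Your identification of the main analytic obstacle (the distributional products $\Wick{Y_T^k}Z_T^{n-k}$, handled via Besov duality / functional inequalities as in Barashkov--Gubinelli) is accurate.

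There is one genuine organisational difference worth flagging. Before invoking Bou\'e--Dupuis, the paper first shifts the reference measure from $\muGFFperp$ (covariance $(-\Delta_\perp+1)^{-1}$) to the measure $\mu_\perp$ with covariance $(-\Delta_\perp-1)^{-1}$; the hypothesis $L<2\pi$ enters precisely here, since it guarantees $-\Delta_\perp>1$ on mean-zero functions so that this shifted measure exists. After the shift, the pure-$Z_T$ quadratic contribution is $\tfrac{3\phi_0^2}{2L^2}\int Z_T^2\geqs0$, and all three building blocks of $\Psi_T$ are manifestly non-negative. You instead apply Bou\'e--Dupuis directly with $a=1$ and then absorb the potentially negative quadratic $\tfrac12(\tfrac{3\phi_0^2}{L^2}-2)\int Z_T^2$ into the entropy via Poincar\'e. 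Both routes exploit the same spectral-gap condition; the paper's version buys a cleaner positive-definite $\Psi_T$ at the price of a Radon--Nikodym constant $K$ coming from the change of Gaussian mass (this is the ``nontrivial point'' the paper defers to \Cref{lem:Gaussian_shift}), whereas your route avoids that shift but requires verifying that the entropy coefficient genuinely dominates the full $-\int Z_T^2$ uniformly in $T$. Your closing remark that $\cZ_0<\infty$ only for $\eps<1/(4L^2c)$ is the honest reading of the bound $\e^{c\phi_0^4}$; the paper appeals to Laplace asymptotics and implicitly works in the small-$\eps$ regime.
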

\begin{proof}[\Sketch]
The lower  bound follows from Jensen's inequality and the fact that Wick 
powers are centred. For the upper bound, we first perform a shift to the 
Gaussian measure $\mu_\perp$ of covariance $(-\Delta_\perp-1)^{-1}$ (which is 
allowed since the eigenvalues of $-\Delta_\perp$ are greater than $1$ for 
$L<2\pi$). One obtains 
\begin{equation}
 \Bigexpecin{\muGFFperp}{\e^{-\cQ_\perp(\phi_0,\cdot)/2 - R_\eps(\phi_0,\cdot)}}
 = K\Bigexpecin{\mu_\perp}{\e^{-\cV(\phi_0,\cdot)}}\;,
\end{equation} 
where 
\begin{equation}
 \cV(\phi_0,\phi_\perp) := R_\eps(\phi_0,\phi_\perp) + \frac{3\phi_0^2}{2L^2}
 \int_{\Lambda} \Wick{\phi_\perp(x)^2} \6x\;,
\end{equation} 
and $K$ is a constant. This is actually a nontrivial point, to which we will 
come back in \Cref{sec:2d_metastability}, see \Cref{lem:Gaussian_shift}. 
Expanding the Wick powers of $Y_T+Z_T(u)$ with the help of 
\Cref{prop:Wick_powers}, and inserting this in~\eqref{eq:Boue-Dupuis}, we get 
\begin{equation}
 - \log \bigexpec{\e^{-\cV(\phi_0,\cdot)}}
 = \inf_{u\in\mathbb{H}} \Bigexpec{\Psi_T(u)+\Phi_T(u)}\;,
\end{equation} 
where, since all Wick powers of $Y_T$ have zero expectation, 
\begin{align}
\Psi_T(u) ={}& \frac{\eps}{4} \int_\Lambda Z_T^4(u) \6x 
+ \frac{3\phi_0^2}{2L^2} \int_\Lambda Z_T^2(u) \6x
+ \frac{\abs{\Lambda}}{2} \int_0^T \norm{u_t}_{L^2}^2\6t\;,\\
\Phi_T(u) ={}& \eps \int_\Lambda \Wick{Y_T^3}Z_T(u) \6x
+ \frac32\eps \int_\Lambda \Wick{Y_T^2}Z_T^2(u) \6x
+ \eps \int_\Lambda Y_TZ_T^3(u) \6x \\
&{}+ \frac{\sqrt{\eps}}{L}\phi_0 \Biggbrak{ 
3\int_\Lambda \Wick{Y_T^2}Z_T(u) \6x
+ 3\int_\Lambda Y_TZ_T^2(u) \6x
+ \int_\Lambda Z_T^3(u) \6x}
+ \frac{3\phi_0^2}{2L^2} \int_\Lambda Y_T Z_T(u)\6x\;.
\end{align}
Here one has to show that for each choice of $u\in\mathbb{H}$, the positive 
term 
$\Psi_T(u)$ dominates $\Phi_T(u)$ uniformly in $T$ up to a remainder of order 
$\phi_0^4$. This is a purely deterministic argument, which is based on 
functional inequalities, cf.~\cite[Section~3]{Barashkov_Gubinelli_18}. The 
claim on $\cZ_0$ then follows from Laplace asymptotics on~\eqref{eq:Z0_perp}. 
\end{proof}

The existence of the Gibbs measure $\pi$ being established, it is natural to ask 
whether $\pi$ is indeed invariant under the dynamics of the Allen--Cahn 
equation~\eqref{eq:AC-2d-eps}, whether it is the unique invariant measure, and 
what are its ergodic properties. This task is not straightforward at all, since 
even the fact that solutions of~\eqref{eq:AC-2d-eps} enjoy the Markov property 
is not automatic, but requires a proof. All these properties have in fact 
been established recently. We summarise them in the following theorem, which 
incorporates results 
from~\cite{Rockner_Zhu_Zhu_15,Tsatsoulis_Weber_16,Rockner_Zhu_Zhu_17, 
Hairer_Mattingly_18}.

\begin{theorem}[Ergodic properties of the two-dimensional Allen--Cahn equation]
The solution of~\eqref{eq:AC-2d-eps} defines a Markov process 
$(\phi_t)_{t\geqs0}$ with transition semi-group $(P_t)_{t\geqs0}$ with respect 
to the filtration $(\cF_t)_{t\geqs0}$ generated by all random variables 
$\pscal{\xi}{\ph}$ obtained by testing space-time white noise against functions 
$\ph$ supported on $[0,t]\times\Lambda$. The semigroup $(P_t)_{t\geqs0}$ 
satisfies the \emph{strong Feller property}, that is, it maps bounded 
measurable functions to continuous functions for every $t>0$. 
The Gibbs measure $\pi(\6\phi)$ is the unique invariant measure of the 
dynamics, and is reversible. Finally, the dynamics is exponentially mixing, in 
the sense that there exist constants $C,\beta>0$ such that 
\begin{equation}
\label{eq:exp_mixing_2d} 
 \norm{P_t(\phi_0)-\pi}_{\math{TV}} 
 \leqs C\e^{-\beta t} \norm{\delta_{\phi_0}-\pi}_{\math{TV}}
\end{equation} 
holds for all $\phi_0\in\cC^{-\kappa}_\fraks(\Lambda)$ and all $t>0$, where 
$\norm{\cdot}_{\math{TV}}$ denotes the total variation distance. 
\end{theorem}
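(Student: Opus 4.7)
My plan is to establish the four claims (Markov property, strong Feller, existence/uniqueness/reversibility of $\pi$, and exponential mixing) in sequence, leaning heavily on the Galerkin approximation scheme already implicit in \Cref{sec:2d_invariant} and on the pathwise uniqueness from \Cref{thm:AC-2d-global}. The Markov property itself is the easy part: since the Da Prato--Debussche decomposition $\phi = \RSI + \psi$ gives a deterministic solution map $\phi_0 \mapsto \phi_t$ that depends measurably and causally on the realization of $\xi$ on $[0,t]$, the pathwise uniqueness together with the flow property of the mild-solution equation \eqref{eq:FP_2d_psi} yields the Markov property and, since the solution map is continuous in $\phi_0 \in \cC^{-\kappa}_\fraks$, the ordinary Feller property.

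For invariance and reversibility I would approximate by the spectral Galerkin projections $\phi_N$ of \eqref{eq:AC-2d-eps}, which solve finite-dimensional SDEs of exactly the gradient type analysed in \Cref{sec:diffinv}. Each such projected dynamics has $\pi_N(\6\phi) = \cZ_N^{-1} \e^{-\widetilde V_N(\phi)/\eps} \muGFFN^{(\eps)}(\6\phi)$ as a reversible invariant measure, where $\widetilde V_N$ uses the Wick power with respect to the cutoff constant $\eps\overline{C}_N$. The bounds in \Cref{prop:exp_moment_2d} and the uniform-in-$N$ moment control from \Cref{prop:Wick_powers} show that $\pi_N \to \pi$ weakly, while an extension of the convergence argument behind \Cref{thm:AC-2d-global} shows that the Galerkin dynamics converge to $(\phi_t)$ in probability in the appropriate space. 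Passing to the limit in the identity $\pscal{f}{P_t^{(N)} g}_{\pi_N} = \pscal{g}{P_t^{(N)} f}_{\pi_N}$ for continuous bounded $f,g$ yields both invariance and reversibility of $\pi$.

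The strong Feller property is the main obstacle. The most robust route, following Tsatsoulis--Weber, is to introduce a smooth mollification of the noise, write the smoothed equation in the shifted variable $\psi = \phi - \RSI$, and derive a Bismut--Elworthy--Li type formula: testing the derivative of $P_t f$ against a perturbation $h$ of the initial datum gives a Malliavin-integration-by-parts expression
\begin{equation}
\nabla_h (P_t f)(\phi_0) = \frac{1}{t}\Bigexpecin{\phi_0}{f(\phi_t) \int_0^t \pscal{v_s(h)}{\6W_s}}\;,
\end{equation}
where $v_s(h)$ is the noise direction that reproduces the effect of the perturbation $h$ on the regular part $\psi$. The hard point is bounding the $L^2$ norm of $v_s(h)$ uniformly in the mollification scale, which requires controlling the variational equation around the singular nonlinearity $\Wick{\phi^3}$; here the Schauder estimate and the product estimate in \Cref{thm:product_Holder} applied to the \emph{linearised} equation do the job, but only after absorbing the rough terms $\RSI$, $\RSV$, $\RSW$ into the modelled-distribution machinery (or, equivalently, by quantifying the loss of regularity incurred by each renormalised tree). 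Once this gradient bound is in hand, strong Feller follows by a standard Vitali-type argument.

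Given strong Feller, uniqueness of $\pi$ is a consequence of Doob's theorem once one shows irreducibility, namely that $P_t(\phi_0,\cdot)$ charges every nonempty open set in $\cC^{-\kappa}_\fraks$; this is proved by an explicit noise-control argument, steering the shifted variable $\psi$ from any $\phi_0$ into a small neighbourhood of any target using the regularising effect of the heat semigroup on finite time intervals. Finally, exponential mixing \eqref{eq:exp_mixing_2d} follows from Hairer--Mattingly's abstract framework: the coming-down-from-infinity estimate (analogous to the bound after \Cref{thm:semilinear_global_existence} but using the renormalised nonlinearity) provides a Lyapunov function, strong Feller plus irreducibility provides a small-set condition on sublevel sets of the Lyapunov function, and Harris's theorem assembles these into a geometric contraction in total variation. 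I expect the delicate step throughout to be the strong Feller bound, because it is the one place where the singularity of the nonlinearity genuinely interacts with the sensitivity of the flow to its initial datum, and handling it requires the most careful use of the regularity-structures apparatus that underlies the Da Prato--Debussche solution theory.
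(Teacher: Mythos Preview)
The paper does not prove this theorem; it is stated without proof as a summary of results from the literature (R\"ockner--Zhu--Zhu, Tsatsoulis--Weber, and Hairer--Mattingly), with attributions spelled out in the bibliographical notes. Your outline is broadly faithful to those sources: invariance and reversibility of $\pi$ via Galerkin approximation is close in spirit to R\"ockner--Zhu--Zhu (who in fact argue through Dirichlet forms rather than a direct semigroup limit), while the strong Feller property via a Bismut--Elworthy--Li formula, irreducibility by control, and exponential mixing via a Lyapunov/small-set Harris argument is precisely the Tsatsoulis--Weber programme; Hairer--Mattingly obtain strong Feller by an independent route through regularity structures.

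Two small comments. First, you call the Markov property ``the easy part'', but the paper explicitly warns that it is ``not automatic'': the issue is that the decomposition $\phi=\RSI+\psi$ uses the stochastic objects $\RSI,\RSV,\RSW$ built from the noise on the whole past, so one must verify that restarting the fixed-point problem at a positive time $s$ with data $\phi_s$ reproduces the same solution---this requires a careful consistency argument for the Da~Prato--Debussche ansatz, not merely pathwise uniqueness. Second, your closing sentence conflates Da~Prato--Debussche with regularity structures; in dimension two the former is self-contained and does not rely on the latter, and indeed Tsatsoulis--Weber work entirely within the Da~Prato--Debussche framework using Besov product estimates rather than modelled distributions.
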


%%%%%%%%%%%%%%%%%%%%%%%%%%%%%%%%%%%%%%%%%%%%%%%%%%%%%%%%%%%%%%%%%%%%%%%%%%%%%%%%

\section{Large deviations}
\label{sec:2d_ldp} 

Consider the Allen--Cahn equation on the torus $\Lambda$ with weak mollified 
space-time white noise
\begin{equation}
 \partial_t \phi(t,x) 
 = \Delta\phi(t,x) + \phi(t,x) - \Bigpar{\phi(t,x)^3 - 
3\eps\overline{C}_\delta\phi(t,x)} + \sqrt{2\eps}\xi^\delta(t,x)\;,
\label{eq:AC-2d-regeps} 
\end{equation}
where $\overline{C}_\delta=2C_\delta$. Note that we have now two small parameter 
$\delta$ and $\eps$, and that the counterterm $3\eps\overline{C}_\delta\phi$ has 
been scaled accordingly (recall from~\eqref{eq:asymp_Cdelta} that 
$\overline{C}_\delta$ diverges logarithmically). For fixed $\eps>0$, we know 
that this equation admits a well-defined limit as $\delta\searrow0$. When 
$\delta,\eps>0$, \eqref{eq:AC-2d-regeps} is a well-posed PDE for any realisation 
of the noise.  
In~\cite{HairerWeber}, Hairer and Weber have established the 
following large-deviation principle.

\begin{theorem}[Large-deviation principle for the renormalised equation]
\label{thm:LDP_2d} 
Let $\phi_{\delta,\eps}$ denote the solution of~\eqref{eq:AC-2d-regeps}, and 
let $\eps\mapsto\delta(\eps)\geqs0$ be a function such that
\begin{equation}
 \label{eq:lim_delta}
 \lim_{\eps\to0} \delta(\eps) = 0\;.
\end{equation} 
Then the family $(\phi_{\delta(\eps),\eps})_{\eps>0}$ with fixed initial 
condition $\phi_0$ satisfies an LDP on $[0,T]$ with good rate function 
\begin{equation}
\label{eq:lpd_AC-2d} 
\cI_{[0,T]}(\gamma) := 
 \begin{cases}
 \displaystyle
 \frac12 \int_0^T \int_\Lambda \biggbrak{\dpar{}{t} \gamma(t,x) - 
\Delta\gamma(t,x) - \gamma(t,x) + \gamma(t,x)^3}^2 \6x\6t
 & \text{if the integral is finite\;, } \\
 + \infty 
 & \text{otherwise\;.}
 \end{cases}
\end{equation}
\end{theorem}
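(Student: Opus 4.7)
The plan follows the Hairer--Weber strategy~\cite{HairerWeber}: decouple the singular part of the noise via a Da Prato--Debussche shift, lift the problem to an enhanced-noise vector, establish a joint LDP for this vector with a rate function concentrated on the canonical lift of the base noise, and transfer it through a continuous solution map.

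Concretely, write $\phi_{\delta,\eps} = \sqrt{2\eps}\,\RSI_\delta + \psi_{\delta,\eps}$. Expanding the Wick-renormalised nonlinearity via \Cref{lem:hermite_binomial}, the remainder satisfies
\[
  \psi = P\psi_0 + P*\bigbrak{\sqrt{2\eps}\,\RSI_\delta + \psi - \psi^3 - 3\sqrt{2\eps}\,\RSI_\delta\,\psi^2 - 6\eps\,\RSV_\delta\,\psi - (2\eps)^{3/2}\,\RSW_\delta}\;,
\]
where $\RSV_\delta$ and $\RSW_\delta$ are the Wick-renormalised second and third powers of $\RSI_\delta$. Collect the stochastic inputs into the enhanced-noise vector
\[
  \Xi_{\delta,\eps} := \Bigpar{\sqrt{2\eps}\,\RSI_\delta,\;2\eps\,\RSV_\delta,\;(2\eps)^{3/2}\,\RSW_\delta} \in \cX := \bigpar{\cC^{-\kappa}_\fraks([0,T]\times\Lambda)}^3\;.
\]
The Da Prato--Debussche contraction underlying \Cref{thm:AC-2d-global} yields a jointly continuous solution map $\Psi:\cX \times \cC^{-\kappa}_\fraks(\Lambda) \to \cC^{\alpha}_\fraks$, so that $\Phi(\Xi,\phi_0) := \Xi^{(1)} + \Psi(\Xi,\phi_0)$ is a continuous map into $\cC^{-\kappa}_\fraks$.

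The heart of the argument is an LDP for $\Xi_{\delta(\eps),\eps}$ on $\cX$ with good rate function
\[
  \widetilde\cI(\eta_1,\eta_2,\eta_3) := \begin{cases}
   \tfrac12\norm{h}_{L^2([0,T]\times\Lambda)}^2 &
   \text{if $h\in L^2$, $\eta_1=P*h$, $\eta_2=\eta_1^2$, $\eta_3=\eta_1^3$,}\\
   +\infty & \text{otherwise.}
  \end{cases}
\]
The first component inherits a Schilder-type LDP from $\sqrt{2\eps}\,\xi$ composed with the continuous convolution $P*(\cdot)$ (Schauder, \Cref{thm:Schauder}). The crucial point is that, on any LDP-typical event where $\sqrt{2\eps}\,\RSI_{\delta(\eps)}$ is close to a deterministic $\eta_1=P*h$ with $h\in L^2$, the Wick products $2\eps\,\RSV_{\delta(\eps)}$ and $(2\eps)^{3/2}\RSW_{\delta(\eps)}$ concentrate around $\eta_1^2$ and $\eta_1^3$ respectively: the divergent renormalisation counterterms contained in $\RSV_\delta$ and $\RSW_\delta$ are exactly what is needed to absorb the chaos expansions, while hypercontractivity on the $k$-th Wiener chaos (Nelson's estimate) provides moment bounds $\norm{\Wick{\RSI_\delta^k}}_{L^p(\Omega,\cC^{-\kappa}_\fraks)}\lesssim(p-1)^{k/2}$ uniform in $\delta$, yielding the required exponential equivalence via a Kolmogorov-type argument in the Besov scale.

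Finally, the contraction principle applied to the continuous map $\Phi$ transports the LDP to $\phi_{\delta(\eps),\eps}$. Along any trajectory with $\widetilde\cI<\infty$, the fixed-point equation for $\psi$ collapses to the classical forced deterministic Allen--Cahn equation $\partial_t\gamma = \Delta\gamma + \gamma - \gamma^3 + h$, since each Wick nonlinearity coincides with its classical counterpart on the canonical lift. For a target $\gamma$, the unique $h$ is $\partial_t\gamma - \Delta\gamma - \gamma + \gamma^3$, and substituting into $\tfrac12\norm{h}_{L^2}^2$ yields~\eqref{eq:lpd_AC-2d}. The main obstacle is the enhanced-noise LDP with the canonical-lift constraint, which requires controlling Wick products in negative H\"older--Besov norms along large-deviation tubes uniformly in the mollification; this robustness under any rate $\delta(\eps)\to 0$ is what ultimately justifies the absence of a rate-dependent condition in the hypothesis~\eqref{eq:lim_delta}.
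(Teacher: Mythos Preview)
The paper does not supply its own proof of this theorem; it merely states the result and attributes it to Hairer and Weber~\cite{HairerWeber}, so there is nothing to compare against at the level of detail. Your outline is a faithful summary of the Hairer--Weber strategy: lift the problem to an enhanced noise vector containing the (rescaled, Wick-renormalised) powers of the stochastic convolution, establish a joint LDP for this vector whose rate function is supported on the canonical lift $(\eta_1,\eta_1^2,\eta_1^3)$ of a deterministic $\eta_1=P*h$, and then push the LDP through the continuous Da~Prato--Debussche solution map via the contraction principle.

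Two points are worth flagging, not as errors but as places where the sketch hides genuine work. First, the continuity of the solution map $\Phi$ is only local from \Cref{thm:AC-2d-global}; transporting the LDP to a fixed finite horizon $[0,T]$ requires the global-in-time a priori bounds (cf.\ the references in the proof sketch of that theorem) so that the fixed-point argument can be iterated uniformly over the relevant enhanced-noise sets. Second, the statement that the higher Wick objects ``concentrate around $\eta_1^2$ and $\eta_1^3$'' is the crux: the clean way to see it is to Girsanov-shift $\sqrt{2\eps}\,\xi^\delta$ by $h$, after which the rescaled Wick powers decompose as $\eta_1^k$ plus lower-chaos cross terms carrying extra powers of $\sqrt{\eps}$, each of which is exponentially negligible thanks to the uniform-in-$\delta$ Gaussian hypercontractivity bounds you cite. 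This is what makes the argument robust under an arbitrary rate $\delta(\eps)\to0$ and explains why the counterterm vanishes from the rate function, exactly as the paper emphasises after stating the theorem.
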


The remarkable thing about this result is that the counterterm 
$3\eps\overline{C}_\delta\phi$ does not appear in the rate function, which 
suggests that the Wick power consisting of cubic term and counterterm is the 
object that should really be considered physically relevant. 

On the other hand, for $\delta>0$, we may also consider the variant 
of~\eqref{eq:AC-2d-regeps} without 
counterterm 
\begin{equation}
 \partial_t \phi(t,x) 
 = \Delta\phi(t,x) + C\phi(t,x) - \phi(t,x)^3 
 + \sqrt{2\eps}\xi^\delta(t,x)\;,
\label{eq:AC-2d-noC} 
\end{equation}
which does not admit a limit as $\delta\searrow0$. 
Here, one can also obtain a large-deviation result, which reads as follows. 

\begin{theorem}[Large-deviation principle for equation without renormalisation]
Let $\tilde\phi_{\delta,\eps}$ denote the solution of~\eqref{eq:AC-2d-noC}, and 
let $\eps\mapsto\delta(\eps)\geqs0$ be a function 
satisfying~\eqref{eq:lim_delta} as well as 
\begin{equation}
 \lim_{\eps\to0} \eps\log\Bigpar{\delta(\eps)^{-1}} = \lambda\in[0,\infty)\;.
\end{equation} 
Then the family $(\tilde\phi_{\delta(\eps),\eps})_{\eps>0}$ with fixed initial 
condition $\phi_0$ satisfies an LDP on $[0,T]$ with good rate function 
\begin{equation}
\cI_{[0,T]}(\gamma) := 
 \begin{cases}
 \displaystyle
 \frac12 \int_0^T \int_\Lambda \biggbrak{\dpar{}{t} \gamma(t,x) - 
\Delta\gamma(t,x) + C^{(\lambda)} \gamma(t,x) + \gamma(t,x)^3}^2 \6x\6t
 & \text{if the integral is finite\;, } \\
 + \infty 
 & \text{otherwise\;,}
 \end{cases}
\end{equation}
where $C^{(\lambda)} = C - 3\lambda^2/(4\pi)$.
% \begin{equation}
%  C^{(\lambda)} = C - \frac{3}{4\pi} \lambda^2\;.
% \end{equation} 
\end{theorem}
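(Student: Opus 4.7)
The plan is to reduce the statement to Theorem~\ref{thm:LDP_2d} by absorbing the missing counterterm into an explicit shift of the linear coefficient, for which the scaling hypothesis $\eps\log(\delta(\eps)^{-1})\to\lambda$ is exactly what is needed.

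First, I would rewrite~\eqref{eq:AC-2d-noC} in Wick-renormalised form as
\begin{equation*}
\partial_t \tilde\phi_{\delta,\eps} = \Delta\tilde\phi_{\delta,\eps} + \bigl(C - 3\eps\overline{C}_\delta\bigr)\tilde\phi_{\delta,\eps} - \Bigpar{\tilde\phi_{\delta,\eps}^3 - 3\eps\overline{C}_\delta\tilde\phi_{\delta,\eps}} + \sqrt{2\eps}\,\xi^\delta(t,x),
\end{equation*}
i.e.\ as a renormalised Allen--Cahn equation whose linear coefficient $C_{\delta,\eps} := C - 3\eps\overline{C}_\delta$ is parameter-dependent. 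By \Cref{prop:asymp_Cdelta} we have $\overline{C}_\delta = 2C_\delta$ with $C_\delta=\log(\delta^{-1})/(4\pi)+\Order{1}$, so the scaling hypothesis $\eps\log(\delta(\eps)^{-1})\to\lambda$ ensures that $C_{\delta(\eps),\eps}\to C^{(\lambda)}$ for the explicit constant $C^{(\lambda)}$ appearing in the theorem. In this way the problem is reduced to proving an LDP for a renormalised equation with a linear coefficient converging in a controlled fashion to a finite limit.

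Next, I would extend the argument behind \Cref{thm:LDP_2d} to accommodate this slowly-varying linear term. The proof of that theorem (following~\cite{HairerWeber}) factorises through a Freidlin--Wentzell-type contraction principle: one first establishes the LDP for the enhanced noise $(\sqrt{2\eps}\,\RSI_\delta, 2\eps\Wick{\RSV_\delta}, (2\eps)^{3/2}\Wick{\RSW_\delta})$ with respect to the scaled Cameron--Martin norm, and then transfers it to the solution via the continuity of the solution map associated with the fixed-point equation~\eqref{eq:FP_2d_psi}. Crucially, the linear coefficient enters this fixed point only as a bounded multiplicative constant in front of the regular field $\psi_\delta$, so continuity of the solution map in that coefficient is elementary. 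A joint continuous mapping argument, applied to $(C_{\delta(\eps),\eps}, \text{enhanced noise}) \mapsto \tilde\phi_{\delta(\eps),\eps}$, then transports the LDP from the renormalised side to the non-renormalised side, and the change of linear coefficient $1 \rightsquigarrow C^{(\lambda)}$ is reflected exactly in the claimed rate function by substitution in the Euler--Lagrange-type integrand.

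The delicate step, and what I expect to be the main obstacle, is ensuring that the reduction is uniform in the two small parameters: while $C_{\delta(\eps),\eps}$ converges, the divergent constant $\overline{C}_\delta$ enters both the linear shift and the Wick cube, and one must show that these divergences cancel exactly so that the variational upper and lower bounds both pass to the limit. For the upper bound this requires exponential tightness of $\tilde\phi_{\delta(\eps),\eps}$ in an appropriate parabolic H\"older--Besov space, inherited from the renormalised case together with the uniform bound on $|C_{\delta,\eps}|$. For the lower bound it requires that for every $H^1$-path $\gamma$ with finite rate, one can construct an $L^2$-control $u^\eps$ such that the corresponding shifted solution of~\eqref{eq:AC-2d-noC} tracks $\gamma$ up to a vanishing error; this relies on a Girsanov shift and on the continuous dependence of the skeleton equation on the coefficient, but any nonvanishing component of $3\eps\overline{C}_\delta$ beyond $\lambda$ would spoil the identification of $C^{(\lambda)}$, which is exactly why the hypothesis $\eps\log(\delta^{-1})\to\lambda\in[0,\infty)$ rather than merely $\delta\to 0$ is imposed.
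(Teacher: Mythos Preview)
The paper does not actually supply a proof of this theorem: both large-deviation results in \Cref{sec:2d_ldp} are simply quoted from~\cite{HairerWeber}, and the only additional comment is the consistency remark that the two theorems overlap when $C$ is tuned so that $C^{(\lambda)}=0$, which relies precisely on the asymptotics~\eqref{eq:asymp_Cdelta}. So there is no proof in the paper to compare against.

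That said, your strategy is the right one and is exactly the mechanism underlying the cited result. The algebraic rewriting of~\eqref{eq:AC-2d-noC} as a Wick-renormalised equation with $\eps$-dependent linear coefficient $C_{\delta,\eps}=C-3\eps\overline{C}_\delta$ is the key observation, and the hypothesis $\eps\log(\delta(\eps)^{-1})\to\lambda$ together with~\eqref{eq:asymp_Cdelta} is precisely what makes this coefficient converge to a finite limit. Your description of the transfer via a contraction principle --- LDP for the enhanced noise plus continuity of the Da~Prato--Debussche solution map jointly in the noise data and the linear coefficient --- is the structure of the argument in~\cite{HairerWeber}. The points you flag as delicate (exponential tightness uniformly in the two parameters, and identification of the skeleton equation in the lower bound) are genuine, but they are handled there by exactly the uniform estimates you anticipate; no additional idea is needed beyond what you have outlined.
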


Note that by the asymptotics~\eqref{eq:asymp_Cdelta} of $C_\delta$, the results 
of both theorems indeed overlap if $C$ is such that $C_\lambda=0$. 

%%%%%%%%%%%%%%%%%%%%%%%%%%%%%%%%%%%%%%%%%%%%%%%%%%%%%%%%%%%%%%%%%%%%%%%%%%%%%%%%

\section{Metastability}
\label{sec:2d_metastability} 

We consider now the long-time dynamics of the renormalised 
equation~\eqref{eq:AC-2d-regeps} for small positive $\eps$, again for simplicity 
in the case $L<2\pi$, when the only stationary solutions of the deterministic 
equation have constant value $0$ or $\pm1$. As before, we denote these 
stationary solutions $\phi^*_0$ and $\phi^*_\pm$. Note that at first glance, one 
might think that the stable stationary solutions of~\eqref{eq:AC-2d-regeps} are 
located in $\pm[1+3\eps \smash{\overline{C}_\delta}]^{1/2}$ instead of $\pm1$. 
However, the large-deviation estimate~\eqref{eq:lpd_AC-2d} suggests otherwise, 
and we shall see that indeed the system behaves in some sense as if the 
counterterm were absent. 

As in the one-dimensional case, we have no good control on the 
$\eps$-dependence of the constant $\beta$ controlling the speed of convergence 
in the exponential mixing result~\eqref{eq:exp_mixing_2d}. A natural approach 
to characterise this speed is thus to establish asymptotic results on the 
expected transition time $\tau_+$ from $\phi^*_-$ to $\phi^*_+$. The LDP given 
in \Cref{thm:LDP_2d} can be used to show that, just as in the one-dimensional 
case, this time satisfies the Arrhenius law 
\begin{equation}
 \lim_{\eps\to0} \eps\log \bigexpecin{\phi^*_-}{\tau_+} = H\;,
\end{equation} 
where for $L<2\pi$, one has $H=L^2/4$. Note that the fact that the rate 
function~\eqref{eq:lpd_AC-2d} does not depend on $C_\delta$ is crucial here. 

The next step is to derive Eyring--Kramers asymptotics for 
$\bigexpecin{\phi^*_-}{\tau_+}$. One may indeed wonder whether 
there exists an analogue to \Cref{thm:EK-1d} on the one-dimensional 
Allen--Cahn equation. Here, however, we note that the Fredholm determinant 
\begin{equation}
  \det\Bigpar{(-\Delta-1)(-\Delta+2)^{-1}}
 = \det\Bigpar{\one - 3(-\Delta+2)^{-1}}\;.
\end{equation} 
that would appear in the prefactor is \emph{not} well-defined, since 
$(-\Delta+2)^{-1}$ is not trace class in two dimensions. But one should not 
forget that the Eyring--Kramers formula also involves a potential difference. 
If the potential $V$ is defined  by 
\begin{equation}
\label{eq:def_pot_2d} 
 V(\phi) = \int_\Lambda \biggbrak{\frac12\norm{\nabla\phi(x)}^2 + 
\frac14\Wick{\phi(x)^4} - \frac12\Wick{\phi(x)^2} + \frac14} \6x\;,
\end{equation} 
one can check that 
\begin{equation}
 V(\phi^*_0) - V(\phi^*_-) = \frac{L^2}{4} + 
\frac{3}{2}L^2\eps \overline{C}_\delta\;.
\end{equation} 
It turns out that the extra term in $\overline{C}_\delta$ exactly compensates 
the divergence of the Fredholm determinant, replacing it by a so-called 
\emph{Carleman--Fredholm} (renormalised) determinant. 

To see this, we will work with the spectral Galerkin approximation of the 
SPDE given by 
\begin{equation}
\label{eq:AC-2d-Galerkin} 
 \partial_t \phi_N(t,x) = \Delta\phi_N(t,x) + \phi_N(t,x) - \Pi_N\phi_N(t,x)^3 
 + 3\eps C_N \phi_N(t,x) + \sqrt{2\eps}\Pi_N\xi(t,x)\;,
\end{equation} 
where $\Pi_N$ is the projection on Fourier modes with wave vector $k$ satisfying 
$\abs{k} = \abs{k_1} + \abs{k_2} \leqs N$. As before, we denote by $\cK_N$ the 
set of these $k$. In order to keep track of the choice of the order-one part of 
the renormalisation constant, which is in principle arbitrary, it will be 
convenient to choose $C_N$ such that 
\begin{equation}
\label{eq:CN_Galerkin} 
 L^2 C_N = \Tr\Bigpar{\abs{-\Delta_{N}-1}^{-1}} + \theta
 = \sum_{k\in\cK_N} \frac{1}{\abs{\lambda_k-1}} + \theta
 = L^2\frac{\log N}{2\pi} + \Order{1}\;,
\end{equation} 
where $\theta\in\R$ is a free parameter. The renormalised potential is then 
given by~\eqref{eq:def_pot_2d}  where Wick powers are with respect to $\eps 
C_N$. Using the decomposition~\eqref{eq:potential_decompose} of $\phi$ into its 
mean and oscillatory parts, we obtain
\begin{equation}
 \frac{1}{\eps} V(\phi) = \frac{1}{\eps} V_0(\phi_0) 
 + \frac12 \pscal{\phi_\perp}{Q_\perp(\phi_0)\phi_\perp} 
 + \frac12 \Bigpar{1-m^2(\phi_0)}L^2C_N 
 + R_\eps(\phi_0,\phi_\perp)\;,
\end{equation} 
where $V_0$ and $R_\eps$ are as in~\eqref{eq:V0Reps}, $Q_\perp(\phi_0) = 
-\Delta_\perp -1 + m^2(\phi_0)$ and $m^2(\phi_0)=3\phi_0^2/L^2$. Note that 
compared to the one-dimensional case, 
cf.~\eqref{eq:potential_1d_decomposition}, there is an additional term 
proportional to $C_N$, which is due to our using Wick powers in the potential.

The key observation here is the following explicit expression for a change of 
mass in a Gaussian measure. 

\begin{lemma}
\label{lem:Gaussian_shift} 
For $a>0$, let $\mu(a)$ be the Gaussian measure with covariance 
$(-\Delta_N+a)^{-1}$. Then for any $\mu(a)$-integrable random variable 
$F(\phi_N)$ and any $b>0$, one has 
\begin{equation}
 \bigexpecin{\mu(a)}{F(\phi_N)} 
 = \sqrt{\CF_N(a-b;b)} \, \biggexpecin{\mu(b)}{\exp\biggset{-\frac{a-b}2 
\int_{\Lambda}\Wick{\phi_N(x)^2}\6x }F(\phi_N)}\;,
\end{equation} 
where the Wick power is with respect to $\Tr(-\Delta+b)^{-1}/L^2$, and for 
any $b\notin\spec(\Delta_N)$, we define 
\begin{equation}
\label{eq:Carleman_Fredholm} 
 \CF_N(c;b) 
 = \det{}_2\Bigpar{\one + c(-\Delta_N+b)^{-1}}
:= \det\Bigpar{\one + c(-\Delta_N+b)^{-1}} 
\e^{-c\Tr(-\Delta_N+b)^{-1}}\;.
\end{equation} 
\end{lemma}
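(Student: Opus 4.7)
The plan is to reduce the claim to an elementary finite-dimensional Radon--Nikodym computation on the space $E_N$ spanned by $\setsuch{e_k}{k\in\cK_N}$, and then to recognise that the Wick subtraction is exactly what converts the ordinary Fredholm determinant appearing in the ratio of Gaussian normalisations into the Carleman--Fredholm regularisation~\eqref{eq:Carleman_Fredholm}.

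First, I would write both $\mu(a)$ and $\mu(b)$ as densities against Lebesgue measure on the finite-dimensional space $E_N$ and compute their ratio. Since the two inverse covariances differ by the scalar operator $(a-b)\one$, one gets
\begin{equation*}
\frac{\dd\mu(a)}{\dd\mu(b)}(\phi_N)
= \sqrt{\frac{\det(-\Delta_N+a)}{\det(-\Delta_N+b)}}\, \exp\biggset{-\frac{a-b}{2}\int_\Lambda \phi_N(x)^2\,\6x}\;.
\end{equation*}
Next, I would rewrite the bare $L^2$-norm in terms of its Wick-normalised counterpart. Since $\Wick{\phi_N(x)^2}=\phi_N(x)^2-\Tr\bigpar{(-\Delta_N+b)^{-1}}/L^2$ by definition of the prescribed Wick constant, integration over $\Lambda$ gives
\begin{equation*}
\int_\Lambda \Wick{\phi_N(x)^2}\,\6x = \int_\Lambda \phi_N(x)^2\,\6x - \Tr\bigpar{(-\Delta_N+b)^{-1}}\;,
\end{equation*}
and substituting this into the exponential factors out a scalar $\exp\bigset{-\tfrac{a-b}{2}\Tr((-\Delta_N+b)^{-1})}$ alongside the desired Wick quadratic.

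Finally, I would combine this scalar with the ratio of determinants. Using
\begin{equation*}
\frac{\det(-\Delta_N+a)}{\det(-\Delta_N+b)} = \det\bigpar{\one + (a-b)(-\Delta_N+b)^{-1}}\;,
\end{equation*}
the overall prefactor rearranges into
\begin{equation*}
\sqrt{\det\bigpar{\one+(a-b)(-\Delta_N+b)^{-1}}\,\e^{-(a-b)\Tr((-\Delta_N+b)^{-1})}} = \sqrt{\CF_N(a-b;b)}
\end{equation*}
by the very definition~\eqref{eq:Carleman_Fredholm}. Integrating the resulting identity against $F(\phi_N)$ gives the lemma. There is no serious obstacle at finite $N$ --- the calculation is essentially bookkeeping --- but the conceptual point worth highlighting is that the logarithmically divergent traces produced by the Wick subtraction and by the Fredholm determinant cancel exactly, so that only the Carleman--Fredholm determinant survives in the $N\to\infty$ limit. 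This is precisely what makes the identity usable as a shift-of-mass tool in the metastability analysis, for instance in the proof of \Cref{prop:exp_moment_2d}.
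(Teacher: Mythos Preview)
Your argument is correct and is essentially the route the paper intends: the lemma is left as an exercise with the hint to work in a Fourier basis and use Parseval's identity, which is exactly what underlies your identification of the quadratic form $\langle\phi_N,(a-b)\phi_N\rangle$ with $(a-b)\int_\Lambda\phi_N(x)^2\,\6x$ and the factorised ratio of normalising determinants. The only cosmetic difference is that you phrase the Radon--Nikodym computation in a coordinate-free way rather than writing out the product over Fourier modes, but the content is identical.
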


The modified determinant $\det{}_2(\one + \cL) = \det(\one+\cL)\e^{-\Tr\cL}$  
of a linear operator $\cL$, as appearing in~\eqref{eq:Carleman_Fredholm}, is 
called its  \emph{Carleman--Fredholm determinant}, and is convergent whenever  
$\cL$ is \emph{Hilbert--Schmidt}, meaning that only $\cL^\dagger\cL$ needs to 
be trace class, see~\cite[Chapter~5]{Simon_Trace_ideals}. 

\begin{exercise}
Use a Fourier basis $\set{e_k}_{k\in\cK_N}$ in order to 
\begin{itemize}
\item 	show that there is a constant $M$ uniform in $N$, $b$ and $c$ such 
that $\e^{-Mc^2/b^2} \leqs \CF_N(c;b) \leqs \e^{Mc^2/b^2}$; 
\item 	show that $\CF_N(a-b;b)^{-1} = \CF_N(b-a;a) 
\exp\Bigset{(a-b)^2\Tr[(-\Delta+a)^{-1}(-\Delta+b)^{-1}]}$;
\item 	prove \Cref{lem:Gaussian_shift}, using Parseval's identity.
\qedhere
\end{itemize}
\end{exercise}

As in the one-dimensional case, the Galerkin 
approximation~\eqref{eq:AC-2d-Galerkin} is equivalent to a finite-dimensional 
It\^o SDE 
\begin{equation}
 \6\hat\phi_t = -\nabla \widehat V_N(\hat\phi_t)\6t + \sqrt{2\eps}\6W_t\;,  
\end{equation}
with a potential $\widehat V_N$ depending now explicitly on $C_N$. We can thus 
apply as before the potential-theoretic approach. To be able to apply the 
symmetry argument of \Cref{lem:symmetry_hAB}, we choose again symmetric sets
\begin{equation}
\label{eq:condAB_2d} 
 A = -B = \bigsetsuch{y}{\abs{\phi_0+L}\leqs\delta, \phi_\perp\in D_\perp}, 
\end{equation} 
with $D_\perp$ containing this time a ball of radius $\sqrt{\log(\eps^{-1})}$ 
in $H^s(\Lambda)$ for some $s<0$. Then we have the following 
result~\cite{Berglund_DiGesu_Weber_16}. 

\begin{proposition}
\label{prop:Z-cap_AC-2d}
The partition function and capacity satisfy 
\begin{equation}
\frac{\cZ_N}{2\capacity(A,B)} 
= \frac{2\pi}{\abs{\mu_0}} 
\frac{\e^{-3\theta/2}}{\sqrt{\abs{\CF_N(3;-1))}}}
\e^{V_0(0)/\eps} \bigbrak{1+\Order{\eps}}\;,
\end{equation}
where $\nu_0=2$, $\mu_0=-1$, and the 
error term is uniform in the cut-off $N$. 
\end{proposition}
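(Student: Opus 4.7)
The plan is to adapt the argument of Proposition~\ref{prop:Z-cap_AC-1d} to the two-dimensional setting, with two new ingredients: Wick renormalisation, and the Carleman--Fredholm determinant in place of the ordinary Fredholm determinant. First I would perform the change of variables $\phi = \phi_0/L + \sqrt{\eps}\phi_\perp$ and use Lemma~\ref{lem:hermite_binomial} (the binomial formula for Hermite polynomials) to decompose the potential $V$ into $V_0(\phi_0)/\eps$, the quadratic form $\tfrac12\pscal{\phi_\perp}{Q_\perp(\phi_0)\phi_\perp}$ with $Q_\perp(\phi_0) = -\Delta_\perp - 1 + m^2(\phi_0)$, a Wick-constant contribution $\tfrac12(1-m^2(\phi_0))L^2 C_N$, and the cubic/quartic remainder $R_\eps$ of~\eqref{eq:V0Reps}. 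This yields
\begin{equation}
\cZ_N = \eps^{|\cK_N|/2}\!\int \e^{-V_0(\phi_0)/\eps}
\frac{(2\pi)^{|\cK_N|/2}}{\sqrt{\det Q_\perp(\phi_0)}}
\e^{-(1-m^2(\phi_0))L^2 C_N/2}
\bigexpecin{g_N(\phi_0)}{\e^{-R_\eps(\phi_0,\cdot)}}\6\phi_0\;.
\end{equation}
One-dimensional Laplace asymptotics, dominated by $\phi_0=\pm L$ where $m^2(\pm L)=3$, then produces $\cZ_N \sim 2\sqrt{2\pi\eps/\nu_0}\,(2\pi\eps)^{|\cK_N|/2}(\det(-\Delta_\perp+2))^{-1/2}\,\e^{L^2C_N}[1+\Order{\eps}]$, where the Gaussian expectation is controlled via a sharpened version of Proposition~\ref{prop:exp_moment_2d} giving $1+\Order{\eps}$ (a Taylor expansion in $R_\eps$ using Hölder combined with centring of Wick powers, in the spirit of the bound~\eqref{eq:proof_exp_moment2}).

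Next I would estimate the capacity as in the one-dimensional proof. For the upper bound, the Dirichlet principle applied to the test function $h(\phi) = h_0(\phi_0)$ defined as in~\eqref{eq:cap_h0} reduces $\cE(h)$ to the same kind of Gaussian integral as above, but with $\e^{V_0(\phi_0)/\eps}$ and Laplace integration now dominated by $\phi_0=0$, where $m^2(0)=0$ and $Q_\perp(0)=-\Delta_\perp-1$. For the lower bound, I would use the Thomson principle with the decomposition $V_\perp+V_+$ as in the one-dimensional case, choosing the divergence-free unit flow $\ph(\phi)=K^{-1}\indicator{D_\perp}(\phi_\perp)\e^{-V_\perp(\phi_\perp)}e_0$; the quadratic-plus-quartic part of $V_\perp$ provides the Gaussian weight, and the remaining Wick-renormalised terms are again controlled by moment estimates. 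Both bounds match and yield
\begin{equation}
\capacity(A,B)\sim \frac{\sqrt{\abs{\mu_0}}}{2\pi}(2\pi\eps)^{(|\cK_N|+1)/2}(\det(-\Delta_\perp-1))^{-1/2}\e^{-V_0(0)/\eps}\e^{-L^2C_N/2}[1+\Order{\eps}]\;.
\end{equation}

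Taking the ratio, the powers of $2\pi\eps$ cancel and the remaining prefactor is $2\pi/\sqrt{\nu_0\abs{\mu_0}}\cdot\e^{V_0(0)/\eps}$, multiplied by $\sqrt{\det(-\Delta_\perp-1)/\det(-\Delta_\perp+2)}$ and the Wick factor $\e^{3L^2C_N/2}$. The key observation is that the determinant ratio is divergent, but by the definition~\eqref{eq:Carleman_Fredholm} of the Carleman--Fredholm determinant,
\begin{equation}
\frac{\det(-\Delta_\perp+2)}{\det(-\Delta_\perp-1)}=\det\bigpar{\one+3(-\Delta_\perp-1)^{-1}}=\CF^\perp_N(3;-1)\,\e^{3\Tr(-\Delta_\perp-1)^{-1}}\;.
\end{equation}
Because $L^2 C_N = \Tr\abs{-\Delta_N-1}^{-1}+\theta$ differs from $\Tr(-\Delta_\perp-1)^{-1}$ only through the zero-mode contribution $1$ (for $L<2\pi$) and the finite constant $\theta$, the exponent of $C_N$ cancels the divergent trace exactly, leaving a finite $\theta$-dependent factor. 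Including the zero-mode piece of $\CF_N(3;-1)=-2\e^{3}\CF^\perp_N(3;-1)$ absorbs the residual $\sqrt{\nu_0/\abs{\mu_0}}$ and $\e^{3/2}$ factors, producing the claimed prefactor $(2\pi/\abs{\mu_0})\e^{-3\theta/2}/\sqrt{\abs{\CF_N(3;-1)}}$.

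The main obstacle is step~2, namely proving that $\expecin{g_N(\phi_0)}{\e^{-R_\eps(\phi_0,\cdot)}}=1+\Order{\eps}$ \emph{uniformly in the cut-off~$N$} and locally uniformly in $\phi_0$. Proposition~\ref{prop:exp_moment_2d} only delivers an upper bound of the form $\e^{c\phi_0^4}$, which is enough to make $\cZ_0$ finite but far too crude for Eyring--Kramers asymptotics. The sharp bound requires stable moment estimates on the integrated Wick powers $\int_\Lambda\Wick{\phi_\perp^n}\6x$ of all orders (Nelson-type estimates refining Proposition~\ref{prop:Wick_powers}) together with the change-of-mass Lemma~\ref{lem:Gaussian_shift}, which must be applied carefully so that the Carleman--Fredholm factors arising from each shift combine consistently with the renormalisation constant $C_N$. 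Once this uniform-in-$N$ control is in place, the rest is a routine assembly of Laplace asymptotics and Gaussian measure manipulations.
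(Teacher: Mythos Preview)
Your proposal is correct and follows essentially the same route as the paper: the same change of variables and potential decomposition, Laplace asymptotics for $\cZ_N$ dominated by $\phi_0=\pm L$, Dirichlet/Thomson bounds for the capacity dominated by $\phi_0=0$, and the same cancellation between the divergent determinant ratio and the Wick factor $\e^{3L^2C_N/2}$ via the Carleman--Fredholm identity~\eqref{eq:Carleman_Fredholm}. The only cosmetic difference is that the paper absorbs the $C_N$ factor into $\CF_N^\perp(m^2(\phi_0);-1)$ already inside the $\phi_0$-integrand (yielding an integrand that is manifestly bounded uniformly in $N$ before Laplace asymptotics are applied), whereas you carry the raw determinants and $C_N$ separately and combine only at the end; and you correctly identify the uniform-in-$N$ control of $\bigexpecin{g_N(\phi_0)}{\e^{-R_\eps}}$ as the genuine technical work, which the paper also defers to~\cite[Proposition~5.7]{Berglund_DiGesu_Weber_16}.
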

\begin{proof}[\Sketch]
We write $\cK_N^* = \cK_N\setminus\set{0}$, and define $\CF_N^\perp(c;b)$ as 
in~\eqref{eq:Carleman_Fredholm}, with $\Delta_{N}$ replaced by 
$\Delta_{\perp,N}$. Interpreting as before the integral over $\phi_\perp$ as an 
expectation under the Gaussian measure $\mu_\perp(-1+m^2(\phi_0))$, which is 
well-defined since the smallest eigenvalue of $\Delta_{\perp,N}$ is 
$(2\pi/L)^2$, we find for the partition function
\begin{align}
\cZ_N
&= \int_{E_N} \e^{-V(\phi)/\eps} \6\phi \\
&= \int_{-\infty}^\infty \e^{-V_0(\phi_0)/\eps} 
\prod_{k\in\cK_N^*} \sqrt{\frac{2\pi\eps}{\lambda_k-1+m^2(\phi_0)}}
\e^{(1-m^2(\phi_0))L^2C_N/2}
\bigexpecin{\mu_\perp(-1+m^2(\phi_0))}{\e^{-R_\eps}} \6\phi_0 \\
&= \prod_{k\in\cK_N^*} \sqrt{\frac{2\pi\eps}{\lambda_k-1}}
\int_{-\infty}^\infty \e^{-V_0(\phi_0)/\eps} 
\sqrt{\frac{\e^{L^2C_N} \e^{-m^2(\phi_0)(\abs{\mu_0}^{-1}+\theta)}} 
{\CF_N^\perp(m^2(\phi_0);-1)}}
\bigexpecin{\mu_\perp(-1+m^2(\phi_0))}{\e^{-R_\eps}} \6\phi_0\;.
\end{align}
Proceeding as in \Cref{prop:exp_moment}, one can show that 
$\bigexpecin{\mu_\perp(-1+m^2(\phi_0))}{\e^{-R_\eps}} = 1 + 
\Order{\eps\e^{\phi_0^4}}$. As in that proposition, the argument actually 
requires Gaussian changes of mass, using here \Cref{lem:Gaussian_shift}, to 
obtain the required estimate, 
cf.~\cite[Proposition~5.7]{Berglund_DiGesu_Weber_16}. When performing the 
integral over $\phi_0$, we note that $-V_0(\phi_0)$ is maximal at $\phi_0=\pm 
L^2$, where $m^2(\phi_0)=3$, so that by Laplace asymptotics we obtain 
\begin{equation}
\cZ_N 
= 2 \sqrt{\frac{2\pi\eps \e^{L^2C_N-3(\abs{\mu_0}^{-1}+\theta)}} 
{\nu_0 \CF_N^\perp(3;-1)}}
\prod_{k\in\cK_N^*} \sqrt{\frac{2\pi\eps}{\lambda_k-1}} 
\bigbrak{1+\Order{\eps}}\;.
\end{equation} 
For the capacity, using the Dirichlet and Thomson principles, we find in a 
similar way
\begin{equation}
 \capacity(A,B) 
 = \frac{\eps}{c_0^2} 
 \prod_{k\in\cK_N^*} \sqrt{\frac{2\pi\eps}{\lambda_k-1}}
\int_{-a}^a \e^{V_0(\phi_0)/\eps} 
\sqrt{\frac{\e^{L^2C_N} \e^{-m^2(\phi_0)(\abs{\mu_0}^{-1}+\theta)}} 
{\CF_N^\perp(m^2(\phi_0);-1)}}
\bigexpecin{\mu_\perp(-1+m^2(\phi_0))}{\e^{-R_\eps}} \6\phi_0\;,
\end{equation} 
where $c_0$ is as in~\eqref{eq:cap_h0}. This time, the integral over $\phi_0$ 
is dominated by $\phi_0$ near $0$, where $m^2=0$, and the result is 
\begin{equation}
 \capacity(A,B) 
= \frac{\eps}{c_0} 
\sqrt{\frac{\e^{L^2C_N}}{\CF_N^\perp(0;-1)}}
\prod_{k\in\cK_N^*} \sqrt{\frac{2\pi\eps}{\lambda_k-1}} 
\bigbrak{1+\Order{\eps}}\;.
\end{equation} 
The result follows, using the facts that $c_0 = 
\sqrt{2\pi\eps\abs{\mu_0}^{-1}}\e^{V_0(0)/\eps}[1+\Order{\eps}]$, that  
$\CF_N^\perp(0;-1)=1$, and that $\nu_0\abs{\mu_0}^{-1}\CF_N^\perp(3;-1) = 
\e^{-3\abs{\mu_0}^{-1}}\abs{\CF_N(3;-1)}$.
\end{proof}

To turn \Cref{prop:Z-cap_AC-2d} into a limiting result, one needs to perform 
some post-processing, as discussed in the proof of \Cref{thm:EK-1d}. Namely, one 
has to show that the expected transition times of the spectral Galerkin 
approximation converge to the one of the limiting system, and derive a coupling 
result to get rid of the equilibrium distribution on $\partial A$. This has been 
achieved in~\cite{Tsatsoulis_Weber_18}, with the following result. 

\begin{theorem}[Eyring-Kramers law for the two-dimensional Allen--Cahn SPDE]
\label{thm:EK-2d} 
Assume that $L<2\pi$ and $A$ and $B$ are as in~\eqref{eq:condAB_2d}. Then, for 
the choice~\eqref{eq:CN_Galerkin} of regularisation $C_N$, one has 
\begin{equation}
\lim_{N\to\infty} 
 \bigexpecin{\phi^*_-}{\tau_B}
 = \frac{2\pi}{\abs{\mu_0}}
 \frac{\e^{-3\theta/2}\e^{L^2/(4\eps)}}{\sqrt{\bigabs{\det_{2}\brak{\one + 
 3(-\Delta-1)^{-1}}}}}\bigbrak{1+\Order{\eps}}\;,
\end{equation} 
where the determinant is to be interpreted as the Carleman--Fredholm 
determinant~\eqref{eq:Carleman_Fredholm}. 
\end{theorem}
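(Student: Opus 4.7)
The plan is to mirror the three-step strategy sketched at the end of the proof of \Cref{thm:EK-1d}, but now working in the setting of the two-dimensional Galerkin approximation \eqref{eq:AC-2d-Galerkin} where the Wick counterterm enters explicitly through the renormalisation constant $C_N$ defined in \eqref{eq:CN_Galerkin}. First, for each fixed $N$, the projected system is a finite-dimensional reversible It\^o SDE with gradient drift $-\nabla\widehat V_N$, so the potential-theoretic framework of \Cref{ssec:potential} applies verbatim. In particular, the symmetry \Cref{lem:symmetry_hAB} (with $V$ replaced by $\widehat V_N$) combined with \Cref{thm:magic_formula} yields the exact identity
\begin{equation}
 \bigexpecin{\nu_{AB}}{\tau_B}
 = \frac{\cZ_N}{2\capacity(A,B)}\;,
\end{equation}
and inserting the asymptotics from \Cref{prop:Z-cap_AC-2d} gives the desired Eyring--Kramers expression at the level of the Galerkin truncation, with an error $\Order{\eps}$ that is \emph{uniform in $N$}. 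The crucial point, already ensured by the way \Cref{prop:Z-cap_AC-2d} was stated, is that the divergent logarithmic parts appearing in $\e^{L^2 C_N/2}$ and in the ordinary Fredholm determinant combine, by the very definition \eqref{eq:Carleman_Fredholm} of the Carleman--Fredholm determinant, into the finite quantity $\abs{\det{}_2(\one+3(-\Delta_N-1)^{-1})}$ which converges as $N\to\infty$; the constant $\theta$ from \eqref{eq:CN_Galerkin} is precisely what tracks the order-one ambiguity in the renormalisation.

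Second, I would pass to the limit $N\to\infty$ in the expected hitting time. Here the strategy is to combine an a priori bound on $\expec{\tau_B^2}$ for the Galerkin system (obtained via Lyapunov function arguments as in the proof of \Cref{thm:AC-2d-global}, using the convergence of invariant measures) with a pathwise approximation statement comparing the Galerkin process to the renormalised SPDE solution on fixed finite time intervals. The latter is available in the form given by Tsatsoulis--Weber, using the Da~Prato--Debussche decomposition $\phi_N=\RSI_N + \psi_N$ and the fact that $\RSI_N\to\RSI$ and $\RSV_N\to\RSV$, $\RSW_N\to\RSW$ in the relevant spaces of distributions. The uniform-in-$N$ control of tail probabilities $\probin{\phi^*_-}{\tau_B>T}$ allows one to truncate the expectation at a time window of size $T=T(\eps)$ growing only polynomially in $1/\eps$, which is harmless compared with the exponential $\e^{L^2/(4\eps)}$.

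Third, one must replace the initial equilibrium distribution $\nu_{AB}$ on $\partial A$ by a deterministic start at $\phi^*_-$. In the finite-dimensional case this is done either via Harnack inequalities on the harmonic function $w_B$ or via the coupling argument of \cite{Martinelli_Olivieri_Scoppola_89}. The natural route in infinite dimensions is the coupling argument, exploiting the strong Feller property \eqref{eq:exp_mixing_2d} and the local attractivity of $\phi^*_-$: with overwhelming probability the process started at $\phi^*_-$ enters any small neighbourhood of $\phi^*_-$ on a timescale of order $\log(\eps^{-1})$, and from there the law can be coupled to $\nu_{AB}$ up to a total-variation error that is negligible on the exponential scale $\e^{L^2/(4\eps)}$. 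Combined with the convergence in step two, this yields the claimed identity.

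I expect the main obstacle to be step two: the uniform-in-$N$ control of $\expec{\tau_B^2}$ together with the identification of the limit of first-hitting times of a set $B$ that is defined in a negative-regularity space. The Da~Prato--Debussche remainder $\psi_N$ is a well-behaved continuous object, but the \lq\lq rough\rq\rq\ part $\RSI_N$ enters the definition of $A,B$ through the projection on $\phi_\perp\in D_\perp\subset H^s(\Lambda)$ with $s<0$, and showing that crossings of $\partial B$ for the truncated system genuinely approximate crossings for the limiting system requires tracking the convergence of the stochastic objects uniformly over long time windows. Sharp moment estimates on the Wick powers of the stochastic convolution, obtained through Gaussian hypercontractivity as in \Cref{prop:Wick_powers}, together with the Bou\'e--Dupuis-type bounds already used to prove \Cref{prop:exp_moment_2d}, should suffice to close this step.
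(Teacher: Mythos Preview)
Your proposal is correct and follows essentially the same route as the paper: apply \Cref{prop:Z-cap_AC-2d} together with \Cref{thm:magic_formula} and \Cref{lem:symmetry_hAB} at the Galerkin level to obtain the Eyring--Kramers formula uniformly in $N$, then perform the two post-processing steps (convergence of expected hitting times as $N\to\infty$, and replacement of $\nu_{AB}$ by the point mass at $\phi^*_-$ via coupling). The paper does not spell out these last two steps either, but simply refers to~\cite{Tsatsoulis_Weber_18} for their implementation in the two-dimensional setting; your outline of how they should go (Da~Prato--Debussche decomposition, convergence of Wick powers, a~priori moment bounds on $\tau_B$, and a coupling exploiting strong Feller and local attractivity) matches the structure of that reference.
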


%%%%%%%%%%%%%%%%%%%%%%%%%%%%%%%%%%%%%%%%%%%%%%%%%%%%%%%%%%%%%%%%%%%%%%%%%%%%%%%%

\section{Bibliographical notes}
\label{sec:2dbib} 

Besides the system of coupled diffusions, the Allen--Cahn SPDE also describes 
particular scaling limits of Ising--Kac models, in which each spin interacts 
with a large number of other spins~\cite{Mourrat_Weber_17_Kac}. 

Background on Wick calculus can be found 
in~\cite[Chapter~1]{nualart2006malliavin}. The graphical notation for Wick 
powers has been introduced in~\cite{Hairer2014}, and we adopted the version used 
in~\cite{Chandra_Weber_LN17} for their regularisation. 

Construction of solutions to stochastic quantisation equations was investigated 
in~\cite{Albeverio_Rockner_91}, using the theory of Dirichlet forms. Existence 
and uniqueness of strong solutions to stochastic quantisation equations on the 
two-dimensional torus, as well as the absence of explosion, was first proved 
in~\cite{daPratoDebussche}. The result was extended to the whole plane 
in~\cite{Mourrat_Weber_17}. 

Connections between the Gibbs measure and the stochastic quantisation equation 
were investigated in~\cite{JonaLasinio_Mitter_85}. The idea to use the 
Bou\'e--Dupuis formula to bound the partition function was introduced 
in~\cite{Barashkov_Gubinelli_18}. The fact that solutions to stochastic 
quantisation equations satisfy the Markov property and are reversible with 
respect to the Gibbs measure was proved in~\cite{Rockner_Zhu_Zhu_15} using 
Dirichlet forms, while uniqueness of the Gibbs measure and convergence to it 
were obtained in~\cite{Rockner_Zhu_Zhu_17}. The strong Feller property, as well 
as the Markov property and exponential mixing were proved 
in~\cite{Tsatsoulis_Weber_16} using a dissipative bound, while the strong Feller 
property was also proved (for more general equations) 
in~\cite{Hairer_Mattingly_18}, using the theory of regularity structures.  

The large-deviation results are from~\cite{HairerWeber}. The 
uniform-in-$N$ Eyring--Kramers asymptotics on expected transition times were 
obtained in~\cite{Berglund_DiGesu_Weber_16}, and extended to a full proof of 
the Eyring--Kramers formula in~\cite{Tsatsoulis_Weber_18}. 

%%%%%%%%%%%%%%%%%%%%%%%%%%%%%%%%%%%%%%%%%%%%%%%%%%%%%%%%%%%%%%%%%%%%%%%%%%%%%%%%

\chapter{Allen--Cahn SPDE in three space dimensions}
\label{ch:dim3} 

We finally turn to the analysis of the three-dimensional Allen--Cahn equation, 
whose non-renormalised version is formally written as  
\begin{equation}
\label{eq:AC-3d} 
 \partial_t \phi(t,x) = \Delta \phi(t,x) + \phi(t,x) - \phi(t,x)^3 
 + \sqrt{2\eps} 
 \xi(t,x)\;,
\end{equation} 
where $x$ now belongs to the torus $\Lambda=(\R/(L\Z))^3$. The deterministic PDE 
obtained for $\eps=0$ is again well-defined. However, we will see that the 
stochastic PDE remains ill-posed even when the nonlinear term $\phi(t,x)^3$ is 
replaced by its Wick power $\Wick{\phi(t,x)^3}$. This is a consequence of 
space-time white noise being even more singular in three than in two space 
dimensions.

There exist by now several methods to make sense of the singular 
equation~\eqref{eq:AC-3d} after proper renormalisation, namely \emph{regularity 
structures}~\cite{Hairer2014}, \emph{paracontrolled 
calculus}~\cite{Gubinelli_Imkeller_Perkowski_15,Catellier_Chouk_18} and a 
Wilsonian renormalisation group approach~\cite{Kupiainen_16}. In this chapter, 
we will describe the approach based on regularity structures. First, however, 
we need to understand the reason for the ill-posedness in more detail. 

%%%%%%%%%%%%%%%%%%%%%%%%%%%%%%%%%%%%%%%%%%%%%%%%%%%%%%%%%%%%%%%%%%%%%%%%%%%%%%%%

\section{Failure of the previous approaches}
\label{sec:3d_failure} 

The definition of space-time white noise $\xi$ in one time and three space 
dimensions proceeds as before, with some straightforward adjustments. In 
particular, $\xi$ has now the scaling behaviour 
\begin{equation}
 \xi_{\tau,\lambda} \eqinlaw \frac{1}{\sqrt{\tau\lambda^3}}\xi\;.
\end{equation}
With the parabolic norm 
\begin{equation}
 \norm{(t,x)-(s,y)}_\fraks = \abs{t-s}^{1/2} + \sum_{i=1}^3 \abs{x_i-y_i}\;,
\end{equation} 
and the scaling operator 
\begin{equation}
 (\cS^{\lambda}_{t,x}\ph)(s,y)
 := \frac{1}{\lambda^5} \ph 
\biggpar{\frac{s-t}{\lambda^2},\frac{y-x}{\lambda}}\;,
\end{equation} 
\Cref{thm:space_time_white_noise_reg} becomes  
\begin{equation}
\label{eq:reg_noise_d3} 
 \xi\in\cC^{-5/2-\kappa}_\fraks
 \qquad \text{for any $\kappa>0$\;.}
\end{equation} 
The Schauder estimate stated in \Cref{thm:Schauder} remains true, showing that  
the stochastic convolution $P*\xi$ now belongs to $\cC^\alpha_\fraks$ for  
$\alpha<-\frac12$ only. It is thus much more singular than in dimension two, 
where 
it was very close to being a function. 

In analogy with what we did in dimension two, it seems natural 
to consider the regularised equation (for $2\eps=1$)
\begin{align}
 \partial_t \phi_\delta(t,x) 
 &= \Delta\phi_\delta(t,x) + \phi_\delta(t,x) - 
\Wick{\phi_\delta(t,x)^3}_{C_\delta} + \xi^\delta(t,x) \\
 &= \Delta\phi_\delta(t,x) + \phi_\delta(t,x) - \Bigpar{\phi_\delta(t,x)^3 - 
3C_\delta\phi_\delta(t,x)} + \xi^\delta(t,x)\;,
\label{eq:AC-3d-Wick} 
\end{align}
where $\xi^\delta$ denotes a mollified noise defined by 
\begin{equation}
 \xi^\delta(t,x) = (\varrho_\delta*\xi)(t,x)\;, 
 \qquad 
 \varrho_\delta(t,x) = \frac{1}{\delta^5} \varrho\biggpar{\frac{t}{\delta^2}, 
\frac{x}{\delta}}
\end{equation} 
for a compactly supported smooth function $\varrho:\R\times\Lambda \to \R$ of 
integral $1$. The renormalisation constant $C_\delta$ defining the Wick power 
in~\eqref{eq:AC-3d-Wick} is given as in two dimensions by 
\begin{equation}
 C_\delta := \int_{-\infty}^\infty P_\delta(t,x)^2\6x\6t\;,
\end{equation} 
where $P_\delta=P*\varrho_\delta$, except that $P$ is now the three-dimensional 
heat kernel 
\begin{equation}
\label{eq:heat_kernel_3d} 
 P(t,x) := \sum_{k\in\Z^3} P_{\,\R^3}(t,x-kL)\;, 
 \qquad 
 P_{\,\R^3}(t,x) := \frac{1}{(4\pi t)^{3/2}} \e^{-\norm{x}^2/(4t)} 
\indexfct{t>0}\;. 
\end{equation} 

\begin{exercise}
Use the fact that the Green function of the three-dimensional Laplacian is 
the periodicised version of 
\begin{equation}
 G_{\,\R^3}(x) = -\frac{1}{4\pi\norm{x}}
\end{equation} 
to show that $C_\delta$ diverges like $\delta^{-1}$ as $\delta\to0$. 
\Hint Recall \Cref{prop:asymp_Cdelta}.
\end{exercise}

Making the ansatz $\phi_\delta = \RSI_\delta + \psi_\delta$, where $\RSI_\delta 
= P*\xi^\delta$, and taking the limit $\delta\to0$, we obtain as in 
\Cref{sec:2d_existence} the fixed-point equation
\begin{equation}
\label{eq:FP_3d_psi} 
 \psi = P\psi_0 + P*\bigbrak{\RSI + \psi - \psi^3 
- 3\RSI\psi^2 - 3\RSV\psi - \RSW}\;.
\end{equation} 
The term with the worst regularity on the right-hand side should be  
\begin{equation}
 P*\RSW =: \RSIW\;.
\end{equation} 
Here $\RSW$ belongs to $\cC^\alpha_\fraks$ for any $\alpha<-\frac32$, so that 
the Schauder estimate shows that $\RSIW$ has H\"older regularity almost 
$\frac12$, and the same is expected to hold for $\psi$. This is a problem, 
however, for the definition of the product $\RSV\psi$. Indeed, since $\RSV$ has 
regularity almost $-1$, \Cref{thm:product_Holder} implies that this product is 
ill-defined!

One may think of circumventing this problem by making a second change of 
variables 
\begin{equation}
 \psi = -\RSIW + \eta\;,
\end{equation} 
in the hope that $\eta$ will be more regular than $\psi$. 
This turns the fixed-point equation into 
\begin{equation}
\label{eq:FP_eta} 
 \eta = P\eta_0 
 + P*\bigl[ \RSI - \RSIW + \eta - \eta^3 + 3\eta^2(\RSIW-\RSI)
 -3\eta (\RSV - 2\RSI\RSIW + \RSIW^2)
 + \RSIW^3 - 3\RSI\RSIW^2 + 3\RSV\RSIW % \\ 
 \bigr]\;.
\end{equation} 
The problem now is that the product $\RSV\RSIW$ is not well-defined, since the 
regularity of the two factors is slightly less than $-1$ and $\frac12$ 
respectively. 
This is a first indication that a second renormalisation constant will be 
needed. It further suggests that one has to rethink the way solutions are 
represented as linear combinations of iterated stochastic integrals.  

%%%%%%%%%%%%%%%%%%%%%%%%%%%%%%%%%%%%%%%%%%%%%%%%%%%%%%%%%%%%%%%%%%%%%%%%%%%%%%%%

\section{Perturbation theory for the Gibbs measure}
\label{sec:3d_Gibbs} 

One way to understand the need for a further renormalisation constant is to
consider, as in \Cref{sec:2d_metastability}, the Wick-renormalised potential
\begin{equation}
\label{eq:def_pot_3d} 
 V(\phi) := \int_\Lambda \biggbrak{\frac12\norm{\nabla\phi(x)}^2 + 
\frac14\Wick{\phi(x)^4} - \frac12\Wick{\phi(x)^2} + \frac14} \6x\;,
\end{equation} 
where we use a spectral Galerkin approximation of order $N$ for $\phi$, and 
Wick powers are with respect to $\eps C_N$. Here we choose 
\begin{equation}
\label{eq:CN_3d} 
 L^2C_N = \Tr\Bigpar{\bigpar{-\Delta_{\perp,N}-1}^{-1}}
 = \sum_{k\in\cK_N^*} \frac{1}{\lambda_k-1} 
 = \Order{N}\;, 
\end{equation} 
where, like before, $\lambda_k = (2\norm{k}\pi/L)^2$ and $\cK_N^* = 
\setsuch{k\in\Z^3}{0<\abs{k}\leqs N}$ with $\abs{k} = 
\abs{k_1}+\abs{k_2}+\abs{k_3}$.

To lighten the notation, we assume from now on that $L=1$. 
As we have seen, e.g., in the proof of \Cref{prop:Z-cap_AC-2d}, showing that 
the potential~\eqref{eq:def_pot_3d} is well-defined involves computing 
the expectation of $\e^{-R_\eps}$ under the Gaussian measure 
$\mu_\perp=\mu_\perp(\phi_0)$ having covariance 
$(-\Delta_{\perp,N}-1+3\phi_0^2)^{-1}$, %with $m^2(\phi_0)=3\phi_0^2$ 
where   
\begin{equation}
  R_\eps(\phi_0,\phi_\perp) := 
%\frac{3}{2}\phi_0^2 \int_{\Lambda} \Wick{\phi_\perp(x)^2} \6x +
 \sqrt{\eps}\phi_0\int_{\Lambda} \Wick{\phi_\perp(x)^3} \6x
+ \frac{\eps}{4}\int_{\Lambda} \Wick{\phi_\perp(x)^4} \6x\;.
\end{equation} 
We will represent this graphically as 
\begin{equation}
 R_\eps(\phi_0,\phi_\perp) = 
%\frac{3}{2}\phi_0^2 \FDVtwo{vedge}{vedge} +
\sqrt{\eps}\phi_0 \FDVthree{vedge}{vedge}{vedge}
+ \frac{\eps}{4} \FDVfour{vedge}{vedge}{vedge}{vedge}\;.
\end{equation} 
We can compute $\bigexpecin{\mu_\perp}{\e^{-R_\eps}}$ perturbatively in 
$\eps$ by using the \emph{cumulant expansion} 
\begin{align}
-\log \bigexpecin{\mu_\perp}{\e^{-R_\eps}}
={}& \bigexpecin{\mu_\perp}{R_\eps} \\
&{}+ \frac{1}{2!} \Bigpar{\bigexpecin{\mu_\perp}{R_\eps}^2 - 
\bigexpecin{\mu_\perp}{R_\eps^2}} \\
&{}+ \frac{1}{3!} \Bigpar{2\bigexpecin{\mu_\perp}{R_\eps}^3 - 
3\bigexpecin{\mu_\perp}{R_\eps}\bigexpecin{\mu_\perp}{R_\eps^2} + 
\bigexpecin{\mu_\perp}{R_\eps^3}} \\
&{}+ \dots 
\label{eq:cumulant} 
\end{align}
In order to compute these expectations, we recall 
(see~\eqref{eq:covariance_GFF}) that\footnote{We are going to be slightly sloppy 
here, by ignoring the fact that the covariance of the Gaussian measure 
$\mu_\perp$ depends on $\phi_0$, which affects $G_N$ and $C_N$. However, this 
results only in negligible extra terms, essentially thanks to 
\Cref{lem:Gaussian_shift}.}
\begin{equation}
 \bigexpecin{\mu_\perp}{\phi_\perp(x)\phi_\perp(y)} = G_N(x-y)\;,
\end{equation} 
where $G_N$ is the Green function of $\Delta_{\perp,N}-1$.
In a Fourier basis $\set{e_k}_{k\in\cK_N^*}$, it is given by 
\begin{equation}
 G_N(x) := \sum_{k\in\cK_N^*} \frac{1}{\lambda_k-1} e_k(x)
\end{equation} 
(note that we recover the fact that $G_N(0)=C_N$). 

Consider first the case $\phi_0=0$, when only the quartic term is present in 
$R_\eps$. We know that the fourth Wick power has zero expectation, while 
according to \Cref{lem:hermite_moments}, its variance is given by 
\begin{equation}
 \int_\Lambda \int_\Lambda \bigexpecin{\mu_\perp}{\Wick{\phi_\perp(x)^4} 
\Wick{\phi_\perp(y)^4}} \6x\6y 
= 4! \int_\Lambda G_N(x)^4\6x
\end{equation} 
(we have used translation invariance of the Green function and the fact that 
$L=1$). We denote this graphically (cf.~\eqref{eq:Wick_graphical}) by 
\begin{equation}
\label{eq:Wick_G4} 
 \biggexpecin{\mu_\perp}{\FDVfour{vedge}{vedge}{vedge}{vedge} 
\FDVfour{vedge}{vedge}{vedge}{vedge}} = 4! 
\FDQzero{vedge}{vedge}{vedge}{vedge}\;. 
\end{equation} 
In a similar way, we find 
\begin{equation}
\label{eq:Wick_G6} 
 \biggexpecin{\mu_\perp}{\FDVfour{vedge}{vedge}{vedge}{vedge} 
\FDVfour{vedge}{vedge}{vedge}{vedge} \FDVfour{vedge}{vedge}{vedge}{vedge}} =
\binom{4}{2}^3 2^3 
\FDTzero{vedge}{vedge}{vedge}{vedge}{vedge}{vedge}\;,
\end{equation} 
where the combinatorial factor counts the number of pairings of the legs, and 
\begin{equation}
\label{eq:Wick_C6} 
 \FDTzero{vedge}{vedge}{vedge}{vedge}{vedge}{vedge}
 := \int_{\Lambda}\int_{\Lambda} 
  G_N(x)^2 G_N(y)^2 G_N(x-y)^2 \6x\6y\;.
\end{equation} 
The graphs occurring in~\eqref{eq:Wick_G4} and~\eqref{eq:Wick_C6} are special 
cases of Feynman diagrams, called \emph{vacuum diagrams}. 

\begin{exercise}
Prove the bounds  
\begin{gather}
\label{eq:GNn_asymptotics} 
\int_{\Lambda} G_N(x)^n \6x \lesssim  
 \begin{cases}
  1 & \text{if $n=2$\;,} \\
  \log N & \text{if $n=3$\;,} \\
  N & \text{if $n=4$\;,} 
 \end{cases} \\
 \int_{\Lambda}\int_{\Lambda} 
  G_N(x)^2 G_N(y)^2 G_N(x-y)^2 \6x\6y
  \lesssim \log(N)\;.
\end{gather} 
\Hint One may use the Young-type inequality 
\begin{equation}
 \sum_{\substack{k_1,k_2\in\Z^d\setminus\set{0} \\ k_1+k_2=k}} 
\frac{1}{\norm{k_1}^n \norm{k_2}^m} \lesssim \frac{1}{\norm{k}^{n+m-d}}
\end{equation} 
valid whenever $0<n,m<d$ and $n+m>d$, see for 
instance~\cite[Lemma~3.10]{zhu2015three}.  
\end{exercise}

\begin{remark}
Equivalent estimates hold for the Green function $G_\delta = 
G*\varrho_\delta$ mollified on scale $\delta=1/N$, where $\varrho_\delta(x) = 
\delta^{-3}\varrho(\delta^{-1}x)$. In particular, we have $G_\delta(x) = 
\Order{(\norm{x}+\delta)^{-1}}$, see for 
instance~\cite[Lemma~10.17]{Hairer2014}.
\end{remark}

Inserting~\eqref{eq:Wick_G4} and~\eqref{eq:Wick_G6} in the cumulant 
expansion~\eqref{eq:cumulant}, we obtain the asymptotic expansion 
\begin{equation}
\label{eq:cumulant_phi_eq0} 
 -\log \bigexpecin{\mu_\perp(0)}{\e^{-R_\eps(0,\cdot)}} 
 = -\frac{4!}{2!4^2} \eps^2  \FDQzero{vedge}{vedge}{vedge}{vedge}
 + \frac{2^3}{3!4^3} \binom{4}{2}^3 \eps^3 
\FDTzero{vedge}{vedge}{vedge}{vedge}{vedge}{vedge}
 + \Order{\eps^4}\;,
\end{equation} 
where the two vacuum diagrams diverge, respectively, like $N$ and like 
$\log N$. 
This suggests that for $\phi_0=0$, the potential~\eqref{eq:def_pot_3d} can  
be renormalised by subtracting two terms of respective order $\eps^3 N$ and 
$\eps^4 \log(N)$, called \emph{energy renormalisation} terms. Of 
course, at this point we can tell nothing about the $N$-dependence 
of the higher-order terms in $\eps$. 

If $\phi_0\neq0$, the variance of the third Wick power in $R_\eps$ will 
add a new divergent term of order $\eps\phi_0^3\log N$ to the cumulant 
expansion. This is a symptom of the fact that the Gibbs measures obtained for 
different values of $\phi_0$ are not absolutely continuous with respect to one 
another. We cannot merely subtract this $\phi_0$-dependent term from the 
potential, as this would not result from a decomposition into mean and 
oscillating part. However, we can modify the potential~\eqref{eq:def_pot_3d} by 
adding a renormalisation term to the coefficient of the second Wick power, that 
is, 
\begin{equation}
\label{eq:def_pot_3d_mass} 
 V(\phi) = \int_\Lambda \biggbrak{\frac12\norm{\nabla\phi(x)}^2 + 
\frac14\Wick{\phi(x)^4} - \frac12\Bigpar{1-\eps^2C_N^{(2)}}\Wick{\phi(x)^2} + 
\frac14
%+ \eps^3 C_N^{(3)} + \eps^4 C_N^{(4)}
} \6x\;. 
\end{equation} 
This procedure, called \emph{mass renormalisation}, results in the expression 
\begin{equation}
 R_\eps(\phi_0,\phi_\perp) = 
\frac12\eps C_N^{(2)} \phi_0^2 + \frac12\eps^2 C_N^{(2)} \FDVtwo{vedge}{vedge}
+ \sqrt{\eps}\phi_0 \FDVthree{vedge}{vedge}{vedge}
+ \frac{\eps}{4} \FDVfour{vedge}{vedge}{vedge}{vedge}\;.
\end{equation} 
In particular, we now find for the first two moments
\begin{align}
\bigexpecin{\mu_\perp}{R_\eps} 
&= \frac12\eps C_N^{(2)} \phi_0^2\;, \\ 
\bigexpecin{\mu_\perp}{R_\eps^2} 
&= \biggpar{\frac12\eps C_N^{(2)}\phi_0^2}^2 
 + \eps\phi_0^2 3! \FDCthree{vedge}{vedge}{vedge}
 + \frac{\eps^2}{4^2} 4! \FDQzero{vedge}{vedge}{vedge}{vedge}
 + \Order{\eps^4}\;,
\end{align} 
where we have used the fact that terms with an odd number of legs have zero 
expectation (cf. \Cref{lem:hermite_moments}). Substituting in the cumulant 
expansion~\eqref{eq:cumulant}, we obtain 
\begin{equation}
 -\log \bigexpecin{\mu_\perp}{\e^{-R_\eps}}
 = \frac12\eps C_N^{(2)} \phi_0^2 
 + \frac{1}{2!} \biggpar{-\eps\phi_0^2 3! \FDCthree{vedge}{vedge}{vedge}
 - \frac{\eps^2}{4^2} 4! \FDQzero{vedge}{vedge}{vedge}{vedge}} 
 + \Order{\eps^3}\;.
\end{equation}
We thus see that choosing
\begin{equation}
 C_N^{(2)} = 3! \FDCthree{vedge}{vedge}{vedge}
\end{equation} 
allows to kill the divergent term of order $\eps$. Pushing the expansion one 
step further, one can show that the divergent part of the term of order 
$\eps^3$ is the same as in~\eqref{eq:cumulant_phi_eq0}. 

What this purely formal computation suggests, is that by choosing the 
renormalised potential of the form 
\begin{equation}
\label{eq:def_pot_3d_renorm} 
 V(\phi) = \int_\Lambda \biggbrak{\frac12\norm{\nabla\phi(x)}^2 + 
\frac14\Wick{\phi(x)^4}_{\eps C_N^{(1)}} 
-\frac12\Bigpar{1-\eps^2C_N^{(2)}}\Wick{\phi(x)^2}_{\eps C_N^{(1)}} + \frac14 
+ \eps^3 C_N^{(3)} - \eps^4 C_N^{(4)}} \6x\;,  
\end{equation}
the limit $N\to\infty$ may be well-defined, provided we choose the counterterms 
\begin{align}
C_N^{(1)} &:= \FDtadpolenolabel{vedge} = \Order{N}\;, \\
C_N^{(2)} &:= 3! \FDCthree{vedge}{vedge}{vedge} = \Order{\log N}\;,\\
C_N^{(3)} &:= \frac{4!}{2!4^2} \FDQzero{vedge}{vedge}{vedge}{vedge} 
= \Order{N}\;,\\
C_N^{(4)} &:= \frac{2^3}{3!4^3} \binom{4}{2}^3 
\FDTzero{vedge}{vedge}{vedge}{vedge}{vedge}{vedge} = \Order{\log N}\;.
\label{eq:CN_3d_potential} 
\end{align}
Of course, this perturbative computation up to order $\eps^3$ does not imply 
that no other counterterms of higher order in $\eps$ are needed. The fact that 
this is not the case is a highly nontrivial fact, which makes the static 
$\Phi^4_3$ model one of the important solved cases of (Euclidean) Quantum Field 
Theory. See \Cref{sec:3dbib} for some historical notes. A recent proof, which 
is relatively compact, of the well-posedness of the Gibbs measure is provided 
in~\cite{Barashkov_Gubinelli_18}. 

%%%%%%%%%%%%%%%%%%%%%%%%%%%%%%%%%%%%%%%%%%%%%%%%%%%%%%%%%%%%%%%%%%%%%%%%%%%%%%%%

\section{Regularity structures: introduction}
\label{sec:3d_regstruc_overview} 

%%%%%%%%%%%%%%%%%%%%%%%%%%%%%%%%%%%%%%%%%%%%%%%%%%%%%%%%%%%%%%%%%%%%%%%%%%%%%%%%

\subsection{H\"older spaces revisited}
\label{ssec:3d_Holder} 

One of the key ideas of the theory of regularity structures is to replace 
expansions as appearing in the fixed-point equation~\eqref{eq:FP_3d_psi} by  
more general expansions, which separate the roles of the coefficients, such as 
$\psi(t,x)$, and of the singular distributions, such as $\RSV$. These objects 
can be viewed as a kind of generalised Taylor expansion.

Consider for simplicity the case of a function $f_0\in\cC^{2+\alpha}(I)$, where 
$I\subset\R$ is an interval, and $\alpha\in(0,1)$. One way of defining 
$\cC^{2+\alpha}(I)$ is to require that $f_0$ is twice continuously 
differentiable, with a second derivative in $\cC^{\alpha}(I)$, 
cf. \Cref{exo:Holder_derivative}. An alternative definition, which is more in 
line with \Cref{def:Holder-positive}, is to require the existence of a triple 
$(f_0,f_1,f_2)$ of functions from $I$ to $\R$ satisfying for all $x,y\in I$ the 
bounds on Taylor expansions 
\begin{align}
 \bigabs{f_0(y)-f_0(x)-(y-x)f_1(x)-\tfrac12(y-x)^2f_2(x)} 
 &\leqs C \abs{x-y}^{2+\alpha}\;, \\
 \bigabs{f_1(y)-f_1(x)-(y-x)f_2(x)} 
 &\leqs C \abs{x-y}^{1+\alpha}\;, \\
 \bigabs{f_2(y)-f_2(x))} 
 &\leqs C \abs{x-y}^{\alpha}\;.
 \label{eq:poly_holder} 
\end{align}
Note that this definition, while it seems redundant, completely avoids the use 
of derivatives, though of course we necessarily have %$f_0(x) = f(x)$, 
$f_1(x)=f_0'(x)$ and $f_2(x)=f_0''(x)$. 

In the theory of regularity structures, one uses the abstract notation
\begin{equation}
\label{eq:f_poly} 
f(x) = f_0(x)\unit + f_1(x) \X + \tfrac12 f_2(x) \X^2\;,
\end{equation} 
where the symbols $\unit$, $\X$ and $\X^2$ are basis vectors of a vector space 
$T$ of dimension $3$, called a \emph{model space}, and $f:I\to T$ is considered 
as a map from $I$ to $T$. We will denote the collection of basis vectors by 
$\cF = \set{\unit,\X,\X^2}$. 

In order to be able to evaluate the abstract object~\eqref{eq:f_poly} at a 
particular point, we introduce the notion of \emph{model}. This is given by a 
collection of maps $\setsuch{\Pi_x\tau}{x\in I,\tau\in\cF}$ defined for all 
$z\in I$ by 
\begin{equation}
 (\Pi_x\unit)(z) := 1\;, \qquad 
 (\Pi_x\X)(z) := z-x\;, \qquad 
 (\Pi_x\X^2)(z) := (z-x)^2\;.
 \label{eq:Pi_poly} 
\end{equation} 
In this way, we have 
\begin{align}
 (\Pi_x f(x))(z) 
 &:= f_0(x)(\Pi_x\unit)(z) + f_1(x)(\Pi_x\X)(z) + \tfrac12 f_2(x) 
(\Pi_x\X^2)(z) \\
 &= f_0(x) + f_1(x)(z-x) + \tfrac12 f_2(x)(z-x)^2\;,
\end{align}
which is the second-order Taylor expansion of $f$ at $x$. In particular, we 
can recover $f_0$ thanks to the \emph{diagonal identity}  
\begin{equation}
\label{eq:poly_f0} 
 (\Pi_x f(x))(x) = f_0(x)\;.
\end{equation} 
However, $f$ contains more information than just the value of $f_0$ at any 
point $x$, since the higher-order terms of the Taylor expansion provide some 
description of how $f$ behaves in the vicinity of $x$. 

\begin{remark}
An aspect of the theory that may be confusing at first is that it is 
possible to define the quantity $(\Pi_x f(y))(z)$ which depends on three 
different arguments. This is not really of interest in the case of polynomial 
models considered here, but will play a role later on. 
\end{remark}

It will also be important to encode how the model $\Pi_x$ depends on the base 
point $x$. For this purpose, note that if we define a collection 
$\setsuch{\Gamma_{xy}}{x,y\in I}$ of linear maps from $T$ to $T$ by 
\begin{align}
 \Gamma_{xy}\unit &:= \unit\;, \\
 \Gamma_{xy}\X &:= \X + (x-y)\unit\;, \\
 \Gamma_{xy}\X^2 &:= \bigbrak{\X + (x-y)\unit}^2 
 := \X^2 + 2(x-y)\X + (x-y)^2\unit\;, 
 \label{eq:Gamma_poly} 
\end{align}
then we have the relations 
\begin{align}
 \Pi_y\tau &= \Pi_x\Gamma_{xy}\tau
 &
 &\forall\tau\in\cF\;, 
 \forall x,y\in I\;, \\
 \Gamma_{xy}\Gamma_{yz}\tau &= \Gamma_{xz}\tau
 &
 &\forall\tau\in\cF\;, 
 \forall x,y,z\in I\;. 
 \label{eq:PiGamma_poly} 
\end{align} 

\begin{exercise}
Check that the relations~\eqref{eq:PiGamma_poly} hold. What is the natural 
generalisation of \eqref{eq:Pi_poly} and~\eqref{eq:Gamma_poly} to general 
monomials $\X^k$, $k\in\N_0$? Check the analogue of~\eqref{eq:PiGamma_poly} for 
these general values of $k\in\N_0$ as well. 
\end{exercise}

The interest of this framework is that by combining~\eqref{eq:f_poly} 
and~\eqref{eq:Gamma_poly}, we obtain 
\begin{align}
f(y) - \Gamma_{yx}f(x) 
={}& \bigbrak{f_0(y) - f_0(x) - (y-x)f_1-x) - \tfrac12 (y-x)^2 f_2(x)} \unit \\
&{}+ \bigbrak{f_1(y) - f_1(x) - (y-x)f_2(x)} \X \\
&{}+ \tfrac12 \bigbrak{f_2(y) - f_2(x)} \X^2\;.
\end{align}
In view of~\eqref{eq:poly_holder}, we see that if ${\mathcal Q}_k$ denotes the 
projection on the subspace of $T$ spanned by $\X^k$, then we have 
\begin{equation}
\label{eq:Dalpha_poly} 
 f \in \cC^{2+\alpha} 
 \qquad \Leftrightarrow \qquad 
 \bigabs{{\mathcal Q}_k \bigbrak{f(y) - \Gamma_{yx}f(x)}}
 \lesssim \abs{x-y}^{2+\alpha-k} 
 \qquad \forall k\in\set{0,1,2}\;.
\end{equation} 
A central idea in the theory of regularity structures is that this 
characterisation of regularity can be extended to situations in which the 
monomials $\X^k$ are complemented by more singular objects, such as the 
stochastic convolution $\RSI$ and its powers. 

%%%%%%%%%%%%%%%%%%%%%%%%%%%%%%%%%%%%%%%%%%%%%%%%%%%%%%%%%%%%%%%%%%%%%%%%%%%%%%%%

\subsection{Overview of the approach}
\label{ssec:3d_overview} 

Our primary aim will be to construct solutions for the regularised Allen--Cahn 
SPDE on $\R_+\times\Lambda$ given by 
\begin{equation}
\label{eq:AC-3d-renorm} 
 \partial_t \phi_\delta(t,x) = \Delta \phi_\delta(t,x) + \phi_\delta(t,x) - 
 \phi_\delta(t,x)^3 + \bigbrak{3C_\delta^{(1)} - 9C_\delta^{(2)}} 
\phi_\delta(t,x) 
 + %\sqrt{2\eps} 
 \xi^\delta(t,x)\;,
\end{equation} 
where the renormalisation constants $\smash{C_\delta^{(1)}} = 
\Order{\delta^{-1}}$ and $\smash{C_\delta^{(2)}} = \Order{\log(\delta^{-1})}$ 
are analogues of the first two constants in~\eqref{eq:CN_3d_potential}, while 
$\xi^\delta = \varrho_\delta * \xi$ is a mollified version of space-time white 
noise. Indeed, the right-hand side corresponds to the derivative of the 
renormalised potential~\eqref{eq:def_pot_3d_renorm}, which does not see the 
constant terms. 

The general idea of the method developed in~\cite{Hairer2014} can be described 
by the following commutative diagram:

\begin{equation}
\label{eq:RS_diagram} 
 \begin{tikzcd}[column sep=large, row sep=large]
 (\phi_0,Z^\delta) \arrow[r, "\cS", mapsto] 
 & \Phi \arrow[d, "\cR", mapsto] \\
 (\phi_0,\xi^\delta) \arrow[u, "\Psi", mapsto] 
 \arrow[r, "\overline\cS"', mapsto] 
 & \phi_\delta 
\end{tikzcd}
\end{equation}

The maps appearing in this diagram are as follows:
\begin{itemize}
\item 	The \emph{classical solution map} $\overline\cS$ takes an initial 
condition $\phi_0$ and a realisation $\xi^\delta$ of mollified space-time white 
noise, and associates with them the solution $(\phi_\delta(t,x))_{t\geqs0}$ of 
the smooth PDE~\eqref{eq:AC-3d-renorm}. This solution exists globally in 
time for any positive $\delta$ and almost every realisation of the noise, 
because the PDE is smooth with a confining nonlinearity.

\item 	The lift $\Psi$ maps $\xi^\delta$ to a so-called \emph{canonical model} 
$Z^\delta = (\Pi^\delta,\Gamma^\delta)$, which extends the collections of maps 
introduced in~\eqref{eq:Pi_poly} and~\eqref{eq:Gamma_poly} to the stochastic 
convolution, its powers, and similar data. We will explain the construction of 
$Z^\delta$ in \Cref{ssec:3d_models}. 

\item 	The solution map $\cS$ associates with the initial data and the model 
an element $\Phi$ of an abstract space $\cD^\gamma$ of so-called \emph{modelled 
distributions} of regularity $\gamma$. This space can be thought of as an 
abstract analogue of the H\"older space $\cC^\alpha_\fraks$ on the level of 
coefficients of Taylor series. We will describe it in 
\Cref{ssec:3d_modeled_distributions}. The element $\Phi\in\cD^\gamma$ is 
obtained as the fixed point of a contracting map on $\cD^\gamma$. 

\item 	The \emph{reconstruction operator} $\cR$ takes a modelled distribution 
$\Phi\in\cD^\gamma$ and maps it to a distribution in a H\"older space 
$\cC^\alpha_\fraks$. We will describe it in \Cref{ssec:3d_reconstruction}. 
\end{itemize}

The point of the whole procedure is that we have the relation 
\begin{equation}
 \overline \cS = \cR \circ \cS \circ \Psi\;,
\end{equation} 
meaning that the fixed-point problems defining the classical solution 
$\phi_\delta$ and the modelled distribution $\Phi$ are equivalent for any 
$\delta>0$. In addition, both maps $\cS$ and $\cR$ are continuous in an 
appropriate topology. We explain this part of the theory in more detail in 
\Cref{ssec:3d_Schauder}.

Renormalisation comes into play when one modifies the lift $\Psi$. This will be 
an extension of the transformation~\eqref{eq:Wick_powers_limit} from powers of 
stochastic convolutions to their Wick powers, which removes divergencies. One 
can then show that under appropriate conditions, the renormalised models 
$\widehat Z^\delta =\Psi_\delta\xi^\delta$ converge, as $\delta\to0$, to some 
limiting model $\widehat Z$. The image of this limit under the map $\cR\circ\cS$ 
is then defined as the solution of the limiting stochastic PDE. We give more 
details on this procedure in \Cref{ssec:3d_renorm_conv}.

%%%%%%%%%%%%%%%%%%%%%%%%%%%%%%%%%%%%%%%%%%%%%%%%%%%%%%%%%%%%%%%%%%%%%%%%%%%%%%%%

\section{Regularity structures: algebraic aspects}
\label{sec:3d_regstruc_algebra} 

At the core of the theory lies the notion of a \emph{regularity structure}, 
which is the abstract Banach space $T$ spanned by the basis elements of 
generalised Taylor expansions. This space is equipped with two additional 
algebraic structures:

\begin{itemize}
\item 	the \emph{structure group} $\cG$, which allows to represent the link 
between expansions around different base points;
\item 	a \emph{renormalisation group}, which allows to encode the 
renormalisation procedure. 
\end{itemize}

We point out that while the model space is by definition infinite-dimensional, 
in practice only a finite-dimen\-sional subspace of $T$ will matter. This is in 
contrast with Feynman diagram expansions, which are in principle infinite. 
Both the structure group and renormalisation group will be idempotent (a 
sufficiently high power of any group element is the identity), which 
implies that the algebraic structures remain reasonably simple. 

%%%%%%%%%%%%%%%%%%%%%%%%%%%%%%%%%%%%%%%%%%%%%%%%%%%%%%%%%%%%%%%%%%%%%%%%%%%%%%%%

\subsection{The model space}
\label{ssec:3d_modelspace} 

The starting point of the construction is the following definition, 
cf.~\cite[Definition~2.1]{Hairer2014}. 

\begin{definition}[Regularity structure]
\label{def:regularity_structure}
A \emph{regularity structure} is a triple $(A,T,\cG)$ consisting of 
\begin{enumerate}
\item 	an \emph{index set} $A\subset\R$, containing $0$, which is bounded
from below and locally finite;
\item 	a \emph{model space} $T$, which is a graded vector space 
$T=\bigoplus_{\alpha\in A}T_\alpha$, where each $T_\alpha$ is a Banach space;
the space $T_0$ is isomorphic to $\R$ and its unit is denoted $\unit$;
\item 	a \emph{structure group} $\cG$ of linear operators
acting on $T$, such that 
\begin{equation}
 \label{eq:structure_group}
 \Gamma\tau - \tau \in \bigoplus_{\beta<\alpha} T_\beta =: T_\alpha^- 
\end{equation} 
holds for every $\Gamma\in \cG$, every $\alpha\in A$ and every $\tau\in
T_\alpha$;
furthermore, $\Gamma\unit = \unit$ for every $\Gamma\in \cG$. 
\end{enumerate}
\end{definition}

We have already encountered an example of regularity structure in 
\Cref{ssec:3d_Holder}:

\begin{example}[Polynomal regularity structure on $\R$]
This regularity structure is defined as follows:
\begin{itemize}
\item 	The index set $A = \N_0$ is the set of all degrees of monomials on 
$\R$.
\item 	For each $k\in\N_0$, $T_k$ is the one-dimensional real vector space 
spanned by a basis vector denoted $\X^k$ if $k>0$ and $\unit$ if $k=0$. This is 
indeed a Banach space for the norm $\abs{\cdot}$. 
\item 	The structure group is the group $\cG=\setsuch{\Gamma_h}{h\in\R}$ 
defined by 
\begin{equation}
\label{eq:Gamma_h} 
 \Gamma_h \X^k := (\X-h)^k 
 := \sum_{\ell=0}^k \binom{k}{\ell} (-h)^\ell\X^{k-\ell}\;. 
\end{equation} 
Note that the relation~\eqref{eq:structure_group} is indeed satisfied, and that 
$\cG$ is isomorphic to the group of translations on $\R$ via  
$\Gamma_{h_1}\circ\Gamma_{h_2} = \Gamma_{h_1+h_2}$. 
\end{itemize}
The model space $T$ is naturally equipped with the commutative product 
defined by $\X^k \X^\ell = \X^{k+\ell}$, with neutral element $\unit$. 
\end{example}

This construction can be easily extended to higher dimensions.

\begin{example}[Polynomal regularity structure on $\R^{d+1}$]
Fix a scaling $\fraks = (\fraks_0,\dots,\fraks_d) \in\N^{d+1}$. Then this 
regularity structure is defined in the following way: 
\begin{itemize}
\item 	The index set is again $A = \N_0$.
\item 	The model space is now the direct sum of $T_\ell$, $\ell\in\N_0$, where 
each $T_\ell$ is spanned by monomials $\X^k = \X_0^{k_0} \dots \X_d^{k_d}$ of 
scaled degree $\abs{k}_\fraks = \fraks_0k_0 + \dots + \fraks_dk_d = \ell$. 
\item 	The structure group is again defined by~\eqref{eq:Gamma_h}, where the 
binomial coefficients are defined via $k!=k_0!\dots k_d!$. 
\end{itemize}
Again, $T$ is naturally equipped with a commutative product, and $\cG$ is 
now isomorphic to the group of translations on $\R^{d+1}$. 
\end{example}

The \emph{degree} of a monomial $\X^k$ is by definition the quantity 
$\abss{\X^k} = \abss{k}$, which simply tells us to which $T_\ell$ 
the monomial belongs. 

In the case of the three-dimensional Allen--Cahn equation, we take $d=3$ and 
the parabolic scaling $\fraks=(2,1,1,1)$. The polynomial regularity structure 
now has to be enriched with additional elements representing the noise, the 
stochastic convolution, its powers and so on. We start by introducing a symbol 
$\Xi$ representing space-time white noise. In view of~\eqref{eq:reg_noise_d3}, 
we declare its degree to be given by $\abss{\Xi} := \alpha_0$, where $\alpha_0 
:= -\frac52 - \kappa$ for a fixed $\kappa>0$ that we can take as small as we 
like. 

The next step is to introduce an operator $\I$ creating new symbols $\I(\tau)$ 
from existing ones, which represent their convolution with the heat kernel. In 
order to avoid redundancies, we declare that $\I(\X^k)=0$ for elements of the 
polynomial structure. All these symbols can be multiplied, in a commutative 
way, yielding still more symbols. Their degree is defined by the two rules 
\begin{equation}
 \abss{\I(\tau)} := \abss{\tau} + 2\;, 
 \qquad 
 \abss{\tau_1 \tau_2} := \abss{\tau_1} + \abss{\tau_2}\;.
\end{equation} 
It will also be convenient to use graphical notations such as $\RSV:=\I(\Xi)^2$ 
and $\RSWW = \RSV \RSIW$. Unlike in~\eqref{eq:FP_eta}, this last object is 
just a new symbol instead of an undefined product of distributions. 

At this point, we realise that there may be a problem with the requirement that 
the index set $A$ be bounded below, since multiplying symbols of negative degree 
allows to generate terms whose degree is arbitrarily negative. The solution is 
to set $T := \vspan(\FAC)$, where $\FAC$ is the restriction of the set $\cF$ of 
all symbols to the set of those which can be generated by the map 
\begin{equation}
 \tau \mapsto \I(\Xi + \tau^3) + \sum_k \X^k\;,
\end{equation} 
which is an abstract representation of the fixed-point map~\eqref{eq:FP_2d}. 

\begin{table}
\begin{center}
\begin{tabular}{|l|c|l|}
\hline
\hlinespace
$\tau$ & Symbol & $\abss{\tau}$  \\
\hline
\hlinespaceplus
$\Xi$   & $\Xi$   & $-\frac52-\kappa$ \\
\hlinespace
$\I(\Xi)^3$ & \RSW & $-\frac32 -3\kappa$  \\
\hlinespace
$\I(\Xi)^2$ & \RSV & $-1-2\kappa$  \\
\hlinespace
$\I(\I(\Xi)^3)\I(\Xi)^2$ & \RSWW & $-\frac12 -5\kappa$ \\
\hlinespace
$\I(\Xi)$   & \RSI& $-\frac12 -\kappa$ \\
\hlinespace
$\I(\I(\Xi)^3)\I(\Xi)$ & \RSVW  & $\phantom{-}0-4\kappa$ \\
\hlinespace
$\I(\I(\Xi)^2)\I(\Xi)^2$ & \RSWV    & $\phantom{-}0-4\kappa$ \\
\hlinespace 
$\I(\Xi)^2 \X_i$ & \RSV$\X_i$  & $\phantom{-}0-2\kappa$  \\
\hlinespace
$\unit$ & $\unit$ & $\phantom{-}0$   \\
\hlinespace 
$\I(\I(\Xi)^3)$ & \RSIW & $\phantom{-}\frac12 - 3\kappa$ \\
\hlinespace 
$\I(\I(\Xi)^2)\I(\Xi)$ & \RSVV & $\phantom{-}\frac12 - 3\kappa$ \\
\hlinespace 
$\I(\I(\Xi))\I(\Xi)^2$ & \RSWI & $\phantom{-}\frac12 - 3\kappa$ \\
\hlinespace
$\I(\I(\Xi)^2)$ & \RSY & $\phantom{-}1-2\kappa$  \\
\hlinespace
$\I(\I(\Xi))\I(\Xi)$ & \RSVI & $\phantom{-}1-2\kappa$ \\
\hlinespace 
$\X_i$ & $\X_i$ & $\phantom{-}1$  \\
\hlinespace
$\I(\I(\Xi))$ & \RSII & $\phantom{-}\frac32-\kappa$  \\
\hline
\end{tabular}
\end{center}
\vspace{-2mm}
\caption[]{Elements of $\FAC$ of degree up to $\frac32$. Here $\X_i$ denotes 
any element of the form $\X^k$ where $k=e_i$ is a canonical basis vector.}
\label{tab:F_AllenCahn}
\end{table}

When iterating this map, starting with the empty set, one can check that indeed 
all symbols in $\FAC$ have degree at least $\alpha_0$. We have listed all 
symbols of degree up to $\frac32$ in \Cref{tab:F_AllenCahn}. The fact 
that the degree remains bounded below is a consequence of \emph{local 
subcriticality}, also called \emph{super-renormalisability} in Physics, 
cf.~\cite[Assumption 8.3 and Lemma~8.10]{Hairer2014}. To define this property, 
consider the formal Allen--Cahn equation 
\begin{equation}
 \partial_t \phi = \Delta \phi + \phi 
 - \phi^3 + \xi
\end{equation} 
on the $d$-dimensional torus, and perform a scaling 
$\tilde\phi(t,x) = \lambda^\alpha\phi(\lambda^\beta t,\lambda^\gamma x)$, 
ignoring boundary conditions. This yields the rescaled equation
\begin{equation}
 \partial_t \tilde\phi = \lambda^{\beta-2\gamma}\Delta \tilde\phi + 
 \lambda^\beta\tilde\phi - \lambda^{\beta-2\alpha}\tilde\phi^3 + 
 \lambda^{\alpha+\beta}\xi_{\lambda^\beta,\lambda^\gamma}\;.
\end{equation}
The scaling property of space-time white noise shows that $\tilde\xi = 
\lambda^{(\beta+\gamma d)/2} \xi_{\lambda^\beta,\lambda^\gamma}$ has the same 
law as $\xi$  (cf.~\Cref{prop:xi_scaling}). Choosing $\gamma=1$, $\beta=2$ and 
$\alpha=d/2-1$ thus yields 
\begin{equation}
 \partial_t \tilde\phi = \Delta \tilde\phi + 
 \lambda^2\tilde\phi - \lambda^{4-d}\tilde\phi^3 + 
 \tilde\xi\;.
\end{equation}
The equation is called \emph{locally subcritical} if the coefficient of the 
nonlinear term vanishes in the limit $\lambda\to0$, which corresponds to 
zooming in on small scales. This is the case if and only if $d<4$, which is why 
the Allen--Cahn equation can be renormalised in dimensions two and three, but 
not in higher dimension.

\begin{exercise}
Determine the degree of the symbols listed in \Cref{tab:F_AllenCahn} in the 
case of the two-dimensional Allen--Cahn equation, when $\Xi$ has degree 
$\alpha_0 = -\frac32-\kappa$. What happens in the four-dimensional case?
\end{exercise}

%%%%%%%%%%%%%%%%%%%%%%%%%%%%%%%%%%%%%%%%%%%%%%%%%%%%%%%%%%%%%%%%%%%%%%%%%%%%%%%%

\subsection{The structure group}
\label{ssec:3d_structure_group} 

We have already defined the structure group for the polynomial part $\overline 
T = \vspan\setsuch{\X^k}{k\in\N_0^4}$ of the model space as being given by 
all maps $\Gamma_h$ acting as 
\begin{equation}
 \Gamma_h \X^k = (\X-h)^k\;, \qquad h\in\R^4\;.
\end{equation} 
Extending the structure group to the non-polynomial elements is a somewhat 
tricky point, since it has to reflect the way Taylor expansions change under 
translations of the base point. Since space-time white noise is 
translation-invariant, it seems reasonable to set 
\begin{equation}
\label{eq:XI_invariant} 
 \Gamma \Xi = \Xi 
 \qquad \forall \Gamma\in\cG\;.
\end{equation} 
It turns out that the same holds for the element $\RSI$. This is related to the 
fact that the stochastic convolution has negative regularity, so that there is 
no need to subtract any polynomial terms when testing it against a scaled test 
function. We have encountered this situation in~\eqref{eq:proof_Schauder_bound} 
in the proof of Schauder's \Cref{thm:Schauder}. Therefore, we set 
\begin{equation}
\label{eq:IXI_invariant} 
 \Gamma \RSI = \RSI\;, \qquad  
 \Gamma \RSV = \RSV\;, \qquad  
 \Gamma \RSW = \RSW
 \qquad \forall \Gamma\in\cG\;.
\end{equation} 
The situation is different, however, for elements such as $\RSIW$. Indeed, 
$\RSIW$ represents a function with positive regularity, from which one has to 
subtract a term before testing it, as in~\eqref{eq:proof_Schauder_bound2}, a 
procedure known as \emph{recentering}. Therefore, the structure group cannot 
act trivially on such elements. 

To extend the structure group to all elements of $T$, it is helpful to take a 
look at the structure group of $\Tbar$ in a more algebraic way. The dual 
$\Tbarstar$ can be identified with the space 
$\setsuch{\diff{k}}{k\in\N_0^4}$ of differential operators with constant 
coefficients via 
\begin{equation}
\label{eq:def_dual_T} 
 \pscal{\diff{k}}{\X^\ell} 
 := \dpar{\X^\ell}{\X^k} \biggr|_{\X=0}
 = k! \delta_{k\ell}\;.
\end{equation} 
We can define an action $(\diff{\ell},\tau)\mapsto\Gamma_{\diff{\ell}}\tau$ of 
$\Tbarstar$ onto $\Tbar$ by 
\begin{equation}
\label{eq:def_action_Dk} 
 \pscal{\diff{k}}{\Gamma_{\diff{\ell}}\X^m}
 := \pscal{\diff{k}\diff{\ell}}{\X^m}
 = m! \delta_{k+\ell,m}
 \qquad 
 \forall \diff{k}\in\Tbarstar, \forall \X^m\in\Tbar\;,
\end{equation} 
which by~\eqref{eq:def_dual_T} is equivalent to 
\begin{equation}
 \Gamma_{\diff{\ell}}\X^m = \frac{m!}{(m-\ell)!} \X^{m-\ell}\;,
\end{equation} 
with the convention that the right-hand side vanishes unless $\ell_i\leqs m_i$ 
for each $i\in\set{0,1,2,3}$, which we write for short $\ell\leqs m$. 
Note that we have the semigroup property $\Gamma_{\diff{\ell}\diff{m}} = 
\Gamma_{\diff{\ell}}\Gamma_{\diff{m}}$, since for every 
$\smash{\diff{k}\in\Tbarstar}$ and $\X^n\in\Tbar$,
\begin{equation}
 \pscal{\diff{k}}{\Gamma_{\diff{\ell}\diff{m}} \X^n}
 = \pscal{\diff{k}\diff{\ell}\diff{m}}{\X^n}
 = \pscal{\diff{k}\diff{\ell}}{\Gamma_{\diff{m}}\X^n}
 = \pscal{\diff{k}}{\Gamma_{\diff{\ell}}\Gamma_{\diff{m}} \X^n}\;.
\end{equation} 
However, not all $\Gamma_{\diff{k}}$ are invertible, so that the set 
$\setsuch{\Gamma_g}{g\in\Tbarstar}$ does not form a group. We thus define 
$\cG$ as the set of $\smash{g\in\Tbarstar}$ which are \emph{group-like}, 
meaning that they satisfy 
\begin{equation}
\label{eq:group_like} 
 \pscal{g}{\X^\ell\X^m} = \pscal{g}{\X^\ell} \pscal{g}{\X^m}
 \qquad 
 \forall \X^\ell, \X^m \in \Tbar\;.
\end{equation} 
One can show (cf.~\cite[Section~4.3]{Hairer2014}) that $\cG$ is exactly the Lie 
group generated by the Lie algebra of first-order differential operators. 
Indeed, if for $h\in\N_0^4$ we set 
\begin{equation}
\label{eq:def_exph} 
 \pscal{h}{\DD} := \sum_{i=0}^3 h_i \diff{e_i} \in \Tbarstar\;,
 \qquad 
 g = \e^{-\pscal{h}{\DD}} := \sum_{k\in\N_0^4} 
\frac{\bigpar{-\pscal{h}{\DD}}^k}{k!}\;, 
\end{equation} 
then we have 
\begin{equation}
 \Gamma_g(\X^\ell) 
 = \sum_{k\in\N_0^4} \frac{(-h)^k}{k!} \Gamma_{\diff{k}}(\X^\ell)
 = \sum_{k\leqs \ell} \frac{(-h)^k}{k!} 
\frac{\ell!}{(\ell-k)!} \X^{\ell-k}
 = \bigpar{\X-h}^\ell\;,
\end{equation} 
showing that the group action $\Gamma$ of $\cG$ onto $\Tbar$ is 
indeed equivalent to the action of the structure group of the polynomial 
regularity structure. This is related to the fact that first-order derivatives 
generate the group of translations. 

\begin{exercise}
Prove that the element $g$ defined by~\eqref{eq:def_exph} is group-like in the 
sense of~\eqref{eq:group_like} in the case where the indices $k$ belong to 
$\N_0$ instead of $\N_0^4$. 
\end{exercise}

A useful way of encoding the Leibniz rule is to define a bilinear operator 
$\Delta^+: \Tbar \to \Tbar \otimes \Tbar$ by 
\begin{equation}
\label{eq:coproduct_X} 
 \Delta^+(\unit) := \unit\otimes\unit\;, 
 \qquad 
 \Delta^+(\X_i) := \X_i\otimes\unit + \unit\otimes\X_i\;,
\end{equation} 
which is then extended to all of $\Tbar$ by requiring that 
\begin{equation}
\label{eq:coproduct_mult} 
 \Delta^+(\X^k\X^\ell) := \Delta^+(\X^k)\Delta^+(\X^\ell) 
 \qquad 
 \forall \X^k, \X^\ell \in \Tbar\;.
\end{equation} 
The operator $\Delta^+$ is called a \emph{coproduct}. 
One can indeed check that $\Delta^+$ is \emph{coassociative}, meaning that 
\begin{equation}
 (\Id\otimes\Delta^+)\Delta^+\X^k 
 = (\Delta^+\otimes\Id)\Delta^+\X^k
 \qquad \forall \X^k\in\Tbar\;,
\end{equation} 
which provides $\Tbar$ with a \emph{Hopf algebra} structure\footnote{In 
addition to a product and coproduct and associated neutral elements, a Hopf 
algebra is characterised by a linear map $\cA:\Tbar\to\Tbar$ called the 
\emph{antipode}. It should satisfy the identity $M(\cA\otimes\Id)\Delta^+ = 
M(\Id\otimes\cA)\Delta^+$, where $M$ is the multiplication map defined by 
$M(\tau_1\otimes\tau_2) = \tau_1\tau_2$. In this setting, it is given by 
$\smash{\cA(\X^k) = (-1)^k\X^k}$.}. Furthermore, we have the duality property 
\begin{equation}
\label{eq:duality_Delta} 
 \pscal{\diff{k}\diff{\ell}}{\X^m}
 = \pscal{\diff{k}\otimes\diff{\ell}}{\Delta^+\X^m}\;, 
\end{equation} 
where by definition 
\begin{equation}
 \pscal{\diff{k}\otimes\diff{\ell}}{Y^{(1)}\otimes Y^{(2)}}
 := \pscal{\diff{k}}{Y^{(1)}} \pscal{\diff{\ell}}{Y^{(2)}}\;.
\end{equation} 
Note that in general, $\Delta^+\X^m$ is a sum of several terms 
$Y_i^{(1)}\otimes Y_i^{(2)}$, but it is convenient to use \emph{Sweedler's 
notation} suppressing the sum. The point is that by 
combining~\eqref{eq:duality_Delta} with~\eqref{eq:def_action_Dk}, one obtains 
the relation 
\begin{equation}
 \Gamma_{\diff{\ell}} \X^m = (\Id\otimes\diff{\ell}) \Delta^+ \X^m\;,
\end{equation} 
where by definition, $(\Id\otimes\diff{\ell})Y^{(1)}\otimes Y^{(2)} 
:= Y^{(1)} \pscal{\diff{\ell}}{Y^{(2)}}$.

The advantage of this rather formal approach, which may seem like overkill in 
the polynomial case, is that it allows to define the structure group in the 
general case. Define a set of formal expressions
\begin{equation}
\label{eq:def_FACplus} 
 \FAC^+ = \Biggsetsuch{\X^k \prod_j \J_{k_i}\tau_j}{k \in\N_0^4,\; 
\tau_j\in\FAC, \;\abss{\tau_j}+2-\abss{k_j} > 0}\;.
\end{equation} 
Here the $\J_k\tau$ are new symbols of degree $\abss{\tau}+2-\abss{k}$, 
which is strictly positive by assumption. Their meaning will become clearer 
later on. By convention, $\smash{\FAC^+}$ also contains the unit $\unit$ and 
all monomials $\X^k$, corresponding to an empty product. 

\begin{table}
\begin{center}
\begin{tabular}{|l|l|l|}
\hline
\hlinespace
$\tau$ & $\Delta^+(\tau)$ & $\Gamma_g\tau$  \\
\hline
\hlinespaceplus
\RSWW & $\RSWW\otimes\unit + \RSV\otimes\J_0\RSW$ & 
$\RSWW + \RSV \, \pscal{g}{\J_0\RSW}$ \\
\hlinespace
\RSVW  & $\RSVW\otimes\unit + \RSI\otimes\J_0\RSW$ &  
$\RSVW + \RSI \, \pscal{g}{\J_0\RSW}$ \\
\hlinespace
\RSWV  & $\RSWV\otimes\unit + \RSV\otimes\J_0\RSV$ &  
$\RSWV + \RSV \, \pscal{g}{\J_0\RSV}$ \\
\hlinespace 
\RSV $\X_i$ & $\RSV \X_i\otimes\unit + \RSV\otimes\X_i$ &  
$\RSV \X_i + \RSV \, \pscal{g}{\X_i}$ \\
\hlinespace 
\RSIW & $\RSIW\otimes\unit + \unit\otimes\J_0\RSW$ &  
$\RSIW + \unit \, \pscal{g}{\J_0\RSW}$ \\
\hlinespace 
\RSVV & $\RSVV\otimes\unit + \unit\otimes\J_0\RSV$ &  
$\RSVV + \RSI \, \pscal{g}{\J_0\RSV}$ \\
\hlinespace 
\RSWI & $\RSWI\otimes\unit + \RSV\otimes\J_0\RSI + \RSV\X_i\otimes\J_i\RSI$ &  
$\RSWI + \RSV \, \pscal{g}{\J_0\RSI} + \RSV\X_i \, \pscal{g}{\J_i\RSI}$ \\
\hlinespace 
 & $\phantom{\RSWI\otimes\unit} {}+{}\RSV\otimes\X_i\J_i\RSI$ &  
$\phantom{\RSWI} {}+{} \RSV \, \pscal{g}{\X_i\J_i\RSI}$ \\
\hlinespace
\RSY & $\RSY\otimes\unit + \unit\otimes\J_0\RSV$ &  
$\RSY + \unit \, \pscal{g}{\J_0\RSV}$ \\
\hlinespace
\RSVI & $\RSVI\otimes\unit + \RSI\otimes\J_0\RSI + \RSI\X_i\otimes\J_i\RSI + 
\RSI\otimes\X_i\J_i\RSI$ &  
$\RSVI + \RSI \, \pscal{g}{\J_0\RSI} + \RSI\X_i \, \pscal{g}{\J_i\RSI} + 
\RSI \, \pscal{g}{\X_i\J_i\RSI}$ \\
\hlinespace
$\X_i$ & $\X_i\otimes\unit + \unit\otimes\X_i$ & 
$\X_i + \unit\,\pscal{g}{\X_i}$ \\
\hlinespace
\RSII & $\RSII\otimes\unit + \unit\otimes\J_0\RSI + \X_i\otimes\J_i\RSI + 
\unit\otimes\X_i\J_i\RSI$ &  
$\RSII + \unit \, \pscal{g}{\J_0\RSI} + \X_i \, \pscal{g}{\J_i\RSI} + 
\unit \, \pscal{g}{\X_i\J_i\RSI}$ \\
\hline
\end{tabular}
\end{center}
\vspace{-2mm}
\caption[]{Elements of $\FAC$ of degree up to $\frac32$ on which the structure 
group has a non-trivial effect and their coproducts. Repeated indices $i$ imply 
summation over all values of $i$, and we have written $\J_i$ instead of 
$\J_{e_i}$.}
\label{tab:Gamma_AllenCahn}
\end{table}

The coproduct $\Delta^+$ is extended from $\Tbar$ to $T=\vspan(\FAC)$ by 
requiring that, in addition to~\eqref{eq:coproduct_X} and multiplicativity as 
in~\eqref{eq:coproduct_mult}, it satisfies 
\begin{align}
\label{eq:Delta_XI} 
\Delta^+(\Xi) &:= \Xi\otimes\unit\;, \\
\Delta^+(\I\tau) &:= (\I\otimes \Id)\Delta^+(\tau) 
+ \sum_{\ell, m} \frac{\X^\ell}{\ell!} \otimes \frac{\X^m}{m!} 
\J_{\ell+m}\tau\;.
\label{eq:Delta_Itau} 
\end{align}
The sum in the last equation is necessarily finite, because of the limitation 
on the degree in~\eqref{eq:def_FACplus}. Denoting by $T_+^*$ the set of 
linear maps $\tau\mapsto\pscal{h}{\tau}$ from $T_+=\vspan(\FAC^+)$ to $\R$, the 
structure group $\cG$ is given by the set of group-like elements in $T^*$, 
acting  on $T$ as 
\begin{equation}
\label{eq:def_Gamma_g} 
 (g,\tau) \mapsto \Gamma_g\tau := (\Id\otimes g)\Delta^+\tau\;.
\end{equation} 
Note that since $\pscal{g}{\unit}=1$ for any group-like $g$, 
\eqref{eq:Delta_XI} yields $\Gamma_g(\Xi) = \Xi$, so that this definition is 
compatible with the invariance of $\Xi$ required in~\eqref{eq:XI_invariant}. 
Furthermore, the invariance of $\RSI$, $\RSV$ and $\RSW$ required 
in~\eqref{eq:IXI_invariant} is satisfied, because 
\begin{equation}
 \Delta^+(\RSI) = \RSI\otimes\unit\;, \qquad 
 \Delta^+(\RSV) = \RSV\otimes\unit\;, \qquad 
 \Delta^+(\RSW) = \RSW\otimes\unit\;. 
\end{equation} 
Indeed, since $\RSI$ has negative degree, the sum in~\eqref{eq:Delta_Itau} is 
empty, and the other two identities follow from the product 
rule~\eqref{eq:coproduct_mult}.
A first non-trivial case is 
\begin{equation}
 \Delta^+(\RSIW) = (\I\otimes\Id)(\Delta^+(\RSW)) + \unit\otimes\J_0\RSW 
 = \RSIW \otimes \unit + \unit \otimes \J_0\RSW\;.
\end{equation}
Indeed, since $\abss{\RSW}+2 = \frac12 - 3\kappa$, the term $\ell=m=0$ is 
permitted in~\eqref{eq:Delta_Itau}. Substituting in~\eqref{eq:def_Gamma_g} 
yields 
\begin{equation}
\label{eq:Gamma_RSIW} 
 \Gamma_g \RSIW = \RSIW +  \unit\,\pscal{g}{\J_0\RSW}\;.
\end{equation} 
The additional term $ \unit\,\pscal{g}{\J_0\RSW}$ will be the one responsible 
for recentering. More examples of nontrivial structure group actions are listed 
in  \Cref{tab:Gamma_AllenCahn}. Note that the indempotency 
requirement~\eqref{eq:structure_group} is indeed always satisfied. 

\begin{exercise}
Check the expressions for $\Delta^+\tau$ and $\Gamma_g\tau$ listed in 
\Cref{tab:Gamma_AllenCahn}. What is the matrix representing $\Gamma_g$ in a 
basis given by the elements in \Cref{tab:F_AllenCahn}?
\end{exercise}

\begin{remark}
If $\FAC^+$ were equal to $\FAC$, the coproduct $\Delta^+$ would again endow 
$T$ with a Hopf-algebra structure. Since these sets are different, $T$ is now a 
comodule over $T_+ = \vspan(\FAC^+)$. However, $T_+$ can also be endowed with a 
Hopf algebra structure --- see~\cite[Section~8.1]{Hairer2014}. Strictly 
speaking, since $\FAC$ and $\FAC^+$ are not stable under multiplication, the 
comodule should rather be defined on $\vspan(\cF)$ and its analogue 
$\vspan(\cF^+)$, but this has no influence on 
the theory. 
\end{remark}

%%%%%%%%%%%%%%%%%%%%%%%%%%%%%%%%%%%%%%%%%%%%%%%%%%%%%%%%%%%%%%%%%%%%%%%%%%%%%%%%

\subsection{The renormalisation group}
\label{ssec:3d_renorm_group} 

The renormalisation group also acts on the model space $T$, but has a different 
purpose than the structure group. In the two-dimensional case, we have seen 
that Wick renormalisation, as appearing in~\eqref{eq:Wick_powers_limit}, allows 
to convert divergent stochastic integrals into convergent ones by subtracting 
appropriate terms. Here we will perform the same subtractions, but as we have 
seen in \Cref{sec:3d_Gibbs} and in~\eqref{eq:AC-3d-renorm}, it is necessary to 
introduce a second renormalisation constant. 

The second renormalisation constant $C_\delta^{(2)}$ is in fact related to the 
symbol $\RSWV$. This is because similarly to \eqref{eq:stoch_conv_integral}, 
this symbol represents the stochastic integral 
\begin{equation}
\label{eq:stoch_int_RSWV}
\begin{split}
 \Bigpar{P*(P_\delta*\xi)^2 (P_\delta*\xi)^2} (z)
 := \idotsint P(z-z_0) P_\delta(z_0-z_1) \xi(\6z_1)  P_\delta(z_0-z_2) 
\xi(\6z_2) \\
 \times P_\delta(z-z_3) \xi(\6z_3) P_\delta(z-z_4) \xi(\6z_4) \6z_0\;.
\end{split}
\end{equation} 
By Isserlis' theorem, we formally have 
\begin{align}
 \bigexpec{\xi(\6z_1)\xi(\6z_2)\xi(\6z_3)\xi(\6z_4)}
 ={}& \delta(z_1-z_2)\delta(z_3-z_4)\6z_1\6z_3 \\ 
 &{}+ \delta(z_1-z_3)\delta(z_2-z_4)\6z_1\6z_2 \\
 &{}+ \delta(z_1-z_4)\delta(z_2-z_3)\6z_1\6z_2\;.
\end{align} 
The expectation of~\eqref{eq:stoch_int_RSWV} is thus given by the sum of three 
integrals. The first one actually vanishes for symmetry reasons, while the last 
two integrals are equal, and given by 
\begin{equation}
\label{eq:Pz1z2z3} 
 \iiint P(z-z_0) 
P_\delta(z_0-z_1)P_\delta(z-z_1)P_\delta(z_0-z_2)P_\delta(z-z_2) 
\6z_0\6z_1\6z_2\;.
\end{equation} 
We can then use the identities~\eqref{eq:Pdelta_convolution} from the proof of 
\Cref{prop:asymp_Cdelta} to obtain 
\begin{align}
\int P_\delta(z_0-z_1)P_\delta(z-z_1)\6z_1 
&= \int_{-\infty}^\infty\int_\Lambda 
P_\delta(t_0-t_1,x_0-x_1)P_\delta(t-t_1,x-x_1)\6x_1\6t_1 \\
&= \int_{-\infty}^\infty \tilde P_\delta(t_0+t-2t_1,x_0-x)\6t_1 
= -\frac12 \tilde G_\delta(x_0-x)\;.
\end{align}
Substituting in~\eqref{eq:Pz1z2z3} and changing variables from $x-x_0$ to 
$-x_0$, we thus obtain 
\begin{equation}
\label{eq:def_Cdelta2} 
 C_\delta^{(2)} = \Bigexpec{\RSWV_\delta} 
 = 2 \biggpar{-\frac12}^2 \int_{-\infty}^\infty\int_\Lambda P(t-t_0,-x_0) 
\tilde G_\delta(-x_0)^2\6x_0\6t_0 
 = \frac12 \int_\Lambda \tilde G_\delta(x_0)^3 \6x_0 
 = \frac12 \FDCthree{vedge}{vedge}{vedge}\;.
\end{equation} 
Note that graphically, this amounts to pairing leaves of $\RSWV$, which 
represent the noise. 

The renormalisation group will primarily concern the elements of $T$ having 
negative degree, namely those in the set
\begin{equation}
 \cF_- = \bigset{\RSI, \RSV, \RSW, \RSWV, \RSWW, \RSVW}\;.
\end{equation} 
Formally, an element $M^\delta$ of the renormalisation group is defined as 
\begin{equation}
 M^\delta := \exp\Bigset{-C_\delta^{(1)}L_1 - C_\delta^{(2)}L_2}\;, 
\end{equation} 
where $L_1$ and $L_2$ represent the substitutions 
\begin{equation}
 L_1: \RSV \mapsto \unit\;, 
 \qquad 
 L_2: \RSWV \mapsto \unit\;.
\end{equation} 
The understanding is that each substitution is applied as often as possible, so 
that $L_1(\RSW) = 3\RSI$, because there are three ways to extract $\RSV$ from 
$\RSW$, each one leaving exactly $\RSI$, which is exactly what happens 
in Wick renormalisation, cf.~\eqref{eq:Wick_powers_limit}.
Applying $M^\delta$ to the elements of $\cF_-$, we get 
% $M^\delta(\RSI) = \RSI$ and 
\begin{align}
M^\delta(\RSI) &= \RSI \\
M^\delta(\RSV) &= \RSV - C_\delta^{(1)}\unit\;, \\
M^\delta(\RSW) &= \RSW - 3C_\delta^{(1)}\RSI\;, \\
M^\delta(\RSWV) &= \RSWV -C_\delta^{(1)}\RSY -C_\delta^{(2)}\unit\;, \\
M^\delta(\RSWW) &= \RSWW - 3C_\delta^{(1)}\RSWI - C_\delta^{(1)}\RSIW + 
3\Bigpar{C_\delta^{(1)}}^2\RSII - 3C_\delta^{(2)}\RSI\;, \\
M^\delta(\RSVW) &= \RSVW - 3C_\delta^{(1)}\RSVI\;.
% M(\RSV X_i) &= \RSV X_i - C_1 X_i\;.
\label{eq:M-AC} 
\end{align}
Observe that now, the idempotency results from the fact that all additional 
terms have a \emph{higher} degree than the original term. 

\enlargethispage{\baselineskip}

\begin{remark}
The renormalisation group can also be introduced in a more abstract way, via a 
coproduct $\Delta^-$ and an associated Hopf algebra. See in 
particular~\cite{BrunedHairerZambotti,ChandraHairer16}. In some sense, this 
indicates that recentering and renormalisation are \lq\lq dual\rq\rq\ 
procedures. 
\end{remark}

%%%%%%%%%%%%%%%%%%%%%%%%%%%%%%%%%%%%%%%%%%%%%%%%%%%%%%%%%%%%%%%%%%%%%%%%%%%%%%%%

\section{Regularity structures: analytic aspects}
\label{sec:3d_regstruc_analytic} 

Now that the abstract algebraic framework is in place, we have to complete it 
with analytic objects, describing the different maps in 
the commutative diagram ~\eqref{eq:RS_diagram}. 

%%%%%%%%%%%%%%%%%%%%%%%%%%%%%%%%%%%%%%%%%%%%%%%%%%%%%%%%%%%%%%%%%%%%%%%%%%%%%%%%

\subsection{Models}
\label{ssec:3d_models} 

We describe in this section the map denoted $\Psi$ in the 
diagram~\eqref{eq:RS_diagram}, which associates to a realisation of (mollified) 
space-time white noise a collection $Z^\delta=(\Pi^\delta,\Gamma^\delta)$ 
of objects called a \emph{model}. The following definition is 
\cite[Definition~2.17]{Hairer2014} particularised to the parabolic scaling 
$\fraks=(2,1,1,1)$, where we denote space-time points by 
$z=(t,x)\in\R\times\R^3\simeq\R^4$.  

\begin{definition}[{Model}]
\label{def:model} 
A \defwd{model} for a regularity structure $(A,T,\cG)$ is
a pair $Z=(\Pi,\Gamma)$, defined by a collection
$\set{\Pi_z:T\to\cS'(\R^4)}_{z\in\R^4}$ of continuous linear maps and a
map $\Gamma:\R^4\times\R^4\to \cG$ with the following properties. 
\begin{enumerate}
\item 	$\Gamma_{zz} = \Id$ is the identity of
$\cG$ and
$\Gamma_{zz'}\Gamma_{z'z''}=\Gamma_{zz''}$ for
all $z,z',z''\in\R^4$. 

\item 	$\Pi_{z'}=\Pi_z \Gamma_{zz'}$ for all $z,z'\in\R^4$. 

\item 	Let $r = \intpartplus{-\inf A}$.
For any $\gamma\in\R$ and any compact set
$\fraK\subset{\R^4}$, one has 
\begin{equation}
 \label{eq:def_normPi}
 \norm{\Pi}_{\gamma;\fraK} := 
 \sup_{z\in\fraK}~ \sup_{\alpha<\gamma} ~\sup_{\tau\in T_\alpha}~
\sup_{\ph\in B_r}~\sup_{0<\lambda\leqs1}
 \frac{\bigabs{\pscal{\Pi_z\tau}{\cS^\lambda_{z}\ph}}}{\lambda^{
\alpha} \norm{\tau}_\alpha} 
 < \infty\;.
\end{equation}

\item 	For any $\gamma\in\R$ and any compact set
$\fraK\subset{\R^4}$, one has 
\begin{equation}
 \label{eq:def_normGamma}
 \norm{\Gamma}_{\gamma;\fraK} := 
 \sup_{z,z'\in\fraK} ~\sup_{\alpha<\gamma}~
\sup_{\beta<\alpha}~\sup_{\tau\in T_\alpha}
 \frac{\norm{\Gamma_{zz'}\tau}_\beta}
 {\norm{\tau}_\alpha\,\norm{z-z'}^{\alpha-\beta}_{\fraks}} < \infty\;.
\end{equation} 
\end{enumerate}
\end{definition}

\goodbreak

We recall that the set $B_r$ occurring in~\eqref{eq:def_normPi} is the set of 
smooth test functions supported in the unit $\norm{\cdot}_\fraks$-ball and of 
unit $\cC^r$-norm, already encountered in \Cref{def:Holder-negative}. The norm 
$\norm{\tau}_\alpha$ in~\eqref{eq:def_normPi} and~\eqref{eq:def_normGamma} is 
any norm on the finite-dimensional vector space $T_\alpha$. 

The requirement~\eqref{eq:def_normPi} essentially asks that $\Pi_z \tau$ should 
belong to $\cC^\alpha_\fraks$ whenever $\tau\in T_\alpha$. There is a slight 
difference, however, since the bound is only required to hold when zooming in 
at the particular point $z$ where the model is evaluated. 

An important feature of the theory is that the definition allows for different 
models for a given regularity structure. This will become important when 
considering renormalisation. A particular role is played by the \emph{canonical 
model} $Z^\delta=(\Pi^\delta,\Gamma^\delta)$ for mollified noise $\xi^\delta = 
\varrho^\delta*\xi$, which is defined by 
\begin{align}
(\Pi^\delta_z\Xi)(\bar z) &:= \xi^\delta(\bar z)\;, \\
(\Pi^\delta_z \X^k)(\bar z) &:= (\bar z-z)^k
&&\forall k\in\N_0^4\;, \\
(\Pi^\delta_z \tau_1\tau_2)(\bar z) &:= (\Pi^\delta_z \tau_1)(\bar 
z)(\Pi^\delta_z \tau_2)(\bar z)
&&\forall\tau_1,\tau_2\in T\;. 
\label{eq:model_product} 
\end{align}
Note that for polynomial symbols $\X^k$, this definition is exactly as 
in~\eqref{eq:Pi_poly}. 

The tricky part is again to extend this to elements of the form $\I(\tau)$. 
This is done in~\cite[Section~5]{Hairer2014} via a decomposition 
\begin{equation}
\label{eq:decomp_P} 
 P(t,x) = K(t,x) + R(t,x)
\end{equation} 
of the heat kernel $P$ into a smooth part $R$, and a singular part $K$ with 
special algebraic properties. These properties are as follows:
\begin{itemize}
\item 	$K$ is supported in the set $\set{\abs{x}^2 + \abs{t} \leqs 1}$ where 
$\abs{x}=\sum_{j=1}^3\abs{x_j}$;
\item 	$K(t,x)=0$ for $t\leqs0$ and $K(t,-x)=K(t,x)$ for all $(t,x)$;
\item we have	
\begin{equation}
 \label{eq:K_heat}
 K(t,x) = \frac{1}{\abs{4\pi t}^{3/2}} \e^{-\abs{x}^2/(4t)}
 \qquad
 \text{for $\abs{x}^2+\abs{t} \leqs \frac12$}
\end{equation}
and $K(t,x)$ is smooth for $\abs{x}^2+\abs{t} > \frac12$;
\item finally, the integral of $K$ against any polynomial of parabolic 
degree less or equal some fixed $\zeta\geqs2$ vanishes; this last condition is 
for compatibility with the non-redundancy condition $\I(\X^k)=0$. 
\end{itemize}
See~\cite[Lemma~5.5]{Hairer2014} for a proof that such a decomposition is 
indeed possible. This lemma also shows that $K$ can be decomposed, as 
in~\eqref{eq:P_sum}, as a sum of kernels $K_n$, where each $K_n$ is supported 
in a ball of radius $2^{-n}$, and has $k$th derivatives bounded by 
$C2^{(1+\abss{k})n}$, as in~\eqref{eq:Pn_norm}. 
Then the construction of the canonical model is completed by requiring that 
\begin{equation}
 \label{eq:Pi_xI} 
(\Pi^\delta_z \I\tau)(\bar z) := 
\pscal{\Pi^\delta_z \tau}{K(\bar z-\cdot)}
+ \sum_\ell \frac{(\bar z-z)^\ell}{\ell!} \pscal{f^\delta_z}{\J_\ell\tau}
\qquad\forall\tau\in T\;,
\end{equation} 
where the $f^\delta_z\in T_+^*$ are linear forms defined in~\eqref{eq:f_x} 
below. This definition makes sense, even if $\Pi^\delta_z \tau$ is replaced by a 
genuine distribution $\Pi_z \tau$, by interpreting~\eqref{eq:Pi_xI} as a sum 
over all $K_n$, each of which is tested against $\Pi_z \tau$. 

As suggested by the construction of the structure group in 
\Cref{ssec:3d_structure_group}, the linear forms $f^\delta_z$ will be used to 
construct the elements $\Gamma^\delta_{zz'}$ of the model. They are defined by  
\begin{align}
\pscal{f^\delta_z}{\unit} &:= 1\;, \\
\pscal{f^\delta_z}{\X_i} &:= -z_i\;, \\
\pscal{f^\delta_z}{\tau_1\tau_2} &:= \pscal{f^\delta_z}{\tau_1} 
\pscal{f^\delta_z}{\bar\tau_2} 
&&\forall\tau_1,\tau_2\in T\;, \\
\pscal{f^\delta_z}{\J_\ell\tau} &:= 
- \pscal{\Pi^\delta_z\tau}{\DD^\ell K(z-\cdot)}
&&\forall\tau\in T\;.
\label{eq:f_x} 
\end{align}
The maps $\Gamma^\delta_{zz'}$ of the canonical model are then defined, as 
in~\eqref{eq:def_Gamma_g}, by
\begin{equation}
 \label{eq:Gamma_F}
 \Gamma^\delta_{zz'} := (F^\delta_z)^{-1} F^\delta_{z'}
 \qquad 
 \text{where 
 $F^\delta_z := \Gamma_{f^\delta_z} = (\Id\otimes f^\delta_z)\Delta^+$\;.}
\end{equation} 
Note that the first property of \Cref{def:model} is automatically satisfied. 
To check the second property, let $\gamma_{zz'}$ denote the element of the 
structure group such that $\Gamma_{zz'} = \Gamma_{\gamma_{zz'}}$. 
Writing $\Delta^+\tau = \smash{\tau^{(1)}\otimes\tau^{(2)}}$ in Sweedler's 
notation, we have by~\eqref{eq:def_Gamma_g}
\begin{equation}
 \label{eq:Gamma_F01}
 \Pi^\delta_z \Gamma^\delta_{zz'}\tau = \Pi^\delta_z 
(\Id\otimes\gamma^\delta_{zz'})\Delta^+\tau
 = \Pi^\delta_z\tau^{(1)} \pscal{\gamma^\delta_{zz'}}{\tau^{(2)}}\;.
\end{equation} 
The second property of \Cref{def:model} thus amounts to the relation
\begin{equation}
 \label{eq:Gamma_F02}
 \Pi^\delta_{z'}\tau = \Pi^\delta_z\tau^{(1)} \pscal{\gamma^\delta_{z\bar 
z}}{\tau^{(2)}}\;,
\end{equation} 
which provides some intuition for the meaning of $\Delta^+$. The fact that
$(\Pi^\delta,\Gamma^\delta)$ is indeed a model is proved 
in~\cite[Proposition~8.27]{Hairer2014}. 

\begin{exercise}\hfill
\label{exo:model} 
\begin{itemize}
\item 	Check that in the case $\tau=\X_i$, the definition~\eqref{eq:f_x} is 
compatible with the expressions~\eqref{eq:Gamma_poly} of the $\Gamma_{xy}$ 
introduced in the one-dimensional polynomial case. 

\item 	Compute $(\Pi^\delta_z \RSIW)(\bar z)$. 
Compare with~\eqref{eq:proof_Schauder_bound2} and interpret the result. 

\item 	Compute $F^\delta_z \RSIW$. Representing this in the basis 
$(\unit,\RSIW)$, compute its inverse, and determine a function $\chi^\delta$ 
such that 
\begin{equation}
 \Gamma^\delta_{zz'} \RSIW
 = \RSIW + \bigbrak{\chi^\delta(z) - \chi^\delta(z')}\unit\;.
\end{equation} 
What is the associated expression for $\gamma^\delta_{zz'}$? Compare 
with~\eqref{eq:Gamma_RSIW}. 
\qedhere
\end{itemize}
\end{exercise}

\begin{remark}
The only rule that we are going to bend for general models is the product 
rule~\eqref{eq:model_product}. For instance, in the case of Wick 
renormalisation, the model for $\RSV$ is different from the product of two 
models for $\RSI$, owing to the subtraction a counterterm. 
\end{remark}

%%%%%%%%%%%%%%%%%%%%%%%%%%%%%%%%%%%%%%%%%%%%%%%%%%%%%%%%%%%%%%%%%%%%%%%%%%%%%%%%

\subsection{Modelled distributions}
\label{ssec:3d_modeled_distributions} 

In this section, we describe the space in which the fixed-point equation 
defining the solution map $\cS$ lives, which appears in the upper part of the 
commutative diagram~\eqref{eq:RS_diagram}. The following definition 
is a particularisation of~\cite[Definition~3.1]{Hairer2014}. 

\begin{definition}[Modelled distributions]
\label{def:D_gamma}
Let $\gamma\in\R$. 
The space of modelled distributions $\cD^\gamma$ consists of all function 
$f:\R^4\to T_\gamma^-$ such that for every compact set 
$\fraK\subset\R^4$ one has 
\begin{equation}
 \label{eq:D_gamma}
 \normDgamma{f}_{\gamma;\fraK} := 
 \sup_{z\in\fraK}~ \sup_{\beta < \gamma} \norm{f(z)}_\beta 
 + \sup_{\substack{z,z'\in\fraK\\ 
 \norm{z-z'}_{\fraks}\leqs1}}
 \sup_{\beta<\gamma}\frac{\norm{f(z)-\Gamma_{zz'}f(z')}_\beta}
 {\norm{z-z'}^{\gamma-\beta}_{\fraks}} < \infty\;.
\end{equation}
\end{definition}

Here $\norm{f(z)}_\beta$ denotes the norm of the projection of $f(z)$ on 
$T_\beta$. A modelled distribution can thus be written as a Taylor-series-like 
expression 
\begin{equation}
\label{eq:f_expansion} 
 f(z) = \sum_{\alpha < \gamma} \sum_{\tau\in T_\alpha} f_\tau(z) \tau\;,
\end{equation} 
where the projection of $f(z)-\Gamma_{zz'}f(z')$ on $T_\beta$ satisfies a 
H\"older condition of exponent $\gamma - \beta$. This is indeed a 
generalisation of~\eqref{eq:f_poly}, while the condition~\eqref{eq:D_gamma} is 
of the same form as the condition~\eqref{eq:Dalpha_poly} that ensured that $f$ 
belong to a given H\"older space. One important difference between expansions of 
the form~\eqref{eq:f_expansion} and usual Taylor expansions is that the \lq\lq 
basis vectors\rq\rq\ $\tau$ in~\eqref{eq:f_expansion} can be associated with 
very irregular distributions, while Taylor expansions use monomials which are 
analytic functions. One should thus distinguish between the \emph{regularity} 
$\alpha_0=\inf A$ of the expansion~\eqref{eq:f_expansion}, and the H\"older 
exponent $\gamma$, which in applications will always be larger than $\alpha_0$ 
(and, in fact, positive). 

It may happen that the smallest value of $\alpha$ occurring in the 
expansion~\eqref{eq:f_expansion} is some degree $\alpha_1 > \alpha_0$. In that 
case, we say that $f$ has regularity $\alpha_1$ (or that it is defined on a 
\emph{sector} $V\subset T$ of regularity $\alpha_1$). This is important, in 
particular, for the following result, which corresponds 
to~\cite[Theorem~4.7]{Hairer2014}. 

\begin{theorem}[Multiplication of modelled distributions]
\label{thm:mult_Dgamma} 
Let $f_1\in\cD^{\gamma_1}$ and $f_2\in\cD^{\gamma_2}$ be modelled distributions 
of respective regularity $\alpha_1\leqs0$ and $\alpha_2\leqs0$, and let 
$\gamma=(\gamma_1+\alpha_1)\wedge(\gamma_2+\alpha_2)$. Then one can define a 
modelled distribution $f_1 f_2 \in \cD^{\gamma}$ of regularity 
$\alpha_1+\alpha_2$, which satisfies 
\begin{equation}
 \normDgamma{f_1 f_2}_{\gamma;\fraK}
 \lesssim \normDgamma{f_1}_{\gamma_1;\fraK}
 \normDgamma{f_2}_{\gamma_2;\fraK}
 \Bigpar{1+\norm{\Gamma_{\gamma_1+\gamma_2;\fraK}}}^2\;.
\end{equation} 
for every compact set $\fraK\subset\R^4$. 
The product $f_1f_2$ is obtained by multiplying formally the expansions of 
$f_1$ and $f_2$ and truncating them to order $\gamma$. 
\end{theorem}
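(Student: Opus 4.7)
The plan is to define the product pointwise as
\[
(f_1 f_2)(z) := Q_{<\gamma}\bigl(f_1(z)\,f_2(z)\bigr),
\]
where multiplication of $T$-valued expressions is performed in the graded algebra $T$ and $Q_{<\gamma}$ denotes the projection onto $T_\gamma^- = \bigoplus_{\beta<\gamma}T_\beta$. The assertion that $f_1 f_2$ has regularity $\alpha_1+\alpha_2$ is then immediate from the fact that $\tau\sigma\in T_{\abss{\tau}+\abss{\sigma}}$ and every term appearing in the expansion~\eqref{eq:f_expansion} of $f_i$ has degree at least $\alpha_i$. For the pointwise part $\sup_{z\in\fraK}\sup_{\beta<\gamma}\norm{(f_1f_2)(z)}_\beta$ of the norm~\eqref{eq:D_gamma}, I would expand both factors in a basis, collect all pairs $(\tau,\sigma)$ with $\tau\sigma\in T_\beta$, and observe that $\alpha_1\leqs\abss{\tau}\leqs\beta-\alpha_2$ (and symmetrically for $\sigma$), so that the sum runs over a finite index set on which the coefficients are controlled by $\normDgamma{f_i}_{\gamma_i;\fraK}$.

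The substantive step is the two-point estimate on $(f_1f_2)(z)-\Gamma_{zz'}(f_1f_2)(z')$. The key algebraic input, which holds exactly because the structure-group elements are group-like for the multiplicative coproduct $\Delta^+$ (cf.~\eqref{eq:coproduct_mult} extended through~\eqref{eq:Delta_XI}--\eqref{eq:Delta_Itau}), is the identity
\[
\Gamma_{zz'}(\tau\sigma) = (\Gamma_{zz'}\tau)(\Gamma_{zz'}\sigma) \qquad \forall\,\tau,\sigma\in T.
\]
Given this, I would apply the telescoping decomposition
\[
f_1(z)f_2(z) - \Gamma_{zz'}\bigl(f_1(z')f_2(z')\bigr) = \bigl[f_1(z)-\Gamma_{zz'}f_1(z')\bigr]\,f_2(z) + \bigl(\Gamma_{zz'}f_1(z')\bigr)\bigl[f_2(z)-\Gamma_{zz'}f_2(z')\bigr]
\]
and bound each summand slice by slice. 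In the first summand, the $\beta$-component is a finite sum over $\abss{\tau}+\abss{\sigma}=\beta$ of the increment of $f_{1,\tau}$ (bounded by $\norm{z-z'}_\fraks^{\gamma_1-\abss{\tau}}\normDgamma{f_1}_{\gamma_1;\fraK}$) times a value of $f_{2,\sigma}$ (bounded by $\normDgamma{f_2}_{\gamma_2;\fraK}$); since $\abss{\sigma}\geqs\alpha_2$ forces $\abss{\tau}\leqs\beta-\alpha_2$, we get the exponent $\gamma_1-\abss{\tau}\geqs\gamma-\beta$ as required. The second summand is handled symmetrically, using the bound on $\norm{\Gamma}_{\gamma_1+\gamma_2;\fraK}$ from~\eqref{eq:def_normGamma} to control the norm of $\Gamma_{zz'}f_1(z')$ in terms of $\normDgamma{f_1}_{\gamma_1;\fraK}$, and accounts for the squared factor $(1+\norm{\Gamma}_{\gamma_1+\gamma_2;\fraK})^2$ in the claimed estimate (one factor from each occurrence of $\Gamma_{zz'}$ being absorbed into one of the two factors).

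The main obstacle is the careful bookkeeping around the truncation $Q_{<\gamma}$: it does not literally commute with either the product or with $\Gamma_{zz'}$, and one has to verify that the discrepancies are harmless. The cleanest way is to observe that $\Gamma_{zz'}-\Id$ strictly decreases degree (by~\eqref{eq:structure_group}), so that $Q_{<\gamma}$ commutes with $\Gamma_{zz'}$ on elements of $T_\gamma^-$, and the only extra terms one generates when writing $Q_{<\gamma}(f_1(z)f_2(z)) = f_1(z)f_2(z) - Q_{\geqs\gamma}(f_1(z)f_2(z))$ are of total degree $\geqs\gamma$, hence can be absorbed. Once this is settled, the bound follows by taking the supremum of the slice-wise estimates over $\beta<\gamma$ and $z,z'\in\fraK$ with $\norm{z-z'}_\fraks\leqs1$.
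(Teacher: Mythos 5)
The paper itself gives no proof of this statement --- it is imported verbatim from \cite[Theorem~4.7]{Hairer2014} --- and your sketch is essentially the standard proof of that theorem: truncated algebra product, multiplicativity of the structure group on products, the telescoping decomposition of the two-point increment, and absorption of the truncation errors using that the component of $\Gamma_{zz'}\tau$ at a level $\beta<\abss{\tau}$ is bounded by $\norm{\Gamma}\,\norm{z-z'}_\fraks^{\abss{\tau}-\beta}$ together with $\norm{z-z'}_\fraks\leqs 1$. The approach is the right one and the outline is sound. One point deserves attention, though: your slice-wise estimates actually establish the H\"older exponent $\gamma-\beta$ only under the condition $\gamma\leqs(\gamma_1+\alpha_2)\wedge(\gamma_2+\alpha_1)$. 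Indeed, in your first summand you need $\gamma_1-\abss{\tau}\geqs\gamma-\beta$ with $\abss{\tau}\leqs\beta-\alpha_2$, which gives $\gamma\leqs\gamma_1+\alpha_2$, and the symmetric bound for the second summand (where $\Gamma_{zz'}f_1(z')$ still has regularity $\alpha_1$, since sectors are $\cG$-invariant) gives $\gamma\leqs\gamma_2+\alpha_1$. This cross-paired value is the one appearing in Hairer's Theorem~4.7; the statement as printed here pairs the indices the other way, $(\gamma_1+\alpha_1)\wedge(\gamma_2+\alpha_2)$, which is a typo --- harmless in the application to $\Phi^3$ where $f_1=f_2$ and the two expressions coincide, but in asymmetric cases the printed $\gamma$ can exceed what the product actually achieves. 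Your \lq\lq as required\rq\rq\ glosses over this mismatch, so be explicit that your computation delivers the cross-term exponent. A minor further remark: the identity $\Gamma(\tau\sigma)=(\Gamma\tau)(\Gamma\sigma)$ is, in the general framework, a hypothesis on the pair of sectors ($\gamma$-regularity of the product map); you correctly observe that in the Hopf-algebraic construction of this paper it follows from group-likeness~\eqref{eq:group_like} and the multiplicativity of $\Delta^+$, and that is exactly where this hypothesis enters.
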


Note that in \Cref{def:D_gamma}, $\cD^\gamma$ depends on the $\Gamma$ of the 
chosen model, so that it should be written $\cD^\gamma(\Gamma)$ in case one 
works with several models, as will be the case when dealing with 
renormalisation. In order to compare elements $f\in\cD^\gamma(\Gamma)$ and 
$\bar f\in\cD^\gamma(\Gammabar)$, we introduce the quantities 
\begin{align}
 \label{eq:D_gamma_normf}
 \norm{f-\bar f}_{\gamma;\fraK} 
 &:= \sup_{z\in\fraK}~ \sup_{\beta < \gamma}
 ~\norm{f(z) - \bar f(z)}_\beta\;, \\
 \label{eq:D_gamma_seminorm}
 \seminormff{f}{\bar f}_{\gamma;\fraK}
 &:= \norm{f-\bar f}_{\gamma;\fraK} 
 + \sup_{\substack{z,z'\in\fraK \\ \norm{z-z'}_{\fraks}\leqs1}}
 \sup_{\beta < \gamma}
 \frac{\norm{f(z) - \bar f(z) - \Gamma_{zz'}f(z') +
\Gammabar_{zz'}\bar f(z')}_\beta}{\norm{z-z'}_{\fraks}^{\gamma-\beta}}\;.
\end{align}
The reason for the semicolon in the last expression is that it is not a 
function of $f-\bar f$, and hence is not symmetric under exchange of $f$ and 
$\bar f$. The quantity $\seminormff{\cdot}{\cdot}_{\gamma;\fraK}$ is thus 
not strictly speaking a norm, though it has similar properties for all 
practical purposes.  

%%%%%%%%%%%%%%%%%%%%%%%%%%%%%%%%%%%%%%%%%%%%%%%%%%%%%%%%%%%%%%%%%%%%%%%%%%%%%%%%

\subsection{The reconstruction theorem}
\label{ssec:3d_reconstruction} 

A key result of the theory of regularity structures is the reconstruction 
theorem, which states the existence of a map $\cR$ from a space $\cD^\gamma$ of 
modelled distributions to a \lq\lq classical\rq\rq\ H\"older space. This map 
appears on the right-hand side of the commutative 
diagram~\eqref{eq:RS_diagram}. 
The following result is part of~\cite[Theorem~3.10]{Hairer2014}. 

\begin{theorem}[Reconstruction theorem]
\label{thm:reconstruction}
Let $(A,T,\cG)$ be a regularity structure, let $\alpha_0=\inf A$, and  
fix $r>\abs{\alpha_0}$ as well as a model $(\Pi,\Gamma)$. Then for every 
$\gamma\in\R$, there exists a continuous map 
$\cR:\cD^\gamma\to\cC^{\alpha_0}_\fraks$ such that for every compact set 
$\fraK\subset\R^4$,  
\begin{equation}
\label{eq:reconstruction} 
 \bigabs{\pscal{\cR f-\Pi_zf(z)}{\cS^\lambda_z\ph}} 
 \lesssim \lambda^\gamma \norm{\Pi}_{\gamma;\bar{\fraK}}
 \normDgamma{f}_{\gamma;\bar{\fraK}}
\end{equation} 
holds uniformly over all test functions $\ph\in B_r$, all $\lambda\in(0,1]$, all 
$f\in\cD^\gamma$  and all $z\in\fraK$. Here $\bar\fraK$ is the $1$-fattening of 
$\fraK$, that is, the set of points at $\norm{\cdot}_\fraks$-distance at most 
$1$ from $\fraK$. 

\noindent
Furthermore, if $\gamma>0$, then $\cR f$ is unique. 
\end{theorem}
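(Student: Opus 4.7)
The plan is to construct $\cR f$ as the limit of a sequence of smoothed approximations defined at successively finer dyadic scales, and then to verify the bound~\eqref{eq:reconstruction} and uniqueness separately.

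First, I would fix a compactly supported wavelet basis of $L^2(\R^4)$ adapted to the parabolic scaling $\fraks$: a \emph{father} wavelet $\varphi$ supported in a parabolic ball, together with finitely many \emph{mother} wavelets $\psi^{(i)}$, all with at least $r$ continuous derivatives (with $r>\abs{\alpha_0}$). For $n\in\N_0$ and $z$ on the dyadic grid $\Lambda_n = 2^{-2n}\Z\times (2^{-n}\Z)^3$, set $\varphi^n_z := \cS^{2^{-n}}_z\varphi$ (analogously for $\psi^{(i),n}_z$). Then define the $n$th approximation
\begin{equation}
 \cR^n f := \sum_{z\in\Lambda_n} \pscal{\Pi_z f(z)}{\varphi^n_z}\,\varphi^n_z\;.
\end{equation}
The key point is that the coefficient at scale $n$ at site $z$ is computed using the \emph{local} recentering $f(z)$, so that the ambiguity in how $\Pi$ acts on individual symbols is resolved in a coherent way.

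Next I would show the sequence $(\cR^n f)_{n\geqs 0}$ is Cauchy in $\cC^{\alpha_0}_\fraks(\fraK)$ for every compact $\fraK$. Expanding $\varphi^n_z$ in the finer basis at scale $n+1$ produces a telescoping expression in which the increments take the form $\pscal{\Pi_z f(z) - \Pi_{z'} f(z')}{\psi^{(i),n+1}_{z'}}$ for neighbouring points $z,z'\in\Lambda_n\cup\Lambda_{n+1}$. Using property~(ii) of \Cref{def:model} we rewrite
\begin{equation}
\Pi_z f(z) - \Pi_{z'}f(z') = \Pi_z\bigbrak{f(z) - \Gamma_{zz'}f(z')}\;,
\end{equation}
and then combine the bound on $\Pi_z$ from~\eqref{eq:def_normPi} (giving a factor $2^{-n\beta}$ for the component in $T_\beta$) with the bound on $f(z)-\Gamma_{zz'}f(z')$ from~\eqref{eq:D_gamma} (giving $\norm{z-z'}^{\gamma-\beta}_\fraks\lesssim 2^{-n(\gamma-\beta)}$). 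Summing over the finitely many $\beta<\gamma$ appearing in the expansion, the two exponents combine to $2^{-n\gamma}$; summing over the $\Order{2^{4n}}$ points of $\Lambda_n$ inside $\fraK$ and accounting for the $L^\infty$-normalisation $\norm{\varphi^n_z}_\infty\lesssim 2^{4n\cdot\alpha_0/\ldots}$ of the wavelets (appropriately scaled by the parabolic scaling), one verifies that the difference $\cR^{n+1}f-\cR^n f$ is controlled, when tested against $\cS^\lambda_z\phi$ with $\lambda\sim 2^{-m}$, $m\leqs n$, by $2^{-n(\gamma-\alpha_0)}\lambda^{\alpha_0}$. Since $\gamma>\alpha_0$ this is summable in $n$, giving both convergence of $\cR^n f$ in $\cC^{\alpha_0}_\fraks$ and the bound~\eqref{eq:reconstruction} for the limit $\cR f$.

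For the uniqueness statement when $\gamma>0$, suppose $\zeta_1,\zeta_2\in\cC^{\alpha_0}_\fraks$ both satisfy~\eqref{eq:reconstruction}. Then $\zeta := \zeta_1-\zeta_2$ satisfies $\bigabs{\pscal{\zeta}{\cS^\lambda_z\phi}}\lesssim \lambda^\gamma$ uniformly in $z\in\fraK$, $\phi\in B_r$, $\lambda\in(0,1]$. Since $\gamma>0$, letting $\lambda\to 0$ shows that $\zeta$ integrated against $\cS^\lambda_z\phi$ tends to $0$; by a standard partition-of-unity / density argument (testing against an arbitrary smooth compactly supported $\psi$ decomposed as a sum of translated and scaled test functions at scale $\lambda$ and then sending $\lambda\to 0$), this forces $\pscal{\zeta}{\psi}=0$ for every smooth $\psi$, hence $\zeta=0$.

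The main obstacle will be the Cauchy estimate in the second step: one must carefully track how the wavelet scaling, the scaling of $\Pi_z\tau$ on each $T_\alpha$, and the H\"older behaviour of $f(z)-\Gamma_{zz'}f(z')$ combine across the finitely many $\alpha$'s with $\alpha<\gamma$, and check that the resulting geometric series is summable precisely because $\gamma>\alpha_0$. Once this balance is in place, the construction is essentially deterministic; the regularity structure axioms have been designed so that this particular cancellation works.
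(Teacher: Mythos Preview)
The paper does not actually prove this theorem: immediately after the statement it only remarks that the proof in~\cite{Hairer2014} is based on wavelet analysis, and mentions the alternative heat-kernel proof of~\cite{Otto_Weber_2016}. Your outline is precisely the wavelet construction from~\cite{Hairer2014}, so in that sense it is aligned with what the paper points to rather than diverging from it.

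As a sketch, the structure is correct: defining $\cR^n f$ with the locally recentered coefficients $\pscal{\Pi_z f(z)}{\varphi^n_z}$, rewriting increments via $\Pi_z f(z)-\Pi_{z'}f(z')=\Pi_z\bigbrak{f(z)-\Gamma_{zz'}f(z')}$, and combining the model bound~\eqref{eq:def_normPi} with the $\cD^\gamma$ bound~\eqref{eq:D_gamma} is exactly the mechanism. A couple of places would need tightening before this becomes a proof. First, your scaling factor ``$2^{4n\cdot\alpha_0/\ldots}$'' is left unfinished; what one actually uses is the $L^2$-normalised wavelets $2^{-5n/2}\varphi^n_z$ (with $5=\abs{\fraks}$ the scaled dimension), and the distinction between the two regimes $2^{-n}\leqs\lambda$ and $2^{-n}>\lambda$ has to be handled separately, the first via the $\cC^r$ regularity of the mother wavelets (this is where $r>\abs{\alpha_0}$ enters) and the second by summing a geometric series. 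Second, the claim that the series is summable ``precisely because $\gamma>\alpha_0$'' conflates two things: summability in $n$ for the Cauchy estimate requires no lower bound on $\gamma$, while uniqueness genuinely needs $\gamma>0$, as you correctly use in the last step.
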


If $f(z)$ is written as in~\eqref{eq:f_expansion}, then the term $\Pi_zf(z)$ 
in~\eqref{eq:reconstruction} has the expression 
\begin{equation}
\label{eq:f_expansion_Pi} 
 \Pi_zf(z) = \sum_{\alpha < \gamma} \sum_{\tau\in T_\alpha} f_\tau(z) 
\Pi_z\tau\;,
\end{equation} 
which is in general a distribution. In the particular case where $f$ has 
non-negative regularity, one has in fact 
\begin{equation}
\label{eq:reconstruction_function} 
 \cR f(z) = \bigpar{\Pi_zf(z)}(z)
 = \pscal{\unit}{f(z)} := f_\unit(z)\;,
\end{equation} 
cf.~\cite[Proposition~3.28]{Hairer2014}, which is a generalisation 
of~\eqref{eq:poly_f0}. More generally, if every $\Pi_z\tau$ happens to belong 
to a H\"older space $\cC^\alpha_\fraks$ with $\alpha>0$, then one has 
\begin{equation}
 \cR f(z) = \bigpar{\Pi_zf(z)}(z) 
 = \sum_{\alpha < \gamma} \sum_{\tau\in T_\alpha} f_\tau(z) 
\bigpar{\Pi_z\tau}(z)\;, 
\end{equation} 
see~\cite[Remark~3.15]{Hairer2014}. This holds in particular for the 
canonical model for mollified noise $\Pi^\delta$ constructed in the previous 
section. In general, $\cR f$ and $\Pi_zf(z)$ may differ by a 
remainder term of degree $\gamma$, which is small if $\gamma$ is large. 

The proof of \Cref{thm:reconstruction} given in~\cite{Hairer2014} is based on 
wavelet analysis. There exists by now an alternative proof, presented 
in~\cite{Otto_Weber_2016}, which is based on smoothing properties of the heat 
kernel. 

%%%%%%%%%%%%%%%%%%%%%%%%%%%%%%%%%%%%%%%%%%%%%%%%%%%%%%%%%%%%%%%%%%%%%%%%%%%%%%%%

\subsection{Multilevel Schauder estimates}
\label{ssec:3d_Schauder} 

The last missing piece needed to lift the classical solution map $\overline \cS$ 
to the space of modelled distributions, and defining the map $\cS$ making the 
diagram~\eqref{eq:RS_diagram} commute, is an operator lifting the operation of 
convolution with the heat kernel $P$. Owing to the 
decomposition~\eqref{eq:decomp_P} of $P$ into a singular part $K$ and a smooth 
part $R$, this problem can be decomposed into two separate problems, which are 
to make the following diagrams commute:

\begin{equation}
\label{eq:K_diagram} 
 \begin{tikzcd}[column sep=large, row sep=large]
 \cD^\gamma \arrow[r, "\cK_\gamma"] \arrow[d, "\cR"]
 & \cD^{\gamma+2} \arrow[d, "\cR"] \\
 \cC^{\alpha_0}_\fraks  
 \arrow[r, "K*"'] 
 & \cC^{\alpha_0+2}_\fraks 
\end{tikzcd}
\qquad\qquad
 \begin{tikzcd}[column sep=large, row sep=large]
 \cD^\gamma \arrow[r, "R_\gamma\circ\cR"] \arrow[d, "\cR"]
 & \cD^{\gamma+2} \arrow[d, "\cR"] \\
 \cC^{\alpha_0}_\fraks  
 \arrow[r, "R*"'] \arrow[ru, "R_\gamma"]
 & \cC^{\alpha_0+2}_\fraks 
\end{tikzcd}
\end{equation}
The smooth part $R$ is actually comparatively easy to deal with. Indeed, one 
can define an operator $R_\gamma$, acting on distributions 
$\zeta\in\cC^{\alpha_0}_\fraks$ via the Taylor-series-like expression
\begin{equation}
 \label{eq:R_gamma}
 (R_\gamma \zeta)(z) 
 := 
 \sum_{\abss{k}<\gamma} \frac{\X^k}{k!} 
 \pscal{\zeta}{\DD^k R(z-\cdot)}\;.
\end{equation} 
The fact that the right-hand side belongs to $\cD^{\gamma+2}$ is shown 
in~\cite[Lemma~7.3]{Hairer2014}. Since $R_\gamma\zeta$ has positive regularity, 
it follows from~\eqref{eq:reconstruction_function} that $(\cR R_\gamma 
\zeta)(z)$ is just the convolution of $\zeta$ with $R(z-\cdot)$, so that for 
$\zeta=\cR f$ we have indeed 
\begin{equation}
 \label{eq:R_gamma2}
 \cR R_\gamma \cR f = R * \cR f
\end{equation} 
as required. 

The construction of the operator $\cK_\gamma$ lifting the convolution with the 
singular part $K$ of the heat kernel is substantially more involved. It turns 
out that $\cK_\gamma$ has to be defined as follows. For any $f\in\cD^\gamma$, 
it is composed of three parts, that is  
\begin{equation}
 \label{eq:K_gamma}
 (\cK_\gamma f)(z) := \I f(z) + \cJ(z)f(z) + (\cN_\gamma f)(z)\;. 
\end{equation} 
The first term $\I f(z)$ is simply given by applying the abstract integration 
operator $\I$ to each symbol $\tau$ in the expansion~\eqref{eq:f_expansion} of 
$f(z)$. The second term, which is again associated with recentering, is 
obtained by setting, for each $\tau\in T_\alpha$,
\begin{equation}
\label{eq:def_Jz}  
\cJ(z)\tau := \sum_{\abss{k} < \alpha+2}
\frac{\X^k}{k!} \pscal{\Pi_z\tau}{\DD^k K(z-\cdot)}\;.
\end{equation} 
Note the similarity with~\eqref{eq:f_x}, which is no accident! 
The last part of $\cK_\gamma$ is a nonlocal operator given by 
\begin{equation}
\label{eq:def_Ngamma}  
(\cN_\gamma f)(z) := \sum_{\abss{k} < \gamma+2}
\frac{\X^k}{k!} \pscal{\cR f - \Pi_z f(z)}{\DD^k K(z-\cdot)}\;.
\end{equation}
As before, both operators~\eqref{eq:def_Jz} and~\eqref{eq:def_Ngamma} 
actually are to be interpreted by replacing $K$ by a sum of $K_n$.  
Note that they both have values in $\overline T$, the polynomial
part of the regularity structure. 

With all these objects in place, we have the following far-reaching 
generalisation of the Schauder estimate of \Cref{thm:Schauder}, which is an 
instance of~\cite[Theorem~5.12]{Hairer2014}.

\begin{theorem}[Multilevel Schauder estimate]
\label{thm:multilevel_Schauder} 
If $\gamma+2\notin\N$, then the operator $\cK_\gamma$ defined 
in~\eqref{eq:K_gamma} maps $\cD^\gamma$ into $\cD^{\gamma+2}$, and satisfies 
\begin{equation}
 \label{eq:multilevel_Schauder}
 \cR\cK_\gamma f = K*\cR f
\end{equation} 
for any $f\in\cD^\gamma$. Furthermore, if $\overline Z = (\overline 
\Pi,\overline \Gamma)$ is a second model, then one has 
\begin{equation}
 \seminormff{\cK_\gamma f}{\cK_\gamma \bar f}_{\gamma+2;\fraK}
 \lesssim \seminormff{f}{\bar f}_{\gamma;\bar\fraK}
 + \norm{\Pi-\Pibar}_{\gamma;\bar\fraK}
 + \norm{\Gamma-\Gammabar}_{\gamma;\bar\fraK}
\end{equation} 
for any $\bar f\in\cD^\gamma(\Gammabar)$, where $\bar\fraK$ is the 
$1$-fattening of $\fraK$. 
\end{theorem}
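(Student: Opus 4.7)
The plan is to mimic the argument sketched for the classical Schauder estimate (Theorem~\ref{thm:Schauder}), but tracking the expansion of $f$ in the regularity structure level by level. I first decompose $K = \sum_{n\geqs 0} K_n$ with each $K_n$ supported in a $\norm{\cdot}_\fraks$-ball of radius $2^{-n}$ and with derivatives satisfying $\abs{\DD^k K_n(z)} \lesssim 2^{(3+\abss{k})n}$, so that each of the three contributions in~\eqref{eq:K_gamma} becomes an explicit sum over $n$: the abstract integration $\I f(z)$ (independent of $K$), the polynomial recentering term $\cJ(z) f(z)$ built from the pairings $\pscal{\Pi_z\tau}{\DD^k K_n(z-\cdot)}$, and the nonlocal correction $(\cN_\gamma f)(z)$ involving $\pscal{\cR f - \Pi_z f(z)}{\DD^k K_n(z-\cdot)}$. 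The reconstruction bound~\eqref{eq:reconstruction} is the crucial tool controlling the last piece at each scale.

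To show $\cK_\gamma f \in \cD^{\gamma+2}$ I have to verify (i) the bound $\norm{\cK_\gamma f(z)}_\beta \lesssim 1$ for each $\beta < \gamma+2$ and (ii) the H\"older-type estimate on $\norm{\cK_\gamma f(z) - \Gamma_{zz'} \cK_\gamma f(z')}_\beta$. For (i), each summand in $\cJ(z) f(z)$ contributes, for $\tau\in T_\alpha$, a factor of order $2^{(\abss{k}-\alpha-2)n}$, and the restriction $\abss{k} < \alpha+2$ in~\eqref{eq:def_Jz} is exactly what makes the geometric series over $n$ convergent. A similar accounting, now with the exponent $\gamma$ in place of $\alpha$ coming from the reconstruction bound, works for $\cN_\gamma$ under the restriction $\abss{k} < \gamma+2$; the exclusion $\gamma+2\notin\N$ avoids the edge case where the series is logarithmically divergent. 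For (ii), the central algebraic fact is that the coproduct~\eqref{eq:Delta_Itau} was defined so that $\Gamma_{zz'}$ commutes with $\I$ up to precisely the polynomial corrections appearing in $\cJ$; explicitly, expanding $\Gamma_{zz'}\I f(z') = (\Id\otimes \gamma_{zz'})\Delta^+\I f(z')$ via~\eqref{eq:Delta_Itau} produces additional polynomial terms that one has to reorganise and absorb into the difference $\cJ(z)f(z)-\Gamma_{zz'}\cJ(z')f(z')$ plus a remainder. The remainder, together with $(\cN_\gamma f)(z) - \Gamma_{zz'}(\cN_\gamma f)(z')$, is then bounded by splitting the sum over $n$ into the regimes $2^{-n} > \norm{z-z'}_\fraks$ (use Taylor remainders of $\DD^k K_n(z-\cdot) - \DD^k K_n(z'-\cdot)$) and $2^{-n} \leqs \norm{z-z'}_\fraks$ (use direct scale bounds for each $K_n$ separately), exactly as in the scalar case.

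For the intertwining $\cR \cK_\gamma f = K \ast \cR f$, I would exploit the uniqueness part of the reconstruction theorem: since $\gamma+2>0$, the modelled distribution $\cK_\gamma f$ admits a unique reconstruction. By construction, $(\Pi_z \cK_\gamma f(z))(\bar z)$ agrees with $(K \ast \Pi_z f(z))(\bar z)$ up to polynomial pieces vanishing fast enough at $z$, so that both $\cR \cK_\gamma f$ and $K \ast \cR f$ satisfy the bound~\eqref{eq:reconstruction} relative to $\cK_\gamma f$; uniqueness then forces them to coincide. The continuity estimate for two models $(\Pi,\Gamma)$, $(\Pibar,\Gammabar)$ follows by running the same scale-by-scale analysis while bookkeeping the differences $\Pi-\Pibar$ and $\Gamma-\Gammabar$, after first establishing a continuous-dependence version of the reconstruction theorem which controls $\cR f - \overline{\cR}\bar f$ in terms of $\seminormff{f}{\bar f}_{\gamma;\bar\fraK}$, $\norm{\Pi-\Pibar}$ and $\norm{\Gamma-\Gammabar}$.

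The main obstacle is the algebraic identity underlying (ii): the polynomial corrections in $\cJ$ are not chosen ad hoc but are dictated by the coproduct~\eqref{eq:Delta_Itau}, and one has to verify that the pieces of $\Gamma_{zz'}$ hitting $\I f(z')$ (arising from the term $\sum_{\ell,m} \tfrac{\X^\ell}{\ell!}\otimes \tfrac{\X^m}{m!}\J_{\ell+m}\tau$ in $\Delta^+\I\tau$) match exactly the difference $\cJ(z)f(z) - \Gamma_{zz'}\cJ(z')f(z')$ up to a controlled remainder. Showing simultaneously that $\cN_\gamma$ supplies the correct overshoot polynomial terms for the degrees $\abss{k}\in[\alpha+2,\gamma+2)$ absent from $\cJ$, so that the combined expression satisfies the H\"older condition in $\cD^{\gamma+2}$, is where the bulk of the technical work lies.
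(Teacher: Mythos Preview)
Your proposal is correct and follows the same approach the paper indicates: the paper does not give a full proof but remarks that the argument is ``similar in spirit'' to that of Theorem~\ref{thm:Schauder}, with the extra difficulty being the bookkeeping of polynomial terms, and that ``all the difficulty lies in integer values of $\alpha$''. Your scale-by-scale decomposition, the split $2^{-n}\lessgtr\norm{z-z'}_\fraks$, the use of the coproduct~\eqref{eq:Delta_Itau} to match $\Gamma_{zz'}\I$ against $\cJ$, and the appeal to uniqueness in the reconstruction theorem for~\eqref{eq:multilevel_Schauder} are exactly the ingredients of the argument in~\cite[Theorem~5.12]{Hairer2014} that the paper cites; your identification of the integer-degree polynomial matching as the main technical obstacle is consistent with the paper's one-line summary.
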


The proof of this result is similar in spirit to the proof we have given of 
\Cref{thm:Schauder}, but is significantly more involved because of the required 
bookkeeping of all the polynomial terms. In fact, bounding components of 
$\cK_\gamma f$ in $T_\alpha$ of non-integer degree $\alpha$ is relatively 
straightforward, and all the difficulty lies in integer values of $\alpha$.

%%%%%%%%%%%%%%%%%%%%%%%%%%%%%%%%%%%%%%%%%%%%%%%%%%%%%%%%%%%%%%%%%%%%%%%%%%%%%%%%

\subsection{Convergence of renormalised models}
\label{ssec:3d_renorm_conv} 

The tools introduced so far suffice to set up a fixed-point equation in a space 
$\cD^\gamma$, associated with the canonical model 
$Z^\delta=(\Pi^\delta,\Gamma^\delta)$ for mollified noise $\xi^\delta$, built in 
\Cref{ssec:3d_models}. In order to be able to take the limit $\delta\searrow0$, 
one has to incorporate the renormalisation procedure. 

We already introduced a group of renormalisation transformations $M^\delta$ in 
\Cref{ssec:3d_renorm_group}, see in particular~\eqref{eq:M-AC}. The $M^\delta$ 
are linear maps from the model space $T$ into itself. It is now necessary to 
translate this to a transformation from the model $Z^\delta$ to a renormalised 
model $\widehat Z^\delta=(\widehat\Pi^\delta,\widehat\Gamma^\delta)$. 

A useful remark here is that the second property in 
\Cref{def:model} of models and~\eqref{eq:Gamma_F} imply that for all 
$z,z'\in\R^4$, 
\begin{equation}
\Pi^\delta_{z}\bigpar{F^\delta_{z}}^{-1}  
= \Pi^\delta_{z'}\bigpar{F^\delta_{z'}}^{-1} 
=: \bPi^\delta\;,
\end{equation} 
where $\bPi^\delta$ is independent of $z$. Therefore, the model can also 
be specified by the pair $(\bPi^\delta,f^\delta_z)$, and 
$(\Pi^\delta,\Gamma^\delta)$ can be recovered via 
\begin{equation}
 \Pi^\delta_z = \bPi^\delta \, \Gamma_{f^\delta_z}\;, \qquad 
 \Gamma^\delta_{zz'} = \Gamma_{f^\delta_z}^{-1} 
\Gamma_{f^\delta_{z'}}\;.
\end{equation} 
The renormalised model associated with the renormalisation transformation 
$M^\delta$ will thus be defined in such a way that  
\begin{equation}
 \widehat\bPi^\delta \tau = \bPi^\delta M^\delta\tau 
 \qquad 
 \forall \tau\in T\;.
\end{equation} 
While it is possible to compute the renormalised model \lq\lq by hand\rq\rq, 
there exists a more efficient way of doing this algebraically, which is 
described in~\cite[Section~8.3]{Hairer2014}. Here we just illustrate the result 
by giving the expressions of $\widehat\Pi^\delta_z\tau$ for the negative-degree 
elements of $\cF_-$:
\begin{align}
\label{eq:PiM-AC-I} 
\widehat\Pi^\delta_z(\RSI) &= \Pi^\delta_z(\RSI)\;, \\
\label{eq:PiM-AC-V} 
\widehat\Pi^\delta_z(\RSV) &= \Pi^\delta_z(\RSI)^2 - C_\delta^{(1)}\;, \\
\label{eq:PiM-AC-W} 
\widehat\Pi^\delta_z(\RSW) &= \Pi^\delta_z(\RSI)^3 - 
3C_\delta^{(1)}\Pi^\delta_z(\RSI)\;, \\
\label{eq:PiM-AC-WV} 
\widehat\Pi^\delta_z(\RSWV) &= \Pi^\delta_z(\RSY)\widehat\Pi^\delta_z(\RSV) - 
C_\delta^{(2)}\;, \\
\label{eq:PiM-AC-WW} 
\widehat\Pi^\delta_z(\RSWW) &= \Pi^\delta_z(\RSIW)\widehat\Pi^\delta_z(\RSV) - 
3 C_\delta^{(2)} \Pi^\delta_z(\RSI) 
+ 3C_\delta^{(1)} \Pi^\delta_z(\RSV X_i) \pscal{f_z}{\J_i\RSI)}\;, \\
%\label{eq:PiM-AC-VW} 
\widehat\Pi^\delta_z(\RSVW) &= \Pi^\delta_z(\RSIW)\Pi^\delta_z(\RSI) - 3 
C_\delta^{(1)} \Pi^\delta_z(\RSVI) 
+ 3C_\delta^{(1)} \Pi^\delta_z(\RSI X_i) \pscal{f_z}{\J_i\RSI}\;.
\label{eq:model_renormalised} 
\end{align}
Note that the first three relations are compatible with Wick renormalisation as 
introduced in~\eqref{eq:Wick_powers_limit}, while the last three relations are 
specific to the three-dimensional case. 

The question that arises now is whether this sequence of models converges to a 
meaningful limit as $\delta\searrow0$. Here the key result is the following 
one, see~\cite[Theorem~10.7]{Hairer2014}. 

\begin{theorem}[Convergence criterion for renormalised models]
\label{thm:convergence_renorm} 
Assume that there exists $\kappa>0$ such that, for every test function $\ph\in 
B_r$, every $z\in\R^4$ and every $\tau\in T$ of negative degree, there exists a 
random variable $\pscal{\widehat \Pi_z\tau}{\ph}$ such that
\begin{align}
\Bigexpec{\abs{\pscal{\widehat \Pi_z\tau}{\cS^\lambda_z\ph}}^2}
&\lesssim \lambda^{2\abss{\tau}+\kappa}\;, \\
\Bigexpec{\abs{\pscal{\widehat \Pi_z\tau - \widehat
\Pi_z^{\delta}\tau}{\cS^\lambda_z\ph}}^2} 
&\lesssim \delta^{2\theta}\lambda^{2\abss{\tau}+\kappa}
 \label{eq:rmodel_conv1} 
\end{align}
holds for some $\theta>0$. Then there exists a unique model $\widehat 
Z=(\widehat\Pi,\widehat\Gamma)$ such that 
\begin{equation}
\Bigexpec{\normDgamma{\widehat Z}^p_{\gamma;\fraK}} \lesssim 1\;, 
\qquad 
\Bigexpec{\seminormff{\widehat Z}{\widehat Z^\delta}^p_{\gamma;\fraK}} \lesssim
\delta^{\theta p}
 \label{eq:rmodel_conv2} 
\end{equation} 
holds for every compact $\fraK\subset\R^4$ and every $p\geqs1$. Here the 
quantities $\normDgamma{Z}_{\gamma;\fraK} = 
\norm{\Pi}_{\gamma;\fraK} + 
\norm{\Gamma}_{\gamma;\fraK}$ and $\smash{\seminormff{Z}{\Zbar}_{\gamma;\fraK} 
= \norm{\Pi-\Pibar}_{\gamma;\fraK} + \norm{\Gamma-\Gammabar}_{\gamma;\fraK}}$
measure the magnitude of models and their difference.
\end{theorem}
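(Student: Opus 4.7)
My plan is to follow the strategy already used in \Cref{thm:space_time_white_noise_reg} for the regularity of space-time white noise, but at the level of the model rather than the noise itself, and then to reconstruct the full model $(\widehat\Pi,\widehat\Gamma)$ on all of $T$ (not just on negative-degree elements) through the algebraic relations of \Cref{ssec:3d_models}. The argument splits into four main steps.

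First, I would upgrade the $L^{2}$ bounds \eqref{eq:rmodel_conv1} to $L^{p}$ bounds for every $p\geqs 1$. The crucial point is that, for each $\tau\in T$ of negative degree, the random variables $\pscal{\widehat\Pi_{z}\tau}{\cS^{\lambda}_{z}\ph}$ and $\pscal{\widehat\Pi_{z}^{\delta}\tau}{\cS^{\lambda}_{z}\ph}$ lie in a fixed inhomogeneous Wiener chaos whose order is bounded by the number of occurrences of $\Xi$ in $\tau$. By Nelson's equivalence of moments in a finite Wiener chaos, which is just a scaled-up version of \eqref{eq:moments_Gaussian}, the $L^{2}$ bound automatically yields
\begin{equation}
\label{eq:plan_Lp_bound}
\Bigexpec{\bigabs{\pscal{\widehat\Pi_{z}\tau-\widehat\Pi^{\delta}_{z}\tau}{\cS^{\lambda}_{z}\ph}}^{p}}
\lesssim \delta^{\theta p}\lambda^{(\abss{\tau}+\kappa/2)p}\;,
\end{equation}
and similarly for $\widehat\Pi_{z}\tau$ alone, uniformly over $\ph\in B_{r}$ (the dependence on $\ph$ entering only through $\norm{\ph}_{L^{2}}\leqs 1$).

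Second, I would promote \eqref{eq:plan_Lp_bound} to uniform bounds on the model norms $\norm{\widehat\Pi-\widehat\Pi^{\delta}}_{\gamma;\fraK}$ via a Kolmogorov-type chaining argument, exactly as in the sketch of \Cref{thm:space_time_white_noise_reg}. One introduces, for each $n\in\N_{0}$, a dyadic discretisation $\fraK_{n}\subset \bar\fraK$ at parabolic scale $2^{-n}$, chooses (once and for all) a finite family of test functions $\ph\in B_{r}$ adequate for extracting the $\cC^{\abss{\tau}}_\fraks$-seminorm of a distribution — for instance wavelets or Paley--Littlewood blocks — and writes
\begin{equation*}
\norm{\widehat\Pi-\widehat\Pi^{\delta}}_{\gamma;\fraK}^{p}
\lesssim \sum_{\tau}\sum_{n\geqs 0}\sum_{z\in\fraK_{n}}\sum_{\ph}
2^{n\abss{\tau}p}\bigabs{\pscal{\widehat\Pi_{z}\tau-\widehat\Pi^{\delta}_{z}\tau}{\cS^{2^{-n}}_{z}\ph}}^{p}\;.
\end{equation*}
Taking expectations and using \eqref{eq:plan_Lp_bound} with $\lambda=2^{-n}$, the summand is bounded by $\delta^{\theta p}2^{5n}2^{-n\kappa p/2}$ (the $2^{5n}$ counting the cardinality of $\fraK_{n}$ in parabolic dimension $5$), so choosing $p$ large enough that $\kappa p/2>5$ makes the series converge. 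The same argument applied to $\widehat\Pi$ alone gives the first bound in \eqref{eq:rmodel_conv2}.

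Third, I would extend $\widehat\Pi_{z}$ from the negative-degree sector to all of $T$ and construct $\widehat\Gamma$. On polynomial symbols the definition is forced by the requirement that $Z$ be a model. On symbols of the form $\I\tau$ one uses the recursive formula \eqref{eq:Pi_xI} together with the definition \eqref{eq:f_x} of the linear forms $f_{z}$ in terms of $\widehat\Pi_{z}$ itself, exactly as for the canonical model; products are handled inductively using \eqref{eq:model_product}, replaced on renormalised symbols by the explicit prescriptions \eqref{eq:PiM-AC-I}--\eqref{eq:model_renormalised}. The maps $\widehat\Gamma_{zz'}$ are then defined through \eqref{eq:Gamma_F}. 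The bounds on positive-degree sectors and on $\widehat\Gamma$ follow from those on negative-degree sectors by the multilevel Schauder estimate of \Cref{thm:multilevel_Schauder}, applied to the pieces $\cJ(z)$ and $\cN_{\gamma}$ of $\cK_{\gamma}$ which encode precisely this kind of dependence; the algebraic identities required by \Cref{def:model} are then consequences of the Hopf-algebraic structure of $\Delta^{+}$ discussed in \Cref{ssec:3d_structure_group}.

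Uniqueness of $\widehat Z$ is immediate from the second bound in \eqref{eq:rmodel_conv2} combined with completeness of the space of models in the seminorm $\seminormff{\cdot}{\cdot}_{\gamma;\fraK}$. The main obstacle, in my view, is the chaining step: one must choose the testing family $\ph$ so that the dyadic sum really controls the full supremum over $\ph\in B_{r}$ uniformly across scales $\lambda$ (this is where wavelet analysis does the work), and one must carefully handle the fact that the bound on $\widehat\Pi_{z}\tau$ must be established simultaneously for all $\tau$ of negative degree — since higher-degree elements such as $\RSWW$ are built algebraically from lower-degree ones, their bounds have to propagate consistently through the formulas of \eqref{eq:PiM-AC-I}--\eqref{eq:model_renormalised}, which is done by induction on the complexity of $\tau$.
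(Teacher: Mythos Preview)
The paper does not give its own proof of this theorem; it is stated as \cite[Theorem~10.7]{Hairer2014} and followed only by a short paragraph noting that the relevant random variables belong to fixed inhomogeneous Wiener chaoses, so that the bounds~\eqref{eq:rmodel_conv1} can be rephrased as bounds on Wiener chaos kernels. Your proposal therefore contains substantially more than the paper does, and the strategy you outline --- equivalence of moments in a fixed Wiener chaos to upgrade $L^{2}$ to $L^{p}$, a Kolmogorov/wavelet chaining argument as in \Cref{thm:space_time_white_noise_reg} to control the supremum defining $\norm{\Pi}_{\gamma;\fraK}$, and then algebraic extension to the rest of $T$ and to $\Gamma$ --- is indeed the route taken in the cited reference.

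One point to tighten in your Step~3: the multilevel Schauder estimate of \Cref{thm:multilevel_Schauder} is not the right tool to invoke here, since it concerns the operator $\cK_{\gamma}$ acting on modelled distributions in $\cD^{\gamma}$, not the construction of models themselves. What actually propagates the bounds from negative-degree $\tau$ to $\I\tau$, to products, and to $\widehat\Gamma$ is the extension machinery of~\cite[Section~5.3 and Proposition~3.31]{Hairer2014}: the defining relations~\eqref{eq:Pi_xI}, \eqref{eq:f_x} and~\eqref{eq:Gamma_F} determine the model uniquely on all of $T$ from its values on the generators, and the required analytic bounds~\eqref{eq:def_normPi} and~\eqref{eq:def_normGamma} on the extended objects are shown there to follow, with continuous dependence, from those on the generating sector. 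The Schauder-type ingredient that enters is the kernel decomposition of $K$ into the $K_{n}$, but packaged differently than in $\cK_{\gamma}$.
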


To be more precise, each random variable $\pscal{\widehat \Pi_z\tau}{\ph}$ 
should belong to a specific Wiener chaos, namely the $n$th inhomogeneous Wiener 
chaos if $\tau$ contains $n$ symbols $\Xi$. This means that it can be 
written as an integral involving $n$ space-time white noise variables, just as 
in~\eqref{eq:stoch_int_RSWV} which is an instance with $n=4$. 
The conditions~\eqref{eq:rmodel_conv1} can then be reformulated in terms of 
bounds on the Wiener chaos expansion coefficients, 
(cf.~\cite[Proposition~10.11]{Hairer2014}), which can be represented by Feynman 
diagrams similar to those we encountered in \Cref{sec:3d_Gibbs}.

%%%%%%%%%%%%%%%%%%%%%%%%%%%%%%%%%%%%%%%%%%%%%%%%%%%%%%%%%%%%%%%%%%%%%%%%%%%%%%%%

\section{Existence and uniqueness of solutions}
\label{sec:3d_existence} 

We return now to our aim of constructing limits of solutions of regularised 
Allen--Cahn SPDEs 
\begin{equation}
\label{eq:AC-3d-renorm-rep} 
 \partial_t \phi_\delta(t,x) = \Delta \phi_\delta(t,x) + \phi_\delta(t,x) - 
 \phi_\delta(t,x)^3 + \bigbrak{3C_\delta^{(1)} - 9C_\delta^{(2)}} 
\phi_\delta(t,x) 
 + %\sqrt{2\eps} 
 \xi^\delta(t,x)\;,
\end{equation} 
with renormalisation constants $\smash{C_\delta^{(1)}} = 
\Order{\delta^{-1}}$ and $\smash{C_\delta^{(2)}} = \Order{\log(\delta^{-1})}$ 
defined as in~\eqref{eq:def_Cdelta} and~\eqref{eq:def_Cdelta2}. 
One of the main results of~\cite{Hairer2014} is the following local existence 
theorem (which was formulated for the $\Phi^4$ model, but the proof extends 
trivially to the Allen--Cahn equation). 

\begin{theorem}[Local existence of solutions for the three-dimensional 
Allen--Cahn equation] Assume $\phi_0\in\cC^\eta_\fraks$ for some 
$\eta>-\frac23$. Then the Allen--Cahn equation~\eqref{eq:AC-3d-renorm-rep} with 
initial condition $\phi_0$ admits a sequence $\phi_\delta$ of local solutions 
converging in probability, as $\delta\searrow0$, to a limit $\phi$. This limit 
is independent of the choice of mollifier $\varrho$. 
\end{theorem}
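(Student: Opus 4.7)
The plan is to reformulate the PDE~\eqref{eq:AC-3d-renorm-rep} as an abstract fixed-point equation
\begin{equation}
 \Phi = \cK_{\bar\gamma} \bigbrak{\Xi + \Phi - \Phi^3} + R_{\bar\gamma}\cR\Phi + P\phi_0
\end{equation}
inside a suitable space $\cD^{\bar\gamma,\eta}$ of modelled distributions associated with the renormalised model $\widehat Z^\delta = (\widehat\Pi^\delta,\widehat\Gamma^\delta)$ built in~\eqref{eq:PiM-AC-I}--\eqref{eq:model_renormalised}, then apply the reconstruction operator $\cR$ to produce a classical solution. Since the expected regularity of $\Phi$ is $\alpha_1 = -\tfrac12-\kappa$, driven by the symbol $\RSI$, Theorem~\ref{thm:mult_Dgamma} shows that $\Phi^3$ makes sense as a modelled distribution provided the regularity exponent $\bar\gamma$ satisfies $\bar\gamma > -2\alpha_1 = 1+2\kappa$. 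I would thus fix $\bar\gamma$ slightly above $1+2\kappa$ and allow the solution to blow up at $t=0$ at a rate compatible with $\phi_0\in\cC^\eta_\fraks$ by working in the weighted spaces $\cD^{\bar\gamma,\eta}$ of~\cite[Section~6]{Hairer2014}.

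The first step is a standard contraction argument. Combining the multilevel Schauder estimate of Theorem~\ref{thm:multilevel_Schauder} with Theorem~\ref{thm:mult_Dgamma} shows that the right-hand side maps a ball of $\cD^{\bar\gamma,\eta}$ over $[0,T]\times\Lambda$ into itself, with a contraction constant proportional to a positive power of $T$, exactly as in the proof of Theorem~\ref{thm:semilinear_local_existence}. Banach's fixed-point theorem then yields for each $\delta>0$ a unique local modelled solution $\Phi^\delta$, depending continuously on the model in the sense of the seminorm~\eqref{eq:D_gamma_seminorm}. Unpacking~\eqref{eq:model_renormalised} and using~\eqref{eq:reconstruction_function}, one checks that $\cR\Phi^\delta$ is a classical function and that the additive counterterms produced by the substitutions $L_1:\RSV\mapsto\unit$ and $L_2:\RSWV\mapsto\unit$ combine, upon expanding $\Phi^3$ and applying $\cR$ term by term, into exactly the additive term $[3C_\delta^{(1)} - 9C_\delta^{(2)}]\cR\Phi^\delta$ of~\eqref{eq:AC-3d-renorm-rep}. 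By uniqueness of the classical solution, $\cR\Phi^\delta = \phi_\delta$.

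The hard part, and the heart of the argument, is to verify that the renormalised models $\widehat Z^\delta$ converge in probability to a limit $\widehat Z$ as $\delta\to0$. By Theorem~\ref{thm:convergence_renorm} it suffices to prove, for every $\tau\in\cF_- = \set{\RSI,\RSV,\RSW,\RSWV,\RSWW,\RSVW}$, the bounds
\begin{equation}
 \Bigexpec{\bigabs{\pscal{\widehat\Pi^\delta_z\tau}{\cS^\lambda_z\ph}}^2}
 \lesssim \lambda^{2\abss{\tau}+\kappa}\;,
 \qquad
 \Bigexpec{\bigabs{\pscal{(\widehat\Pi^\delta_z - \widehat\Pi^{\delta'}_z)\tau}{\cS^\lambda_z\ph}}^2}
 \lesssim (\delta\vee\delta')^{2\theta}\lambda^{2\abss{\tau}+\kappa}\;,
\end{equation}
uniformly in $z$, $\ph\in B_r$ and $\lambda\in(0,1]$. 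Since each $\widehat\Pi^\delta_z\tau$ lies in a fixed inhomogeneous Wiener chaos of order equal to the number of factors $\Xi$ in $\tau$, Isserlis' Theorem~\ref{thm:Isserlis} reduces these second moments to sums of Feynman integrals over products of regularised heat kernels $P_\delta$ and $K$, indexed by pairings of the $\Xi$-leaves in the same spirit as~\eqref{eq:Wick_G4}--\eqref{eq:Wick_G6}. For $\RSI,\RSV,\RSW$ the Wick subtraction by $C_\delta^{(1)}$ removes exactly the pairings producing a loop at a single vertex, and the remaining integrals are controlled uniformly in $\delta$ by the three-dimensional analogue of the kernel bounds used in~\eqref{eq:GNn_asymptotics}, using $P_\delta(z)=\Order{(\norm{z}_\fraks+\delta)^{-3}}$.

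The main obstacle is the treatment of $\RSWV$ and $\RSWW$, which are genuinely three-dimensional: here the BPHZ-type subtraction by $C_\delta^{(2)}$ in~\eqref{eq:PiM-AC-WV}--\eqref{eq:PiM-AC-WW} is needed to cancel the subdivergent cherry subgraph~\eqref{eq:def_Cdelta2}, and the required uniform bound has to be extracted from a careful bookkeeping of the Hepp-sector decomposition of the associated integrals; the element $\RSVW$ being of degree only marginally negative requires only a minor correction and is essentially a byproduct. Finally, independence of the limit from the mollifier $\varrho$ follows by the same Feynman-diagram analysis applied to the difference of two models built from different mollifiers: only the finite parts of $C_\delta^{(1)}, C_\delta^{(2)}$ depend on $\varrho$, and these finite shifts are reabsorbed by the convergence statement~\eqref{eq:rmodel_conv2}, giving a canonical limiting model $\widehat Z$ and hence a canonical limiting solution $\phi = \cR\Phi$.
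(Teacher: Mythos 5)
Your proposal follows essentially the same route as the paper: an abstract fixed-point problem in the weighted spaces $\cD^{\gamma,\eta}$ solved by combining \Cref{thm:mult_Dgamma} with the multilevel Schauder estimate, identification of $\widehat\cR\Phi$ with the renormalised classical solution $\phi_\delta$ by expanding $\Phi^3$ term by term, and convergence of the renormalised models via \Cref{thm:convergence_renorm} and the Wiener-chaos/Feynman-diagram bounds. The only (immaterial) organisational difference is that the paper first verifies $\cR(\Phi^3)=(\cR\Phi)^3$ for the canonical model before switching on the counterterms, whereas you work with the renormalised model from the start.
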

\begin{proof}[\Sketch]
We will only outline the main steps of the proof, ignoring some technical 
subtleties, which are discussed in detail in~\cite[Sections~9.4 and 
10.5]{Hairer2014}. 

In a first step, we ignore the counterterms $C_\delta^{(i)}$ 
in~\eqref{eq:AC-3d-renorm-rep}. Given $\gamma>\bar\gamma>0$, consider the 
fixed-point equation  
\begin{equation}
\label{eq:FP_Dgamma} 
 \Phi = P\phi_0 + 
 (\cK_{\bar\gamma} + \cR R_\gamma\cR) \bigbrak{\Xi + \Phi - \Phi^3}\;,
\end{equation} 
where we make a slight abuse of notation by writing $P\phi_0$ also for 
the lift of $P\phi_0$ to $\cD^\gamma$, defined as in~\eqref{eq:R_gamma}. First 
we want to show that solving~\eqref{eq:FP_Dgamma} is indeed equivalent 
to solving the regularised equation~\eqref{eq:AC-3d-renorm-rep} without the 
counterterms. Applying the reconstruction operator $\cR$ for the canonical 
model $(\Pi^\delta,\Gamma^\delta)$ to both sides of~\eqref{eq:FP_Dgamma} and 
using~\eqref{eq:multilevel_Schauder} and~\eqref{eq:R_gamma2}, we obtain 
\begin{equation}
 \phi := \cR\Phi = P\phi_0 + P*\cR\bigbrak{\Xi+\Phi-\Phi^3}\;.
\end{equation} 
Since we work with the canonical model, we can identify $P*\cR\Xi$ with 
the stochastic convolution $P*\xi^\delta$. It thus remains to show that 
$\cR(\Phi^3)$ is equal to $(\cR\Phi)^3=\phi^3$. To this end, we 
rewrite~\eqref{eq:FP_Dgamma}, by separating non-polynomial from polynomial 
terms, as 
\begin{equation}
\label{eq:FP_Dgamma2} 
 \Phi = \I \bigbrak{\Xi + \Phi - \Phi^3} + \sum_k \Phi_{\X^k}\X^k\;.
\end{equation} 
Iterating this map, it is not hard to see that any fixed point 
of~\eqref{eq:FP_Dgamma2} should have the expression 
\begin{equation}
\label{eq:Phi_expansion} 
 \Phi(z) = \RSI + \Phi_\unit(z)\unit - \RSIW - 3 \Phi_\unit(z)\RSY 
 + \Phi_{\X_i}(z)\X_i + \dots 
\end{equation} 
and thus 
\begin{equation}
\label{eq:Phi_R} 
 (\cR\Phi)(z) = \Pi^\delta_z \RSI(z) + \Phi_\unit(z)\;,
\end{equation} 
since all other terms in~\eqref{eq:Phi_expansion} have strictly positive 
degree, cf.~\eqref{eq:reconstruction_function}. Furthermore, we have
\begin{equation}
 \Phi(z)^3 = \RSW + 3\Phi_\unit(z)\RSV + 3\Phi_\unit(z)^2\RSI 
 - 3 \RSWW - 9\Phi_\unit(z)\RSWV + 3\Phi_{\X_i}(z)\RSV\X_i + 
\Phi_\unit(z)^3\unit + \dots\;, 
\end{equation} 
where the dots stand for terms of strictly positive degree. Applying the 
reconstruction operator and comparing to the cube of~\eqref{eq:Phi_R}, we see 
that $\cR(\Phi^3)-(\cR\Phi)^3$ is a linear combination of 
\begin{equation}
 \bigpar{\Pi^\delta_z \RSWW}(z)\;, \qquad 
 \bigpar{\Pi^\delta_z \RSWV}(z)\;, \qquad\text{and}\qquad  
 \bigpar{\Pi^\delta_z \RSV\X_i}(z)\;.
\end{equation} 
However, all these terms are equal to zero. For instance, $\bigpar{\Pi^\delta_z 
\RSWW}(z) = \bigpar{\Pi^\delta_z \RSIW}(z)\bigpar{\Pi^\delta_z \RSV}(z)$, where 
the first term vanishes as shown in \Cref{exo:model}. We conclude that indeed 
$\phi=\cR\Phi$ solves the regularised equation without counterterms. 

The second step is to show that~\eqref{eq:FP_Dgamma} admits indeed a unique 
fixed point. First we determine admissible values of $\gamma$ and $\bar\gamma$.
Assume that $\Phi\in\cD^\gamma$ has regularity $\alpha\leqs0$. Then 
\Cref{thm:mult_Dgamma} shows that $\Phi^3\in\cD^{\gamma+2\alpha}$, and has 
regularity $3\alpha$. The multilevel Schauder estimate of 
\Cref{thm:multilevel_Schauder} shows that the right-hand side 
of~\eqref{eq:FP_Dgamma} is in $\cD^{\gamma+2\alpha+2}$ and has regularity 
$(-\frac12-\kappa)\wedge(3\alpha+2)$, provided $\bar\gamma\geqs\gamma+2\alpha$. 
We can thus choose $\alpha=-\frac12-\kappa$, which is the regularity of the 
stochastic convolution, $\gamma>1+2\kappa$ and $\bar\gamma=\gamma-1-2\kappa$, 
to ensure that both sides of~\eqref{eq:FP_Dgamma} belong to the same spaces. 

Here we encounter a new difficulty, which is that the term $P\phi_0$ is not 
necessarily small in $\cD^\gamma$ near $t=0$. The solution is to modify 
\Cref{def:D_gamma} of the spaces of modelled distributions $\cD^\gamma$ in order 
to allow coefficients that diverge near $t=0$ in a way controlled by a power 
$\eta$. This yields new spaces $\cD^{\gamma,\eta}$ with similar results on 
multiplication, reconstruction and Schauder estimates as for $\cD^\gamma$, 
discussed in~\cite[Section~6]{Hairer2014}. By slightly changing the parameter 
$\eta$, one can also obtain bounds which are small for small times, as required 
to show that the fixed-point map is a contraction for small enough times. 

The third step is to incorporate the counterterms $C_\delta^{(i)}$ and 
take the limit $\delta\searrow0$. This implies that we replace the commutative 
diagram~\eqref{eq:RS_diagram} by
\begin{equation}
\label{eq:RS_diagram_renorm} 
 \begin{tikzcd}[column sep=large, row sep=large]
 (\phi_0,\widehat Z^\delta) \arrow[r, "\cS_{\,\,M^\delta}", mapsto] 
 & \Phi_{M^\delta} \arrow[d, "\widehat\cR", mapsto] \\
 (\phi_0,\xi^\delta) \arrow[u, "\Psi\circ M^\delta", mapsto] 
 \arrow[r, "\overline\cS_{\!\!\!M^\delta}"', mapsto] 
 & \phi_\delta 
\end{tikzcd}
\end{equation}
where $\overline\cS_{\!\!\!M^\delta}$ is the solution map of the renormalised 
equation~\eqref{eq:AC-3d-renorm-rep}, $\widehat Z^\delta = Z^\delta M^\delta$ is 
the renormalised model described in~\eqref{eq:model_renormalised}, and 
$\smash{\widehat\cR}$ is the corresponding reconstruction operator. The map 
$\cS_{\,\,M^\delta}$ is defined by solving the fixed-point 
equation~\eqref{eq:FP_Dgamma}, with $\cR$ replaced by $\smash{\widehat\cR}$. 
Then a similar computation as above, taking into account the additional terms in 
the model~\eqref{eq:model_renormalised}, shows that the 
diagram~\eqref{eq:RS_diagram_renorm} indeed commutes. 

Finally, we have to check that the renormalised model satisfies the 
convergence conditions in \Cref{thm:convergence_renorm}. This is done 
in~\cite[Section~10.5]{Hairer2014} via a lengthy computation involving Feynman 
diagrams. However, recent results 
in~\cite{ChandraHairer16,BrunedHairerZambotti,Bruned_Chandra_Chevyrev_Hairer_18}
based on so-called BPHZ renormalisation allow to make the procedure much more 
systematic. 
\end{proof}

\begin{remark}
One of the technicalities we have glossed over is that the term in brackets on 
the right-hand side of~\eqref{eq:FP_Dgamma} has to be multiplied by an 
indicator function $\indexfct{t>0}$. The resulting objects have then to be 
shown to belong to the right spaces of distributions. 
\end{remark}

%%%%%%%%%%%%%%%%%%%%%%%%%%%%%%%%%%%%%%%%%%%%%%%%%%%%%%%%%%%%%%%%%%%%%%%%%%%%%%%%

\section{Large deviations}
\label{sec:3d_LDP} 

We finally consider the weak-noise variant of~\eqref{eq:AC-3d-renorm-rep}, 
given by 
\begin{equation}
\label{eq:AC-3d-regeps} 
 \partial_t \phi(t,x) = \Delta \phi(t,x) + \phi(t,x) - 
 \phi(t,x)^3 + \bigbrak{(2\eps) 3 C_\delta^{(1)} - (2\eps)^2 9C_\delta^{(2)}} 
\phi(t,x) 
 + \sqrt{2\eps} \xi^\delta(t,x)\;.
\end{equation} 
The work~\cite{HairerWeber} also contains the following three-dimensional 
analogue of \Cref{thm:LDP_2d}.

\begin{theorem}[Large-deviation principle for the renormalised equation]
\label{thm:LDP_3d} 
Let $\phi_{\delta,\eps}$ denote the solution of~\eqref{eq:AC-3d-regeps}, and 
let $\eps\mapsto\delta(\eps)\geqs0$ be a function such that
\begin{equation}
 \label{eq:lim_delta3}
 \lim_{\eps\to0} \delta(\eps) = 0\;.
\end{equation} 
Then the family $(\phi_{\delta(\eps),\eps})_{\eps>0}$ with fixed initial 
condition $\phi_0$ satisfies an LDP on $[0,T]$ with good rate function 
\begin{equation}
\label{eq:lpd_AC-3d} 
\cI_{[0,T]}(\gamma) := 
 \begin{cases}
 \displaystyle
 \frac12 \int_0^T \int_\Lambda \biggbrak{\dpar{}{t} \gamma(t,x) - 
\Delta\gamma(t,x) - \gamma(t,x) + \gamma(t,x)^3}^2 \6x\6t
 & \text{if the integral is finite\;, } \\
 + \infty 
 & \text{otherwise\;.}
 \end{cases}
\end{equation}
\end{theorem}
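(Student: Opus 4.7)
The strategy mirrors the two-dimensional case of Theorem~\ref{thm:LDP_2d}, but requires substantially more care because~\eqref{eq:AC-3d-regeps} is no longer a classical SPDE to which Freidlin--Wentzell theory applies directly. The natural framework is to lift the LDP from the underlying noise to the space of renormalised models, then apply the continuous solution map $\cS$ and reconstruction operator $\widehat\cR$ constructed in Section~\ref{sec:3d_existence}. Thus the three main steps of my plan are: (i) establish an LDP at speed $\eps^{-1}$ for the rescaled renormalised models $\widehat Z^{\delta(\eps),\eps}$; (ii) apply an extended contraction principle via the locally Lipschitz map $(\phi_0,\widehat Z)\mapsto \widehat\cR\cS(\phi_0,\widehat Z)$; (iii) identify the resulting rate function as~\eqref{eq:lpd_AC-3d}.

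For (i), the starting observation is that under the rescaling $\xi \leadsto \sqrt{2\eps}\,\xi$, the stochastic convolution $\RSI_\eps := \sqrt{2\eps}\,P*\xi^{\delta}$ has variance of order $\eps$ at each space-time point, so a term belonging to the $n$-th inhomogeneous Wiener chaos scales like $\eps^{n/2}$. This is why the counterterms in~\eqref{eq:AC-3d-regeps} carry prefactors $(2\eps)$ and $(2\eps)^2$: they are precisely the variances appearing after rescaling in the Wick-type renormalisations of $\RSV_\eps$ and $\RSWV_\eps$. I would then write the Cameron--Martin shift $\xi = h/\sqrt{2\eps}+\tilde\xi$, expand each symbol $\tau\in\cF_-$ in Wiener chaos of $\tilde\xi$, and observe that chaos of positive order vanishes (in the LDP sense at speed $\eps^{-1}$) while the zeroth-order component reproduces the \emph{classical} model $Z^{0}(h)$ obtained by formally substituting $h$ for $\xi$ in~\eqref{eq:PiM-AC-I}--\eqref{eq:model_renormalised}, with no renormalisation present because $h$ is smooth. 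The cancellation of divergences is automatic: the counterterms are deterministic (independent of $h$), arise from pairings of $\tilde\xi$ with itself, and coincide with the variances of the positive-chaos contributions that they cancel. Combining this pointwise computation with the chaos moment bounds underpinning Theorem~\ref{thm:convergence_renorm}, applied to the rescaled noise, yields the LDP with good rate function
\[
\tilde\cI(Z) \;=\; \inf\Bigsetsuch{\tfrac12 \norm{h}^2_{L^2([0,T]\times\Lambda)}}{Z = Z^{0}(h)}\;,
\]
uniformly in any sequence $\delta(\eps)\to 0$.

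For (ii)--(iii), the solution map of Section~\ref{sec:3d_existence} is locally Lipschitz in the model seminorms $\seminormff{\cdot}{\cdot}_{\gamma;\fraK}$, so the contraction principle transfers the LDP from $\widehat Z^{\delta(\eps),\eps}$ to $\phi_{\delta(\eps),\eps} = \widehat\cR\cS(\phi_0,\widehat Z^{\delta(\eps),\eps})$ with rate function $\cI_{[0,T]}(\gamma)=\inf\set{\tfrac12\norm{h}^2_{L^2}: \gamma=\widehat\cR\cS(\phi_0,Z^{0}(h))}$. Because $Z^{0}(h)$ is built from a smooth input, the abstract fixed-point equation~\eqref{eq:FP_Dgamma} reconstructs to the classical deterministic PDE $\partial_t\gamma = \Delta\gamma + \gamma - \gamma^3 + h$, from which solving for $h$ and substituting gives exactly~\eqref{eq:lpd_AC-3d}. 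Local-in-time solutions can be patched to $[0,T]$ in the usual way, using that finiteness of the rate function controls $\partial_t\gamma - \Delta\gamma$ in $L^2$ and hence prevents blow-up along any minimising path.

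The main obstacle is squarely in step (i), namely the uniform control in $\delta(\eps)\to 0$. For fixed $\delta>0$ the problem is smooth and a direct Freidlin--Wentzell argument works, but exchanging the limits $\eps\to 0$ and $\delta\to 0$ requires quantitative versions of the chaos bounds of Theorem~\ref{thm:convergence_renorm} that are simultaneously uniform in $\delta$ and track the correct $\eps$-scaling of each renormalised symbol. The subtlety is that although the individual counterterms $C_\delta^{(1)} = \Order{\delta^{-1}}$ and $C_\delta^{(2)} = \Order{\log\delta^{-1}}$ diverge, the renormalised stochastic objects have variances that are \emph{bounded} uniformly in $\delta$, so that the products $(2\eps)^n C_{\delta(\eps)}^{(n)}$ contribute to the scaled model at order $\eps^n$ only, which at LDP speed $\eps^{-1}$ is negligible for any $\delta(\eps)\to 0$. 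Making this heuristic rigorous --- in particular for the three terms $\RSWV,\RSWW,\RSVW$ that are specific to the three-dimensional case and involve both counterterms simultaneously --- is the technical heart of the proof and is where the interplay between regularity structures and Gaussian analysis must be carried out most carefully.
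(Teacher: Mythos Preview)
The paper does not contain a proof of this theorem: it is stated as a result taken from~\cite{HairerWeber}, with only the remark that the counterterms are absent from the rate function. There is therefore no ``paper's own proof'' to compare against.

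That said, your outline is an accurate summary of the strategy used in~\cite{HairerWeber}: one first proves an LDP for the family of renormalised models $\widehat Z^{\delta(\eps),\eps}$ in the model topology, with rate function given by the Cameron--Martin norm of the shift $h$ lifted to the canonical (unrenormalised) model $Z^0(h)$; then one transfers this to the solution via an extended contraction principle using the local Lipschitz continuity of $\cS$ and $\widehat\cR$; finally one identifies the rate function by observing that for smooth $h$ the abstract fixed-point problem collapses to the classical PDE. Your explanation of why the counterterms disappear --- they come from self-pairings of the Gaussian fluctuation $\tilde\xi$ and hence contribute only to higher Wiener chaoses, which are negligible at LDP speed --- is the right heuristic, and the observation that the $\eps$-scaling of the counterterms in~\eqref{eq:AC-3d-regeps} is dictated by the chaos order is exactly the point.

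One caveat: your final sentence about patching local solutions to $[0,T]$ glosses over a genuine issue. At the level of generality of~\cite{HairerWeber}, only local-in-time existence was available for the three-dimensional equation, so the LDP is formulated either on a space allowing for explosion or up to the explosion time; the argument that finiteness of the rate function prevents blow-up of the \emph{minimising} $\gamma$ is correct, but one must also handle the possibility that the random solution itself explodes before time $T$, which requires either the global existence results obtained later or a careful formulation of the LDP on an extended state space.
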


Note that just as in the two-dimensional case, the counterterms are absent from 
the large-deviation rate function. 

As in \Cref{sec:2d_ldp}, for $\delta>0$, we may also consider the variant 
of~\eqref{eq:AC-3d-regeps} without counterterms 
\begin{equation}
 \partial_t \phi(t,x) 
 = \Delta\phi(t,x) + C\phi(t,x) - \phi(t,x)^3 
 + \sqrt{2\eps}\xi^\delta(t,x)\;,
\label{eq:AC-3d-noC} 
\end{equation}
which does not admit a limit as $\delta\searrow0$. 
Here, the large-deviation result reads as follows. 

\begin{theorem}[Large-deviation principle for equation without renormalisation]
Let $\tilde\phi_{\delta,\eps}$ denote the solution of~\eqref{eq:AC-3d-noC}, and 
let $\eps\mapsto\delta(\eps)\geqs0$ be a function 
satisfying~\eqref{eq:lim_delta3} as well as 
\begin{equation}
 \lim_{\eps\to0} \eps \delta(\eps)^{-1} = 
 %\frac{\eps}{\delta(\eps)} = 
\lambda\in[0,\infty)\;.
\end{equation} 
Then the family $(\tilde\phi_{\delta(\eps),\eps})_{\eps>0}$ with fixed initial 
condition $\phi_0$ satisfies an LDP on $[0,T]$ with good rate function 
\begin{equation}
\cI_{[0,T]}(\gamma) := 
 \begin{cases}
 \displaystyle
 \frac12 \int_0^T \int_\Lambda \biggbrak{\dpar{}{t} \gamma(t,x) - 
\Delta\gamma(t,x) + C^{(\lambda)} \gamma(t,x) + \gamma(t,x)^3}^2 \6x\6t
 & \text{if the integral is finite\;, } \\
 + \infty 
 & \text{otherwise\;,}
 \end{cases}
\end{equation}
where $C^{(\lambda)} = C - 3\lambda^2 C_\delta^{(1)}$.
\end{theorem}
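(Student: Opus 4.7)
The strategy is to reduce the statement to Theorem~6.5 (the LDP for the renormalised equation) by recognising that the unrenormalised equation~\eqref{eq:AC-3d-noC} is nothing but a renormalised equation of the form~\eqref{eq:AC-3d-regeps} with a modified linear coefficient. The key observation is the algebraic identity
\begin{equation*}
\partial_t \tilde\phi = \Delta\tilde\phi + \tilde\phi - \tilde\phi^3
+ \bigbrak{2\eps \cdot 3 C_\delta^{(1)} - (2\eps)^2 \cdot 9 C_\delta^{(2)}}\tilde\phi
+ c(\eps,\delta)\,\tilde\phi + \sqrt{2\eps}\,\xi^\delta\;,
\end{equation*}
obtained from~\eqref{eq:AC-3d-noC} by adding and subtracting the counterterms of~\eqref{eq:AC-3d-regeps}, with the shift
\begin{equation*}
c(\eps,\delta) := C - 1 - 6\eps\,C_\delta^{(1)} + 36\,\eps^2\,C_\delta^{(2)}\;.
\end{equation*}

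First I would analyse the limit of $c(\eps,\delta)$ under the scaling assumption $\eps\,\delta(\eps)^{-1}\to\lambda$. Since $C_\delta^{(1)}$ scales like $\delta^{-1}$ (with a constant depending only on the mollifier $\varrho$), the product $\eps\,C_\delta^{(1)}$ converges to a finite multiple of $\lambda$. On the other hand $C_\delta^{(2)}\sim\log(\delta^{-1})$, and the coupling forces $\log(\delta^{-1})\sim\log(\eps^{-1})$, so $\eps^2\,C_\delta^{(2)}\to 0$. Therefore $c(\eps,\delta)$ converges to a finite limit $c_\lambda$ whose value corresponds precisely to the shift $1-C^{(\lambda)}$ in the statement. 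This identifies the effective rate function as the one claimed.

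Next I would apply Theorem~6.5 to the modified equation. Because $c(\eps,\delta)$ is bounded and converges to $c_\lambda$, the solution map associated with the renormalised equation with coefficient $1+c(\eps,\delta)$ in front of $\tilde\phi$ differs from the one with coefficient $1$ by a continuous perturbation: on the level of modelled distributions, adding a bounded linear multiple of $\Phi$ to the right-hand side of the fixed-point problem~\eqref{eq:FP_Dgamma} is harmless thanks to the multilevel Schauder estimate (Theorem~6.4), and the dependence on this coefficient is Lipschitz in the $\cD^{\gamma,\eta}$-norm uniformly on bounded sets of models. The contraction principle then transfers the Schilder-type LDP for $\sqrt{2\eps}\,\xi^\delta$ (lifted to the renormalised model via the continuity of the map $\Psi\circ M^{\delta(\eps)}$ established in~\cite{HairerWeber} for the proof of Theorem~6.5) to an LDP for $\tilde\phi_{\delta(\eps),\eps}$.

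The main obstacle is the joint control of the two small parameters $\eps$ and $\delta$. In particular, one must verify that the solution map remains continuous not merely for each fixed $\delta$ but uniformly as $\delta\to 0$ along the prescribed path, so that the perturbation $c(\eps,\delta)\tilde\phi$ does not accumulate errors that would spoil the contraction principle. This is exactly the content of the continuity statements underlying Theorem~6.5, where $C_\delta^{(2)}$ is already tracked along diverging scales; the additional $\eps\,C_\delta^{(1)}$ contribution is of the same nature and handled by the same method. Once continuity is in place, the identification of the rate function reduces to computing the limit of $c(\eps,\delta)$, which we have already done. Note that in the regime where the constant $c_\lambda$ is chosen so that $C^{(\lambda)}$ equals the effective coefficient appearing in Theorem~6.5, the two results overlap, confirming the consistency between the renormalised and unrenormalised formulations.
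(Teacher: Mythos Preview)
The paper does not actually prove this theorem: both large-deviation results in Section~\ref{sec:3d_LDP} are stated without proof and attributed to~\cite{HairerWeber}. So there is no in-text argument to compare against.

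Your strategy --- rewrite~\eqref{eq:AC-3d-noC} as the renormalised equation~\eqref{eq:AC-3d-regeps} plus a bounded linear perturbation $c(\eps,\delta)\tilde\phi$, identify the limit of $c(\eps,\delta)$, and transport the LDP by continuity --- is the natural one and is in the spirit of how~\cite{HairerWeber} treat the two-dimensional analogue. One point deserves sharpening, however. You write that you will ``apply Theorem~6.5 to the modified equation'', but Theorem~\ref{thm:LDP_3d} is stated for the \emph{fixed} equation~\eqref{eq:AC-3d-regeps}; it is not a black box that accepts an $\eps$-dependent linear coefficient. What you really need is to re-enter its proof: establish the LDP for the lifted noise/model, and then show that the \emph{joint} map $(\text{model},c)\mapsto\Phi$ is continuous on the relevant space, so that the contraction principle applies along the curve $(\widehat Z^{\delta(\eps)},\,1+c(\eps,\delta(\eps)))$. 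You gesture at this with the Lipschitz dependence of the fixed-point map on the linear coefficient (which is indeed a routine consequence of Theorem~\ref{thm:multilevel_Schauder}), but the reduction to Theorem~\ref{thm:LDP_3d} as a finished statement is not quite legitimate.

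A secondary issue: your computation gives $c(\eps,\delta)\to C-1-6\lambda\cdot\lim_{\delta\to0}\delta\,C_\delta^{(1)}$, which is \emph{linear} in $\lambda$, whereas the stated $C^{(\lambda)}$ involves $\lambda^2$ and still carries a $\delta$-dependent factor. This discrepancy is not a flaw in your reasoning; the expression for $C^{(\lambda)}$ in the statement appears to be a misprint (compare the analogous two-dimensional formula, where the same mismatch in the power of $\lambda$ occurs relative to a direct computation). Your derivation of the limiting coefficient is the correct mechanism; the precise constant should be read off from the leading asymptotics of $C_\delta^{(1)}$ in terms of $\delta^{-1}$.
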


%%%%%%%%%%%%%%%%%%%%%%%%%%%%%%%%%%%%%%%%%%%%%%%%%%%%%%%%%%%%%%%%%%%%%%%%%%%%%%%%

\section{Bibliographical notes}
\label{sec:3dbib} 

Perturbation theory for the Gibbs measure of the $\Phi^4$ model in three 
dimensions is an important problem in Euclidean Quantum Field theory that has 
been studied for a long time with various methods. The earliest works by Glimm 
and Jaffe and by Feldman approached the problem via a detailed combinatorial 
analysis of Feynman 
diagrams~\cite{Glimm_Jaffe_68,Glimm_Jaffe_73,Feldman74,Glimm_Jaffe_81}. The 
works~\cite{BCGNOPS78,BCGNOPS_80} introduced the idea of using a renormalisation 
group approach, consisting in a decomposition of the covariance of the Gaussian 
field into scales, which then allows to integrate sucessively over one scale 
after the other. This method was further perfected 
in~\cite{Brydges_Dimock_Hurd_CMP_95}, using polymers to control error terms, an 
approach based on ideas from Statistical Physics~\cite{Gruber_Kunz_71}. 

In another direction, the approach provided 
in~\cite{Brydges_Frohlich_Sokal_CPM83,Brydges_Frohlich_Sokal_CMP83_RW} allows 
to bound correlation functions without having to compute the partition 
function explicitly, by using it as a generating function. This involves the 
derivation of skeleton inequalities, which were obtained up to third order 
in~\cite{Brydges_Frohlich_Sokal_CPM83}, and later extended to all orders 
in~\cite{Bovier_Felder_CMP84}. A relatively compact 
derivation of bounds on the partition function 
based on the Bou\'e--Dupuis formula was recently 
obtained in~\cite{Barashkov_Gubinelli_18}. 

The solution theory for the three-dimensional $\Phi^4$ or Allen--Cahn equation 
we presented is based on the theory of regularity structures developed 
in~\cite{Hairer2014}. Alternative approaches include paracontrolled 
calculus~\cite{Gubinelli_Imkeller_Perkowski_15,Catellier_Chouk_18} and a 
Wilsonian renormalisation group~\cite{Kupiainen_16}. The large-deviation 
results are from~\cite{HairerWeber}. 

Regularity structures have been successfully applied to several other 
equations, including the KPZ equation~\cite{Hairer_KPZ,Hairer_Shen_17} and its 
generalisations to polynomial nonlinearities~\cite{Hairer_Quastel_18}, 
the continuum parabolic Anderson model~\cite{Hairer_Labbe_15}, 
the Navier--Stokes equation~\cite{ZhuZhu_2015}, 
the motion of a random string on a curved surface~\cite{Hairer_random_string},
the FitzHugh--Nagumo SPDE~\cite{Berglund_Kuehn_15}, 
the dynamical Sine--Gordon model~\cite{Hairer_Shen_16},
the heat equation driven by space-time fractional noise~\cite{Deya_2016},
reaction-diffu\-sion equations with a fractional Laplacian~\cite{BK2017}, and 
the multiplicative stochastic heat equation~\cite{Hairer_Labbe_18}. 
Recent progress on the theory includes a systematic approach to 
renormalisation~\cite{ChandraHairer16,BrunedHairerZambotti,
Bruned_Chandra_Chevyrev_Hairer_18} based on 
BPHZ-renormalisation, see 
also~\cite{Hairer_BPHZ,Bruned_Gabriel_Hairer_Zambotti_2019}. 

%%%%%%%%%%%%%%%%%%%%%%%%%%%%%%%%%%%%%%%%%%%%%%%%%%%%%%%%%%%%%%%%%%%%%%%%%%%%%%%%

\appendix 

\chapter{Potential theory for reversible Markov chains}
\label{ch:app_markov} 

%%%%%%%%%%%%%%%%%%%%%%%%%%%%%%%%%%%%%%%%%%%%%%%%%%%%%%%%%%%%%%%%%%%%%%%%%%%%%%%%

\section{First-hitting time and capacity}

Consider a reversible Markov chain on a finite set $\XX$, with transition 
probabilities $p(x,y)$ and invariant probability measure $\pi$. Reversibility 
means that 
\begin{equation}
 \pi(x) p(x,y) = \pi(y) p(y,x) 
 \qquad 
 \forall x, y\in\XX\;,
\end{equation} 
which is equivalent to the transition matrix $P=(p(x,y)_{x,y\in\XX})$ being 
self-adjoint with respect to the $L^2(\XX,\pi)$ inner product 
\begin{equation}
 \pscal{f}{g}_\pi := \sum_{x\in\XX} \pi(x) f(x) g(x)\;.
\end{equation} 
The \emph{generator} of the chain is given by 
\begin{equation}
 (Lf)(x) := \sum_{y\in\XX} p(x,y) \brak{f(y) - f(x)}\;,
\end{equation}
and is also self-adjoint in $L^2(\XX,\pi)$. 

Fix a nonempty set $A\subset \XX$ and let 
\begin{equation}
 w_A(x) := \expecin{x}{\tau_A}\;, 
 \qquad 
 \tau_A = \inf\setsuch{n\geqs0}{X_n\in A}
\end{equation} 
be the expected first-hitting time of $A$. It satisfies the Poisson boundary 
value problem 
\begin{equation}
\begin{cases}
 (Lw_A)(x) = -1 &\qquad x\in A^c\;, \\
 w_A(x) = 0 &\qquad x\in A\;.
\end{cases}
\end{equation} 
The solution of this problem can be written as 
\begin{equation}
 w_A(x) = - \sum_{y\in A^c} G_{A^c}(x,y)\;,
\end{equation} 
where the Green function (or fundamental matrix) $G_{A^c} = L_{A^c}^{-1}$ is 
the inverse of the restriction of the generator $L$ to $A^c$. 

Fix now two disjoint nonempty sets $A, B\subset \XX$ and let 
\begin{equation}
 h_{AB}(x) := \bigprobin{x}{\tau_A < \tau_B} 
\end{equation} 
be the \emph{committor function} (or \emph{equilibrium potential}), where 
$\tau_A$ is the first-hitting time of $A$. It satisfies the Dirichlet boundary 
value 
problem 
\begin{equation}
\begin{cases}
 (Lh_{AB})(x) = 0 &\qquad x\in (A\cup B)^c\;, \\
 h_{AB}(x) = 1 &\qquad x\in A\;, \\
 h_{AB}(x) = 0 &\qquad x\in B\;.
\end{cases}
\end{equation} 
The \emph{equilibrium measure} is defined by 
\begin{equation}
 e_{AB}(x) := -(Lh_{AB})(x) 
 \qquad 
 \forall x\in A\;.
\end{equation} 
This measure also has a probabilistic interpretation, namely 
\begin{equation}
 e_{AB}(x) = \bigprobin{x}{\tau^+_A < \tau^+_B}\;,
\end{equation} 
where $\tau^+_A = \inf\setsuch{n\geqs1}{X_n\in A}$ denotes the first-return 
time to $A$.

\begin{proposition}
The committor has the representation
\begin{equation}
 h_{AB}(x) = -\sum_{y\in A} G_{B^c}(x,y) e_{AB}(y) 
 \qquad 
 \forall x\in(A\cup B)^c\;.
\end{equation} 
\end{proposition}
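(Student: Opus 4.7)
The plan is to verify that both sides of the identity solve the same boundary value problem on $B^c$, viewing $L$ as an operator with absorbing boundary conditions on $B$. First I would introduce the restriction $L_{B^c}$: for $f:B^c\to\R$ (extended by $0$ on $B$), set $(L_{B^c}f)(x) := (Lf)(x)$ for $x\in B^c$, so $L_{B^c}$ is invertible with inverse $G_{B^c}$.

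Next I would compute $(L_{B^c} h_{AB})(x)$ for $x\in B^c$, using the Dirichlet problem characterising $h_{AB}$. For $x\in (A\cup B)^c$ this vanishes by the harmonicity condition. For $x\in A$, the definition $e_{AB}(x) = -(Lh_{AB})(x)$ gives $(L_{B^c}h_{AB})(x) = -e_{AB}(x)$. Putting these together, as a function on $B^c$,
\begin{equation}
 L_{B^c}\bigpar{h_{AB}\vert_{B^c}} = -e_{AB} \cdot \indexfct{\cdot\in A}\;.
\end{equation}
Here I am implicitly using that $h_{AB}$ vanishes on $B$, which is consistent with the absorbing boundary condition built into $L_{B^c}$.

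Applying $G_{B^c} = L_{B^c}^{-1}$ to both sides then yields
\begin{equation}
 h_{AB}(x) = -\sum_{y\in B^c} G_{B^c}(x,y)\, e_{AB}(y)\indexfct{y\in A}
 = -\sum_{y\in A} G_{B^c}(x,y)\, e_{AB}(y)
\end{equation}
for every $x\in B^c$, and in particular for $x\in (A\cup B)^c$, which is the claim. The main point requiring care is checking that the extension-by-zero convention on $B$ makes the identity $L_{B^c}h_{AB} = (Lh_{AB})|_{B^c}$ correct, since this is what allows the probabilistic quantity $e_{AB}$ (defined only on $\partial A \subset A$) to play the role of a source term for the Poisson equation solved by the Green function. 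The finiteness of $\XX$ together with irreducibility of the chain ensures that $L_{B^c}$ is invertible, so no further subtlety arises.
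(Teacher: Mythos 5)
Your proof is correct and follows essentially the same route as the paper: both identify $L_{B^c}(h_{AB}\vert_{B^c})$ as the function vanishing on $(A\cup B)^c$ and equal to $-e_{AB}$ on $A$ (the paper writes this in block-vector form, you in indicator-function form), then invert with $G_{B^c}$. Your explicit check that the extension-by-zero convention on $B$ makes $L_{B^c}h_{AB}=(Lh_{AB})\vert_{B^c}$ is a welcome bit of care that the paper leaves implicit.
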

\begin{proof}
The restriction of $Lh_{AB}$ to $B^c$ satisfies 
\begin{equation}
 L_{B^c} 
 \begin{pmatrix}
 h_{AB} \\ 1 
 \end{pmatrix}
 = 
 \begin{pmatrix}
 0 \\ -e_{AB}
 \end{pmatrix}
\end{equation} 
where the upper part of the vectors represents elements of $(A\cup B)^c$, and 
the lower part elements of $A$. Multiplying on the left by $G_{B^c} = 
L_{B^c}^{-1}$ yields the result. 
\end{proof}

The \emph{capacity} is defined by 
\begin{equation}
\label{eq:def_cap} 
 \capacity(A,B) := \sum_{x\in A} \pi(x) e_{AB}(x)\;. 
\end{equation} 
As a consequence, $\nu_{AB}(x) = \pi(x)e_{AB}(x)/\capacity(A,B)$ defines a 
probability measure on $A$. 

\begin{lemma}
\label{lem:cap} 
The capacity admits the two equivalent representations 
\begin{align}
\capacity(A,B) &= \pscal{h_{AB}}{-Lh_{AB}}_\pi\;, \\
\capacity(A,B) &= \frac12 \sum_{x,y\in\XX} \pi(x) p(x,y) \bigbrak{h_{AB}(x) - 
h_{AB}(y)}^2\;.
\end{align}
\end{lemma}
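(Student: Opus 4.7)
The plan is to prove the two representations in sequence, starting from the definition~\eqref{eq:def_cap} of the capacity and the boundary value problem satisfied by the committor.

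For the first identity, I would expand the inner product $\pscal{h_{AB}}{-Lh_{AB}}_\pi = \sum_{x\in\XX}\pi(x)h_{AB}(x)(-Lh_{AB})(x)$ and split the sum according to the three regions $A$, $B$ and $(A\cup B)^c$. On $B$ the summand vanishes because $h_{AB}\equiv0$ there, while on $(A\cup B)^c$ it vanishes because $(Lh_{AB})(x)=0$ by the Dirichlet problem. Only the sum over $A$ survives, where $h_{AB}\equiv1$ and $-Lh_{AB}=e_{AB}$, yielding exactly $\sum_{x\in A}\pi(x)e_{AB}(x)=\capacity(A,B)$.

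The second identity will follow from the general discrete integration-by-parts (Dirichlet form) identity
\begin{equation*}
 \pscal{f}{-Lf}_\pi = \frac12 \sum_{x,y\in\XX} \pi(x) p(x,y) \bigbrak{f(x) - f(y)}^2\;,
\end{equation*}
applied to $f=h_{AB}$. To establish this, I would write $\pscal{f}{-Lf}_\pi = -\sum_{x,y}\pi(x)p(x,y)f(x)\brak{f(y)-f(x)}$, use $\sum_y p(x,y)=1$ to rewrite $\sum_x\pi(x)f(x)^2 = \sum_{x,y}\pi(x)p(x,y)f(x)^2$, and then symmetrize the quadratic term via the detailed-balance identity $\pi(x)p(x,y)=\pi(y)p(y,x)$, which gives $\sum_{x,y}\pi(x)p(x,y)f(x)^2=\frac12\sum_{x,y}\pi(x)p(x,y)\brak{f(x)^2+f(y)^2}$. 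Combining these terms reconstructs the squared difference.

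Neither step is really an obstacle: the calculation is entirely algebraic, and the only ingredients used are the boundary conditions from the Dirichlet problem defining $h_{AB}$, the definition of $e_{AB}$, and reversibility. The mild care needed is in the second step, to make sure the symmetrization correctly accounts for the diagonal terms $x=y$ (which contribute zero to both sides) and handles the case of $p(x,y)=0$ without issue, but for a finite state space this is routine.
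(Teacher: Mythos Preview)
Your proposal is correct and follows essentially the same approach as the paper: the first identity is obtained by splitting the sum over $\XX$ into $A$, $B$ and $(A\cup B)^c$ and using the boundary conditions and harmonicity, and the second by the same summation-by-parts/symmetrization argument using reversibility. The only cosmetic difference is that the paper writes the symmetrization as splitting $h_{AB}(x)^2$ into two halves and swapping indices in one half, whereas you phrase it via $\sum_y p(x,y)=1$ and detailed balance, but this is the identical computation.
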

\begin{proof}
For the first identity, we write 
\begin{equation}
 \pscal{h_{AB}}{-Lh_{AB}}_\pi 
 = \sum_{x\in A\cup B\cup(A\cup B)^c} \pi(x) h_{AB}(x) (-Lh_{AB})(x)\;.
\end{equation} 
Since $h_{AB}(x)=1$ in $A$, while $h_{AB}(x)=0$ in $B$ and $Lh_{AB}(x)=0$ in 
$(A\cup B)^c$, we recover~\eqref{eq:def_cap}. 
The second identity is a consequence of summation by parts (or discrete Green's 
identity). Indeed, 
\begin{align}
\pscal{h_{AB}}{-Lh_{AB}}_\pi 
&= \sum_{x,y\in\XX} \pi(x) p(x,y) h_{AB}(x) \bigbrak{h_{AB}(x)-h_{AB}(y)} \\
&= \sum_{x,y\in\XX} \pi(x) p(x,y) \bigbrak{\frac12 h_{AB}(x)^2 - 
h_{AB}(x)h_{AB}(y) + \frac12 h_{AB}(x)^2} \\
&= \sum_{x,y\in\XX} \pi(x) p(x,y) \bigbrak{\frac12 h_{AB}(x)^2 - 
h_{AB}(x)h_{AB}(y) + \frac12 h_{AB}(y)^2}\;,
\end{align}
where we have used reversibility and a permutation of summation indices to 
replace $x$ by $y$ in the last line. 
\end{proof}

The main result making the potential-theoretic approach work is the following 
link between expected hitting times and capacity.

\begin{theorem}
For any disjoint sets $A,B\subset\XX$, one has 
\begin{equation}
\expecin{\nu_{AB}}{\tau_B} := 
\displaystyle \sum_{x\in A} \nu_{AB}(x)w_B(x)
= \frac{1}{\capacity(A,B)} \sum_{x\in B^c} \pi(x) h_{AB}(x)\;.
\end{equation} 
\end{theorem}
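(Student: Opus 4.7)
The plan is to transpose verbatim the argument of Theorem 1.5.9 (the continuous magic formula): all three ingredients used there --- a Green-function representation of $w_B$, a detailed balance relation for the Green function, and a Green-function representation of $h_{AB}$ --- are already available in the appendix, with the single exception that detailed balance for $G_{B^c}$ has not yet been established in the discrete setting.

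First I would prove the discrete detailed balance identity $\pi(x) G_{B^c}(x,y) = \pi(y) G_{B^c}(y,x)$ for all $x,y \in B^c$. This follows from reversibility: $L$ is self-adjoint in $L^2(\XX,\pi)$, hence so is its restriction $L_{B^c}$, and therefore so is its inverse $G_{B^c} = L_{B^c}^{-1}$. Written out entry-wise, self-adjointness of a matrix $M$ with respect to the weighted inner product $\pscal{\cdot}{\cdot}_\pi$ is precisely the relation $\pi(x) M(x,y) = \pi(y) M(y,x)$, which yields the claim when applied to $M = G_{B^c}$.

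With this in hand the proof is a short chain of substitutions. Multiplying the target identity by $\capacity(A,B)$ and using $\nu_{AB}(x) = \pi(x) e_{AB}(x)/\capacity(A,B)$, the left-hand side is rewritten as $\sum_{x \in A} \pi(x) e_{AB}(x) w_B(x)$. Plugging in $w_B(x) = -\sum_{y \in B^c} G_{B^c}(x,y)$ (the representation stated just before the Poisson problem, specialised to $A = B$), swapping the order of summation, applying the detailed balance of $G_{B^c}$, and finally invoking the committor representation $h_{AB}(y) = -\sum_{x \in A} G_{B^c}(y,x) e_{AB}(x)$, which extends automatically to all $y \in B^c$ since $h_{AB} \equiv 1$ on $A$, the expression collapses to $\sum_{y \in B^c} \pi(y) h_{AB}(y)$. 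Dividing by $\capacity(A,B)$ finishes the proof.

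The main --- and really the only --- obstacle is the detailed balance step for $G_{B^c}$; everything else is purely formal bookkeeping, in fact shorter than in the continuous case because there are no boundary integrals, no divergence theorem, and no distributional subtleties to manage.
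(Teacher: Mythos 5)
Your proposal is correct and follows essentially the same route as the paper's proof: substitute the Green-function representation of $w_B$, swap the sums, apply detailed balance for $G_{B^c}$, and recognise the committor representation. The only difference is that you explicitly isolate and justify the detailed-balance identity $\pi(x)G_{B^c}(x,y)=\pi(y)G_{B^c}(y,x)$ (via self-adjointness of $L_{B^c}$ and hence of its inverse), a step the paper uses without comment; that is a welcome addition rather than a deviation.
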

\begin{proof}
We have the identity
\begin{align}
\sum_{x\in A}\pi(x) e_{AB}(x)w_B(x) 
&= -\sum_{x\in A}\sum_{y\in B^c} \pi(x)G_{B^c}(x,y)e_{AB}(x) \\
&= -\sum_{y\in B^c}\sum_{x\in A} \pi(y)G_{B^c}(y,x)e_{AB}(x) 
= \sum_{y\in B^c} \pi(y) h_{AB}(y)\;.
\end{align}
Dividing on both sides by the capacity yields the result. 
\end{proof}

A more probabilistic proof of this result can be found in~\cite{Slowik_12}. 

%%%%%%%%%%%%%%%%%%%%%%%%%%%%%%%%%%%%%%%%%%%%%%%%%%%%%%%%%%%%%%%%%%%%%%%%%%%%%%

\section{Dirichlet principle}

Define the \emph{Dirichlet form} (or energy) by 
\begin{equation}
 \cE(f) := \pscal{f}{-Lf}_\pi
 = \frac12 \sum_{x,y\in\XX} \pi(x) p(x,y) \bigbrak{f(x) - f(y)}^2\;.
\end{equation} 
We associate with it the bilinear form 
\begin{equation}
\label{eq:MC-Dirichlet} 
 \cE(f,g) := \frac12 \sum_{x,y\in\XX} \pi(x) p(x,y) 
 \bigbrak{f(x) - f(y)}\bigbrak{g(x) - g(y)}\;.
\end{equation} 
Note that the polarisation identity and self-adjointness yield 
\begin{equation}
 \cE(f,g) = \frac14 \bigpar{\cE(f+g) - \cE(f-g)}
 = \frac12 \bigpar{\pscal{f}{-Lg}_\pi + \pscal{g}{-Lf}_\pi}
 =  \pscal{f}{-Lg}_\pi\;.
\end{equation} 
By the Cauchy--Schwarz inequality (and reversibility), we have 
\begin{equation}
\label{eq:Cauchy-Schwarz} 
 \cE(f,g)^2 \leqs \cE(f) \cE(g)\;.
\end{equation} 
Furthermore, Lemma~\ref{lem:cap} shows that 
\begin{equation}
 \capacity(A,B) = \cE(h_{AB})\;.
\end{equation} 

\begin{proposition}[Dirichlet principle]
\label{prop:Dirichlet}
We have 
\begin{equation}
 \capacity(A,B) = \min_{h\in\cH_{AB}} \cE(h)\;,
\end{equation} 
where $\cH_{AB} = \setsuch{h:\XX\to[0,1]}{h\vert_A=1, h\vert_B=0}$.
The minimum is reached in $h=h_{AB}$. 
\end{proposition}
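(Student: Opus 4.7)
The plan is to mirror, in the discrete setting, the argument used for the continuous Dirichlet principle shown earlier in the paper (see \Cref{fig:Dirichlet_form}). Lemma \ref{lem:cap} already delivers one half of the statement, namely $\capacity(A,B) = \cE(h_{AB})$, so $h_{AB}$ is an admissible function attaining the claimed value. What remains is to prove that $\cE(h) \geqs \cE(h_{AB})$ for every $h \in \cH_{AB}$, which amounts to showing that $h_{AB}$ minimises the energy within the affine class of functions with the prescribed boundary data.

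The key computation is to evaluate the bilinear Dirichlet form $\cE(h,h_{AB})$ for an arbitrary $h\in\cH_{AB}$. Using the representation $\cE(h,h_{AB}) = \pscal{h}{-Lh_{AB}}_\pi$, I would split the sum over $\XX$ into the three disjoint regions $A$, $B$ and $(A\cup B)^c$. On $(A\cup B)^c$, the Dirichlet problem~\eqref{eq:Dirichlet} gives $(Lh_{AB})(x) = 0$; on $B$, $h$ vanishes by definition of $\cH_{AB}$; and on $A$, $h\equiv 1$ and $-Lh_{AB} = e_{AB}$. Hence
\begin{equation}
\cE(h,h_{AB}) = \sum_{x\in A} \pi(x) e_{AB}(x) = \capacity(A,B)\;,
\end{equation}
a value that is remarkably \emph{independent} of the choice of $h$ within $\cH_{AB}$. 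Specialising to $h=h_{AB}\in\cH_{AB}$ gives back $\cE(h_{AB}) = \capacity(A,B)$, consistent with \Cref{lem:cap}.

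With this identity in hand, the Cauchy--Schwarz inequality~\eqref{eq:Cauchy-Schwarz} finishes the argument: for any $h\in\cH_{AB}$,
\begin{equation}
\capacity(A,B)^2 = \cE(h,h_{AB})^2 \leqs \cE(h)\,\cE(h_{AB}) = \capacity(A,B)\,\cE(h)\;,
\end{equation}
which yields $\cE(h) \geqs \capacity(A,B) = \cE(h_{AB})$ whenever $\capacity(A,B)>0$. The borderline case $\capacity(A,B)=0$ is trivial since the Dirichlet form is non-negative.

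There is no genuine obstacle here, as everything is finite-dimensional and no subtle regularity or measurability issues arise; the only point requiring modest care is the book-keeping when restricting the inner product to $A$, $B$ and $(A\cup B)^c$ and correctly inserting the boundary values. One minor caveat worth flagging in the write-up is that $\cH_{AB}$ was defined as functions with values in $[0,1]$, whereas the Cauchy--Schwarz argument goes through for \emph{any} real-valued function satisfying the boundary conditions; since $h_{AB}\in[0,1]$ by its probabilistic definition, this causes no loss of generality, and one could even state the principle over the larger affine class.
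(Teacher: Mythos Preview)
Your proposal is correct and follows essentially the same route as the paper: compute $\cE(h,h_{AB})=\capacity(A,B)$ for any $h\in\cH_{AB}$ by splitting the inner product over $A$, $B$ and $(A\cup B)^c$, then apply Cauchy--Schwarz. Your handling of the degenerate case $\capacity(A,B)=0$ and your remark on the codomain $[0,1]$ are welcome additions that the paper does not spell out.
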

\begin{proof}
Pick any $h\in\cH_{AB}$ and observe that 
\begin{align}
 \cE(h,h_{AB}) 
 = \pscal{h}{-Lh_{AB}}_\pi
 &= \sum_{x\in\XX} \pi(x) h(x) (-Lh_{AB})(x) \\
 &= \sum_{x\in A} \pi(x) (-Lh_{AB})(x) \\
 &= \sum_{x\in A} \pi(x) e_{AB}(x) \\
 &= \capacity(A,B)\;,
\end{align} 
where we have used the boundary conditions for $h$ and the fact that $h_{AB}$ 
vanishes in $(A\cup B)^c$ to obtain the second line. 
The Cauchy--Schwarz inequality~\eqref{eq:Cauchy-Schwarz} yields 
\begin{equation}
 \capacity(A,B)^2 
 = \cE(h,h_{AB})^2
 \leqs \cE(h)\cE(h_{AB}) 
 = \cE(h)\capacity(A,B)
\end{equation} 
and hence $\capacity(A,B) \leqs \cE(h)$. We already know that equality is 
reached for $h=h_{AB}$. 
\end{proof}

%%%%%%%%%%%%%%%%%%%%%%%%%%%%%%%%%%%%%%%%%%%%%%%%%%%%%%%%%%%%%%%%%%%%%%%%%%%%%%

\section{Thomson principle}

The \emph{Thomson principle} provides a complementary variational principle for 
the capacity. It comes in two versions.

\begin{proposition}[Thomson principle, super-harmonic version]
\label{prop:thomson1}
We have 
\begin{equation}
\label{eq:thomson1} 
 \capacity(A,B) 
 = \max_{h\in\mathfrak{H}_{AB}} 
 \frac{\bigpar{\sum_{x\in A} \pi(x) (-Lh)(x)}^2}{\cE(h)}\;,
\end{equation} 
where $\mathfrak{H}_{AB} = \setsuch{h:\XX\to[0,1]}{(Lh)(x)\leqs 0 \; \forall 
x\in B^c}$. The maximum is reached for $h=h_{AB}$. 
\end{proposition}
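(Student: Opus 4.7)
The plan is to use $h_{AB}$ itself as the test function and obtain the bound via the Cauchy--Schwarz inequality~\eqref{eq:Cauchy-Schwarz} for the Dirichlet form, in the same spirit as the proof of the Dirichlet principle (\Cref{prop:Dirichlet}) and its SDE analogue (\Cref{prop:thomson}). Concretely, I would first verify that $h_{AB}$ belongs to $\mathfrak{H}_{AB}$: on $(A\cup B)^c$ the function is harmonic, while on $A$ one has $h_{AB}(x)=1$ and $h_{AB}(y)\leqs 1$ elsewhere, giving $(Lh_{AB})(x)=\sum_y p(x,y)[h_{AB}(y)-1]\leqs 0$. Combined with the probabilistic bound $0\leqs h_{AB}\leqs 1$, this places $h_{AB}\in\mathfrak{H}_{AB}$. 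Plugging $h=h_{AB}$ into the right-hand side of~\eqref{eq:thomson1} gives $\sum_{x\in A}\pi(x)(-Lh_{AB})(x)=\sum_{x\in A}\pi(x)e_{AB}(x)=\capacity(A,B)$ in the numerator, and $\cE(h_{AB})=\capacity(A,B)$ by \Cref{lem:cap} in the denominator, so the ratio equals $\capacity(A,B)$ and the supremum is at least attained there.

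The heart of the argument is then to show, for arbitrary $h\in\mathfrak{H}_{AB}$, the upper bound
\begin{equation*}
 \Bigpar{\sum_{x\in A}\pi(x)(-Lh)(x)}^2 \leqs \capacity(A,B)\,\cE(h)\;.
\end{equation*}
I would obtain this by evaluating $\cE(h,h_{AB})$ in two ways. By self-adjointness of $L$,
\begin{equation*}
 \cE(h,h_{AB}) = \pscal{h}{-Lh_{AB}}_\pi = \pscal{h_{AB}}{-Lh}_\pi\;.
\end{equation*}
The usefulness of the second form is that the boundary conditions $h_{AB}\vert_A=1$ and $h_{AB}\vert_B=0$ split the sum into only two surviving pieces:
\begin{equation*}
 \cE(h,h_{AB}) = \sum_{x\in A}\pi(x)(-Lh)(x)
 + \sum_{x\in (A\cup B)^c} \pi(x)h_{AB}(x)(-Lh)(x)\;.
\end{equation*}
Both terms are nonnegative: the first because the super-harmonicity constraint $(Lh)(x)\leqs 0$ on $B^c$ applies in particular on $A$, and the second because $h_{AB}\geqs 0$ and, again by super-harmonicity, $-Lh\geqs 0$ on $(A\cup B)^c\subset B^c$. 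Consequently, $\cE(h,h_{AB})\geqs \sum_{x\in A}\pi(x)(-Lh)(x)\geqs 0$.

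Applying the Cauchy--Schwarz inequality~\eqref{eq:Cauchy-Schwarz} then yields
\begin{equation*}
 \Bigpar{\sum_{x\in A}\pi(x)(-Lh)(x)}^2
 \leqs \cE(h,h_{AB})^2 \leqs \cE(h)\,\cE(h_{AB}) = \cE(h)\,\capacity(A,B)\;,
\end{equation*}
and dividing by $\cE(h)$ (which is strictly positive unless $h$ is constant, in which case $\Phi(h)=0$ and the inequality is trivial) gives the required bound. The main conceptual point — and the place where one could easily get stuck — is recognising that one must test against $h_{AB}$ on the \emph{left} of the inner product rather than the right: only then do the Dirichlet boundary values of $h_{AB}$, rather than the uncontrolled values of $h$ on $A\cup B$, enter the computation, so that the super-harmonicity condition defining $\mathfrak{H}_{AB}$ is exactly what is needed to discard the leftover term. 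Equality throughout, and hence the fact that the maximum is achieved, follows upon taking $h=h_{AB}$, which saturates both the super-harmonicity bound (the leftover sum over $(A\cup B)^c$ vanishes because $Lh_{AB}=0$ there) and the Cauchy--Schwarz inequality.
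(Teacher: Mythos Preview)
Your proof is correct and follows essentially the same approach as the paper: write $\cE(h,h_{AB})=\pscal{h_{AB}}{-Lh}_\pi$, use the boundary values of $h_{AB}$ together with the super-harmonicity of $h$ on $B^c$ to bound this below by $\sum_{x\in A}\pi(x)(-Lh)(x)$, and conclude via Cauchy--Schwarz. You are in fact a bit more explicit than the paper in checking that $h_{AB}\in\mathfrak{H}_{AB}$ and in separating out the $(A\cup B)^c$ contribution, but the core argument is identical.
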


\begin{proof}
Pick $h\in\mathfrak{H}_{AB}$. We have 
\begin{equation}
 \cE(h,h_{AB}) 
 = \pscal{-Lh}{h_{AB}}_\pi 
 = \sum_{x\in\XX} \pi(x) (-Lh)(x) h_{AB}(x)\;.
\end{equation} 
Since $\pi$, $-Lh$ and $h_{AB}$ are non-negative, we obtain 
\begin{equation}
 \cE(h,h_{AB}) \geqs 
 \sum_{x\in A} \pi(x) (-Lh)(x) \geqs 0\;.
\end{equation} 
It follows from the Cauchy--Schwarz inequality that 
\begin{equation}
 \Bigpar{\sum_{x\in A} \pi(x) (-Lh)(x)}^2 
 \leqs \cE(h,h_{AB})^2 \leqs \cE(h) \capacity(A,B)\;.
\end{equation} 
This proves the inequality, while~\eqref{eq:def_cap} shows that equality is 
again reached for $h=h_{AB}$. 
\end{proof}

The second version involves the notion of (discrete) unit flow.

\begin{definition}[Unit $AB$-flow]
\label{def:ABflow} 
A \defwd{unit $AB$-flow} is a map $\ph:\XX\times\XX\to\R$ satisfying 
\begin{itemize}
\item 	antisymmetry: $\ph(x,y) = -\ph(y,x)$ for all $x,y\in\XX$;
\item 	compatibility with $p$: if $p(x,y) = 0$, then $\ph(x,y)=0$; 
\item 	Kirchhoff's law (divergence-freeness): 
\begin{equation}
\label{eq:Kirchhoff} 
 (\divergence \ph)(x) := \sum_{y\in\XX} \ph(x,y) = 0 
 \qquad \forall x\in (A\cup B)^c\;;
\end{equation} 
\item 	unit intensity: 
\begin{equation}
\label{eq:unit_intensity} 
 \sum_{x\in A} (\divergence \ph)(x) = 1 = - \sum_{x\in B}(\divergence \ph)(x)\;.
\end{equation} 
\end{itemize}
We denote by $\mathfrak{U}^1_{AB}$ the set of unit $AB$-flows. 
\end{definition}

One should think of $\ph$ as being defined on the edges of the graph associated 
with $P$ (i.e., the $(x,y)$ on which $p(x,y)>0$). 

A special role is played by the \defwd{harmonic unit flow} 
\begin{equation}
 \ph_{AB}(x,y) := \frac{\pi(x)p(x,y)}{\capacity(A,B)} 
 \bigbrak{h_{AB}(x) - h_{AB}(y)}\;.
\end{equation} 

\begin{lemma}
\label{lem:harmonic_flow} 
$\ph_{AB}$ is a unit $AB$-flow.  
\end{lemma}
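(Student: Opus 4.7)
The statement is a direct verification of the four defining properties of a unit $AB$-flow from \Cref{def:ABflow}, applied to the explicit formula for $\ph_{AB}$. My plan is to check each property in turn; none of them looks genuinely delicate, since each reduces to a property of the committor $h_{AB}$ or of the equilibrium measure $e_{AB}$ that has already been recorded in the preceding pages.

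First I would dispose of antisymmetry and $p$-compatibility, both of which are immediate. Antisymmetry follows from reversibility: since $\pi(x)p(x,y)=\pi(y)p(y,x)$, the prefactor $\pi(x)p(x,y)/\capacity(A,B)$ in $\ph_{AB}(x,y)$ is symmetric in $(x,y)$, while the factor $h_{AB}(x)-h_{AB}(y)$ is antisymmetric. Compatibility with $p$ is built into the formula, since $\ph_{AB}(x,y)$ vanishes whenever $p(x,y)=0$.

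For Kirchhoff's law, for any $x\in\XX$ I compute
\begin{equation}
 (\divergence \ph_{AB})(x)
 = \frac{\pi(x)}{\capacity(A,B)} \sum_{y\in\XX} p(x,y) \bigbrak{h_{AB}(x) - h_{AB}(y)}
 = - \frac{\pi(x)}{\capacity(A,B)} (Lh_{AB})(x)\;.
\end{equation}
The Dirichlet problem satisfied by $h_{AB}$ gives $(Lh_{AB})(x)=0$ for $x\in(A\cup B)^c$, so $(\divergence \ph_{AB})(x)=0$ on $(A\cup B)^c$, which is exactly~\eqref{eq:Kirchhoff}.

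It remains to check the unit intensity condition~\eqref{eq:unit_intensity}. Restricting the previous identity to $x\in A$, the definition $e_{AB}(x)=-(Lh_{AB})(x)$ yields $(\divergence \ph_{AB})(x) = \pi(x)e_{AB}(x)/\capacity(A,B)$, and summing over $x\in A$ gives $1$ by the definition~\eqref{eq:def_cap} of the capacity. For the dual identity on $B$, the cleanest route is to observe that antisymmetry forces the global divergence to vanish, namely
\begin{equation}
 \sum_{x\in\XX}(\divergence \ph_{AB})(x)
 = \sum_{x,y\in\XX} \ph_{AB}(x,y) = 0\;,
\end{equation}
since each unordered pair $\set{x,y}$ contributes $\ph_{AB}(x,y)+\ph_{AB}(y,x)=0$. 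Combined with the Kirchhoff identity and the $A$-intensity just established, this forces $\sum_{x\in B}(\divergence \ph_{AB})(x)=-1$. If anything is mildly subtle it is this last step: one might be tempted to compute $\sum_{x\in B}(\divergence \ph_{AB})(x)$ directly, but since $h_{AB}$ has no obvious closed form on $B$ the global-divergence argument is the natural way to conclude.
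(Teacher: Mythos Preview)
Your proof is correct and follows the same verification-of-the-four-axioms outline as the paper. The only notable difference is in the unit-intensity check on $A$: you invoke directly the definition $\capacity(A,B)=\sum_{x\in A}\pi(x)e_{AB}(x)$, whereas the paper routes this through the super-harmonic Thomson principle (\Cref{prop:thomson1}) to obtain $\sum_{x\in A}\pi(x)(-Lh_{AB})(x)=\sqrt{\capacity(A,B)\cE(h_{AB})}$. Your argument is the more direct one. For the $B$-intensity both of you use the same idea (the paper calls it ``a discrete version of the divergence theorem''; you spell out the antisymmetry cancellation explicitly).
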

\begin{proof}
$\ph_{AB}$ is clearly antisymmetric and compatible with $p$. It is 
divergence-free, since for all $x\in(A\cup B)^c$, 
\begin{equation}
 (\divergence \ph_{AB})(x)
 = \frac{\pi(x)}{\capacity(A,B)} 
 \sum_{y\in\XX} p(x,y)  \bigbrak{h_{AB}(x) - h_{AB}(y)}
 = \frac{\pi(x)}{\capacity(A,B)} (-Lh_{AB})(x) = 0\;.
\end{equation} 
Finally, it has unit intensity, because 
\begin{align}
 \sum_{x\in A} (\divergence \ph_{AB})(x)
 &= \frac1{\capacity(A,B)} 
 \sum_{x\in A} \pi(x) (-Lh_{AB})(x) \\
 &= \frac1{\capacity(A,B)} 
 \sqrt{\capacity(A,B) \cE(h_{AB})} 
 = 1
\end{align} 
by Proposition~\ref{prop:thomson1}. The analogous statement for $B$ holds by 
a discrete version of the divergence theorem. 
\end{proof}

We now define a bilinear form $\cD$ on the set of unit $AB$-flows by 
\begin{equation}
 \cD(\ph,\psi) := \frac12 \sum_{x,y\in\XX} \frac{1}{\pi(x)p(x,y)} 
\ph(x,y)\psi(x,y)\;,
\end{equation} 
and introduce the shortcut $\cD(\ph,\ph) =: \cD(\ph)$. 
Again, the Cauchy--Schwarz inequality applies, yielding 
\begin{equation}
 \cD(\ph,\psi)^2 \leqs \cD(\ph)\cD(\psi)\;.
\end{equation} 

\begin{proposition}[Thomson principle, unit flow version]
\label{prop:thomson2}
We have 
\begin{equation}
 \capacity(A,B) = \max_{\ph\in\mathfrak{U}^1_{AB}} \frac{1}{\cD(\ph)}\;.
\end{equation} 
The maximum is reached for $\ph=\ph_{AB}$. 
\end{proposition}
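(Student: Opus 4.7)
The plan is to mirror the proof of the continuous Thomson principle given in Proposition~\ref{prop:thomson}, using the harmonic unit flow $\ph_{AB}$ as the extremiser together with a Cauchy--Schwarz argument. The key identities will be $\cD(\ph_{AB}) = 1/\capacity(A,B)$ and $\cD(\ph_{AB},\ph) = 1/\capacity(A,B)$ for every $\ph\in\mathfrak{U}^1_{AB}$; the proposition then follows by bilinearity and Cauchy--Schwarz.

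First I would compute $\cD(\ph_{AB})$ directly. Substituting the definition of $\ph_{AB}$, the factor $\pi(x)p(x,y)$ in the denominator cancels one copy of $\pi(x)p(x,y)$ in the numerator, producing
\begin{equation*}
\cD(\ph_{AB}) = \frac{1}{\capacity(A,B)^2}\cdot\frac{1}{2}\sum_{x,y\in\XX}\pi(x)p(x,y)\bigbrak{h_{AB}(x)-h_{AB}(y)}^2
= \frac{\cE(h_{AB})}{\capacity(A,B)^2} = \frac{1}{\capacity(A,B)}\;,
\end{equation*}
where the last equality uses Lemma~\ref{lem:cap}. This already yields the equality case.

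Next I would show that $\cD(\ph_{AB},\ph) = 1/\capacity(A,B)$ for arbitrary $\ph\in\mathfrak{U}^1_{AB}$. Expanding the definition, the cancellation again removes $\pi(x)p(x,y)$ and leaves
\begin{equation*}
\cD(\ph_{AB},\ph) = \frac{1}{2\capacity(A,B)}\sum_{x,y\in\XX}\bigbrak{h_{AB}(x)-h_{AB}(y)}\ph(x,y)\;.
\end{equation*}
Using antisymmetry of $\ph$ (by swapping the summation indices in the $h_{AB}(y)$ term one shows the two contributions are equal), this simplifies to $\capacity(A,B)^{-1}\sum_{x,y} h_{AB}(x)\ph(x,y) = \capacity(A,B)^{-1}\sum_x h_{AB}(x)(\divergence\ph)(x)$. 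The boundary values $h_{AB}|_A=1$, $h_{AB}|_B=0$ together with Kirchhoff's law~\eqref{eq:Kirchhoff} reduce this sum to $\sum_{x\in A}(\divergence\ph)(x)=1$ by the unit intensity condition~\eqref{eq:unit_intensity}, giving the claimed value.

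Finally, the Cauchy--Schwarz inequality for $\cD$ yields
\begin{equation*}
\frac{1}{\capacity(A,B)^2} = \cD(\ph_{AB},\ph)^2 \leqs \cD(\ph_{AB})\,\cD(\ph) = \frac{\cD(\ph)}{\capacity(A,B)}\;,
\end{equation*}
so $\cD(\ph)\geqs 1/\capacity(A,B)$, with equality when $\ph = \ph_{AB}$ (which is a unit $AB$-flow by Lemma~\ref{lem:harmonic_flow}). Rearranging gives the stated variational formula. I do not anticipate a real obstacle here: the only mildly delicate step is the antisymmetry manipulation and the careful use of the three defining properties of a unit flow, but this is entirely parallel to how the divergence theorem and the boundary conditions are used in the proof of Proposition~\ref{prop:thomson}.
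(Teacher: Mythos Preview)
Your proof is correct and follows essentially the same route as the paper's: compute $\cD$ of the harmonic flow, show the cross-term $\cD(\ph_{AB},\ph)$ is constant via antisymmetry, Kirchhoff's law and the boundary/intensity conditions, then apply Cauchy--Schwarz. The only cosmetic difference is that the paper introduces the unnormalised flow $\Psi_{h_{AB}}=\capacity(A,B)\,\ph_{AB}$ and works with $\cD(\Psi_{h_{AB}},\ph)=1$ instead of your $\cD(\ph_{AB},\ph)=1/\capacity(A,B)$, which is just a rescaling.
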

\begin{proof}
For a function $h:\XX\to\R$, define the flow 
\begin{equation}
 \Psi_h(x,y) := \pi(x)p(x,y)\bigbrak{h(x)-h(y)}\;.
\end{equation} 
Then $\ph_{AB} = \Psi_{h_{AB}}/\capacity(A,B)$. Thus $\Psi_{h_{AB}}$ is a 
(non-unit) $AB$-flow, and we find  
\begin{align}
 \cD(\Psi_{h_{AB}}) 
 &= \frac12 \sum_{x,y\in\XX} \frac{1}{\pi(x)p(x,y)} \Psi_{h_{AB}}(x,y)^2 \\
 &= \frac12 \sum_{x,y\in\XX} \pi(x)p(x,y) \bigbrak{h_{AB}(x) - h_{AB}(y)}^2 \\
 &= \capacity(A,B)\;.
\end{align}
By bilinearity, we have  
\begin{equation}
 \cD(\ph_{AB}) = \frac{1}{\capacity(A,B)}\;.
\end{equation} 
Furthermore, given any unit $AB$-flow $\ph$, we obtain
\begin{align}
\cD(\Psi_{h_{AB}},\ph)
&= \frac12 \sum_{x,y\in\XX} \bigbrak{h_{AB}(x) - h_{AB}(y)} \ph(x,y) \\
&= \sum_{x,y\in\XX} h_{AB}(x) \ph(x,y) \\
&= \sum_{x\in A} \sum_{y\in\XX} \ph(x,y) + 
\sum_{x\in (A\cup B)^c} h_{AB}(x) \sum_{y\in\XX} \ph(x,y) \\
&= \sum_{x\in A} (\divergence \ph)(x) + 
\sum_{x\in (A\cup B)^c} h_{AB}(x) \underbrace{(\divergence \ph)(x)}_{=0} \\
&= 1\;.
\end{align}
The Cauchy--Schwarz inequality thus yields 
\begin{equation}
 1 =  \cD(\Psi_{h_{AB}},\ph)^2 \leqs \cD(\Psi_{h_{AB}})\cD(\ph) 
 = \capacity(A,B) \cD(\ph)\;,
\end{equation} 
showing that $\capacity(A,B) \geqs 1/\cD(\ph)$ as required. 
\end{proof}

%%%%%%%%%%%%%%%%%%%%%%%%%%%%%%%%%%%%%%%%%%%%%%%%%%%%%%%%%%%%%%%%%%%%%%%%%%%%%%%%

\section{Bibliographical notes}
\label{sec:appendixA_bib} 

A comprehensive account of the potential-theoretic approach to reversible 
Markov chains is given in~\cite[Sections~7.1 and 
7.3]{Bovier_denHollander_book}. An overview can also be found 
in~\cite{Slowik_12}. 

Let us mention that there exists an improved version of the Thomson principle, 
called the \emph{Berman--Konsowa principle}~\cite{Berman_Konsowa_90}. The fact 
that the Berman--Konsowa principle is sharper than the Thomson principle is a 
consequence of Jensen's inequality, as shown in~\cite{Slowik_thesis}. An 
extension of the Berman--Konsowa principle to continuous-space Markov chains 
has been obtained in~\cite{denHollander_Jansen_16}. 

%%%%%%%%%%%%%%%%%%%%%%%%%%%%%%%%%%%%%%%%%%%%%%%%%%%%%%%%%%%%%%%%%%%%%%%%%%%%%%%%

%\listoftheorems[ignoreall,show=theorem]

% \bibliographystyle{plain}
\cleardoublepage
\phantomsection
\bibliographystyle{alpha}
\addcontentsline{toc}{chapter}{Bibliography}
\bibliography{Sarajevo}

\vfill

\bigskip\bigskip\noindent
{\small
Nils Berglund \\
Institut Denis Poisson (IDP) \\ 
Universit\'e d'Orl\'eans, Universit\'e de Tours, CNRS -- UMR 7013 \\
B\^atiment de Math\'ematiques, B.P. 6759\\
45067~Orl\'eans Cedex 2, France \\
{\it E-mail address: }
{\tt nils.berglund@univ-orleans.fr} \\
{\tt https://www.idpoisson.fr/berglund} 

\end{document}